\newcommand{\field}{\ensuremath{\mathbb}}
\newcommand{\C}{{\field C}}
\newcommand{\R}{{\field R}}
\newcommand{\Z}{{\field Z}}
\newcommand{\N}{{\mathbb N}}
\newcommand{\QU}{{\mathbb H}} 
\newcommand{\ga}{\alpha}
\newcommand{\wt}{\widetilde}
\newtheorem{prop}{Proposition}[section]
\newtheorem{cor}[prop]{Corollary}
\newtheorem{theorem}[prop]{Theorem}
\newtheorem{conj}[prop]{Conjecture}
\newtheorem{corollary}[prop]{Corollary}
\theoremstyle{definition}
\newtheorem{remark}[prop]{Remark}
\newtheorem*{fact}{Fact}
\newtheorem{definition}[prop]{Definition}
\newtheorem{question}{Question}
\newcommand{\frb}{\mathfrak{b}}
\newcommand{\frg}{\mathfrak{g}}
\newcommand{\frk}{\mathfrak{k}}
\newcommand{\frs}{\mathfrak{s}}
\newcommand{\frt}{\mathfrak{t}}
\newcommand{\caI}{\mathcal{I}}
\newcommand{\caL}{\mathcal{L}}
\newcommand{\caO}{\mathcal{O}}
\newcommand{\caS}{\mathcal{S}}
\newcommand{\caT}{\mathcal{T}}
\newcommand{\caV}{\mathcal{V}}
\newcommand{\caX}{\mathcal{X}}
\renewcommand{\tilde}{\widetilde}
\begin{document}

\newpage
\thispagestyle{empty}
\vspace*{18pt}
\begin{center}
\textsc{Symmetric Subgroup Orbit Closures on Flag Varieties:\\Their Equivariant Geometry, Combinatorics, and Connections With Degeneracy Loci}\\[18pt]
by\\[18pt]
\textsc{Benjamin Joe Wyser}\\[12pt]
(Under the direction of William A. Graham)\\[12pt]
\textsc{Abstract}
\end{center}
We give explicit formulas for torus-equivariant fundamental classes of closed $K$-orbits on the flag variety $G/B$ when $G$ is one of the classical groups $SL(n,\C)$, $SO(n,\C)$, or $Sp(2n,\C)$, and $K$ is a symmetric subgroup of $G$.  We describe parametrizations of each orbit set and the combinatorics of its weak order, allowing us to compute formulas for the equivariant classes of all remaining orbit closures using divided difference operators.  In each of the cases in type $A$, we realize $K$-orbit closures as universal degeneracy loci of a certain type, involving a vector bundle $V$ over a scheme $X$ equipped with a flag of subbundles and a further structure determined by $K$.  We describe how our equivariant formulas can be interpreted as formulas for such loci in the Chern classes of the various bundles on $X$.

\begin{list}{\sc Index words:\hfill}{\labelwidth 1.2in\leftmargin 1.4in\labelsep 0.2in}
\item 
\begin{flushleft}\singlespacing
Equivariant cohomology,
Flag variety,
Symmetric subgroup,
Degeneracy loci,
Equivariant,
Localization,
Orbit
\end{flushleft}
\end{list}

\newpage
\thispagestyle{empty}
\vspace*{18pt}
\begin{center}
\textsc{Symmetric Subgroup Orbit Closures on Flag Varieties:\\Their Equivariant Geometry, Combinatorics, and Connections With Degeneracy Loci}\\[18pt]
by\\[18pt]
\textsc{Benjamin Joe Wyser}\\[12pt]
B.S., Mississippi State University, 2002\\
\vfill
A Dissertation Submitted to the Graduate Faculty \\
of The University of Georgia in Partial Fulfillment \\
of the \\
Requirements for the Degree \\[10pt]
\textsc{Doctor of Philosophy}\\[36pt]
\textsc{Athens, Georgia}\\[18pt]
2012
\end{center}

\newpage
\thispagestyle{empty}
\vspace*{5.5in}
\begin{center}
\copyright 2012 \\
Benjamin Joe Wyser \\
All Rights Reserved
\end{center}

\newpage
\thispagestyle{empty}
\vspace*{18pt}
\begin{center}
\textsc{Symmetric Subgroup Orbit Closures on Flag Varieties:\\Their Equivariant Geometry, Combinatorics, and Connections With Degeneracy Loci}\\[18pt]
by\\[18pt]
\textsc{Benjamin Joe Wyser}
\end{center}
\vfill
\begin{flushleft}\singlespacing
\hspace*{200pt}\makebox[100pt][l]{Major Professor:}William A. Graham\\
\vskip 12pt
\hspace*{200pt}\makebox[100pt][l]{Committee:       }Brian D. Boe\\
\hspace*{200pt}\makebox[100pt][l]{~                }Daniel Krashen\\
\hspace*{200pt}\makebox[100pt][l]{~                }Daniel K. Nakano\\
\hspace*{200pt}\makebox[100pt][l]{~                }Robert Varley\\
\vfill
Electronic Version Approved:\\[12pt]
Dean Maureen Grasso\\
Dean of the Graduate School\\
The University of Georgia\\
May 2012
\end{flushleft}

\cleardoublepage
\phantomsection

\chapter*{Acknowledgments}
The author wholeheartedly thanks his research advisor, Professor William A. Graham, for his help in conceiving this thesis project, as well as for his great generosity with both his time and expertise throughout the project.  Additionally, the author acknowledges and thanks Peter Trapa, Monty McGovern, and Michel Brion for helpful discussions and/or email correspondence.
\pagenumbering{roman}
\setcounter{page}{4}
\addcontentsline{toc}{chapter}{Acknowledgments}

\newpage
\pagenumbering{arabic}
\setcounter{page}{1}
\tableofcontents
\listoffigures
\listoftables

\chapter{Introduction and Preliminaries}
Let $G$ be a connected, complex, simple algebraic group of classical type.  Let $\theta$ be a (holomorphic) involution
of $G$ - that is, $\theta$ is an automorphism of $G$ whose square is the identity.  Fix $T \subseteq B$, a $\theta$-stable maximal torus and Borel subgroup of $G$, respectively.  Let $K = G^{\theta}$ be the subgroup of elements of $G$ which are fixed by $\theta$.  Such a subgroup of $G$ is referred to as a \textit{symmetric subgroup}.

$K$ acts on the flag variety $G/B$ with finitely many orbits (\cite{Matsuki-79}), and the geometry of these orbits and their closures plays an important role in the theory of Harish-Chandra modules for a certain real form $G_{\R}$ of the group $G$ --- namely, one containing a maximal compact subgroup $K_{\R}$ whose complexification is $K$.  For this reason, the geometry of $K$-orbits and their closures have been studied extensively, primarily in representation-theoretic contexts.

Their role in the representation theory of real groups aside, $K$-orbit closures can be thought of as generalizations of Schubert varieties, and, in principle, any question one has about Schubert varieties may also be posed about $K$-orbit closures.  With this in mind, we note here that our work is motivated by earlier work of Fulton (\cite{Fulton-92,Fulton-96_1,Fulton-96_2}) on Schubert loci in flag bundles, their role as universal degeneracy loci of maps of flagged vector bundles, and by connections between this work and the $T$-equivariant cohomology of the flag variety, $H_T^*(G/B)$, discovered by Graham (\cite{Graham-97}).  We briefly describe this earlier work.  Suppose $V$ is a vector bundle over a variety $X$, and suppose that $E_{\bullet}$ and $F_{\bullet}$ are two complete flags of subbundles of $V$.  Let $w \in S_n$ be given, and consider the locus

\[ \Omega_w = \{ x \in X \ \vert \ \text{rank}(E_i(x) \cap F_j(x)) \geq r_w(i,j) \text{ for all } i,j\}, \]
where $r_w(i,j)$ is a non-negative integer depending on $w$, $i$, and $j$.  Fulton considered the problem of finding a formula for the fundamental class $[\Omega_w] \in H^*(X)$ in terms of the Chern classes of the bundles involved.  Assuming that the flags $E_{\bullet},F_{\bullet}$ are ``sufficiently generic" (in a sense that can be made precise), the problem reduces to the universal case of finding formulas for the fundamental classes of Schubert loci in the flag bundle $Fl(V)$.  Moreover, it is enough to find a formula for the smallest Schubert locus (that corresponding to a point in every fiber).  One may then deduce formulas for larger loci from this formula by applying ``divided difference operators", moving inductively up the (weak) Bruhat order.

Graham considered this problem in a more universal and Lie-theoretic setting.  Let $G$ be a reductive algebraic group over $\C$, with $T \subseteq B \subseteq G$ a maximal torus and Borel subgroup, respectively.  Denote by $E$ the total space of a universal principal $G$-bundle.  This is a contractible space with a free action of $G$ (hence also a free action of $B$, by restriction).  Let $BB$ and $BG$ denote the spaces $E/B$ and $E/G$, respectively.  Then $BB$ and $BG$ are classifying spaces for the groups $B$ and $G$.    In the setting of \cite{Graham-97}, the primary object of interest is the diagonal $\Delta \subseteq BB \times_{BG} BB$.  After a translation between $H^*(BB \times_{BG} BB)$ and the $T$-equivariant cohomology $H_T^*(G/B)$ of $G/B$, one sees that the problem of describing $[\Delta] \in H^*(BB \times_{BG} BB)$ is equivalent to that of describing the $T$-equivariant class of a point.  In the setting of $T$-equivariant cohomology, one has use of the localization theorem, which allows one to verify the correctness of a formula for the class of a point simply by checking that it restricts correctly at all of the $T$-fixed points.  The observation is that had a formula for this class not already been discovered by Fulton using other methods, it might have been determined simply by identifying how it should restrict at each fixed point and attempting to guess a class which restricts as required.

With this in mind, we return now to the setting of symmetric subgroups.  Let $S = K \cap T$, a maximal torus of $K$ contained in $T$.  (When $\text{rank}(K) = \text{rank}(G)$, we have $S = T$, but in general, $S$ may be strictly smaller than $T$.)  In the present paper, we apply equivariant localization and divided difference operators as described above to discover previously unknown formulas for the $S$-equivariant fundamental classes of $K$-orbit closures on $G/B$.  We do so for all symmetric pairs $(G,K)$ when $G=SL(n,\C)$, $SO(n,\C)$, or $Sp(2n,\C)$.  As a means to this end, along the way we also partially handle some pairs $(G,K)$ with $G = Spin(n,\C)$.

In each case, this is done in two steps.  First, we identify the closed orbits and their restrictions at the various $S$-fixed points.  Using this information, we produce polynomials in the generators of $H_S^*(G/B)$ which restrict at the $S$-fixed points as required.  We then conclude by the localization theorem that these polynomials represent the equivariant fundamental classes of the closed $K$-orbits.  (As an interesting aside, we remark that the aforementioned work of Fulton turns out to be vital to this step in two cases, namely the cases $(Sp(2n,\C),GL(n,\C))$ and $(SO(2n,\C),GL(n,\C))$.  Indeed, our formulas for the closed orbits in those cases are ``determinantal" in nature, and are very similar to corresponding formulas of Fulton for the smallest Schubert locus in the type $C$ and type $D$ flag bundles.  Moreover, some algebraic properties of these determinants established in \cite{Fulton-96_1} turn out to amount precisely to a proof of the correctness of our formulas.)

Second, we outline how divided difference operators can be used to deduce formulas for the fundamental classes of the remaining orbit closures.  This is analogous to what is done for Schubert varieties.  However, combinatorial parametrizations of $K \backslash G/B$, as well as descriptions of its weak closure order in terms of such parametrizations, are typically more complicated than the weak Bruhat order on Schubert varieties.  We refer to known results on these combinatorics (\cite{Matsuki-Oshima-90,McGovern-Trapa-09,Richardson-Springer-90,Yamamoto-97}).  Although these parametrizations are likely familiar to experts on the representation theory of real groups, proofs of their correctness have not appeared in the literature in all cases.  As such, we have written down the details here for lack of a suitable reference.

One application of our formulas is that they allow one to deduce Chern class formulas for varieties analogous to the degeneracy loci considered by Fulton.  In general, such loci involve a vector bundle $V$ on a scheme $X$ equipped with a complete flag of subbundles and a further structure determined by $K$.  Given such a setup, degeneracy loci can be defined by conditions on the ``relative position" of the flag and the extra structure over various points of $X$.  In the type $A$ cases, this extra structure is either a splitting as a direct sum of subbundles of ranks $p$ and $q$ (corresponding to $K = S(GL(p,\C) \times GL(q,\C))$, or a non-degenerate bilinear form taking values in the trivial bundle.  The form is symmetric in the case of $K = SO(n,\C)$, and skew-symmetric in the case of $K = Sp(2n,\C)$.  $K$-orbit closures are universal cases of such loci, in exactly the same way that Schubert varieties are universal cases of the degeneracy loci studied by Fulton.  We describe the dictionary between these two viewpoints explicitly in the type $A$ cases, and indicate our thoughts on how this should extend to cases where $G$ is of type $BCD$.

After giving preliminary background in Chapter 1, we treat the various examples in types $ABCD$ in Chapters 2, 3, 4, and 5, respectively.  Each of these chapters is organized as follows:  For each symmetric pair $(G,K)$, we realize $K$ explicitly as a subgroup of $G$, and describe the corresponding embeddings of Weyl groups and root systems.  We then identify the closed orbits explicitly --- their number, and the fixed points contained in each.  Using this information, we determine formulas for each closed $K$-orbit using equivariant localization as described above.  The identification of the closed orbits is straightforward in the cases where $K$ is connected, but we do deal with some cases where $K$ is disconnected.  In those cases, we actually compute formulas for the closed orbits of the identity component of $K$ (or, equivalently, for the closed orbits of a corresponding symmetric subgroup of the simply connected cover of $G$, which \textit{is} connected).  In such cases, we must then identify how the closed $K$-orbits break up as unions of these, and add the formulas accordingly.  Once this is sorted out, we describe a parametrization of the orbit set, as well as the combinatorics of the weak order on the level of that parametrization.  (To avoid overly cluttering the exposition, detailed proofs of the correctness of these parametrizations are relegated to an appendix.)  We conclude in each case with an example calculation.

We end by describing the degeneracy locus picture in Chapter 6.  We give the full details for all cases in type $A$, and indicate some brief thoughts on the remaining cases, leaving some details for future work.

\section{Notation}\label{ssec:notation}
Here we define some notations which will be used throughout the paper.

We denote by $I_n$ the $n \times n$ identity matrix, and by $I_{n,m}$ the block matrix which has $I_n$ in the upper-left block, $-I_m$ in the lower-right block, and $0$'s elsewhere.  That is,
\[  I_{n,m} := 
\begin{pmatrix}
I_n & 0 \\
0 & -I_m \end{pmatrix}. \]

$I_{a,b,c}$ will denote the block matrix which has $I_a$ in the upper-left block, $-I_b$ in the middle block, and $I_c$ in the lower-right block, like so:
\[  I_{a,b,c} := 
\begin{pmatrix}
I_a & 0 & 0\\
0 & -I_b & 0 \\
0 & 0 & I_c \end{pmatrix}. \]

We denote by $J_n$ the $n \times n$ matrix with $1$'s on the antidiagonal and $0$'s elsewhere, i.e. the matrix $(e_{i,j}) = \delta_{i,n+1-j}$.  $J_{n,m}$ shall denote the block matrix which has $J_n$ in the upper-right block, $-J_m$ in the lower-left block, and $0$'s elsewhere.  That is,
\[  J_{n,m} := 
\begin{pmatrix}
0 & J_n \\
-J_m & 0 \end{pmatrix}. \]

For any group $G$ with $g \in G$, $\text{int}(g)$ shall denote the inner automorphism ``conjugation by $g$".

We will use both ``one-line" notation and cycle notation for permutations.  When giving a permutation in one-line notation, the sequence of values will be listed with no delimiters, while for cycle notation, parentheses and commas will be used.  Hopefully this will remove any possibility for confusion on the part of the reader.  So, for example, the permutation $\pi \in S_4$ which sends $1$ to $2$, $2$ to $3$, $3$ to $1$, and $4$ to $4$ will be given in one-line notation as $2314$ and in cycle notation as $(1,2,3)$.

We shall often have occasion to consider ``signed permutations", which are bijections $\sigma$ from the set $\{\pm 1,\pm 2\,\hdots,\pm n\}$ to itself having the property that 
\[ \sigma(-i) = -\sigma(i) \]
for all $i$.  We define the \textit{absolute value} of such a permutation, denoted $|\sigma|$, to be the permutation of $\{1,\hdots,n\}$ given by
\[ |\sigma|(i) = |\sigma(i)|. \]

Signed permutations will usually be written in one-line notation with bars over some of the numbers to indicate negative values.  For instance, the signed permutation $\sigma = 13\overline{2}$ is defined by $\sigma(1) = 1$, $\sigma(2) = 3$, and $\sigma(3) = -2$.

We will at times view signed permutations of $\{1,\hdots,n\}$ as being embedded in a larger symmetric group (usually $S_{2n}$ or $S_{2n+1}$).  To avoid any confusion in terminology, an element $\sigma \in S_m$ will be called a ``signed element of $S_m$" if and only if it has the property that
\[ \sigma(m+1-i) = m+1-\sigma(i) \]
for $i=1,\hdots,m$.  Signed permutations of $\{1,\hdots,n\}$ can be embedded as signed elements of $S_{2n}$ as follows:  Given a signed permutation $\pi$, define the first $n$ values of the signed element $\sigma \in S_{2n}$ by
\[ \sigma(i) = 
\begin{cases}
	\pi(i) & \text{ if $\pi(i) > 0$} \\
	2n+1-|\pi(i)| & \text{ if $\pi(i) < 0$,}
\end{cases} \]
and then define the remaining values of $\sigma$ to be what they are required to be:  $\sigma(2n+1-i) = 2n+1-\sigma(i)$.

Embedding signed permutations in $S_{2n+1}$ works very similarly.  Define the first $n$ values of the signed element $\sigma \in S_{2n+1}$ by
\[ \sigma(i) = 
\begin{cases}
	\pi(i) & \text{ if $\pi(i) > 0$} \\
	2n+2-|\pi(i)| & \text{ if $\pi(i) < 0$,}
\end{cases} \]
then insist that $\sigma(2n+2-i) = 2n+2-\sigma(i)$ for $i=1,\hdots,n$.  Note that this forces $\sigma(n+1) = n+1$.

We will also deal often with flags, i.e. chains of subspaces of a given vector space $V$.  A flag
\[ \{0\} \subset F_1 \subset F_2 \subset \hdots \subset F_{n-1} \subset F_n = V \]
will often be denoted by $F_{\bullet}$.  When we wish to specify the components $F_i$ of a given flag $F_{\bullet}$ explicitly, we will typically use the shorthand notation
\[ F_{\bullet} = \left\langle v_1,\hdots,v_n \right\rangle, \]
which shall mean that $F_i$ is the linear span $\C \cdot \left\langle v_1,\hdots,v_i \right\rangle$ for each $i$.

We will always be dealing with characters of tori $S$ (the maximal torus of $K$) and $T$ (the maximal torus of $G$).  To avoid confusing and ambiguous notation, these characters will generally be thought of as living in separate places, even in the event that $S$ and $T$ coincide.  Characters of $S$ will generally be denoted by capital $X$ variables, while characters of $T$ will be denoted by capital $Y$ variables.  Equivariant cohomology classes, on the other hand, will be represented by polynomials in lower-case $x$ and $y$ variables, where the lower-case variable $x_i$ means $X_i \otimes 1$, and where the lower-case variable $y_i$ means $1 \otimes Y_i$.  (See Proposition \ref{prop:eqvt-cohom-flag-var}.)

Unless stated otherwise, $H^*(-)$ shall always mean cohomology with $\C$-coefficients.

Lastly, we note here once and for all that $K \backslash G/B$ should always be taken to mean the set of $K$-orbits on $G/B$, unless explicitly stated otherwise.  (This as opposed to $B$-orbits on $K \backslash G$, or $B \times K$-orbits on $G$.)

\section{Equivariant cohomology (of the flag variety), and the localization theorem}\label{ssec:eqvt_cohom}
Our primary cohomology theory is equivariant cohomology with respect to the action of a maximal torus $S$ of $K$.  The $S$-equivariant cohomology of an $S$-variety $X$ is, by definition,
\[ H_S^*(X) := H^*((ES \times X)/S). \]
Here, $ES$ denotes the total space of a universal principal $S$-bundle (a contractible space with a free $S$-action), as in the introduction.  In the next section, we will also briefly refer to $S$-equivariant homology, which is by definition the \textit{Borel-Moore} homology $H_*((ES \times X)/S)$.  (For information on Borel-Moore homology, see e.g. \cite[\S B.2]{Fulton-YoungTableaux}.)  For smooth $X$, which is all we shall be concerned with here, the two theories are identified via Poincar\'{e} duality, so we work almost exclusively with cohomology.

Note that $H_S^*(X)$ is always an algebra for the ring $\Lambda_S := H_S^*(\{\text{pt.}\})$, the $S$-equivariant cohomology of a 1-point space (equipped with trivial $S$-action).  The algebra structure is given by pullback through the constant map $X \rightarrow \{\text{pt.}\}$.

Taking $X$ to be the flag variety $G/B$, we now describe $H_S^*(X)$ explicitly.  Let $R = S(\frt^*)$, the $\C$-symmetric algebra on the dual to the Lie algebra $\frt$ of a maximal torus $T$ of $G$.  Let $R' = S(\frs^*)$, the $\C$-symmetric algebra on the dual to the Lie algebra $\frs$ of $S$.  It is a standard fact that $R \cong \Lambda_T$, and $R' \cong \Lambda_S$.  Let $n$ be the dimension of $T$, and let $r$ be the dimension of $S$.  (In many cases, $S=T$, and $r=n$, but in some cases, $S$ is a proper subtorus of $T$, and $r < n$.)  Let $Y_1,\hdots,Y_n$ denote coordinates on $\frt^*$, taken as generators for the algebra $R$.  Likewise, let $X_1,\hdots,X_r$ denote coordinates on $\frs^*$, algebra generators for $R'$.

Note that there is a map $R \rightarrow R'$ induced by restriction of characters, whence $R'$ is a module for $R$.  Note also that $W$ acts on $R$, since it acts naturally on the characters $Y_i$.  Then it makes sense to form the tensor product $R' \otimes_{R^W} R$.  As it turns out, this is the $S$-equivariant cohomology of $X$.

\begin{prop}\label{prop:eqvt-cohom-flag-var}
With notation as above, $H_S^*(X) = R' \otimes_{R^W} R$.  Thus elements of $H_S^*(X)$ are represented by polynomials in variables $x_i := X_i \otimes 1$ and $y_i := 1 \otimes Y_i$.
\end{prop}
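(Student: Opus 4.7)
The plan is to prove this in two steps: first establish the classical Borel isomorphism $H_T^*(G/B) \cong R \otimes_{R^W} R$, and then use a change-of-groups argument to pass from the full torus $T$ to its subtorus $S$.

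For the first step, I would use the identification already sketched in the paper's introduction: the Borel construction gives $H_T^*(G/B) = H^*(BB \times_{BG} BB)$, which up to homotopy equals $H^*(BT \times_{BG} BT)$ (since $T \hookrightarrow B$ is a homotopy equivalence). The key input is Borel's theorem that the map $BT \to BG$ induces the inclusion $H^*(BG) \hookrightarrow H^*(BT) = R$ with image $R^W$, together with Chevalley's theorem that $R$ is free of rank $|W|$ over $R^W$. With freeness in hand, a Leray--Hirsch / Eilenberg--Moore argument applied to the fibration $G/T \to BT \to BG$ collapses and yields
\[ H^*(BT \times_{BG} BT) \cong H^*(BT) \otimes_{H^*(BG)} H^*(BT) = R \otimes_{R^W} R. \]

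For the second step, I would invoke the standard change-of-groups principle: whenever $S \subseteq T$ is a closed subtorus and $X$ is a $T$-space with $H_T^*(X)$ free as an $R$-module, one has
\[ H_S^*(X) \cong H_T^*(X) \otimes_R R', \]
where $R \to R'$ is the surjection induced by restriction of characters $\frt^* \twoheadrightarrow \frs^*$. For $X = G/B$, the Schubert cell decomposition gives a free $R$-basis of $H_T^*(G/B)$, so this applies, yielding
\[ H_S^*(G/B) \cong (R \otimes_{R^W} R) \otimes_R R' \cong R' \otimes_{R^W} R. \]
The identification of elements with polynomials in $x_i = X_i \otimes 1$ and $y_i = 1 \otimes Y_i$ then comes by tracking which tensor factor each variable sits in: the $x_i$'s are generators of the left factor $R' = S(\frs^*)$, while the $y_i$'s are generators of the right factor $R = S(\frt^*)$.

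The main obstacle, to my mind, is not either step individually but making the change-of-groups reduction fully rigorous. The isomorphism $H_S^*(X) \cong H_T^*(X) \otimes_R R'$ is \emph{not} an unconditional fact; it requires either the freeness of $H_T^*(X)$ over $R$ (so higher Tor's vanish) or a spectral sequence collapse for the fibration $T/S \to ES \times_S X \to ET \times_T X$ (whose fiber $T/S$ is itself a torus, so contributes nontrivial cohomology). With $\C$-coefficients and $X = G/B$ both ingredients are available, but the freeness must be invoked explicitly rather than taken for granted. Once this is granted, the two-step reduction delivers the stated presentation, and the variable conventions follow by unwinding the tensor product.
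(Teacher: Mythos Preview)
Your argument is correct but follows a genuinely different path from the paper's. The paper does not reduce to the case $S=T$ first; instead it works directly with $S$, identifying $E \times^S (G/B)$ with the fiber product $BS \times_{BG} BB$ and checking that the evident map $\widetilde{\phi}: H^*(BS) \otimes_{H^*(BG)} H^*(BB) \to H^*(BS \times_{BG} BB)$ coincides with the map $\phi$ built from pullback-from-a-point on the $R'$ factor and $\lambda \mapsto c_1(\mathcal{L}_\lambda)$ on the $R$ factor. A Leray--Hirsch argument for $E \times^S (G/B) \to BS$ shows $H_S^*(G/B)$ is free over $R'$ of rank $|W|$; since $R' \otimes_{R^W} R$ has the same rank and $\phi$ is surjective (equivariant Schubert classes are hit), $\phi$ is an isomorphism. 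Your two-step route is tidier in that it reuses the classical $T$-equivariant Borel presentation as a black box and isolates the passage $T \rightsquigarrow S$ as a separate base-change, whereas the paper's uniform argument handles all $S \subseteq T$ at once and, more to the point for what follows, bakes in the geometric interpretation of the two tensor factors (equivariant parameters versus Chern classes of tautological line bundles) that the next proposition exploits. Two small remarks on your version: in the base-change step you must tensor along the \emph{left} $R$-module structure on $R \otimes_{R^W} R$ (the one coming from $H^*(BT)$), so that the resulting $R'$ carries the $x$-variables and the surviving $R$ carries the $y$-variables---your final identification is right, but the notation $H_T^*(X) \otimes_R R'$ is ambiguous on this point. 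And the Serre spectral sequence for $T/S \to E\times^S X \to E \times^T X$ does not literally collapse; it has transgressive differentials, but freeness of $H_T^*(X)$ over $R$ turns it into a Koszul resolution computing the tensor product, which is what you actually need.
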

\begin{proof}
For the case $S=T$, this is the well-known fact that $H_T^*(X) \cong R \otimes_{R^W} R$, for which a proof can be found in \cite{Brion-98_i}.  For lack of a reference in the more general case, when $S$ may be a strict subtorus of $T$, we provide a proof here, which of course applies also to the case $S=T$.

It is easy to see that $H_S^*(X)$ is free over $R'$ of rank $|W|$.  Indeed, we have a flag bundle $E \times^S (G/B) \rightarrow BS$.  This is a locally trivial fibration with fiber isomorphic to $G/B$.  On the space $E \times^S (G/B)$, for any character $\lambda \in \widehat{T}$, we have a line bundle $\caL_{\lambda}$ which restricts to the line bundle $L_{\lambda} = G \times^B \C_{\lambda}$ over the fiber $G/B$.  Express the $|W|$ Schubert classes (a basis for $H^*(G/B)$) as polynomials in the Chern classes of these line bundles.  Then those same polynomials evaluated at the Chern classes of the line bundles $\caL_{\lambda}$ give $|W|$ classes in $H^*(E \times^S G/B)$ which restrict to a basis for the cohomology of $H^*(G/B)$.  The claim now follows from the Leray-Hirsch Theorem.

Now, note that there is a map $R' \otimes_{\C} R \rightarrow H_S^*(G/B)$.  The map is the tensor product of two maps, $p: R' \rightarrow H_S^*(G/B)$ and $q: R \rightarrow H_S^*(G/B)$.  The map $p$ is pullback through the map to a point, as described above.  The map $q$ takes a character $\lambda$ to $c_1(\caL_{\lambda})$.  The map $p \otimes q$ is surjective, since the $S$-equivariant Schubert classes are hit by the map $q$ on the second factor.

Since $R$ is free over $R^W$ of rank $|W|$, $R' \otimes_{R^W} R$ is free over $R'$ of rank $|W|$, hence $H_S^*(G/B)$ and $R' \otimes_{R^W} R$ are both free $R'$-modules of the same rank.  Consider the possibility that $p \otimes q$ factors through $R' \otimes_{R^W} R$ --- that is, suppose that $x \otimes y \mapsto p(x) q(y)$ is a well-defined map $R' \otimes_{R^W} R \rightarrow H_S^*(G/B)$.   If so, then this map is clearly surjective, since $p \otimes q$ is, so it is injective as well, being a map of free $R'$-modules of the same rank.  The map is moreover a ring homomorphism, and so it is in fact an isomorphism of rings.

Thus we need only see that the map $\phi: R' \otimes_{R^W} R \rightarrow H_S^*(G/B)$ given by $\phi(\ga \otimes \beta) = p(\ga)q(\beta)$ is well-defined.  To see this, note first that the space $E \times^S (G/B)$ is isomorphic to the space $BS \times_{BG} BB$.  Indeed, the map $E \times G \rightarrow E \times_{BG} E$ given by $(e,g) \mapsto (e,eg)$ is an isomorphism, since $E \rightarrow BG$ is a principal $G$-bundle.  This map is $S \times B$-equivariant, where $S \times B$ acts on $E \times G$ by $(e,g).(s,b) = (es,s^{-1}gb)$, and on $E \times_{BG} E$ by $(e_1,e_2).(s,b) = (e_1s,e_2b)$.  Thus the isomorphism descends to quotients, and $(E \times G) / (S \times B) \cong E \times^S (G/B)$, while $(E \times_{BG} E) / (S \times B) \cong BS \times_{BG} BB$.

Now, we have a map $\wt{\phi}: H^*(BS) \otimes_{H^*(BG)} H^*(BB) \rightarrow H^*(BS \times_{BG} BB)$ given by $\alpha \otimes \beta \mapsto \pi_1^*(\alpha) \pi_2^*(\beta)$, where $\pi_1, \pi_2$ are the projections from $BS \times_{BG} BB$ onto $BS$ and $BB$, respectively.  There is no question of this map being well-defined; that it is well-defined is immediate given commutativity of the square
\[
\xymatrixcolsep{1pc} 
\xymatrixrowsep{2pc}
\xymatrix
{BS \times_{BG} BB \ar@{>}[r] \ar@{>}[d] & BB \ar@{>}[d] \\
BS \ar@{>}[r] & BG
}
\]
It is well-known that $H^*(BS) \cong R'$, $H^*(BB) \cong R$, and $H^*(BG) \cong R^W$, so clearly $H^*(BS) \otimes_{H^*(BG)} H^*(BB) \cong R' \otimes_{R^W} R$.  Thus to see that $\phi$ is well-defined, we can simply observe that it is precisely the map $\wt{\phi}$ when $H^*(BS) \otimes_{H^*(BG)} H^*(BB)$ is identified with $R' \otimes_{R^W} R$, and $H^*(BS \times_{BG} BB)$ is identified with $H_S^*(G/B) = H^*(E \times^S (G/B))$ via the isomorphism described above.

On the first factor $R'$, the map $\phi$ maps a character $\lambda$ of $S$ to $c_1((E \times^S \C_{\lambda}) \times (G/B))$.  The bundle $(E \times^S \C_{\lambda}) \times G/B$ is the line bundle associated to the principal $S$-bundle $E \times G/B \rightarrow E \times^S (G/B)$ and the $1$-dimensional representation $\C_{\lambda}$ of $S$.  On the other hand, the map $\wt{\phi}$ maps $\lambda$ to $c_1(\pi_1^*(\caL_{\lambda}))$.  The bundle $\pi_1^*\caL_{\lambda} = (E \times^S \C_{\lambda}) \times_{BG} BB$ is the line bundle associated to the principal $S$-bundle $E \times_{BG} BB \rightarrow BS \times_{BG} BB$ and the same $1$-dimensional representation $\C_{\lambda}$ of $S$.  Since these two line bundles are associated to principal $S$-bundles which correspond via our isomorphism, and to the same representation of $S$, they are in fact the same line bundle when the two spaces are identified.  Thus $\phi$ and $\wt{\phi}$ agree on the $R'$ factor.

The story on the second factor is much the same.  The map $\phi$ maps a character $\lambda$ of $T$ to $c_1(E \times^S (G \times^B \C_{\lambda}))$, the first Chern class of the line bundle associated to the principal $B$-bundle $E \times^S G \rightarrow E \times^S (G/B)$ and the $1$-dimensional representation $\C_{\lambda}$ of $B$ (where, as usual, the $T$-action on $\C_{\lambda}$ is extended to $B$ by letting the unipotent radical act trivially).  The map $\wt{\phi}$ maps $\lambda$ to $c_1(\pi_2^*\caL_{\lambda})$, with $\pi_2^*\caL_{\lambda} = BS \times_{BG} (E \times^B \C_{\lambda})$ the line bundle associated to the principal $B$-bundle $BS \times_{BG} E \rightarrow BS \times_{BG} BB$ and the same representation of $B$.  Since these principal bundles correspond via our identification, and since the line bundles are associated to these principal bundles and the same representations of $B$, they are the same line bundle.  Thus $\phi$ and $\wt{\phi}$ agree on the $R$ factor as well.
\end{proof}

As mentioned, the $S$-equivariant cohomology of any $S$-variety $X$ is an algebra for $\Lambda_S$, the $S$-equivariant cohomology of a point.  We have the following standard localization theorem for actions of tori, one reference for which is \cite{Brion-98_i}:

\begin{theorem}
Let $X$ be an $S$-variety, and let $i: X^S \hookrightarrow X$ be the inclusion of the $S$-fixed locus of $X$.  The pullback map of $\Lambda_S$-modules 
\[ i^*: H_S^*(X) \rightarrow H_S^*(X^S) \]
is an isomorphism after a localization which inverts finitely many characters of $S$.  In particular, if $H_S^*(X)$ is free over $\Lambda_S$, then $i^*$ is injective. 
\end{theorem}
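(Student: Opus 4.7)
The plan is to reduce the claim to the algebraic fact that equivariant cohomology of an $S$-variety with no fixed points is torsion over $\Lambda_S$, and then feed this into the long exact sequence of the pair $(X, X^S)$. Explicitly, I would set $U = X \setminus X^S$ and consider
\[ \cdots \to H_S^{*-1}(U) \to H_S^*(X, X^S) \to H_S^*(X) \xrightarrow{i^*} H_S^*(X^S) \to H_S^*(U) \to \cdots, \]
with the goal of showing that both $H_S^*(U)$ and $H_S^*(X, X^S)$ (which agrees with $H_S^*(U)$ up to a shift on the smooth locus, or more generally can be replaced by it via excision for a tubular neighborhood) become zero after inverting a finite set of characters of $S$.

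The main technical step, and the real obstacle, is the following torsion lemma: if $Y$ is an $S$-variety with $Y^S = \emptyset$, then $H_S^*(Y)$ is annihilated by a product of finitely many characters of $S$. I would prove this by stratifying $Y$ into finitely many $S$-stable locally closed pieces so that each stratum is an $S$-equivariant bundle over an orbit $S/S_y$ with $S_y \subsetneq S$ (using that $S$-orbits on a variety give only finitely many orbit types). For such an orbit, $H_S^*(S/S_y) \cong H^*(BS_y) \cong \Lambda_{S_y}$, and the restriction $\Lambda_S \to \Lambda_{S_y}$ has a nontrivial kernel generated by characters of $S$ trivial on $S_y$; pick any nonzero such character $\chi_y$. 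An induction on the number of strata, together with the Mayer--Vietoris long exact sequence applied to equivariant tubular neighborhoods of the closed strata, shows that the product $\chi := \prod_y \chi_y$ annihilates $H_S^*(Y)$. Inverting $\chi$ then kills the flanking terms in the long exact sequence above, yielding the required isomorphism.

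For the ``in particular'' assertion, I would argue as follows: when $H_S^*(X)$ is a free $\Lambda_S$-module, the natural localization map $H_S^*(X) \to H_S^*(X)[\chi^{-1}]$ is injective (no torsion elements to kill). The composition of $i^*$ with the localization of $H_S^*(X^S)$ is then the composition of this injection with an isomorphism, so $i^*$ itself must be injective. The hard part of the whole argument is really the stratification/Mayer--Vietoris bookkeeping in the torsion lemma; once that is in hand, the rest is formal homological algebra.
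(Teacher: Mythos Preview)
The paper does not actually prove this theorem; it is stated as a standard result with the citation ``one reference for which is \cite{Brion-98_i}'' and no argument is given. So there is nothing in the paper to compare your proposal against directly.

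That said, your outline is essentially the standard proof one finds in the cited reference and elsewhere, and the ``in particular'' clause is handled correctly. One point deserves cleaning up: the long exact sequence you wrote,
\[ \cdots \to H_S^{*-1}(U) \to H_S^*(X, X^S) \to H_S^*(X) \xrightarrow{i^*} H_S^*(X^S) \to H_S^*(U) \to \cdots, \]
is not the long exact sequence of the pair $(X, X^S)$; that sequence has $H_S^{*+1}(X, X^S)$ in place of $H_S^*(U)$. Your parenthetical acknowledges this, but the identification of $H_S^*(X, X^S)$ with a shift of $H_S^*(U)$ via a tubular neighborhood is not generally valid (Thom isomorphism gives something else). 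The clean fix is either to prove the torsion lemma directly for the relative groups $H_S^*(X, X^S)$ using the same stratification argument (excision lets you work inside $U$), or to run the whole argument in equivariant Borel--Moore homology, where the open--closed sequence for $X^S \hookrightarrow X \hookleftarrow U$ really does have $H_*^S(U)$ as the third term. Either route works and neither changes the substance of your plan.
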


The last statement is what is relevant for us, since when $X$ is the flag variety, $H_S^*(X) = R' \otimes_{R^W} R$ is free over $R'$.  Thus in the case of the flag variety, the localization theorem tells us that any equivariant class is entirely determined by its image under $i^*$.  As noted in the next section (cf. Proposition \ref{prop:s-fixed-points}), the locus of $S$-fixed points is finite, and indexed by the Weyl group $W$, even in the event that $S$ is a proper subtorus of the maximal torus $T$ of $G$.  Thus in our setup, 
\[ H_S^*(X^S) \cong \bigoplus_{w \in W} \Lambda_S, \]
so that in fact a class in $H_S^*(X)$ is determined by its image under $i_w^*$ for each $w \in W$, where here $i_w$ denotes the inclusion of the $S$-fixed point $wB$.    Given a class $\beta \in H_S^*(X)$ and an $S$-fixed point $wB$, we will typically denote the restriction $i_w^*(\beta)$ at $wB$ by $\beta|_{wB}$, or simply by $\beta|_w$ if no confusion seems likely to arise.

Suppose that $Y$ is a closed $K$-orbit.  We denote by $[Y] \in H_S^*(X)$ its $S$-equivariant fundamental class.  For the sake of clarity, we explain this abuse of notation.  To be precise, by $[Y]$ we mean the Poincar\'{e} dual to the direct image of the fundamental (equivariant) homology class of $Y$ in $H_*^S(X)$.  This is the unique equivariant cohomology class $\alpha \in H_S^*(X)$ having the property that $\alpha \cap [X] = [Y]$.

We describe in the next section how to compute $[Y]|_{wB}$ for $w \in W$.  Since $[Y]$ is completely determined by these restrictions, the idea is to compute them and then try to ``guess" a formula for $[Y]$ based on them.  For us, a ``formula for $[Y]$" is a polynomial in the variables $x_i$ and $y_i$ (defined in the statement of Proposition \ref{prop:eqvt-cohom-flag-var}) which represents $[Y]$.  Note that such a formula amounts to a particular choice of lift of $[Y]$ from $R' \otimes_{R^W} R$ to $R' \otimes_{\C} R$.

To be able to tell whether a given guess at a formula for $[Y]$ is correct, we must understand how the restriction maps $i_w^*$ work.  That is the content of the next proposition.

\begin{prop}\label{prop:restriction-maps}
Suppose that $\beta \in H_S^*(X)$ is represented by the polynomial $f = f(x,y)$ in the $x_i$ and $y_i$.  Then $\beta|_{wB} \in \Lambda_S$ is the polynomial $f(X,\rho(wY))$.  Here, $\rho$ denotes the restriction $\frt^* \rightarrow \frs^*$.
\end{prop}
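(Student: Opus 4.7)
The plan is to reduce to a verification on algebra generators. By Proposition \ref{prop:eqvt-cohom-flag-var}, $H_S^*(X) = R' \otimes_{R^W} R$ is generated as a $\C$-algebra by the classes $x_i = X_i \otimes 1$ and $y_i = 1 \otimes Y_i$. The restriction map $i_w^*$ is a ring homomorphism, and the ``evaluation at $w$'' map $f(x,y) \mapsto f(X, \rho(wY))$ is also a well-defined ring homomorphism $R' \otimes_{R^W} R \to R'$ (well-definedness follows from $W$-invariance of elements of $R^W$). It thus suffices to check that the two agree on each $x_i$ and each $y_i$.

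For the $x_i$: these come from $\Lambda_S = R'$ via pullback along the constant map $X \to \{\text{pt}\}$, so by $\Lambda_S$-linearity of $i_w^*$ one has $i_w^*(x_i) = X_i$, matching the formula. The substantive step is the calculation of $i_w^*(y_i)$. For this I will use that $y_i = c_1(\caL_{Y_i})$, where $\caL_{Y_i} = E \times^S (G \times^B \C_{Y_i})$ on $E \times^S (G/B)$, together with the standard fact that the $S$-equivariant first Chern class of a line bundle over a point is precisely its weight in $\widehat{S} \subset R'$.

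What remains is then to identify the fiber of $L_\lambda = G \times^B \C_\lambda$ over the point $wB$ as a $T$-module and restrict to $S$ via $\rho$. For this I will invoke the standard computation: using $t \cdot [w,z] = [tw,z] = [w \cdot (w^{-1}tw), z]$ together with the equivalence $(g,z) \sim (gb, \lambda(b)^{-1} z)$ (and $w^{-1}tw \in T$), one gets that the fiber is the $T$-module $\C_{w\lambda}$, where $w\lambda$ denotes the Weyl translate of $\lambda$. Passing from $T$-weight to $S$-weight via $\rho$ then yields $i_w^*(y_i) = \rho(wY_i)$, as required.

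The main obstacle is purely bookkeeping: fixing conventions (direction of the Weyl action, sign convention in the associated-bundle quotient) so the signs emerge correctly, and handling the case $S \subsetneq T$ uniformly (here Proposition \ref{prop:s-fixed-points}, referenced in the next section, guarantees that the $S$-fixed locus is still indexed by $W$, so the statement makes sense in the generality claimed). None of these present real difficulty once conventions are pinned down.
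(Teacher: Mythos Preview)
Your proposal is correct and follows essentially the same approach as the paper: both reduce to checking the identity on the generators $x_i$ and $y_i$, handle $x_i$ via the $\Lambda_S$-module structure, and compute $i_w^*(y_i)$ by identifying the $T$-weight of the fiber of $G \times^B \C_{Y_i}$ at $wB$ and then restricting to $S$ via $\rho$. The only stylistic difference is that the paper phrases the last step as a pullback through $BS \to BT$, whereas you carry out the associated-bundle calculation directly; these are the same computation.
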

\begin{proof}
It suffices to check that 
\[ x_i|_{wB} = X_i, \]
and that
\[ y_i|_{wB} = \rho(wY_i). \]

For the first, recall that the class $x_i \in H_S^*(X)$ is $\pi^*(X_i)$, where $\pi: X \rightarrow \{\text{pt.}\}$ is the map to a point, and $X_i \in \frs^*$ is a coordinate on $\frs$.  Letting $i_w$ denote the inclusion of the fixed point $wB$ into $X$, we have that $\pi \circ i_w = id$, so that $i_w^* \circ \pi^*$ is the identity on $H_S^*(\{wB\})$.  Thus $i_w^*(x_i) = i_w^*(\pi^*(X_i)) = X_i$, which is what is being claimed.

For the second, recall that $y_i$ is the $S$-equivariant Chern class $c_1^S(L_{Y_i}) = c_1(E \times^S L_{Y_i})$, with $Y_i \in \frt^*$.  Thus
\[ i_w^*(y_i) = i_w^*(c_1(E \times^S L_{Y_i})) = c_1(i_w^*(E \times^S L_{Y_i})). \]
The bundle $i_w^*(E \times^S L_{Y_i})$ over $BS$ is pulled back from the bundle $i_w^*(E \times^T L_{Y_i})$ over $BT$ through the natural map $BS \rightarrow BT$.  The bundle $i_w^*(E \times^T L_{Y_i})$ corresponds to a $T$-equivariant bundle over $\{wB\}$ (i.e. a representation of $T$) having weight $wY_i$, as one easily checks.  Thus the bundle $i_w^*(E \times^S L_{Y_i})$ corresponds to an $S$-equivariant bundle over $\{wB\}$ having $S$-weight $\rho(wY_i)$, since the pullback $\Lambda_T \rightarrow \Lambda_S$ through the map $BS \rightarrow BT$ is determined by restriction of characters.
\end{proof}

\section{Closed orbits}\label{ssec:closed_orbits}
Let $G, B, T, K, S, W$ be as in the introduction.  Let $\Phi = \Phi(G,T)$ denote the roots of $G$.  Let $\Phi^+$ denote the positive system of $\Phi$ such that the roots of $B$ are \textit{negative}, and denote $\Phi^- = -\Phi^+ = \Phi(B,T)$.  Let $X = G/B$ be the flag variety.

In our computations of equivariant classes, the closed orbits play a key role.  These are the orbits for whose classes we give formulas explicitly.  We use equivariant localization as described in the previous section to verify the correctness of these formulas.  Taking such formulas as a starting point, formulas for classes of remaining orbit closures can then be computed using divided difference operators, as explained in the next section.

In this subsection, we give the general facts regarding the closed orbits which we use to compute their equivariant classes.  By equivariant localization, to determine a formula for the $S$-equivariant class of a closed orbit, it suffices, at least in principle, to compute the restriction of this class at each $S$-fixed point.  We start by identifying the $S$-fixed points.  We know that the $T$-fixed points are finite, and indexed by $W$.  The question is whether $X^S$ can be larger than this, in the event that $S$ is a proper subtorus of $T$.  In fact, it cannot.  We refer to \cite{Brion-99} for the following result:

\begin{prop}[\cite{Brion-99}]\label{prop:s-fixed-points}
If $K = G^{\theta}$ is a symmetric subgroup of $G$, $T$ is a $\theta$-stable maximal torus of $G$, and $S$ is a maximal torus of $K$ contained in $T$, then $(G/B)^S = (G/B)^T$.
\end{prop}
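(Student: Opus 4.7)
The plan is to prove the non-trivial inclusion $(G/B)^S \subseteq (G/B)^T$, the reverse being immediate from $S \subseteq T$. My strategy is as follows. $(G/B)^S$ is a smooth closed subvariety of $G/B$ stable under the connected torus $T$ (since $T$ centralizes $S$), so if one can show $(G/B)^S$ is zero-dimensional, then the only action of the connected group $T$ on this finite set is trivial, forcing every point of $(G/B)^S$ to be $T$-fixed. By the Borel fixed-point theorem applied to each connected component of $(G/B)^S$, it suffices to check zero-dimensionality at each $T$-fixed point $wB$. At such a point, the tangent space $T_{wB}(G/B) \cong \g/\mathrm{Ad}(w)\b$ decomposes as a direct sum of $T$-weight spaces indexed by $w\Phi^+$, and its $S$-fixed subspace is the sum of those weight spaces whose weight restricts to zero on $\frs := \mathrm{Lie}(S)$. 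So the reduction is to showing: no root $\beta \in \Phi(G,T)$ restricts to zero on $\frs$, equivalently $Z_G(S) = T$, i.e., $S$ is a regular subtorus of $G$.

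To deduce this regularity from the hypothesis that $S$ is a maximal torus of $K$, I would translate the vanishing condition into an involution condition on roots. Since $T$ is $\theta$-stable and $S \subseteq T \cap K$ is a maximal torus of $K$, connectedness arguments force $S = (T^\theta)^\circ$, whence $\frs$ is the $(+1)$-eigenspace of $d\theta$ on $\t$. A root $\alpha$ therefore vanishes on $\frs$ precisely when $\theta\alpha = -\alpha$ (a so-called \emph{real} root). I would rule out real roots by a Cayley-transform argument: assuming $\alpha$ is real, the subalgebra $\g_\alpha \oplus \C H_\alpha \oplus \g_{-\alpha} \cong \mathfrak{sl}_2$ is $\theta$-stable, and a short rescaling argument using $\theta^2 = 1$ permits a choice of root vectors with $\theta E_\alpha = \pm F_\alpha$. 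The element $Y := E_\alpha \pm F_\alpha$ (with matching signs) is then $\theta$-fixed, semisimple (since it is semisimple inside the $\mathfrak{sl}_2$, and semisimplicity is intrinsic), and lies outside $\t$; moreover it commutes with $\frs$ because $\frs$ acts on $\g_{\pm\alpha}$ via the characters $\pm\alpha$, which vanish on $\frs$. Thus $\frs \oplus \C Y$ is a toral subalgebra of $\k$ strictly larger than $\frs$, contradicting the maximality of $S$.

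The principal obstacle will be the normalization $\theta E_\alpha = \pm F_\alpha$, which requires writing $\theta E_\alpha = c F_\alpha$ for some $c \in \C^\times$, extracting the constraint imposed by $\theta^2 = 1$, and then absorbing a suitable square root into $E_\alpha$; careless sign or scaling conventions could muddy the semisimplicity or $\theta$-fixedness of $Y$. As an alternative that bypasses the Cayley computation, one can invoke the classical structure theory of symmetric pairs: the hypothesis that $\frs$ is a maximal torus of $\k$ identifies $\t$ as a \emph{fundamental} Cartan subalgebra of $\g$, which by standard results has no real roots, and the reduction above then finishes the argument immediately.
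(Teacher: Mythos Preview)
Your argument is correct. Note, however, that the paper does not supply its own proof of this proposition: it simply cites \cite{Brion-99} and moves on. So there is no ``paper's proof'' to compare against; you have filled in what the paper outsources.

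That said, the key content of your proof --- that no root of $(G,T)$ vanishes on $\frs$, equivalently $Z_G(S)=T$ --- is exactly the fact the paper later invokes (again by citation to \cite{Springer-85,Brion-99}) when embedding $W_K$ into $W$. Your Cayley-transform argument proves this fact directly: given a real root $\alpha$, you normalize so that $\theta E_\alpha=\pm F_\alpha$, and then $Y=E_\alpha\pm F_\alpha$ is a $\theta$-fixed semisimple element commuting with $\frs$ but lying outside $\t$, contradicting maximality of $S$ in $K$. Each step checks out (the normalization is achievable since rescaling $E_\alpha\mapsto\lambda E_\alpha$, $F_\alpha\mapsto\lambda^{-1}F_\alpha$ changes the constant $c$ in $\theta E_\alpha=cF_\alpha$ to $\lambda^2 c$; semisimplicity of $Y$ follows from its semisimplicity in the $\mathfrak{sl}_2$-subalgebra and finite-dimensionality of the adjoint representation; and $\frs+\C Y$ is abelian and consists of commuting semisimple elements, hence is toral). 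Your alternative route via the standard fact that a fundamental (maximally compact) $\theta$-stable Cartan has no real roots is equally valid and is closer in spirit to what one finds in the cited references.

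One small remark: in your normalization step you can in fact always arrange $\theta E_\alpha=F_\alpha$ with the plus sign simply by \emph{defining} $F_\alpha:=\theta E_\alpha$, so the $\pm$ bookkeeping is harmless but unnecessary unless you insist on preserving a fixed $\mathfrak{sl}_2$-normalization $[E_\alpha,F_\alpha]=H_\alpha$.
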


With the $S$-fixed locus described, we now outline how the restriction of the class of a closed orbit to an $S$-fixed point can be computed explicitly.  The key fact that we use is the self-intersection formula.  To show that the self-intersection formula even applies, we first need the following easy result:

\begin{prop}\label{prop:closed-orbits-smooth}
	Suppose that $K$ is a connected symmetric subgroup of $G$.  Then each closed $K$-orbit is isomorphic to the flag variety for the group $K$.  In particular, any closed $K$-orbit is smooth.
\end{prop}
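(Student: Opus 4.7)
The plan is to identify a closed $K$-orbit with a homogeneous space for $K$ and then show the isotropy group must be a Borel subgroup of $K$. Pick a point $gB \in G/B$ lying on a closed $K$-orbit $Y$, and let $H = K \cap gBg^{-1}$ be its stabilizer in $K$, so that $Y \cong K/H$ as a $K$-variety via the orbit map. The two features of $H$ that matter are: (i) $H$ is a closed subgroup of $K$ whose coset space $K/H$ is complete, because $Y$ is closed in the projective variety $G/B$; and (ii) $H$ is solvable, being a closed subgroup of the Borel subgroup $gBg^{-1}$ of $G$.

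Next, I would invoke the standard characterization of parabolic subgroups to conclude from (i) that $H$ is parabolic in $K$. More precisely, completeness of $K/H$ forces $H$ to contain a Borel subgroup $B_K$ of $K$ (this is where the connectedness assumption on $K$ is used, to guarantee both the existence of Borel subgroups of $K$ in the usual sense and the equivalence between completeness of $K/H$ and parabolicity of $H$). Combining this with (ii), $H$ is at once parabolic and solvable, so $H$ must actually equal a Borel subgroup of $K$: the unipotent radical of a parabolic is contained in any Borel inside it, and its Levi quotient, being solvable, is a torus, forcing the parabolic to coincide with its Borel.

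Once $H = B_K$ is established, we conclude $Y \cong K/B_K$, which by definition is the flag variety of the connected group $K$. Smoothness of $Y$ is then immediate, since the flag variety $K/B_K$ of any connected linear algebraic group is a smooth projective variety (it is a homogeneous space under $K$, and every orbit of an algebraic group action is smooth).

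The only non-routine step is the passage from completeness of $K/H$ to parabolicity of $H$, which I would quote from a standard source (e.g.\ Borel's textbook on linear algebraic groups). Everything else is essentially bookkeeping: identifying the stabilizer, noting solvability, and observing that a solvable parabolic is Borel. The connectedness hypothesis on $K$ enters exactly to ensure that the standard theory of Borel and parabolic subgroups applies without modification; the complementary case where $K$ is disconnected is handled separately in the main body by passing to the identity component.
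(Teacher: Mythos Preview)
Your proposal is correct and follows essentially the same argument as the paper: identify the closed orbit as $K/H$ with $H$ the stabilizer, note that $H$ is solvable (being contained in a Borel of $G$) and parabolic (since $K/H$ is complete, citing a standard reference), hence $H$ is a Borel of $K$. The paper cites Humphreys rather than Borel for the parabolicity step and is slightly terser, but the logic is identical.
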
  
\begin{proof}
	Suppose that $K \cdot gB$ is a closed orbit.  Then $K \cdot gB \cong K / \text{Stab}_K(gB)$, and clearly, $\text{Stab}_K(gB) = g^{-1}Bg \cap K$.  Because $K \cdot gB$ is a closed subvariety of $G/B$ and because $G/B$ is complete, $K \cdot gB$ is complete as well.  Thus $g^{-1}Bg \cap K$ is a parabolic subgroup of $K$ (\cite[\S 21.3]{Humphreys-75}).  Since it is contained in the Borel subgroup $g^{-1}Bg$ of $G$, it is solvable, and so it is in fact a Borel subgroup of $K$.  Thus $K \cdot gB$ is isomorphic to a quotient of $K$ by a Borel.
\end{proof}

Let $Y$ be a closed $K$-orbit, with $Y \stackrel{i}{\hookrightarrow} X$ the inclusion.  Recall that what we are trying to compute is a formula for the Poincar\'{e} dual $\alpha$ to the equivariant homology class $i_*([Y]) \in H_*^S(X)$.  (By abuse of notation, we will generally denote the class $\alpha$ by $[Y]$.)  By equivariant localization, this class     is determined by knowing $\alpha|_{wB}$ for each $w \in W$.  Suppose that $wB \in Y$.  Denote by $j_w$ the inclusion of $wB$ into $Y$, and by $i_w$ the inclusion of $wB$ into $X$, so that $i_w = i \circ j_w$.  Then in $H_*^S(X)$, we have the following:
\[ i_w^*(i_*([Y]) = (j_w^* \circ i^*)(i_*([Y])) = j_w^*((i^* \circ i_*)([Y])) =  \]
\[ j_w^*(c_d^S(N_Y X) \cap [Y]) = c_d^S(N_Y X|_{wB}) \cap j_w^*([Y]) = c_d^S(N_Y X|_{wB}) \cap [wB], \]
where $d$ is the codimension of $Y$ in $X$.  Here we have used some basic facts of intersection theory regarding pushforwards and pullbacks, for which the standard reference is \cite{Fulton-IT}.  Note that we are able to use the self-intersection formula because $Y$ is smooth, and hence $E \times^S Y$ is regularly embedded in $E \times^S X$. 

On the other hand, 
\[ i_w^*(i_*([Y])) = i_w^*(\alpha \cap [X]) = \alpha|_{wB} \cap i_w^*([X]) = \alpha|_{wB} \cap [wB]. \]
Then in $H_S^*(X)$, we have
\[ \alpha|_{wB} = c_d^S(N_Y X|_{wB}). \]

Thus computing the restriction of the class $\alpha$ at each $S$-fixed point amounts to computing $c_d^S(N_Y X|_{wB}) \in H_S^*(\{\text{pt.}\}) \cong \C[X_1,\hdots,X_r]$.  We want to compute this Chern class explicitly, as a polynomial in the $X_i$.  Note that the $S$-equivariant bundle $N_Y X|_{wB}$ is simply a representation of the torus $S$, and its top Chern class is the product of the weights of this representation.  We now compute these weights.

The $S$-module $N_Y X|_{wB}$ is simply $T_w X / T_w Y$, so we determine the weights of $S$ on $T_w X$ and $T_w Y$, then subtract the weights of $T_w Y$ from those of $T_w X$.  It is standard that 
\[ T_w X = \frg / \text{Ad}(w)(\frb). \]
Since $B$ has been taken to correspond to the negative roots, the weights of $S$ on $T_w X$ are the restrictions of the following weights of $T$ on $T_w X$:
\[ \Phi \setminus w \Phi^- = w \Phi^+. \]

A similar computation can be made for $T_w Y$.  We know that
\[ T_w Y = \frk / (\frk \cap \text{Ad}(w)(\frb)), \]
so the weights of $S$ on $T_w Y$ are as follows:
\[ \Phi_K \setminus (\Phi_K \cap w\Phi^-), \]
where $\Phi_K$ denotes the roots of $K$.  Subtracting this set of weights from those on $T_w X$, we conclude the following:

\begin{prop}\label{prop:restriction-of-closed-orbit}
The weights of $S$ on $N_Y X|_{wB}$ are $\rho(w\Phi^+) \setminus (\rho(w\Phi^+) \cap \Phi_K)$, where $\rho$ denotes restriction $\frt^* \rightarrow \frs^*$.
\end{prop}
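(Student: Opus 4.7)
The plan is to reduce the problem to reading off the $T$-weights on the tangent spaces $T_{wB}X$ and $T_{wB}Y$, and then pushing them down to $S$-weights via $\rho$, exactly along the lines sketched in the paragraph preceding the proposition.

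First, I would invoke Proposition~\ref{prop:closed-orbits-smooth} (in the connected case; more generally, one notes that any closed $K$-orbit is a homogeneous space and hence smooth) to ensure that $Y$ is smooth at $wB$. This gives a short exact sequence of $S$-modules
\[ 0 \longrightarrow T_{wB}Y \longrightarrow T_{wB}X \longrightarrow N_YX|_{wB} \longrightarrow 0, \]
so that, as a multiset, the $S$-weights on $N_YX|_{wB}$ are the $S$-weights on $T_{wB}X$ with those on $T_{wB}Y$ removed.

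Second, I would use the standard identifications $T_{wB}X \cong \frg/\mathrm{Ad}(w)\frb$ and $T_{wB}Y \cong \frk/(\frk\cap\mathrm{Ad}(w)\frb)$. Since $\frb$ carries $T$-weights $\Phi^-$ by the chosen convention, the $T$-weights on $T_{wB}X$ are $\Phi\setminus w\Phi^- = w\Phi^+$. Similarly, after the Cartan contributions cancel between numerator and denominator, the $T$-weights on $T_{wB}Y$ are $\Phi_K\setminus(\Phi_K\cap w\Phi^-) = \Phi_K\cap w\Phi^+$, where $\Phi_K\subset\Phi$ denotes the $T$-roots appearing in $\frk$.

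Third, I would apply $\rho$ to pass to $S$-weights: the $S$-weights on $T_{wB}X$ become the multiset $\rho(w\Phi^+)$, and those on $T_{wB}Y$ become $\rho(\Phi_K\cap w\Phi^+)$. Forming the multiset difference and identifying $\Phi_K$ with its image $\rho(\Phi_K)\subset\frs^*$ (i.e.\ with the root system of $S$ in $\frk$) yields $\rho(w\Phi^+)\setminus\bigl(\rho(w\Phi^+)\cap\Phi_K\bigr)$, as asserted.

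The main delicate point — the one obstacle worth flagging — is the unequal-rank case, in which $\rho$ may fail to be injective on $\Phi$. One must check that the multiset difference and the set difference on the right-hand side coincide; equivalently, that whenever $\alpha\in w\Phi^+$ satisfies $\rho(\alpha)\in\Phi_K$, the root $\alpha$ itself lies in $\Phi_K$. This follows from the $\theta$-stability of $\frk$ and the classification of roots as real, imaginary, or complex with respect to $\theta$: roots contributing to $\frk$ are precisely the compact imaginary ones together with the $\theta$-paired complex ones, and the root-theoretic bookkeeping is straightforward for each of the classical symmetric pairs treated in subsequent chapters.
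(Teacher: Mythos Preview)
Your approach is essentially identical to the paper's argument (given in the paragraphs immediately preceding the proposition): identify $T_{wB}X=\frg/\mathrm{Ad}(w)\frb$ and $T_{wB}Y=\frk/(\frk\cap\mathrm{Ad}(w)\frb)$, read off the weights, and form the difference.

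Two small corrections are worth noting. First, in the unequal-rank case $\frk$ is not $T$-stable (only $S$-stable, since $T\not\subset K$), so your phrase ``the $T$-weights on $T_{wB}Y$'' and the inclusion ``$\Phi_K\subset\Phi$'' do not quite make sense there; the paper goes straight to $S$-weights on $\frk$, with $\Phi_K$ living in $\frs^*$. Second, the resolution you sketch for the multiset-versus-set issue --- that $\rho(\alpha)\in\Phi_K$ forces $\alpha$ itself to lie in $\Phi_K$ --- is not correct: for a complex pair $\{\alpha,\theta(\alpha)\}$, neither $\frg_\alpha$ nor $\frg_{\theta(\alpha)}$ lies in $\frk$, yet the $\theta$-fixed diagonal does, and its $S$-weight $\rho(\alpha)=\rho(\theta(\alpha))$ is a $K$-root. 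The proposition should simply be read as a multiset identity (remove one copy of each $K$-root appearing in the multiset $\rho(w\Phi^+)$), which is exactly how the paper applies it in all the explicit restriction computations of Chapters~2--5.
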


Now that we have explained how the restrictions of the equivariant classes of closed orbits are computed, the next matter which must be dealt with is how to answer the following two questions for a given symmetric pair $(G,K)$:
\begin{enumerate}
	\item How many closed orbits are there?
	\item Which $S$-fixed points are contained in which closed orbits?
\end{enumerate}

It follows from Borel's fixed point theorem that any closed orbit must contain an $S$-fixed point.  Better yet, it follows from Proposition \ref{prop:closed-orbits-smooth} that for connected $K$, each closed orbit must contain $|W_K|$ $S$-fixed points, where $W_K = N_K(S)/S$ is the Weyl group for $K$.  However, for a given $S$-fixed point $wB$, it need not be the case that $K \cdot wB$ is a closed orbit.

To describe precisely which $K \cdot wB$ are closed in the way which will be most useful to us in our examples, we must first define twisted involutions and the Richardson-Springer map.  The references for what follows are \cite{Richardson-Springer-90,Richardson-Springer-92}.

First, observe that because $T$ is a $\theta$-stable torus, $N_G(T)$ is also $\theta$-stable, and hence there is an induced map (which we also call $\theta$) on $W$.

\begin{definition}
A \textbf{twisted involution} is an element $w \in W$ such that $w = \theta(w)^{-1}$.  We shall denote the set of twisted involutions by $\caI$.
\end{definition}

We now describe a map from $K \backslash G/B$ to $\caI$.  First, define the map
\[ \tau: G \rightarrow G \]
by $\tau(g) = g\theta(g)^{-1}$.  Next, define the set 
\[ \caV := \{g \in G \ \vert \ g\theta(g)^{-1} \in N_G(T) \} = \tau^{-1}(N_G(T)). \]
The set $\caV$ has a left $T$-action and a right $K$-action, and the orbit set $V = T \backslash \mathcal{V}/K$ is in bijective correspondence with $K \backslash G/B$.  (One direction of this bijection is given by $TgK \mapsto K \cdot g^{-1}B$.)  Given an element $v = TgK$ of $V$, we denote the corresponding $K$-orbit $K \cdot g^{-1}B$ by $\mathcal{O}(v)$.  The map
\[ \phi: \mathcal{V} \rightarrow W \]
given by $\phi(g) = \pi(\tau(g))$ (where $\pi$ is the natural projection $N_G(T) \rightarrow W$) is constant on $T \times K$ orbits, so we have a map (which we also call $\phi$) $\phi: V \rightarrow W$.    It is easy to check that $\phi$ actually maps $V$ into $\caI$.

\begin{remark}
Obviously, the map $\phi$ can also be thought of as a map $K \backslash G/B \rightarrow \caI$, defined by $\phi(\caO(v)) = \phi(v)$.  From this point forward, we will generally think of $\phi$ in this way, and will use notation such as $\phi(Q)$ for $Q \in K \backslash G/B$ an orbit, without explicitly mentioning a corresponding element of $V$.
\end{remark}

\begin{definition}\label{def:rs-map}
We will refer to the map $\phi: K \backslash G/B \rightarrow \caI$ defined above as the \textbf{Richardson-Springer map}.
\end{definition}

With these definitions made, we now give the following characterization of the closed orbits.

\begin{prop}[{\cite[Proposition 1.4.2]{Richardson-Springer-92}}]\label{prop:num-closed-orbits}
Let $w \in W$ be given.  The $K$-orbit $Q = K \cdot wB$ is closed if and only if $\phi(Q) = 1$.
\end{prop}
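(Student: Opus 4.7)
The plan is to translate the algebraic condition $\phi(Q)=1$ into the geometric condition that $B':=wBw^{-1}$ is $\theta$-stable, and then show separately that this is equivalent to closedness of $Q$. The crux will be the $\Rightarrow$ direction.

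\emph{Unwinding $\phi$.} Choose a lift $\dot w \in N_G(T)$ of $w$. Then $g:=\dot w^{-1}$ lies in $\caV$ (since $\dot w^{-1}\theta(\dot w) \in N_G(T)$) with $g^{-1}B=wB$, so $TgK$ is the class in $V=T\backslash\caV/K$ corresponding to $Q$. Hence $\phi(Q) = \pi(\dot w^{-1}\theta(\dot w)) = w^{-1}\theta(w)$ in $W$, which is trivial iff $\theta(w)=w$. Since $B$ is $\theta$-stable and $N_G(T)\cap B=T$, the condition $\theta(w)=w$ in $W$ is in turn equivalent to $\theta$-stability of $B'$.

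\emph{$\Leftarrow$ direction.} If $B'$ is $\theta$-stable, then $B'$ contains the $\theta$-stable torus $T$ with $S=(T\cap K)^0$ a maximal torus of $K$; a classical result on involutions of reductive groups (cf.\ \cite[\S 2]{Richardson-Springer-90}) implies that $B'\cap K$ is a Borel subgroup of $K$. Since $B'\cap K$ is the $K$-stabilizer of $wB$, the orbit $Q \cong K/(B'\cap K)$ is complete, hence closed in $G/B$.

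\emph{$\Rightarrow$ direction.} If $Q$ is closed, Matsuki's theorem \cite{Matsuki-79} furnishes a $\theta$-stable Borel in $Q$: there is $k_0\in K$ with $k_0 wB$ a $\theta$-fixed point of $G/B$. Applying $\theta$ and using $\theta(k_0)=k_0$ gives $\theta(w)B = wB$, so $\dot w^{-1}\theta(\dot w) \in B\cap N_G(T) = T$, whence $\theta(w) = w$ in $W$. An alternative Matsuki-free approach is a direct Lie-algebra count: write $\dim(\frk\cap\mathrm{Ad}(w)\frb)$ as a sum over $\theta$-orbits in $w\Phi^-$ and observe that equality with $\dim\frb_K$ forces $w\Phi^-$ to be $\theta$-stable, from which one recovers $\theta(w)=w$.
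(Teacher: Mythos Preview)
The paper does not supply its own proof of this proposition; it is simply quoted from \cite[Proposition 1.4.2]{Richardson-Springer-92}, and the companion characterization via $\theta$-stable Borels is likewise quoted without proof as Proposition~\ref{prop:num-closed-orbits-2}. So there is no ``paper's proof'' to compare against here.

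Your argument is sound in outline. The computation $\phi(K\cdot wB)=w^{-1}\theta(w)$ is correct, and the equivalence of $\theta(w)=w$ in $W$ with $\theta$-stability of $wBw^{-1}$ (via $N_G(T)\cap B=T$) is the right bridge between Propositions~\ref{prop:num-closed-orbits} and~\ref{prop:num-closed-orbits-2}. Your $\Leftarrow$ direction is exactly the argument recorded in the paper as Proposition~\ref{prop:closed-orbits-smooth}.

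The one point to tighten is the $\Rightarrow$ direction. Attributing ``closed $K$-orbits contain a $\theta$-fixed point'' to \cite{Matsuki-79} is imprecise: that paper establishes finiteness of $K\backslash G/B$, not this structural fact. What you are really invoking is one direction of Proposition~\ref{prop:num-closed-orbits-2}, which the paper also imports from \cite{Richardson-Springer-92}; in Richardson--Springer the two statements are proved together, so your reduction is not circular, but the citation should point there (or to Springer's paper on algebraic groups with involutions) rather than to Matsuki. Your alternative dimension-count sketch is the self-contained route: once closedness forces $K\cap wBw^{-1}$ to be a Borel of $K$ (as in Proposition~\ref{prop:closed-orbits-smooth}), comparing $\dim(\frk\cap\mathrm{Ad}(w)\frb)$ to $\dim\frb_K$ root-by-root does yield $\theta$-stability of $w\Phi^-$, but that step deserves to be written out rather than asserted.
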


Note that if $W_K$ is the Weyl group for $K$, then $W_K$ is naturally a subgroup of $W$.  This is obvious in the event that $\text{rank}(K) = \text{rank}(G)$, so that $S = T$.  Then $N_K(T)$ is obviously a subgroup of $N_G(T)$, and $W_K = N_K(T)/T$ is obviously a subgroup of $W = N_G(T)/T$.  It is less obvious in the event that $S \subsetneq T$, since it is not \textit{a priori} clear that $N_K(S)$ is a subgroup of $N_G(T)$.  That it is follows from that fact that $T$ can be recovered as $Z_G(S)$, the centralizer of $S$ in $G$ (see \cite{Springer-85,Brion-99}).  Since any element of $G$ normalizing $S$ must also normalize $Z_G(S) = T$, we have an inclusion $N_K(S) \subset N_G(T)$.  This gives a map $W_K = N_K(S)/S \rightarrow N_G(T)/T = W$ defined by $nS \mapsto nT$.  The kernel of this map is $\{nS \mid n \in N_K(S) \cap T\}$.  Since $S = K \cap T$, the group $N_K(S) \cap T$ is simply $S$:
\[ N_K(S) \cap T = N_K(S) \cap (T \cap K) = N_K(S) \cap S = S. \]
Thus the kernel of the map $W_K \rightarrow W$ is $\{1\}$, and so it is an inclusion.

With this in mind, note that if $K \cdot wB$ is a closed orbit, the $S$-fixed points it contains correspond to elements of $W$ having the form $w'w$, with $w' \in W_K$ (viewed as an element of $W$ via the inclusion of Weyl groups we have just described).  Thus, by Proposition \ref{prop:num-closed-orbits}, the \textit{number} of closed orbits is $N / |W_K|$, where $N$ is the number of $w \in W$ with $\phi(w) = 1$.

In particular, we have the following easy corollary of Proposition \ref{prop:num-closed-orbits}, which applies to the majority of the cases that we consider in this paper.

\begin{corollary}\label{cor:num-closed-orbits-equal-rank}
Suppose that $\text{rank}(K) = \text{rank}(G)$, so that $S=T$.  Then $K \cdot wB$ is closed for all $w \in W$.  Thus the number of closed $K$-orbits is $|W|/|W_K|$.  Each orbit $K \cdot wB$ contains the $|W_K|$ $S$-fixed points corresponding to the elements of the left coset $W_K w$.
\end{corollary}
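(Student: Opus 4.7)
The plan is to apply Proposition \ref{prop:num-closed-orbits}: it suffices to show that under the equal-rank hypothesis, the Richardson-Springer map $\phi$ sends every orbit $K \cdot wB$ to the identity of $W$. Once this is established, the counting statement and the description of the $S$-fixed points fall directly out of the analysis in the paragraph immediately preceding the corollary, which shows that any closed orbit carries $|W_K|$ fixed points indexed by the left coset $W_K w$.

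The first reduction I would make is to observe that the hypothesis forces $S = T$, and hence $T \subseteq K$. Indeed, $S \subseteq T$ are connected tori with $\dim S = \text{rank}(K) = \text{rank}(G) = \dim T$, so they coincide. Consequently $\theta|_T = \text{id}$. From this I would derive that $\theta$ acts trivially on $W = N_G(T)/T$: for $n \in N_G(T)$, $\theta$-stability of $T$ gives $\theta(n) \in N_G(T)$, and for any $t \in T$ one computes $\theta(n) t \theta(n)^{-1} = \theta(n t n^{-1}) = n t n^{-1}$, so $n^{-1} \theta(n)$ centralizes $T$ and hence lies in $Z_G(T) = T$.

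With this in place, computing $\phi$ on $K \cdot wB$ is a formal exercise. Picking a representative $n_w \in N_G(T)$ of $w$, the element $n_w^{-1}$ lies in $\caV$ (since $n_w^{-1} \theta(n_w^{-1})^{-1} = n_w^{-1} \theta(n_w) \in N_G(T)$) and represents $K \cdot wB$ under the bijection $T \backslash \caV / K \leftrightarrow K \backslash G/B$ spelled out in the excerpt. Hence $\phi(K \cdot wB) = \pi(n_w^{-1} \theta(n_w)) = 1$ by the previous paragraph, and Proposition \ref{prop:num-closed-orbits} yields that every $K \cdot wB$ is closed. The remaining assertions then follow immediately: each such orbit contains exactly $|W_K|$ of the $S$-fixed points, partitioned as the left cosets $W_K w$, and the total fixed-point count $|W|$ (Proposition \ref{prop:s-fixed-points}) yields $|W|/|W_K|$ closed orbits. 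The only substantive step is the triviality of the $\theta$-action on $W$; everything else is bookkeeping, so I anticipate no real obstacle.
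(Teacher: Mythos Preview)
Your proof is correct and follows essentially the same approach as the paper: both establish that $\theta$ acts trivially on $W$ (so $\phi \equiv 1$) and then invoke Proposition~\ref{prop:num-closed-orbits}. The only difference is that the paper obtains this triviality by citing Springer's characterization that the equal-rank condition is equivalent to $\theta$ being inner, whereas you give a direct elementary argument from $\theta|_T = \mathrm{id}$; your version is therefore more self-contained.
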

\begin{proof}
The equal rank condition is equivalent to the condition that $\theta$ be an inner involution (\cite[1.8]{Springer-87}).  An inner involution acts trivially on $W$, meaning that $\phi(w) = 1$ for all $w \in W$.
\end{proof}

Finally, we mention another characterization of the closed orbits from \cite{Richardson-Springer-92}, which we will make use of in one example.

\begin{prop}[{\cite[Proposition 1.4.3]{Richardson-Springer-92}}]\label{prop:num-closed-orbits-2}
For $w \in W$, the $K$-orbit $K \cdot wB$ is closed if and only if $wBw^{-1}$ is a $\theta$-stable Borel.
\end{prop}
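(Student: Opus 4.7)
The plan is to derive this directly from Proposition \ref{prop:num-closed-orbits}, which characterizes closed orbits as those $Q$ with $\phi(Q) = 1$, by explicitly computing $\phi(K \cdot wB)$ in terms of $w$ and then matching the result to the $\theta$-stability condition on $wBw^{-1}$.

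First, I would unpack the Richardson-Springer map on the orbit $Q = K \cdot wB$. Since $T$ is $\theta$-stable, $N_G(T)$ is $\theta$-stable, so any representative $\dot{w} \in N_G(T)$ of $w$ satisfies $\dot{w}\theta(\dot{w})^{-1} \in N_G(T)$, i.e. $\dot{w} \in \caV$. Under the bijection $T \backslash \caV / K \leftrightarrow K \backslash G/B$ given by $TgK \mapsto K \cdot g^{-1}B$, the double coset $T\dot{w}^{-1}K$ corresponds to $K \cdot wB$. Therefore $\phi(Q) = \pi(\tau(\dot{w}^{-1})) = \pi(\dot{w}^{-1}\theta(\dot{w}))$, where $\pi : N_G(T) \to W$ is the natural projection. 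Hence $\phi(Q) = 1$ if and only if $\dot{w}^{-1}\theta(\dot{w}) \in T$, a condition independent of the choice of representative $\dot{w}$.

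Next, I would analyze the condition that $wBw^{-1}$ is $\theta$-stable. Because $B$ is $\theta$-stable, $\theta(\dot{w}B\dot{w}^{-1}) = \theta(\dot{w})B\theta(\dot{w})^{-1}$, so $wBw^{-1}$ is $\theta$-stable if and only if $\dot{w}^{-1}\theta(\dot{w})$ normalizes $B$. Since Borel subgroups are self-normalizing, this is equivalent to $\dot{w}^{-1}\theta(\dot{w}) \in B$. Combined with the fact that $\dot{w}^{-1}\theta(\dot{w}) \in N_G(T)$ and $N_G(T) \cap B = T$, the condition collapses to $\dot{w}^{-1}\theta(\dot{w}) \in T$, which is exactly the condition identified in the previous step. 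Invoking Proposition \ref{prop:num-closed-orbits}, the proposition follows.

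I do not anticipate a serious obstacle here; the argument is essentially a chain of definitional unpackings. The only place to be careful is the bookkeeping in step one: making sure to take the representative $\dot{w}^{-1}$ (not $\dot{w}$) so that the bijection $TgK \mapsto K \cdot g^{-1}B$ produces the orbit $K \cdot wB$, and confirming that $\phi$ is well-defined on the double coset so that the choice of lift of $w$ to $N_G(T)$ does not matter.
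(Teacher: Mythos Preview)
Your argument is correct. The paper does not actually supply a proof of this proposition; it merely cites it from \cite{Richardson-Springer-92}. Your derivation from Proposition~\ref{prop:num-closed-orbits} via the explicit computation of $\phi(K \cdot wB)$ and the identification $N_G(T) \cap B = T$ is clean and self-contained, and is essentially the standard way one sees this equivalence established.
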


\section{Other orbits}\label{ssec:other_orbits}
As alluded to in the previous section, there is an ordering on $K \backslash G/B$ with respect to which the closed orbits are minimal (\cite[Theorem 4.6]{Richardson-Springer-90}).  We describe this ordering.  Let $\ga \in \Delta$ be a simple root, and let $P_{\ga}$ be the minimal parabolic subgroup of $G$ of type $\ga$ containing $B$.  Consider the canonical map
\[ \pi_{\ga}: G/B \rightarrow G/P_{\ga}. \]
This is a $\mathbb{P}^1$-bundle.  Letting $Q \in K \backslash G/B$ be given, consider the set $Z_{\ga}(Q):=\pi_{\ga}^{-1}(\pi_{\ga}(Q))$.  The map $\pi_{\ga}$ is $K$-equivariant, so $Z_{\ga}(Q)$ is $K$-stable.  Assuming $K$ is connected, $Z_{\ga}(Q)$ is also irreducible, so it has a dense $K$-orbit.  In the event that $K$ is disconnected, one sees that the component group of $K$ acts transitively on the irreducible components of $Z_{\ga}(Q)$, and from this it again follows that $Z_{\ga}(Q)$ has a dense $K$-orbit.

If $\text{dim}(\pi_{\ga}(Q)) < \text{dim}(Q)$, then the dense orbit on $Z_{\ga}(Q)$ is $Q$ itself.  However, if $\text{dim}(\pi_{\ga}(Q)) = \text{dim}(Q)$, the dense $K$-orbit will be another orbit $Q'$ of one dimension higher.  In either event, using notation as in \cite{McGovern-Trapa-09}, we make the following definition:
\begin{definition}\label{def:weak_order_dot_notation}
With notation as above, $s_{\ga} \cdot Q$ shall denote the dense $K$-orbit on $Z_{\ga}(Q)$.
\end{definition}

\begin{definition}
The partial ordering on $K \backslash G/B$ generated by relations of the form $Q < Q'$ if and only if $Q' = s_{\ga} \cdot Q$ (with $\dim(Q') = \dim(Q) + 1$) for some $\ga \in \Delta$ is referred to as the \textbf{weak closure order}, or simply the \textbf{weak order}.
\end{definition}

Let $Y,Y'$ denote the closures of $Q,Q'$, respectively.  Assume that $Q' = s_{\ga} \cdot Q$, and define an operator $\partial_{\ga}$ on $H_S^*(X)$, known as a ``divided difference operator" or a ``Demazure operator",  as follows:
\[ \partial_{\ga}(f) = \frac{f - s_{\ga}(f)}{\ga}. \]

Let $d$ denote the degree of $\pi_{\ga}|_Y$ over its image.  Using standard facts from intersection theory, along with the fact that $\partial_{\ga} = \pi_{\ga}^* \circ (\pi_{\ga})_*$, it is easy to see that $[Y'] = \frac{1}{d} \partial_{\ga}([Y])$.

Putting all of this together, we see that we can recursively determine formulas for the equivariant classes of all orbit closures given the following data:
\begin{enumerate}
	\item Formulas for classes of the closed orbits.
	\item The weak closure order on $K \backslash G/B$.
	\item For any two orbits $Q,Q'$, with closures $Y, Y'$, and with the property that $Q' = s_{\ga} \cdot Q$, the degree $d$ of $\pi_{\ga}|_Y$ over its image.
\end{enumerate}

In fact, the aforementioned degree $d$ is always either 1 or 2, and this can be determined combinatorially based on the orbit $Q$ and the simple root $\ga$, as we now describe.  Before giving the precise statement, we will require some more preliminary definitions and observations.  All of what follows can be found in \cite{Richardson-Springer-90,Richardson-Springer-92}.

Let $T'$ be any $\theta$-stable maximal torus.  Then $\theta$ induces a map on the root system $\Phi(T',G)$ defined by $T'$.  We say that a root $\ga$ of $T'$ is
\begin{itemize}
	\item \textit{Real} if $\theta(\ga) = -\ga$.
	\item \textit{Complex} if $\theta(\ga) \neq \pm \ga$.
	\item \textit{Imaginary} if $\theta(\ga) = \ga$.  Within this case, there are two subcases:  $\ga$ is \textit{compact imaginary} if the root subgroup $G_\ga \subseteq K$, and \textit{non-compact imaginary} otherwise.
\end{itemize}

Let $\ga \in \Delta$ be a simple root of our fixed torus $T$.  Given any $K$-orbit $Q$, it is possible to find a representative $gB \in Q$ such that $T_Q = gTg^{-1}$ is a $\theta$-stable torus.  The automorphism $\text{int}(g)$ defines an isomorphism between the root systems $\Phi(T,G)$ and $\Phi(T_Q,G)$, and we categorize the root $\ga$ as \textit{real}, \textit{complex}, etc. for $Q$ if the root $\ga' = \text{int}(g)(\ga)$ has that property as a root of $T_Q$, as defined above.  (One checks that this is independent of the choice of representative $gB$.)

Next, we define the cross-action of $W$ on $K \backslash G/B$:
\begin{definition}
The \textbf{cross-action} of $W$ on $K \backslash G/B$, denoted $\times$, is defined by
\[ w \times (K \cdot gB) = K \cdot gw^{-1}B. \]
\end{definition}

With the cross-action defined, we can define non-compact imaginary roots of type I and II:
\begin{definition}
Suppose $\ga$ is a non-compact imaginary root for the orbit $Q$.  Then $\ga$ is of \textbf{type I} if $s_{\ga} \times Q \neq Q$, and of \textbf{type II} if $s_{\ga} \times Q = Q$.
\end{definition}

With these initial observations and definitions made, we can now state the following result on the weak closure order on $K \backslash G/B$, which tells us in particular how to determine the degree $d$ of the map $\pi_{\ga}|_Y$.  (Recall that $\phi$ denotes the Richardson-Springer map $K \backslash G/B \rightarrow \caI$, see Definition \ref{def:rs-map}.)

\begin{prop}\label{prop:nsc_for_orbit_lift}
Suppose $Q$ is a $K$-orbit on $G/B$ with closure $Y$.  Let $a = \phi(Q) \in \caI$, and let $\ga \in \Delta$ be given.  Then $s_{\ga} \cdot Q \neq Q$ (and hence $\dim(s_{\ga} \cdot Q) = \dim(Q) + 1$) if and only if one of the two following scenarios occurs:
\begin{enumerate}
	\item $\ga$ is complex for $Q$ and $l(s_{\ga} a \theta(s_{\ga})) = l(a) + 2$; or
	\item $\ga$ is non-compact imaginary for $Q$.
\end{enumerate}

In case (1) above, the map $\pi_{\ga}|_Y$ has degree 1 (i.e. is birational).  In case (2), $\pi_{\ga}|_Y$ is birational if $\ga$ is non-compact imaginary type I, and has degree 2 if $\ga$ is non-compact imaginary type II.
\end{prop}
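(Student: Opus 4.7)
The plan is to analyze the $K$-equivariant $\mathbb{P}^1$-bundle $\pi_\ga|_{Z_\ga(Q)} : Z_\ga(Q) \to \pi_\ga(Q)$ in each of the four root types and determine when $Q$ is dense in $Z_\ga(Q)$. First I would choose a representative $gB \in Q$ for which $T_Q := gTg^{-1}$ is $\theta$-stable (such representatives exist by standard Richardson-Springer arguments) and set $\alpha := \text{int}(g)(\ga)$, a root of $T_Q$ whose $\theta$-type for $T_Q$ matches the $\ga$-type for $Q$. By $K$-equivariance, the question reduces to understanding the action of $L := K \cap gP_\ga g^{-1}$ on the fiber $\pi_\ga^{-1}(gP_\ga) = gP_\ga/B \cong \mathbb{P}^1$: the orbit $Q$ is dense in $Z_\ga(Q)$ (i.e.\ $s_\ga \cdot Q = Q$) if and only if $L$ acts transitively on this $\mathbb{P}^1$.

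The core of the proof is a case-by-case analysis of the $L$-action on $\mathbb{P}^1$ according to which of the root subgroups $U_{\pm \alpha}$ lie in $K$. If $\ga$ is compact imaginary, then $U_{\pm\alpha} \subset K$, so $L$ contains an $SL_2$ (or its image in $PGL_2$) acting transitively on $\mathbb{P}^1$, giving $s_\ga \cdot Q = Q$. If $\ga$ is real for $Q$, a dual argument via the opposite Borel shows $Q$ is already dense in $Z_\ga(Q)$. If $\ga$ is non-compact imaginary, then $\theta$ preserves $\alpha$ and the $SL_2$ it generates, but $U_\alpha \not\subset K$; the $\theta$-fixed subgroup of this $SL_2$ is (up to isogeny) a one-dimensional torus whose action on $\mathbb{P}^1$ has two fixed points and a one-dimensional open orbit. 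Hence $s_\ga \cdot Q \neq Q$, with dimension increasing by exactly $1$.

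The main obstacle lies in the complex case, where $\theta(\alpha) \neq \pm \alpha$. Here $L$ acts on $\mathbb{P}^1$ either transitively or with a single fixed point and a one-dimensional open orbit, and the dichotomy must be matched with the length condition $l(s_\ga a \theta(s_\ga)) = l(a) + 2$. For this I would invoke the Richardson-Springer monoid action of $W$ on $\caI$ from \cite{Richardson-Springer-90}, which satisfies $\phi(s_\ga \cdot Q) = m(s_\ga) \cdot \phi(Q)$ and for which $m(s_\ga)$ acts on $a$ by left-right conjugation $a \mapsto s_\ga a \theta(s_\ga)$ precisely when this product is longer by $2$. This equivariance translates the geometric fiber condition into the required combinatorial condition on lengths of twisted involutions.

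Finally, the degree computation proceeds by counting intersection points of $Y = \overline{Q}$ with a generic fiber of $\pi_\ga$. In the complex case there is a unique $L$-fixed point in $Y \cap \mathbb{P}^1$ (the open $L$-orbit accounts for $Q'$), so $\pi_\ga|_Y$ is birational. In the non-compact imaginary case there are two $L$-fixed points on $\mathbb{P}^1$; the type I versus type II distinction (defined via the cross-action $s_\ga \times Q$) detects whether these two fixed points lie in distinct $K$-orbits (type I, degree $1$) or in the same $K$-orbit $Q$ (type II, degree $2$). This completes the analysis.
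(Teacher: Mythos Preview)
Your approach is essentially the same as the paper's sketch: reduce to the action of $L = K \cap gP_\ga g^{-1}$ on the fiber $\mathbb{P}^1$, analyze the orbit structure case by case, and read off the degree of $\pi_\ga|_Y$ from the cardinality of the $L$-orbit corresponding to $Q$. Your invocation of Richardson--Springer for the complex case is also what the paper does.

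There is one imprecision worth fixing. The criterion ``$Q$ is dense in $Z_\ga(Q)$ iff $L$ acts transitively on $\mathbb{P}^1$'' is not right; the correct statement is that $Q$ is dense iff the $L$-orbit of the specific point $gB$ is open in the fiber. In the complex case the image of $L$ in $\mathrm{Aut}(\mathbb{P}^1)$ always contains a nontrivial unipotent subgroup, so the action is \emph{never} transitive: there is always one fixed point and one open orbit. The length dichotomy $l(s_\ga a \theta(s_\ga)) = l(a) \pm 2$ does not govern whether $L$ is transitive; it governs whether the base point $gB$ lands on the fixed point (length goes up, $s_\ga \cdot Q \neq Q$) or in the open orbit (length goes down, $s_\ga \cdot Q = Q$). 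Similarly, in the real case $L$ is not transitive either, yet $gB$ sits in the open $L$-orbit, so $Q$ is dense. Your subsequent handling of the non-compact imaginary type I/II distinction and the degree count is correct and matches the paper: in type II the image of $L$ is the normalizer of a torus, so the two torus-fixed points form a single two-point $L$-orbit, giving degree $2$.
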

\begin{proof}
We only briefly sketch what is involved in the proof.  A more detailed exposition can be found in \cite[Section 4]{Richardson-Springer-90}.  

One first establishes a correspondence between $K$-orbits on $Z_{\ga}(Q)$ and the orbits of $K(g,\ga) := K \cap gP_{\ga}g^{-1}$ on the fiber $\pi_{\ga}^{-1}(gP_{\ga})$.  This fiber being isomorphic to $\mathbb{P}^1$, there are only a few possibilities for the orbit structure.  This structure depends on the image of $h: K(g,\ga) \rightarrow \text{Aut}(\mathbb{P}^1)$ determined by the action of $K(g,\ga)$.  In the event that $\ga$ is complex for the orbit $Q$, this image contains a non-trivial unipotent subgroup, and there are two orbits on the $\mathbb{P}^1$ fiber:  one dense orbit and one fixed point.  The dense orbit corresponds to $Q$ itself if $l(s_{\ga} a \theta(s_{\ga})) = l(a) - 2$, but to an orbit one dimension higher in the event that $l(s_{\ga} a \theta(s_{\ga})) = l(a) + 2$.  

If $\ga$ is non-compact imaginary, then one of two cases occurs.  In case 1, the image $h(K(g,\ga))$ is a maximal torus.  In this case, there are three orbits on the fiber - one dense orbit and two fixed points.  In case 2, $h(K(g,\ga))$ is the normalizer of a maximal torus.  In this case, there are two orbits - one dense orbit and a two-point orbit.  Each point of the two-point orbit is fixed by the identity component $K(g,\ga)^0$, and the two points are permuted by $K(g,\ga)$.  Which case we are in depends on whether $\ga$ is type I or type II.  If $\ga$ is type I, we are in case 1, and if $\ga$ is type II, we are in case 2.

These various cases give us information about the $K$-orbits on $Z_{\ga}(Q)$, and we can see what the degree of $\pi_{\ga}|_Y$ over its image is in each case.  Indeed, when $\ga$ is complex and $l(s_{\ga} a \theta(s_{\ga})) = l(a) + 2$, or when $\ga$ is non-compact imaginary type I, $Q$ corresponds to a 1-point orbit on $\mathbb{P}^1$ (the lone fixed point in the former case, and one of the two fixed points in the latter).  When $\ga$ is non-compact imaginary type II, $Q$ corresponds to the two-point orbit on $\mathbb{P}^1$.  The number of points in the $K(g,\ga)$-orbit corresponding to $Q$ is easily seen to be the number of pre-images in $Q$ of any point of $\pi_{\ga}(Q) \subseteq G/P_{\ga}$.  The conclusion regarding the degree of $\pi_{\ga}|_Y$ follows.
\end{proof}

In \cite{Brion-01}, the graph for the weak order on $K$-orbit closures is endowed with additional data, as follows:  If $Y' = s_{\ga} \cdot Y \neq Y$, then the directed edge originating at $Y$ and terminating at $Y'$ is labelled by the simple root $\ga$, or perhaps by an index $i$ if $\ga = \ga_i$ for some predetermined ordering of the simple roots.  Additionally, if the degree of $\pi_{\ga}|_Y$ is $2$, then this edge is double.  (In other cases, the edge is simple.)  We modify this convention as follows:  Rather than use simple and double edges, in our diagrams we distinguish the degree two covers by blue edges, as opposed to the usual black.  (We do this simply because our weak order graphs were created using GraphViz, which does not, as far as the author can ascertain, have a mechanism for creating a reasonable-looking double edge.  On the other hand, coloring the edges is straightforward.)

\section{Symbolic parametrization of orbits}
In each individual case we consider, a symbolic parametrization of the orbit set, as well as a combinatorial description of the weak ordering in terms of this parametrization, is given.  This allows us to determine formulas for the classes of all orbit closures, starting with the closed orbits at the bottom of the ordering, and moving up by applying divided difference operators.

The details of individual cases are given in the corresponding sections, but here we give some general information and definitions which will be relevant when discussing each of the various cases.

\subsection{Twisted involutions} \label{ssec:twisted_involutions}
Recall the set $\caI$ of twisted involutions and the Richardson-Springer map $\phi$ defined in Subsection \ref{ssec:closed_orbits}.  These play an important role in the combinatorial description of $K \backslash G/B$ in some of the cases we consider --- namely, the three non-equal rank cases in type $A$, these being $K = SO(2n+1,\C)$, $K=SO(2n,\C)$, and $K=Sp(2n,\C)$.  (When $K$ is the special orthogonal group, the analysis differs depending on whether the rank of $G$ is even or odd, so we treat these as separate cases.)

At least in these cases, the weak ordering on $K \backslash G/B$ can be deduced combinatorially from an analogous ``weak Bruhat ordering" on $\mathcal{I}$.  To describe this, we must make a few more definitions.  First, define the ``twisted action" of (the group) $W$ on (the set) $W$ by
\[ a * w = aw \theta(a)^{-1}. \]
One checks easily that $\mathcal{I}$ is stable under the twisted action, whereby we have a $W$-action on $\mathcal{I}$.

Next, we define a monoid $M = M(W)$ associated to the Weyl group $W$.  As a set, the elements of $M$ are symbols $m(w)$, one for each $w \in W$.  The multiplication on $M$ is defined as follows:  Given $w \in W$ and $s \in S$ a simple reflection,
\[ m(s)m(w) = 
\begin{cases}
	m(sw) & \text{ if $l(sw) > l(w)$}, \\
	m(w) & \text{ otherwise.}
\end{cases} \]

There is an action of $M$ on the set $\mathcal{I}$.  Given $s \in S$ and $a \in \mathcal{I}$, define 
\[ m(s) * a =
\begin{cases} 
  a & \text{ if $l(sa) < l(a)$}, \\
  sa & \text{ if $l(sa) > l(a)$ and $s*a=a$}, \\
	s*a & \text{ otherwise.}
\end{cases} \]

The weak Bruhat order on twisted involutions can now be defined as follows:  Given twisted involutions $a, b$, we say that $a < b$ if $b \in M * a$ (and $a \neq b$).  In the event that $b = m(s_i) * a \neq a$ for some simple reflection $s_i \in S$, we will use the notation $a <_i b$.

There is also an $M$-action on $K \backslash G/B$ given simply by
\[ m(s_i) * Q = s_i \cdot Q. \]

(Recall Definition \ref{def:weak_order_dot_notation}.)  However, the map Richardson-Springer map $\phi$ (cf. Definition \ref{def:rs-map}) need not be $M$-equivariant for general $G$ and $K$.  The precise statement is as follows:

\begin{prop}[{\cite[Proposition 3.3.3]{Richardson-Springer-92}}]\label{prop:twisted_inv_map_phi_weak_order}
Let $Q,Q' \in K \backslash G/B$ and $s_i \in S$ be given.  Let $a = \phi(Q)$ and $a' = \phi(Q')$.  Then
	\begin{enumerate}
		\item If $Q' = s_i \cdot Q$, then $a <_i a'$.
		\item If $a <_i a'$, then $Q' = s_i \cdot Q$ unless $s_i$ is compact imaginary for the orbit $Q$.
	\end{enumerate}
\end{prop}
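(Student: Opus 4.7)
The plan is to case-analyze according to the type of the simple root $\ga_i$ (real, complex with $l(s_i a \theta(s_i)) = l(a) \pm 2$, compact imaginary, non-compact imaginary type I, non-compact imaginary type II) with respect to the orbit $Q$, and in each case to verify the claimed compatibility between the actions $s_i \cdot$ on $K \backslash G/B$ and $m(s_i)\,*$ on $\caI$. The key tool already in hand is Proposition \ref{prop:nsc_for_orbit_lift}, which identifies exactly those types for which $s_i \cdot Q \neq Q$, together with the accompanying description of the $K(g,\ga_i)$-action on the $\mathbb{P}^1$-fiber $\pi_{\ga_i}^{-1}(\pi_{\ga_i}(g^{-1}B))$.

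For part (1), I would fix a representative $g \in \caV$ with $K \cdot g^{-1}B = Q$ and exhibit a compatible representative $g' \in \caV$ for $Q'$. In the complex case (where $\ga_i$ is complex and $l(s_i a \theta(s_i)) = l(a) + 2$), the natural choice is $g' = n_i g$ for a lift $n_i \in N_G(T)$ of $s_i$. A direct calculation gives $\tau(g') = n_i \tau(g) \theta(n_i)^{-1}$, so that $\phi(Q') = s_i a \theta(s_i)^{-1} = s_i * a$; the length hypothesis then ensures this equals $m(s_i) * a$, giving $a <_i a'$. In the non-compact imaginary case, the sketch of Proposition \ref{prop:nsc_for_orbit_lift} lets one pick $g' = g u$ with $u$ an element of an appropriate root subgroup, so that $\tau(g') \in N_G(T)$ projects to $s_i a$ in $W$; here $\theta$ fixes the relevant conjugate of $\ga_i$, so $s_i * a = a$, and a standard length check gives $l(s_i a) > l(a)$, hence $m(s_i) * a = s_i a = a'$.

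For part (2), suppose $a <_i a'$, so $l(s_i a) > l(a)$ and $a' = m(s_i) * a \neq a$. I would rule out the types of $\ga_i$ that force $s_i \cdot Q = Q$ and are not the compact imaginary exception. If $\ga_i$ is real for $Q$, or complex with $l(s_i a \theta(s_i)) = l(a) - 2$, standard identities for twisted involutions give $l(s_i a) < l(a)$, contradicting the hypothesis. If $\ga_i$ is complex length-increasing or non-compact imaginary (type I or II), part (1) directly identifies $\phi(s_i \cdot Q) = a'$ and $s_i \cdot Q \neq Q$, so $Q' = s_i \cdot Q$. The only remaining type is $\ga_i$ compact imaginary: then the root subgroup for the relevant conjugate of $\ga_i$ lies in $K$, forcing the whole $\mathbb{P}^1$-fiber into $Q$ so that $s_i \cdot Q = Q$, yet one checks that $s_i * a = a$ and $l(s_i a) > l(a)$, hence $m(s_i) * a = s_i a \neq a$. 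This is precisely the exceptional case.

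The main obstacle is the representative-level bookkeeping: selecting $g \in \caV$ so that the $\theta$-stable torus attached to $Q$ is $gTg^{-1}$, and then producing the right perturbation $g \rightsquigarrow g'$ in each of the imaginary subcases. The type I versus type II distinction is particularly delicate, because the image $h(K(g,\ga_i)) \subset \mathrm{Aut}(\mathbb{P}^1)$ is in one case a torus and in the other its normalizer, so the two fixed points on the fiber either belong to distinct $K$-orbits or are swapped by $K(g,\ga_i)$; one must choose $g'$ from the appropriate $K(g,\ga_i)$-orbit and verify that $\tau(g')$ projects to $s_i a$ rather than to $a$ itself.
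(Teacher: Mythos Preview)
The paper does not give its own proof of this proposition; it is simply quoted from \cite[Proposition 3.3.3]{Richardson-Springer-92} and used as a black box. So there is no ``paper's proof'' to compare against. Your sketch is essentially the outline of the original Richardson--Springer argument: a case analysis according to the type of $\alpha_i$ relative to $Q$, carried out at the level of representatives $g \in \caV$ and the $\mathbb{P}^1$-fiber structure from Proposition~\ref{prop:nsc_for_orbit_lift}.

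One point does deserve attention. In your treatment of part (2) you write that in the complex length-increasing or non-compact imaginary cases, ``part (1) directly identifies $\phi(s_i \cdot Q) = a'$ and $s_i \cdot Q \neq Q$, so $Q' = s_i \cdot Q$.'' But the hypotheses of the proposition give you an \emph{arbitrary} $Q'$ with $\phi(Q') = a'$; knowing $\phi(s_i \cdot Q) = a'$ does not by itself force $Q' = s_i \cdot Q$ unless $\phi$ is injective, which is not assumed. The statement as written in the paper is somewhat loose on this point; if you read it (as the paper's later applications effectively do) as an assertion about $s_i \cdot Q$ itself --- namely, that $a <_i m(s_i)*a$ forces $s_i \cdot Q \neq Q$ except in the compact imaginary case --- then your argument is fine. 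If you read it literally as determining an arbitrary $Q'$ from $\phi(Q')$, the conclusion fails in general and no amount of bookkeeping will save it. You should flag which reading you are proving.
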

In particular, if $m(s_i) * a = a'$, and $s_i$ is compact imaginary for $Q$ (which means that $m(s_i) * Q = Q$), the map $\phi$ does not respect the $M$-action of $m(s_i)$.  The upshot is that in general, the weak order on $K \backslash G/B$ and the weak Bruhat order on $\mathcal{I}$ need not correspond as perfectly as one might hope, since it can occur that two twisted involutions $a$ and $a'$ are related in the weak Bruhat ordering on $\caI$, while orbits $Q$ and $Q'$ (mapping to $a,a'$, respectively) are \textit{not} related in the weak ordering on $K \backslash G/B$.  However, the following result implies that at least when the map $\phi$ is injective, this does not happen.

\begin{prop}[{\cite[Proposition 7.9, Part (i)]{Richardson-Springer-90}}]\label{prop:twisted_inv_paths}
Let $Q \in K \backslash G/B$ be given, with $a = \phi(Q)$.  Suppose that 
\[ id <_{i_1} a_1 <_{i_2} a_2 <_{i_3} \hdots <_{i_n} a_n = a. \]
Then there exists some closed $K$-orbit $Q'$ such that
\[ Q = s_{i_n} \cdot (s_{i_{n-1}} \cdot \hdots (s_{i_2} \cdot (s_{i_1} \cdot Q')) \hdots ). \]
\end{prop}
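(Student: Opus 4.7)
The plan is to proceed by induction on $n$, the length of the chain in $\caI$. The base case $n = 0$ is trivial: then $a = \text{id}$, so $Q$ is closed by Proposition \ref{prop:num-closed-orbits}, and we take $Q' = Q$.

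For the inductive step with $n \geq 1$, I would establish a one-step descent lemma: for any orbit $Q$ with $\phi(Q) = a_n$ and any twisted involution $a_{n-1}$ with $a_{n-1} <_{i_n} a_n$, there exists a $K$-orbit $Q_{n-1}$ satisfying $\phi(Q_{n-1}) = a_{n-1}$ and $s_{i_n} \cdot Q_{n-1} = Q$. Granting this, the inductive hypothesis applied to the shorter chain $\text{id} <_{i_1} a_1 <_{i_2} \cdots <_{i_{n-1}} a_{n-1}$ and the orbit $Q_{n-1}$ yields a closed orbit $Q'$ with $Q_{n-1} = s_{i_{n-1}} \cdot \bigl( \cdots (s_{i_1} \cdot Q') \cdots \bigr)$, and then $Q = s_{i_n} \cdot Q_{n-1}$ completes the chain from $Q'$ up to $Q$.

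To prove the descent lemma, I would analyze the $\mathbb{P}^1$-fiber $Z_\ga(Q) = \pi_\ga^{-1}(\pi_\ga(Q))$ for $\ga = \ga_{i_n}$. Any candidate $Q_{n-1}$ with $s_\ga \cdot Q_{n-1} = Q$ must satisfy $Z_\ga(Q_{n-1}) = Z_\ga(Q)$ (since $Q \subseteq Z_\ga(Q_{n-1})$ and each is a single $\pi_\ga$-fiber), and $Q$ must be the dense $K$-orbit on this fiber, i.e.\ $s_\ga \cdot Q = Q$. By Proposition \ref{prop:nsc_for_orbit_lift}, this restricts the type of $\ga$ at $Q$ to one of: real, compact imaginary, or complex with $l(s_\ga a_n \theta(s_\ga)) = l(a_n) - 2$. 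In each such case, the $K$-orbit decomposition of $Z_\ga(Q)$ sketched in the proof of Proposition \ref{prop:nsc_for_orbit_lift} provides a smaller-dimensional orbit $Q_{n-1} \subset Z_\ga(Q)$ that is a candidate for the descent; using the definition of $\phi$ via the $\tau$-map (Definition \ref{def:rs-map}) together with the identity $m(s_\ga) * a_{n-1} = a_n$, one then verifies that $\phi(Q_{n-1}) = a_{n-1}$.

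The hard part is this last verification, namely matching the descent orbit $Q_{n-1}$ with the correct twisted involution $a_{n-1}$: one must track $T \times K$ double coset representatives through the $\tau$-map in each subcase of the root-type trichotomy, and rule out the compact-imaginary case using the strictness of $a_{n-1} <_{i_n} a_n$ together with Proposition \ref{prop:twisted_inv_map_phi_weak_order}(ii). This bookkeeping is the technical content of \cite[\S 7]{Richardson-Springer-90}; my plan would be to invoke their analysis (or, where convenient, to reprove it for the specific symmetric pairs treated in this paper) to complete the descent step and close the induction.
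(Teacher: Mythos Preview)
The paper does not prove this proposition at all: it is stated with attribution to \cite[Proposition 7.9, Part (i)]{Richardson-Springer-90} and no argument is supplied, so there is no proof in the paper against which to compare your proposal. Your sketch --- induction on $n$ with a one-step descent lemma obtained by analyzing the $K$-orbit structure on the $\mathbb{P}^1$-fiber $Z_\ga(Q)$ --- is indeed the shape of the argument in \cite{Richardson-Springer-90}, and you correctly identify the technical crux (matching the descent orbit with the twisted involution $a_{n-1}$ via the $\tau$-map bookkeeping in each root-type case). Since the paper simply invokes the cited result, your proposal goes beyond what the paper does; if your goal is to match the paper, a one-line citation suffices.
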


In particular, when $\phi$ is injective, we have the following:

\begin{corollary}\label{cor:weak_order_for_phi_injective}
Suppose $\phi: K \backslash G/B \rightarrow \caI$ is injective.  Let $Q_1,Q_2$ be any two orbits, with $a_1 = \phi(Q_1)$ and $a_2 = \phi(Q_2)$.  Then $Q_1 < Q_2$ if and only if $a_1 < a_2$.  Thus the weak order on $K \backslash G/B$ corresponds precisely to the order on $\phi(K \backslash G/B)$ induced by the weak order on $\caI$.
\end{corollary}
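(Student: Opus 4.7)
The plan is to reduce, via injectivity of $\phi$, to the case of a single closed orbit, and then to handle the two directions by invoking Proposition \ref{prop:twisted_inv_map_phi_weak_order}(1) on the orbit side and Proposition \ref{prop:twisted_inv_paths} on the $\caI$ side. As a preliminary, I would note that Proposition \ref{prop:num-closed-orbits} identifies the closed $K$-orbits with $\phi^{-1}(1)$, so injectivity of $\phi$ immediately delivers a unique closed orbit, which I will denote $Q_0$.

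For the forward direction, I would take a chain $Q_1 = S_0, S_1, \ldots, S_m = Q_2$ witnessing $Q_1 < Q_2$, with $S_j = s_{\alpha_j} \cdot S_{j-1} \neq S_{j-1}$ for each $j$. Proposition \ref{prop:twisted_inv_map_phi_weak_order}(1) then yields a relation between $\phi(S_{j-1})$ and $\phi(S_j)$ of the form $<_{\alpha_j}$; injectivity of $\phi$ forces $\phi(S_{j-1}) \neq \phi(S_j)$, so each such relation is strict, and concatenating gives $a_1 < a_2$ in the weak Bruhat order on $\caI$. For the reverse direction, assuming $a_1 < a_2$, I would fix chains $1 <_{j_1} d_1 <_{j_2} \cdots <_{j_n} a_1$ and $a_1 <_{i_1} c_1 <_{i_2} \cdots <_{i_k} a_2$ in $\caI$. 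Applying Proposition \ref{prop:twisted_inv_paths} to $Q_1$ with the first chain, and using uniqueness of $Q_0$ to identify the existentially asserted closed orbit, I obtain $Q_1 = s_{j_n} \cdots s_{j_1} \cdot Q_0$. Applying the same proposition to $Q_2$ with the concatenated chain yields $Q_2 = s_{i_k} \cdots s_{i_1} \cdot s_{j_n} \cdots s_{j_1} \cdot Q_0 = s_{i_k} \cdots s_{i_1} \cdot Q_1$. Setting $R_0 = Q_1$ and $R_l = s_{i_l} \cdot R_{l-1}$ for $l = 1, \ldots, k$, and discarding those indices with $R_l = R_{l-1}$, produces a strict weak order chain from $Q_1$ to $Q_2 = R_k$. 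The chain is nonempty because $a_1 \neq a_2$ together with injectivity gives $Q_1 \neq Q_2$, so $Q_1 < Q_2$ follows.

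The step I expect to be the main obstacle is the compact imaginary exception in Proposition \ref{prop:twisted_inv_map_phi_weak_order}(2), which in general obstructs lifting a single cover relation $a <_i a'$ in $\caI$ to a cover relation on orbits. The whole point of invoking Proposition \ref{prop:twisted_inv_paths} in the reverse direction is to bypass this by lifting complete chains starting from the identity; uniqueness of the closed orbit, supplied by the injectivity hypothesis, then promotes that proposition's existential conclusion into a concrete identification of the intermediate lifts.
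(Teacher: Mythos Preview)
Your proof is correct, and the reverse direction takes a genuinely different route from the paper's.

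For the forward direction both arguments are the same: Proposition~\ref{prop:twisted_inv_map_phi_weak_order}(1) applied along a weak-order chain. (Your remark about injectivity forcing strictness is harmless but unnecessary: the relation $<_i$ is strict by definition, and the proposition already delivers it once $S_j \neq S_{j-1}$.)

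For the reverse direction the approaches diverge. The paper first proves a combinatorial uniqueness lemma on $\caI$: for fixed $i$ and $b$, there is at most one $a$ with $a <_i b$. This requires a short but explicit case analysis on the $M(W)$-action. The paper then argues by induction on $\dim(Q_2)-\dim(Q_1)$: given $a_1 < a <_i a_2$, Proposition~\ref{prop:twisted_inv_paths} produces some $Q$ with $s_i \cdot Q = Q_2$, the uniqueness lemma forces $\phi(Q) = a$, and injectivity gives the identification needed for the inductive step.

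Your argument avoids the uniqueness lemma entirely. You observe that injectivity of $\phi$ together with Proposition~\ref{prop:num-closed-orbits} yields a \emph{unique} closed orbit $Q_0$, which upgrades the existential conclusion of Proposition~\ref{prop:twisted_inv_paths} to a definite one. Applying that proposition twice---once to $Q_1$ along a chain $1 \to a_1$, once to $Q_2$ along the concatenated chain $1 \to a_1 \to a_2$---and anchoring both at $Q_0$, you obtain $Q_2 = s_{i_k} \cdots s_{i_1} \cdot Q_1$ directly. This is cleaner: it trades the case analysis of the paper's lemma for the single observation that the closed orbit is unique. The paper's approach, by contrast, is more local and would still work in settings where one wanted to compare orbits without descending all the way to the bottom of the order.
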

\begin{proof}
That $Q_1 < Q_2 \Rightarrow a_1 < a_2$ follows from Proposition \ref{prop:twisted_inv_map_phi_weak_order}, part (1).  So we prove only the direction $\Leftarrow$ here.

We first note that for any $i$, it is impossible to have three distinct twisted involutions $a_1,a_2,b$ with $a_1 <_i b$ and $a_2 <_i b$.  Suppose by contradiction that we have such a situation.  Let $s = s_i$.  By definition of the $M(W)$-action on $\caI$, the possible ways this could happen are
\begin{enumerate}
	\item $s * a_1 = a_1$, $s * a_2 = a_2$:  In this case, we would have that 
	\[ m(s) * a_1 = sa_1 = sa_2 = m(s) * a_2, \] 
	and this contradicts $a_1 \neq a_2$.
	\item $s * a_1 \neq a_1$, $s * a_2 \neq a_2$:  In this case, we have 
	\[ m(s) * a_1 = sa_1\theta(s)^{-1} = sa_2\theta(s)^{-1} = m(s) * a_2, \]
	again contradicting $a_1 \neq a_2$.
	\item $s * a_1 = a_1$, $s * a_2 \neq a_2$:  In this case, we would have $b = sa_1 = sa_2\theta(s)^{-1}$, so that $a_1 = a_2\theta(s)^{-1}$.  Recalling that $a_1,a_2$ are twisted involutions, so that $a_1 = \theta(a_1)^{-1}$ and $a_2 = \theta(a_2)^{-1}$, this says
	\[ \theta(a_1^{-1}) = \theta(a_2^{-1})\theta(s^{-1}) = \theta(a_2^{-1}s), \]
	so that
	\[ a_1^{-1} = a_2^{-1}s \Rightarrow sa_1 = a_2. \]
	But this contradicts that $sa_1 = b$.
\end{enumerate}

With this preliminary observation made, the proof is by induction on $\dim(Q_2) - \dim(Q_1)$.  Suppose first that this quantity is 1, and suppose that $a_1 <_i a_2$  for some simple root $\ga_i$.  By Proposition \ref{prop:twisted_inv_paths}, there is \textit{some} orbit $Q$ with $Q_2 = s_i \cdot Q$.  By Proposition \ref{prop:twisted_inv_map_phi_weak_order} part (1), $\phi(Q) <_i a_2$.  By our initial observation, this implies that $\phi(Q) = \phi(Q_1) = a_1$.  Since $\phi$ is injective, $Q = Q_1$, so $Q_2 = s_i \cdot Q_1$, as required.

Now suppose that $\dim(Q_2) - \dim(Q_1) > 1$, and suppose that $a_1 < a_2$.  Then by definition of the weak Bruhat order on $\caI$, there exists $a \in \caI$ with $a_1 < a <_i a_2$ for some simple root $\ga_i$.  By Proposition \ref{prop:twisted_inv_paths}, there is an orbit $Q$ with $Q_2 = s_i \cdot Q$.  Using our preliminary observation again, $\phi(Q) = a$.  By induction, $Q_1 < Q$, and $Q < Q_2$ by definition, so $Q_1 < Q_2$.
\end{proof}

The map $\phi$ is injective for the pairs $(SL(2n+1,\C),SO(2n+1,\C))$ and $(SL(2n,\C),Sp(2n,\C))$, so in these cases, we are able to describe $K \backslash G/B$ and its weak order entirely in terms of the combinatorics of $\caI$.

Next, we describe how to categorize simple roots as complex or non-compact imaginary (type I or type II) for a given orbit on the level of twisted involutions.  \begin{prop}[{\cite[Section 2.4]{Richardson-Springer-92}}]\label{prop:twisted_inv_complex_imaginary}
Let $Q$ be a $K$-orbit on $G/B$, with $a = \phi(Q)$.  Let a simple root $\ga \in \Delta$ be given, with $s \in S$ the corresponding simple reflection.  Then
	\begin{enumerate}
		\item $s$ (or $\ga$) is complex for $Q$ if and only if $a \theta(\ga) \neq \pm \ga$.
		\item $s$ (or $\ga$) is imaginary for $Q$ if and only if $a \theta(\ga) = \ga$.
	\end{enumerate}
\end{prop}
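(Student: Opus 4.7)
The plan is to pick for the orbit $Q$ a representative $g \in G$ with $Q = K \cdot gB$ chosen so that $T_Q := gTg^{-1}$ is $\theta$-stable. By the construction of $\phi$, this is equivalent to requiring that $n := g^{-1}\theta(g) \in N_G(T)$, and then $a = \phi(Q) = \pi(n) \in W$. By the definitions preceding the proposition, $\ga \in \Delta$ is complex (resp.\ imaginary) for $Q$ precisely when $\theta(\ga') \neq \pm \ga'$ (resp.\ $\theta(\ga') = \ga'$), where $\ga'$ is the root of $T_Q$ corresponding to $\ga$ under the isomorphism $\text{int}(g)$. So the problem reduces to the purely formal task of determining how the action of $\theta$ on characters of $T_Q$ transfers, via $\text{int}(g)$, to an operator on characters of $T$.

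The main calculation is direct. Write $c_g : T \to T_Q$ for conjugation by $g$, and let $\ga'$ be the pushforward of $\ga$ along $c_g$. One evaluates $\theta(\ga')$ at a typical point $c_g(t) \in T_Q$, using $\theta(g) = gn$ and the $\theta$-stability of $T$ (which guarantees $n\theta(t)n^{-1} \in T$); a brief manipulation reduces the result to $\ga(n\theta(t)n^{-1})$. Expressing the latter in terms of the $W$-action on characters of $T$ gives the character $\theta(a^{-1}\cdot \ga)$. Invoking the natural compatibility $\theta(w \cdot \chi) = \theta(w) \cdot \theta(\chi)$ together with the twisted-involution identity $\theta(a) = a^{-1}$ then rewrites this as $a \cdot \theta(\ga)$.

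In summary, under the identification of characters of $T_Q$ with characters of $T$ via $c_g$, the action of $\theta$ on the former corresponds to the operator $\ga \mapsto a\theta(\ga)$ on the latter. Both parts of the proposition then follow immediately by unwinding definitions: $\ga$ is imaginary for $Q$ iff $a\theta(\ga) = \ga$, and complex for $Q$ iff $a\theta(\ga) \neq \pm \ga$.

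The only step that requires genuine care, and that I expect to be the main potential source of error, is the bookkeeping of inverses when passing between $T_Q$ and $T$. The naive computation produces $a^{-1}\theta(\ga)$ rather than $a\theta(\ga)$; it is precisely the twisted-involution identity $\theta(a) = a^{-1}$ that absorbs the extra inverse and yields the symmetric form appearing in the statement. Getting the convention for $\phi$ (and hence for $n = g^{-1}\theta(g)$ versus $n = g\theta(g)^{-1}$) right at the outset is essential, since the wrong choice would leave an unremovable $\theta$-twist in the final expression.
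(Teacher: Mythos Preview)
The paper does not supply its own proof of this proposition; it is stated with attribution to \cite[Section 2.4]{Richardson-Springer-92} and used as a black box. Your argument is correct and is essentially the direct computation one carries out to verify the claim: choosing $g$ with $n = g^{-1}\theta(g) \in N_G(T)$, transporting $\theta$ from $T_Q$ back to $T$ via $c_g$, and identifying the resulting operator on characters as $\ga \mapsto a\theta(\ga)$ using the twisted-involution relation $\theta(a) = a^{-1}$. Your cautionary remark about the convention for $n$ (namely $g^{-1}\theta(g)$ rather than $g\theta(g)^{-1}$) is well-placed, since the paper's map $\phi$ is defined via $\tau(h) = h\theta(h)^{-1}$ on $\caV$ with the orbit correspondence $ThK \mapsto K \cdot h^{-1}B$, so for $Q = K \cdot gB$ one indeed recovers $a = \pi(g^{-1}\theta(g))$ as you use.
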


Recall that given a non-compact imaginary root, to determine whether it is type I or type II, we must compute the cross action of the appropriate simple reflection on the orbit in question.  The following proposition relates the cross action of $W$ on the set of orbits to the twisted action of $W$ on $\mathcal{I}$:
\begin{prop}[{\cite[Proposition 1.4.4]{Richardson-Springer-92}}]\label{prop:twisted_inv_phi_equivariant}
	The map $\phi: K \backslash G/B \rightarrow \mathcal{I}$ is $W$-equivariant with respect to the cross action of $W$ on $K \backslash G/B$ and the twisted action of $W$ on $\mathcal{I}$.
\end{prop}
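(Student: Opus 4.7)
The plan is to compute directly, working through the bijection $V = T\backslash\mathcal{V}/K \leftrightarrow K\backslash G/B$, $TgK \mapsto K\cdot g^{-1}B$, that already defines $\phi$. Given an orbit $Q$, pick $g \in \mathcal{V}$ with $Q = K\cdot g^{-1}B$, so $\phi(Q) = \pi(g\theta(g)^{-1})$. Given $w \in W$, pick a representative $\dot{w} \in N_G(T)$. By definition of the cross action,
\[ w \times Q = K\cdot g^{-1}\dot{w}^{-1}B = K\cdot (\dot{w}g)^{-1}B. \]
So I would like to say $\phi(w \times Q) = \pi(\tau(\dot{w}g))$. This requires first verifying that $\dot{w}g \in \mathcal{V}$.

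First I would check this membership. Since $T$ is $\theta$-stable, so is $N_G(T)$, hence $\theta(\dot{w})^{-1} \in N_G(T)$. Then
\[ \tau(\dot{w}g) = \dot{w}g\,\theta(\dot{w}g)^{-1} = \dot{w}\cdot\bigl(g\theta(g)^{-1}\bigr)\cdot\theta(\dot{w})^{-1} \]
is a product of three elements of $N_G(T)$, and thus lies in $N_G(T)$. So $\dot{w}g \in \mathcal{V}$, and $(\dot{w}g)^{-1}B$ is indeed an admissible representative for $w\times Q$.

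Next I would apply $\pi$ and use that $\pi$ is a group homomorphism on $N_G(T)$ and that the involution $\theta$ on $N_G(T)$ descends to the involution $\theta$ on $W$ (which is exactly why $\theta$ was extended to $W$ in the first place). This gives
\[ \phi(w \times Q) = \pi(\tau(\dot{w}g)) = \pi(\dot{w})\cdot \pi\bigl(g\theta(g)^{-1}\bigr)\cdot \pi(\theta(\dot{w})^{-1}) = w\cdot \phi(Q)\cdot \theta(w)^{-1} = w * \phi(Q), \]
which is precisely the asserted $W$-equivariance.

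Finally, I would briefly confirm that nothing depends on the choices made: altering $g$ by an element of $T$ on the left or of $K$ on the right changes $g\theta(g)^{-1}$ only by left-multiplication by elements of $T$ and right-multiplication by elements of $\theta(T) = T$, which disappear under $\pi$; and altering $\dot{w}$ within its $T$-coset has the same effect. The computation is really just bookkeeping once the correct representatives are in hand, so there is no genuine obstacle; the only mildly delicate point, and the thing to state carefully, is the compatibility of $\theta$ on $N_G(T)$ with $\theta$ on $W$, which is where the $\theta$-stability of $T$ is used.
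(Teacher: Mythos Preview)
The paper does not supply its own proof of this proposition; it is simply quoted from \cite[Proposition 1.4.4]{Richardson-Springer-92} without argument. Your direct computation is correct and is exactly the natural proof: lift the cross action to $\mathcal{V}$ via left multiplication by $\dot{w}$, expand $\tau(\dot{w}g)$, and push down through $\pi$. One small imprecision in your final paragraph: replacing $g$ by $gk$ with $k\in K$ leaves $g\theta(g)^{-1}$ literally unchanged (since $\theta(k)=k$), not merely modified by an element of $T$; but since the well-definedness of $\phi$ is already established in the paper's setup, this check is in any case superfluous to the equivariance argument.
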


Finally, we record one other fact from \cite{Richardson-Springer-90} regarding the \textit{full} closure order on $K \backslash G/B$ and its relation to the induced Bruhat order on $\mathcal{I}$ in the event that the map $\phi$ is injective:

\begin{prop}[{\cite[Proposition 9.14]{Richardson-Springer-90}}]\label{prop:full_closure_twisted}
Suppose that the map $\phi: K \backslash G/B \rightarrow \mathcal{I}$ is injective.  Then $K \backslash G/B$, equipped with the full closure order, is isomorphic as a partially ordered set to its image under $\phi$, when this image is given the partial order induced by the Bruhat order on $\mathcal{I}$.
\end{prop}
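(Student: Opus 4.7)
The plan is to establish compatibility of orders in both directions, using the weak-order correspondence from Corollary \ref{cor:weak_order_for_phi_injective} as the building block, together with a ``maximum Bruhat cell'' characterization of $\phi$.

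First I would prove monotonicity of $\phi$. For each $K$-orbit $Q$, consider the set $W(Q) = \{w \in W : BwB/B \cap Q \neq \emptyset\}$, that is, the Bruhat cells that meet $Q$. A direct unpacking of $\phi(Q) = \pi(g\theta(g)^{-1})$ for $g^{-1}B \in Q$ shows that $\phi(Q)$ coincides with $\max W(Q)$ in the Bruhat order (this is a standard Richardson--Springer computation). Since the closure of a $K$-orbit is a union of $K$-orbits and is also a union of $B$-orbits, $Q_1 \subseteq \overline{Q_2}$ implies $W(Q_1) \subseteq \bigcup_{w \in W(Q_2)} \{v : v \leq w\}$, whence $\phi(Q_1) = \max W(Q_1) \leq \max W(Q_2) = \phi(Q_2)$. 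This gives monotonicity without using injectivity.

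For the converse, I would argue by induction on $\ell(\phi(Q_2)) - \ell(\phi(Q_1))$. Assume $\phi(Q_1) \leq \phi(Q_2)$ in $\mathcal{I}$, and pick a maximal chain $\phi(Q_1) = a_0 \lessdot a_1 \lessdot \cdots \lessdot a_n = \phi(Q_2)$ in the induced Bruhat order on $\mathcal{I}$. By injectivity of $\phi$ and the results of Richardson--Springer cited in the excerpt, each $a_k$ has a unique preimage $R_k \in K\backslash G/B$. For each covering $a_{k-1} \lessdot a_k$, one breaks into cases: if the cover comes from the $M(W)$-action (i.e.\ is a weak-order cover), Corollary \ref{cor:weak_order_for_phi_injective} gives $R_{k-1} < R_k$ in the weak order, hence $R_{k-1} \subset \overline{R_k}$; if it is a strictly ``Bruhat'' cover, one invokes the subword/exchange-type lemma for the Bruhat order on $\mathcal{I}$ (Richardson--Springer \S8--9) to refine it into a sequence of weak moves interspersed with cross-actions, all of which lift to orbit closure relations by injectivity. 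Transitivity of closure containment then gives $Q_1 \subseteq \overline{Q_2}$.

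The main obstacle is the reverse direction, specifically the fact that Bruhat covers on $\mathcal{I}$ need not be weak-order covers. One must verify that every such Bruhat cover can be realized by a finite composition of weak-order moves and cross-actions at the orbit level. Injectivity of $\phi$ is precisely what makes this reconstruction unambiguous: at each stage of the chain there is no choice as to which orbit one is dealing with, so the induction goes through. Once both directions are in hand, one has an order isomorphism between $K\backslash G/B$ (closure order) and $\phi(K\backslash G/B)$ (induced Bruhat order), as claimed.
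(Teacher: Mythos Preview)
The paper does not give its own proof here; it simply cites \cite[Proposition 9.14]{Richardson-Springer-90}. So I will evaluate your argument on its own merits.

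Your forward direction rests on the claim that $\phi(Q) = \max W(Q)$, where $W(Q) = \{w \in W : BwB/B \cap Q \neq \emptyset\}$. This is false, even when $\phi$ is injective. Take $(G,K) = (SL(3,\C), SO(3,\C))$ as treated in the paper. The unique closed orbit $Q_0 = K \cdot 1B$ has $\phi(Q_0) = 1$, but by the description of $S$-fixed points in the paper, $Q_0$ contains $w_0 B$ (since $w_0 = 321$ is a signed element of $S_3$), and $w_0 B$ lies in the big Bruhat cell. Hence $\max W(Q_0) = w_0 \neq 1$. The correct Bruhat-cell interpretation of $\phi$ lives not in $G/B$ but in $G$ itself, via the map $\tau(g) = g\theta(g)^{-1}$: identifying $G/K$ with its image under $\tau$, the $B$-orbit corresponding to $Q$ lies in the $(B,\theta(B))$-double coset indexed by $\phi(Q)$, and monotonicity follows from closure relations among \emph{those} double cosets. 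Your instinct to invoke Bruhat cells is right, but you have placed them on the wrong space.

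Your reverse direction also has a gap. Cross-actions preserve orbit dimension, so if $s_\alpha \times Q_1 = Q_2 \neq Q_1$ then neither lies in the closure of the other; they therefore cannot be used to manufacture closure relations along a Bruhat chain. The argument in Richardson--Springer is structurally different: they define the Bruhat order on $\mathcal{I}$ abstractly as the \emph{weakest} partial order compatible (in a precise sense involving the $M(W)$-action and the ``Property~Z'' axiom) with the weak order, and they show that the closure order on $K\backslash G/B$ satisfies the same minimality property. Injectivity of $\phi$ then forces the two orders to coincide. Your chain-refinement strategy does not engage with this minimality characterization, and without it the non--weak Bruhat covers cannot be lifted.
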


\begin{remark}\label{rmk:bruhat-order-on-I}
It is worth mentioning that the ``Bruhat order on $\caI$" referred to in Proposition \ref{prop:full_closure_twisted} is defined in \cite{Richardson-Springer-90}, not as the order on $\caI$ induced by the \textit{usual} Bruhat order on $W$, but as the weakest order on $\caI$ which is ``compatible," in a certain sense, with the weak order on $\caI$ defined above.  In fact, it is (incorrectly) stated in \cite{Richardson-Springer-90} that these two ``Bruhat orders" are \textit{not} necessarily the same.  This misstatement is corrected in \cite{Richardson-Springer-92}, with a proof that the two Bruhat orders in fact \textit{are} the same appearing in \cite{Richardson-Springer-94}.  Thus we think of the ``Bruhat order" on the image of $\phi$ to be induced by the usual Bruhat order on $W$.  That it is valid to do so will be important in Subsection \ref{ssec:type-a-other-deg-loci}.
\end{remark}

\subsection{Clans}\label{ssec:clans}
The information of the previous section is used primarily in the non-equal rank type $A$ cases, where $K = Sp(2n,\C)$, $SO(2n+1,\C)$, or $SO(2n,\C)$.  In all other cases, the parametrization of $K \backslash G/B$ is in terms of what are called ``clans".  These are first defined for $K$-orbits on $G/B$ where $G = SL(n,\C)$ and $K = S(GL(p,\C) \times GL(q,\C))$ for $p+q=n$.  Then a \textit{$(p,q)$-clan} is a string of $n$ characters, each either a $+$, a $-$, or a natural number.  Each natural number which appears must appear exactly twice, and the difference between the number of $+$'s and the number of $-$'s appearing in the string must be precisely $p-q$.  The set of $(p,q)$-clans parametrizes $K \backslash G/B$.  (See Subsection \ref{ssec:orbits_supq} for more precise details.)

For all other symmetric pairs $(G,K)$ (that is, for all cases where $G$ is of type $BCD$), it turns out that $K$ is a subgroup of $K' = GL(p,\C) \times GL(q,\C)$ for some appropriate choice of $p$ and $q$.  Further, $G$ is a subgroup of $G' = GL(n,\C)$ for some $n$, and the flag variety $X$ for $G$ naturally embeds in the flag variety $X'$ for $G'$.  As such, the intersection of a $K'$-orbit on $X'$ with $X$, if non-empty, is stable under $K$ and hence is a union of $K$-orbits.

In general, such an intersection need not be a single $K$-orbit.  Indeed, it could either be a single $K$-orbit or a union of $2$ $K$-orbits, and this depends critically on the chosen representative of the isogeny class of $G$, which in turn can affect the connectedness of $K$.  In the cases we consider, we choose $G$ (and the corresponding $K$) so that the intersection of a $K'$-orbit on $X'$ is always a single $K$-orbit.  In fact, we take this opportunity to state this now as a theorem:

\begin{theorem}\label{thm:k-orbit-intersections}
For each symmetric pair $(G,K)$ in types $BCD$ considered in this paper, each $K$-orbit on $G/B$ is exactly the intersection of a $GL(p,\C) \times GL(q,\C)$-orbit on the type $A$ flag variety with the smaller flag variety of the appropriate type, for some appropriate choice of $p$ and $q$.
\end{theorem}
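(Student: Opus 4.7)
The plan is to verify this case by case, since the statement concerns the individual symmetric pairs $(G,K)$ of types $B$, $C$, $D$ considered in the paper. In every case, $G$ sits inside $G' = GL(n,\C)$ as the stabilizer of a non-degenerate bilinear form on $\C^n$ (symmetric for $SO$, skew-symmetric for $Sp$), and $K$ is the joint stabilizer in $G'$ of this form together with a direct sum decomposition $\C^n = \C^p \oplus \C^q$, so $K = G \cap K'$ with $K' = GL(p,\C) \times GL(q,\C)$. The flag variety $X$ of $G$ is the closed subvariety of $X'$ consisting of flags isotropic with respect to the form (of appropriate ranks), so the inclusion $X \hookrightarrow X'$ is $K$-equivariant and every $K'$-orbit closure on $X'$ meets $X$ in a $K$-stable closed subset, necessarily a union of $K$-orbits.

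First I would invoke, in each case, the combinatorial parametrization of $K \bs G/B$ that is set up in the corresponding chapter (namely symmetric clans, signed involutions, or ``$(p,q)$-clans with extra symmetry''). The decisive feature in every case is that this parametrizing set $\mathcal{C}_K$ is naturally realized as the subset of ordinary $(p,q)$-clans $\mathcal{C}_{K'}$ cut out by the symmetry condition coming from $\theta$. This realization yields a well-defined map $\iota\colon K \bs G/B \hookrightarrow K' \bs X'$, and our task reduces to showing that $\iota(Q) \cap X = Q$ for every $K$-orbit $Q$.

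Next I would establish the two inclusions. The inclusion $Q \subseteq \iota(Q) \cap X$ is immediate from $K \subseteq K'$ and the fact that $\iota(Q)$ is precisely the $K'$-orbit containing $Q$ (one checks this on a standard representative flag, exhibited by the clan). For the reverse inclusion, I would argue by counting: produce a representative flag for each clan in $\mathcal{C}_K$ that lies in $X$, show these representatives lie in distinct $K$-orbits by examining invariants (intersection ranks with the two subspaces defining $K'$, plus isotropy data recorded by the form), and conclude that the number of $K$-orbits inside $\iota(Q) \cap X$ is at most one. Equivalently, one shows that within the $K'$-orbit $\iota(Q)$, any two points of $X$ differ by an element of $K' \cap G = K$.

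The main obstacle is the potential splitting of $K'$-orbits into multiple $K$-orbits on $X$, which is a genuine phenomenon when $K$ is disconnected or fails to be chosen in the correct isogeny class. This is the precise reason for the careful choice of representative of the isogeny class of $G$ made at the start of each chapter. Handling this requires, in each disconnected case, identifying the component group of $K$ and verifying that the ``extra'' $K'$-translations needed to move between points of $\iota(Q) \cap X$ can always be realized by elements of $K$ itself (using, for example, block-diagonal involutions lying in $G$). Once the choice of $G$ is fixed as in the paper, this verification goes through uniformly, and the detailed orbit-by-orbit calculations are carried out in the appendix.
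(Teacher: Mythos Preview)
Your framework is correct and matches the paper's: one has a surjection from $K\backslash X$ onto the set of clans $\gamma$ with $Q_{\gamma}'\cap X\neq\emptyset$, and the content of the theorem is that each fiber is a singleton, i.e.\ that $Q_{\gamma}'\cap X$ never splits as a union of several $K$-orbits. However, your counting step does not close. Producing a representative in $X$ for each admissible clan and showing that these lie in pairwise distinct $K$-orbits only yields the lower bound $|K\backslash X|\geq |\mathcal C_K|$; it says nothing about each intersection being a \emph{single} $K$-orbit. What is needed is an \emph{upper} bound $|K\backslash X|\leq |\mathcal C_K|$, i.e.\ an independent count of $K$-orbits, and your proposal provides none. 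Your ``equivalently'' reformulation---that any two points of $Q_{\gamma}'\cap X$ differ by an element of $K$---is exactly the statement to be proved, and you do not indicate a mechanism for establishing it; an arbitrary element of $K'$ carrying one isotropic flag to another need not lie in $G$, so there is real work here that component-group bookkeeping alone does not handle.

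The paper's proof supplies the missing upper bound by a genuinely different device: it counts not $|K\backslash X|$ for one $K$, but the cardinality of the \emph{one-sided parameter space} $\mathcal X=\coprod_i K_i\backslash X$ as $K_i$ ranges over all symmetric subgroups in the inner class of $\theta$. This set admits a surjection to the twisted involutions $\mathcal I$ whose fiber over $\tau$ has order $|T_{\tau}/T_0^{-\tau}|$, a quantity computable purely from the torus and the $W$-action (Adams--du~Cloux). One then checks, fiber by fiber over $\mathcal I$, that this number equals the total number of admissible clans (over all $K_i$) attached to $\tau$. Equality forces every $Q_{\gamma}'\cap X$ to be a single $K_i$-orbit for every $i$ simultaneously. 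So the key lemma you are missing is this external enumeration of $|\mathcal X|$; without it, or without a direct transitivity argument you have not supplied, the proof does not go through.
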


The proof of Theorem \ref{thm:k-orbit-intersections} will be given in Appendix A.  The upshot of the theorem is that in each of the cases outside of type $A$, the set of $K$-orbits can be parametrized by a subset of the $(p,q)$-clans (for appropriate $p,q$) possessing some additional combinatorial properties which amount to the corresponding $GL(p,\C) \times GL(q,C)$-orbits meeting the smaller flag variety non-trivially.  Generally, these combinatorial properties involve at least a symmetry condition.  We make the following definitions now for later reference.  Let $\gamma = (c_1,\hdots,c_n)$ be a clan.

\begin{definition}\label{def:symmetric-clan}
We say that $\gamma$ is \textbf{symmetric} if the clan $(c_n,\hdots,c_1)$ obtained from $\gamma$ by reversing its characters is equal to $\gamma$ as a clan.  Specifically, we require
\begin{enumerate}
	\item If $c_i$ is a sign, then $c_{n+1-i}$ is the same sign.
	\item If $c_i$ is a number, then $c_{n+1-i}$ is also a number, and if $c_{n+1-i} = c_j$, then $c_{n+1-j} = c_i$.
\end{enumerate}
\end{definition}

\begin{definition}\label{def:skew-symmetric-clan}
We say that $\gamma$ is \textbf{skew-symmetric} if the clan $(c_n,\hdots,c_1)$ is the ``negative" of $\gamma$, meaning it is the same clan, except with all signs changed.  Specifically,
\begin{enumerate}
	\item If $c_i$ is a sign, then $c_{n+1-i}$ is the opposite sign.
	\item If $c_i$ is a number, then $c_{n+1-i}$ is also a number, and if $c_{n+1-i} = c_j$, then $c_{n+1-j} = c_i$.
\end{enumerate}
\end{definition}

Note that condition (2) of each of the above definitions allows for the possibility that $c_i = c_{n+1-i}$.  However, this is not necessary for a clan to be symmetric or skew-symmetric.  Indeed, the clan (1,2,1,2) is symmetric (and also skew-symmetric), since its reverse (2,1,2,1) is the same clan (this is explained in Subsection \ref{ssec:orbits_supq}), but there are no matching natural numbers in positions $(i,n+1-i)$ for any $i$.

Once it is established that the $K'$-orbit corresponding to $\gamma$ meets $X$ if and only if $\gamma$ possesses one of these properties (and perhaps meets some additional criteria), showing that each non-empty intersection of a $K'$-orbit on $X'$ with $X$ is a single $K$-orbit on $X$ requires a counting argument, the setup for which is a bit involved.  The details of this are described in Appendix A.
\chapter{Examples in Type $A$}
We consider first the group $G=SL(n,\C)$, the group of $n \times n$ complex matrices of determinant $1$.

For a maximal torus $T$ of $G$, let $Y_i$ denote coordinates on $\frt$, so that 
\[ \Phi = \{Y_i - Y_j \ \vert \ i \neq j \}. \]
We choose the ``standard" positive system 
\[ \Phi^+ = \{Y_i - Y_j \ \vert \ i < j \}, \]
and let $\Phi^- = -\Phi^+$.  Take $B$ to be the Borel subgroup containing $T$ and whose roots are $\Phi^-$.  (Concretely, we may take $T$ to be the diagonal elements of $G$, and $B$ to be the lower-triangular elements of $G$.  Then $\frt$ is the set of all trace-zero diagonal matrices, and $Y_i(\text{diag}(a_1,\hdots,a_n)) = a_i$ for each $i$.)

In this case, the Weyl group $W$ is isomorphic to the symmetric group $S_n$, and elements of $W$ act on the coordinates $Y_i$ by permutation of the indices.

\section{$K \cong S(GL(p,\C) \times GL(q,\C))$}\label{sect:type_a_supq}
Suppose that $p+q=n$.  Consider the involution $\theta$ of $G$ given by
\[ \theta(A) = I_{p,q} A I_{p,q}. \]

(Refer to Subsection \ref{ssec:notation} for this notation.)  Then
\[ K = G^\theta = \left\{
\left[
\begin{array}{cc}
K_{11} & 0 \\
0 & K_{22} \end{array}
\right] \in SL(n,\C)
\ \middle\vert \ 
\begin{array}{c}
K_{11} \in GL(p,\C) \\
K_{22} \in GL(q,\C) \end{array}
\right\} \cong S(GL(p,\C) \times GL(q,\C)). \]

In the notation of the introduction, this choice of $K$ corresponds to the real form $G_\R = SU(p,q)$ of $G$.

This is an equal rank case, so that the maximal torus $S$ of $K$ can be taken to be exactly $T$.  We still refer to the torus of $K$ as $S$, even in the cases where $S=T$.  We also refer to coordinates on $\frs$ by capital $X$ variables, here $X_1,\hdots,X_n$.  In this notation, the restriction map $\rho: \frt^* \rightarrow \frs^*$ is given by $\rho(Y_i) = X_i$ for $i=1,\hdots,n$.  Although this may seem a bit silly in the equal rank examples, the distinction is necessary in the non-equal rank cases, and it is helpful to keep our notational conventions consistent across all of the cases we consider.

The roots of $K$ are as follows:
\[ \Phi_K = \{ X_i-X_j \mid i,j \leq p \text{ or } i,j>p\}. \]

$W_K$ embeds in $W$ as those permutations of $\{1,\hdots,n\}$ which act separately on the subsets $\{1,\hdots,p\}$ and $\{p+1,\hdots,n\}$, i.e. $W_K \cong S_p \times S_q$.

\subsection{Formulas for the closed orbits}\label{ssec:closed-orbits-supq}
By Corollary \ref{cor:num-closed-orbits-equal-rank} and the above observation regarding $W_K$, there are 
\[ \dfrac{|W|}{|W_K|} = \dfrac{n!}{p!q!} = \dbinom{n}{p} \]
closed orbits, each containing $|W_K| = p!q!$ $S$-fixed points.

Given any $w \in W$, denote by $l_p(w)$ the number 
\[ l_p(w) := \# \{ (i,j) \ \vert \ 1 \leq i < j \leq n, w(j) \leq p < w(i) \}. \]

Then we have the following formula for the $S$-equivariant class of the closed orbit $K \cdot wB$:

\begin{prop}\label{prop:closed_orbit_formula_supq}
	Let $Q \in K \backslash G / B$ be a closed $K$-orbit containing the $T$-fixed point $w$.  Then $[Q]$ is represented by the polynomial
	\[ P(x,y) = (-1)^{l_p(w)} \displaystyle\prod_{i \leq p < j}(x_i - y_{w^{-1}(j)}). \]
\end{prop}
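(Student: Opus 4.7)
The plan is to apply the localization theorem of Section \ref{ssec:eqvt_cohom} and verify the formula fixed-point by fixed-point. Since $H_S^*(X)$ is free over $\Lambda_S$, the restriction map $i^*$ is injective, so it is enough to show that $P(x,y)$ and the equivariant fundamental class $[Q]$ have the same image in the copy of $\Lambda_S$ sitting at each $S$-fixed point $vB$, for every $v \in W$. For $vB \notin Q$ this common image must be zero, while for $vB \in Q$ Proposition \ref{prop:restriction-of-closed-orbit} (combined with the self-intersection computation of the previous subsection) identifies the restriction with the product of the weights $X_{v(i)} - X_{v(j)}$ for $i < j$ such that $\{v(i), v(j)\}$ is \emph{split} by $p$, meaning one index is at most $p$ and the other is strictly greater than $p$.

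The first step is to compute $P|_{vB}$ directly. By Proposition \ref{prop:restriction-maps}, this restriction is obtained from $P(x,y)$ by the substitutions $x_i \mapsto X_i$ and $y_i \mapsto X_{v(i)}$. Setting $A_w := w^{-1}(\{p+1, \ldots, n\})$ and re-indexing via $k = w^{-1}(j)$, this yields
\[ P|_{vB} = (-1)^{l_p(w)} \prod_{\substack{i \leq p \\ k \in A_w}} (X_i - X_{v(k)}). \]
I would then dispose of the vanishing case: by Corollary \ref{cor:num-closed-orbits-equal-rank}, $vB \in Q$ iff $v \in W_K w$, which (since $W_K \cong S_p \times S_q$ preserves the partition of $\{1,\ldots,n\}$ at $p$) is equivalent to $v(A_w) = \{p+1,\ldots,n\}$. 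If $vB \notin Q$, some $k \in A_w$ must satisfy $v(k) \leq p$; taking $i = v(k)$ in the product then produces a zero factor, so $P|_{vB} = 0$.

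For $vB \in Q$, the map $k \mapsto v(k)$ is a bijection $A_w \to \{p+1,\ldots,n\}$, so the pairs $(i, v(k))$ enumerate each pair $(a,b)$ with $a \leq p < b$ exactly once, giving $P|_{vB} = (-1)^{l_p(w)} \prod_{a \leq p < b}(X_a - X_b)$. To match this with the expected restriction, I would group the split-type weights $X_{v(i)} - X_{v(j)}$ according to the unordered pair $\{v(i),v(j)\} = \{a,b\}$ with $a \leq p < b$: such a pair contributes $X_a - X_b$ if $v^{-1}(a) < v^{-1}(b)$ and $-(X_a - X_b)$ otherwise, and the number of pairs of the second type is precisely $l_p(v)$. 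So the expected restriction equals $(-1)^{l_p(v)} \prod_{a \leq p < b}(X_a - X_b)$.

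The remaining, and really the only nontrivial, step is the sign comparison $(-1)^{l_p(v)} = (-1)^{l_p(w)}$. I expect to prove the stronger equality $l_p(v) = l_p(w)$: writing $v = uw$ with $u \in W_K = S_p \times S_q$, the element $u$ permutes the two blocks $\{1,\ldots,p\}$ and $\{p+1,\ldots,n\}$ separately, so the condition ``$v(j) \leq p < v(i)$'' is literally equivalent to ``$w(j) \leq p < w(i)$.'' This sign bookkeeping is the main obstacle of the proof, though a mild one; once it is in hand, both restrictions agree and the localization theorem yields the result.
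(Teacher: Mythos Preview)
Your proof is correct and follows essentially the same approach as the paper's: both verify the formula by localization, computing the restriction of $[Q]$ at each $S$-fixed point via Proposition~\ref{prop:restriction-of-closed-orbit} and checking that $P(x,y)$ has the same restrictions. Your organization is slightly different---you fold the ``independence of the choice of $w$'' check into the single observation $l_p(v)=l_p(w)$ for $v\in W_Kw$, whereas the paper verifies this invariance up front---but the substance is identical.
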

\begin{proof}
First, we observe that this formula is independent of the choice of $S$-fixed point $w$ representing the orbit.  Indeed, any other $S$-fixed point $\wt{w} \in Q $ is of the form $\wt{w} = \sigma w$ for some $\sigma \in W_K$.  Since $\sigma$ preserves the sets $\{1,\hdots,p\}$ and $\{p+1,\hdots,n\}$, we see that 
\[ (\sigma w)(j) \leq p < (\sigma w)(i) \Leftrightarrow w(j) \leq p < w(i), \]
and so $l_p(\wt{w}) = l_p(w)$.  Further, the set $\{w^{-1}(j) \ \vert \ j > p \}$ (that is, the set of indices on the $y$'s in our proposed formula) is clearly the same as $\{\wt{w}^{-1}(j) \ \vert \ j > p \} = \{(w^{-1}\sigma^{-1})(j) \ \vert \ j > p\}$, again because $\sigma^{-1}$ permutes those $j$ which are greater than $p$.

With that established, we now use Proposition \ref{prop:restriction-of-closed-orbit} to identify the restriction of $[Q]$ at each $S$-fixed point.  The set $\rho(w\Phi^+)$ is
\[ \rho(\{w \alpha \mid \alpha \in \Phi^+ \}) = \{ X_{w(i)} - X_{w(j)} \mid i < j \}. \]

Subtracting roots of $K$, we are left with precisely one of $\pm (X_i - X_j)$ for each $i,j$ with $i \leq p < j$.  The number of remaining roots of the form $-(X_i-X_j)$ is precisely $l_p(w)$.  Thus 
\[ [Q]|_w = F(X) := (-1)^{l_p(w)} \displaystyle\prod_{i \leq p < j}(X_i - X_j). \]

(We remark that because $l_p$ is constant on cosets $W_K w$, the restriction $[Q]|_w$ is actually the same at every $S$-fixed point $w \in Q$.)

So for any $u \in W$,
\[ [Q]|_u = 
\begin{cases}
	F(X) & \text{ if $u \in Q$}, \\
	0 & \text{ otherwise.}
\end{cases} \]
Recalling the precise definition of the restriction maps $i_u^*$ given in Proposition \ref{prop:restriction-maps}, we see that we are looking for a polynomial $p$ in the $x_i$ and $y_i$ such that $p(X,\sigma w(X)) = F(X)$ for any $\sigma \in W_K$, and such that $p(X,w'(X)) = 0$ for any $w' \in W$ such that $w'w^{-1} \notin W_K$.

It is straightforward to check that $P$ has these properties.  Indeed, for $\sigma \in W_K$, we see that 
\[ P(X,\sigma w(X)) = (-1)^{l_p(w)} \displaystyle\prod_{i \leq p < j}(X_i - X_{\sigma(j)}), \]
and since $\sigma$ permutes $\{p+1,\hdots,n\}$, this is precisely $F(X)$.

On the other hand, given $w'$ with $w'w^{-1} \notin W_K$,
\[ P(X,w'(X)) = (-1)^{l_p(w)} \displaystyle\prod_{i \leq p < j}(X_i - X_{w'w^{-1}(j)}) = 0, \]
since $w'w^{-1}$, not being an element of $W_K$, necessarily sends some $j > p$ to some $i \leq p$.  We conclude that $P(x,y)$ represents $[Q]$.
\end{proof}

\subsection{Parametrization of $K \backslash G/B$ and the weak order}\label{ssec:orbits_supq}
We remark here at the outset that the references we use for this parametrization typically refer to $GL(p,\C) \times GL(q,\C)$-orbits on the flag variety $GL(n,\C)/B$, rather than $S(GL(p,\C) \times GL(q,\C))$-orbits on $SL(n,\C)/B$.  However, these are the same, since any element of $GL(p,\C) \times GL(q,\C)$ differs from an element of $S(GL(p,\C) \times GL(q,\C))$ only by a scalar matrix, which acts trivially on the flag variety.  So if the reader prefers, he may just as well think of $GL(p,\C) \times GL(q,\C)$-orbits on $GL(n,\C)/B$.
  
The results detailed in this subsection appeared for the first time in \cite{Matsuki-Oshima-90}.  Proofs and further details appear in \cite{Yamamoto-97}.  The combinatorics are also given a nice exposition in \cite{McGovern-Trapa-09}, and much of our description is taken from there.

In this case, as mentioned briefly in the Subsection \ref{ssec:clans}, the orbits are parametrized by what are referred to as \textit{clans} of signature $(p,q)$ (or \textit{$(p,q)$-clans}).  

\begin{definition}
A \textbf{$(p,q)$-clan} is a string $(c_1,\hdots,c_n)$ of  $n=p+q$ characters, each of which is a $+$, a $-$, or a natural number, subject to the following conditions:
\begin{enumerate}
	\item Every natural number which appears must appear exactly twice.
	\item The difference in the number of $+$ signs and the number of $-$ signs must be $p-q$.  (If $q > p$, there are $q-p$ more minus signs than plus signs.)
\end{enumerate}
\end{definition}

We consider such strings only up to an equivalence which says, essentially, that it is the position of matching natural numbers, and not the numbers themselves, which determine the clan.  For instance, the clans $(1,2,1,2)$, $(2,1,2,1)$, and $(5,7,5,7)$ are all the same, since they all have matching natural numbers in positions $1$ and $3$, and also in positions $2$ and $4$.  On the other hand, $(1,2,2,1)$ is a different clan, since it has matching natural numbers in positions $1$ and $4$, and in positions $2$ and $3$.

As an example, suppose $n = 4$, and $p = q = 2$.  Then we must consider all clans of length $4$ where the number of $+$'s and the number of $-$'s is the same (since $p-q = 0$).  There are $21$ of these, and they are as follows:  
	\[ (+,+,-,-); (+,-,+,-); (+,-,-,+); (-,+,+,-); (-,+,-,+); (-,-,+,+); \]
	\[ (1,1,+,-); (1,1,-,+); (1,+,1,-); (1,-,1,+); (1,+,-,1); (1,-,+,1); \]
	\[ (+,1,1,-); (-,1,1,+); (+,1,-,1); (-,1,+,1); (+,-,1,1); (-,+,1,1); \]
	\[ (1,1,2,2); (1,2,1,2); (1,2,2,1) \]

We spell out precisely the correspondence between clans and $K$-orbits.  Let $E_p = \C \cdot \left\langle e_1,\hdots,e_p \right\rangle$ be the span of the first $p$ standard basis vectors, and let $\widetilde{E_q} = \C \cdot \left\langle e_{p+1},\hdots,e_n \right\rangle$ be the span of the last $q$ standard basis vectors.  Let $\pi: \C^n \rightarrow E_p$ be the projection onto $E_p$.

For any clan $\gamma=(c_1,\hdots,c_n)$, and for any $i,j$ with $i<j$, define the following quantities:
\begin{enumerate}
	\item $\gamma(i; +) = $ the total number of plus signs and pairs of equal natural numbers occurring among $(c_1,\hdots,c_i)$;
	\item $\gamma(i; -) = $ the total number of minus signs and pairs of equal natural numbers occurring among $(c_1,\hdots,c_i)$; and
	\item $\gamma(i; j) = $ the number of pairs of equal natural numbers $c_s = c_t \in \N$ with $s \leq i < j < t$.
\end{enumerate}

For example, for the $(2,2)$-clan $\gamma=(1,+,1,-)$,
\begin{enumerate}
	\item $\gamma(i; +) = 0,1,2,2$ for $i=1,2,3,4$;
	\item $\gamma(i; -) = 0,0,1,2$ for $i=1,2,3,4$; and
	\item $\gamma(i;j) = 1,0,0,0,0,0$ for $(i,j) = (1,2), (1,3), (1,4), (2,3), (2,4), (3,4)$.
\end{enumerate}

With all of this notation defined, we have the following theorem on $K$-orbits on $G/B$:
\begin{theorem}[\cite{Yamamoto-97}]\label{thm:orbit_description}
Suppose $p+q=n$.  For a $(p,q)$-clan $\gamma$, define $Q_{\gamma}$ to be the set of all flags $F_{\bullet}$ having the following three properties for all $i,j$ ($i<j$):
\begin{enumerate}
	\item $\dim(F_i \cap E_p) = \gamma(i; +)$
	\item $\dim(F_i \cap \widetilde{E_q}) = \gamma(i; -)$
	\item $\dim(\pi(F_i) + F_j) = j + \gamma(i; j)$
\end{enumerate}

For each $(p,q)$-clan $\gamma$, $Q_{\gamma}$ is nonempty and stable under $K$.  In fact, $Q_{\gamma}$ is a single $K$-orbit on $G/B$.

Conversely, every $K$-orbit on $G/B$ is of the form $Q_{\gamma}$ for some $(p,q)$-clan $\gamma$.  Hence the association $\gamma \mapsto Q_{\gamma}$ defines a bijection between the set of all $(p,q)$-clans and the set of $K$-orbits on $G/B$.
\end{theorem}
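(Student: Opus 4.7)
The theorem has four parts: (i) each $Q_\gamma$ is $K$-stable; (ii) each $Q_\gamma$ is nonempty; (iii) each $Q_\gamma$ is a single $K$-orbit; and (iv) every $K$-orbit has the form $Q_\gamma$ for some clan $\gamma$. Part (iii) will be the main obstacle; the remaining parts are comparatively direct.

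For (i), both $E_p$ and $\widetilde{E_q}$ are $K$-stable, since $K$ acts on them as $GL(p,\C)$ and $GL(q,\C)$ respectively, so the projection $\pi$ is $K$-equivariant, and all three dimensional conditions defining $Q_\gamma$ are $K$-invariant. For (ii), I would construct an explicit flag for each clan by reading $\gamma = (c_1,\ldots,c_n)$ left to right and producing vectors $v_1,\ldots,v_n$: assign a fresh standard basis vector of $E_p$ when $c_i = +$, a fresh standard basis vector of $\widetilde{E_q}$ when $c_i = -$, and, for a matched pair $c_i = c_j$ with $i<j$, set $v_i = e + f$ and $v_j = e - f$ for fresh $e \in E_p$ and $f \in \widetilde{E_q}$. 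The resulting flag $\langle v_1,\ldots,v_n\rangle$ is easily seen to lie in $Q_\gamma$. For (iv), given any flag $F_\bullet$, I would read off $\gamma(i;+), \gamma(i;-), \gamma(i;j)$ directly from the flag; the increments of $(\gamma(i;+), \gamma(i;-))$ at each step are constrained to be of exactly the form produced by one of the clan's characters ($+$, $-$, opening, or closing a matched pair), so these invariants assemble into a valid $(p,q)$-clan $\gamma$, and $F_\bullet \in Q_\gamma$ by construction.

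For the main step (iii), I would show that any flag $F_\bullet \in Q_\gamma$ can be brought by an element of $K$ to the normal form constructed in step (ii). Proceeding by induction on $n$, I would choose a basis $v_1,\ldots,v_n$ with $F_i = \mathrm{span}(v_1,\ldots,v_i)$ and use the available $K$-moves (independent row operations in $E_p$ and $\widetilde{E_q}$, modulo a scalar) to normalize each $v_i$ according to $c_i$. The delicate case is a matched pair $c_i = c_j$: here $v_i$ must have nonzero components in both $E_p$ and $\widetilde{E_q}$ modulo $F_{i-1}$, and while $\gamma(i;+)$ and $\gamma(i;-)$ record the ranks of projection to each factor, the additional invariant $\gamma(i;j)$ captures how $\pi(F_i)$ sits relative to $F_j$ and so specifies exactly which $E_p$-component at position $i$ must be linked to which $\widetilde{E_q}$-component at position $j$. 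The technical heart of the argument is the bookkeeping showing that these invariants collectively rigidify the matched-pair data up to residual $K$-moves that can be absorbed into the standard form. Combining this normalization with step (iv) then identifies the $K$-orbits exactly with the $(p,q)$-clans.
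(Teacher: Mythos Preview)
The paper does not prove this theorem; it is stated with attribution to \cite{Yamamoto-97} and invoked as a known result. Immediately after the statement, the paper only reproduces Yamamoto's algorithm for producing an explicit representative of $Q_\gamma$ (your step (ii)), and that construction matches yours essentially verbatim: assign signatures to each pair of matching naturals, choose a permutation $\sigma$ sending $+$-signature positions into $\{1,\ldots,p\}$ and $-$-signature positions into $\{p+1,\ldots,n\}$, and take $v_i = e_{\sigma(i)}$ for signs and $v_i = e_{\sigma(i)} \pm e_{\sigma(j)}$ for matched pairs.

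Your outline for (i), (ii), (iv) is correct and standard. For (iii), your inductive normalization sketch is the right shape, and your identification of the matched-pair bookkeeping as the ``technical heart'' is accurate --- this is exactly where the work in \cite{Yamamoto-97} lies. Since the paper offers no argument of its own here, there is nothing further to compare; your proposal is a faithful sketch of the proof one finds in Yamamoto's paper.
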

As in the statement of the above theorem, we will typically denote a clan by $\gamma$, and the corresponding orbit by $Q_{\gamma}$.

Next, we outline an algorithm, described in \cite{Yamamoto-97}, for producing a representative of $Q_{\gamma}$ given the clan $\gamma$.

First, for each pair of matching natural numbers of $\gamma$, assign one number a ``signature" of $+$, and assign the other a signature of $-$.  Each character $c_i$ in $\gamma$ is then said to have a signature of $+$ if $c_i$ is either a $+$ or a natural number which is assigned a signature of $+$, and a signature of $-$ otherwise.  Having done this, choose a permutation $\sigma$ of $1,\hdots,n$ with the following properties for all $i=1,\hdots,n$:

\begin{enumerate}
	\item $1 \leq \sigma(i) \leq p$ if the signature of $c_i$ is $+$.
	\item $p+1 \leq \sigma(i) \leq n$ if the signature of $c_i$ is $-$.
\end{enumerate}

Having determined such a permutation $\sigma$, take $F_{\bullet} = \left\langle v_1, \hdots, v_n \right\rangle$ to be the flag specified as follows:
\[ v_i = 
\begin{cases}
	e_{\sigma(i)} & \text{ if $c_i=\pm$}, \\
	e_{\sigma(i)} + e_{\sigma(j)} & \text{ if $c_i \in \N$, $c_i$ has signature $+$, and $c_i = c_j$}, \\
	-e_{\sigma(i)} + e_{\sigma(j)} & \text{ if $c_i \in \N$, $c_i$ has signature $-$, and $c_i = c_j$.}
\end{cases} \]

For example, for the orbit corresponding to the clan $(+,+,+,-,-,-)$, we could take $\sigma = 1$, which would give the standard flag $\left\langle e_1,\hdots,e_6 \right\rangle$.  For $(1,-,+,1)$, we could assign signatures to the $1$'s as follows:  $(1_+,-,+,1_-)$.  We could then take $\sigma$ to be the permutation $1324$.  This would give the flag 
	\[ F_{\bullet} = \left\langle e_1 + e_4, e_3, e_2, e_1-e_4 \right\rangle. \]

The closed orbits being those whose clans consist only of $+$'s and $-$'s, this algorithm tells us in particular how to determine an $S$-fixed point contained in such an orbit.  Indeed, any representative determined by the algorithm above for an orbit whose clan consists only of $+$'s and $-$'s necessarily produces an $S$-fixed flag corresponding to a permutation which assigns to the positions of the +'s the numbers $1,\hdots,p$, and to the positions of the $-$'s the numbers $p+1,\hdots,n$.

We now give a combinatorial description of the weak ordering on the orbit set in terms of this parametrization.  Let $\gamma=(c_1,\hdots,c_n)$.  The simple root $\ga_i = X_i - X_{i+1}$ is complex for the orbit $\caO_{\gamma}$, with $s_{\ga_i} \cdot \caO_{\gamma} \neq \caO_{\gamma}$, if and only if one of the following occurs:
\begin{enumerate}
	\item $c_i$ and $c_{i+1}$ are unequal natural numbers, and the mate of $c_i$ is to the left of the mate of $c_{i+1}$;
	\item $c_i$ is a sign, $c_{i+1}$ is a natural number, and the mate of $c_{i+1}$ is to the right of $c_{i+1}$;
	\item $c_i$ is a natural number, $c_{i+1}$ is a sign, and the mate of $c_i$ is to the left of $c_i$.
\end{enumerate}

So, for example, taking $p = 3, q = 2$, and letting $i=2$, $(1,1,2,+,2)$ satisfies the first condition; $(+,-,1,1,+)$ satisfies the second; $(1,1,+,-,+)$ satisfies the third; and $(1,+,1,2,2)$ satisfies none of them (since the mate of the 1 in the 3rd slot occurs to its \textit{left}, rather than to its right).

On the other hand, $\ga_i$ is non-compact imaginary for the orbit $Q_{\gamma}$ if and only if $c_i$ and $c_{i+1}$ are opposite signs.

Furthermore, one sees that the clan $\gamma'$ for $s_{\ga_i} \cdot \caO_{\gamma}$ is obtained from $\gamma$ by interchanging $c_i$ and $c_{i+1}$ in the complex case, and by replacing the opposite signs in the $c_i$ and $c_{i+1}$ slots by a pair of equal natural numbers in the non-compact imaginary case.  So, again taking $p=3,q=2,i=2$, we have, for example, 
\begin{itemize}
	\item $s_{\ga_2} \cdot (1,1,2,+,2) = (1,2,1,+,2)$;
	\item $s_{\ga_2} \cdot (+,-,1,1,+) = (+,1,-,1,+)$;
	\item $s_{\ga_2} \cdot (1,1,+,-,+) = (1,+,1,-,+)$;
	\item $s_{\ga_2} \cdot (1,+,-,1,+) = (1,2,2,1,+)$.
\end{itemize}

Finally, we describe the cross action of $W = S_n$ on the orbits in terms of this parametrization.  In fact, the action is the obvious one, given by permuting the symbols of any clan according to the underlying permutation of any $w \in W$.  (The most straightforward way to see this is to note the effect of simple transpositions on the representatives specified in \cite{Yamamoto-97}.)

Thus we see that if $\ga_i$ is non-compact imaginary for the orbit $Q_{\gamma}$, then $\gamma$ has $(c_i,c_{i+1})$ equal to either $(+,-)$ or $(-,+)$, and the action of $s_{\ga_i}$ (the simple transposition $(i,i+1)$) on $\gamma$ is to switch these signs.  In particular, $s_{\ga_i} \times Q_{\gamma} \neq Q_{\gamma}$, and so we see that all non-compact imaginary roots are of type I in this case.  This means that the weak order graph consists only of black edges, and no factors of $\frac{1}{2}$ are required in our divided difference computations.

\subsection{Example}\label{ssec:supq_example}
With this parametrization in hand, we now do an example, giving formulas for the classes of all orbit closures in the case where $n=4$, $p=q=2$.  (The clans parametrizing the orbits in this case are written down in the previous subsection.)

As we have noted, the method of \cite{Yamamoto-97} for producing representatives of the orbits always produces an $S$-fixed point when applied to a closed orbit.  This allows us to easily determine the formulas for the closed orbits using Proposition \ref{prop:closed_orbit_formula_supq}.  The closed orbit corresponding to $(+,+,-,-)$, as noted above, contains the $S$-fixed point corresponding to the identity element of $W$, so by Proposition \ref{prop:closed_orbit_formula_supq}, its class is represented by $(x_1 - y_3)(x_1 - y_4)(x_2 - y_3)(x_2 - y_4)$.  The closed orbit corresponding to $(+,-,+,-)$ contains the $S$-fixed point corresponding to $1324$, so its class is represented by $-(x_1-y_2)(x_1-y_4)(x_2-y_2)(x_2-y_4)$.  Formulas for the remaining closed orbits are obtained similarly.

Consider the orbit closure $Y_{(+,1,1,-)} = \overline{Q_{(+,1,1,-)}}$.  Because we have $s_{\ga_2} \cdot (+,+,-,-) = (+,1,1,-)$, we know that 
\[ [Y_{(+,1,1,-)}] = \partial_{\ga_2}([Q_{(+,+,-,-)}]). \]

Since
\[ \partial_{\ga_2}(f(x,y)) = \dfrac{f(x,y) - f(x,y_1,y_3,y_2,y_4)}{y_2 - y_3}, \]
we see that 
\[ [Y_{(+,1,1,-)}] = (x_1-y_4)(x_2-y_4)(x_1+x_2-y_2-y_3). \]

Similarly, since $s_{\ga_1} \cdot (+,-,+,-) = (1,1,+,-)$, we have
\[ [Y_{(1,1,+,-)}] = \partial_{\ga_1}([Q_{(+,-,+,-)}]), \]
which one computes to be
\[ [(1,1,+,-)] = -(x_1-y_4)(x_2-y_4)(x_1+x_2-y_1-y_2). \]

With knowledge of the weak order in hand, we can compute formulas for the classes of all orbit closures in a similar manner, moving recursively up from the closed orbits.  The weak order graph, along with formulas for all orbit closures, appear as Figure \ref{fig:type-a-2-2} and Table \ref{tab:type-a-2-2} of Appendix B.

\section{$K \cong SO(2n+1,\C)$}
We realize $K = SO(2n+1,\C)$ as the subgroup of $G$ preserving the quadratic form given by the antidiagonal matrix $J = J_{2n+1}$.  That is, $K = G^{\theta}$ where $\theta$ is the involution
\[ \theta(g) = J(g^{-1})^tJ. \]

This choice of $K$ corresponds to the real form $G_\R = SL(2n+1,\R)$ of $G$.

This realization of $K$ is in fact conjugate to the ``usual" one, that being the fixed point set of the involution $\theta'(g) = (g^{-1})^t$.  We prefer our choice of realization because we can take a maximal torus $S = K \cap T$ consisting of diagonal elements, and a Borel subgroup $B$ consisting of lower-triangular elements.

The torus $\frs = \text{Lie}(S)$ has the form $\text{diag}(a_1,\hdots,a_n,0,-a_n,\hdots,-a_1)$.  Thus if $Y_1,\hdots,Y_{2n+1}$ represent coordinates on $\frt$, restricting to $\frs$ we have $\rho(Y_{n+1}) = 0$, and $\rho(Y_i) = X_i$, $\rho(Y_{2n+2-i}) = -X_i$ for $i=1,\hdots,n$.

The roots of $K$ are as follows:  
\begin{itemize}
	\item $\pm X_i$ ($i = 1,\hdots,n$)
	\item $\pm (X_i + X_j)$ ($1 \leq i < j \leq n)$
	\item $\pm (X_i - X_j)$ ($1 \leq i < j \leq n)$
\end{itemize}

The Weyl group $W_K$ of $K$ should be thought of as consisting of signed permutations of $\{1,\hdots,n\}$ (changing any number of signs).  This is the action of $W_K$ on the coordinates $X_i \in \frs^*$.  $W_K$ is embedded in $W$ as signed elements of $S_{2n+1}$, as described in Subsection \ref{ssec:notation}.

\subsection{A formula for the closed orbit}
As it turns out, there is a unique closed orbit in this case.

\begin{prop}
There is precisely one closed $K$-orbit on $G/B$.  In our chosen realization, it is the orbit $K \cdot 1B$, and contains the $S$-fixed points corresponding to elements of $W_K$, embedded as signed elements of $S_{2n+1}$.
\end{prop}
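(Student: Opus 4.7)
The plan is to apply Proposition \ref{prop:num-closed-orbits}, which says $K \cdot wB$ is closed if and only if $\phi(K \cdot wB) = 1$, together with the count that the number of closed orbits equals $N/|W_K|$, where $N = \#\{w \in W : \phi(K \cdot wB) = 1\}$. Identifying $W$ with the set of $S$-fixed points as in Proposition \ref{prop:s-fixed-points}, the definition of $\phi$ applied to $g = w^{-1}$ gives
\[ \phi(K \cdot wB) = \pi(w^{-1}\theta(w^{-1})^{-1}) = w^{-1}\theta^*(w), \]
so the condition $\phi(K \cdot wB) = 1$ is equivalent to $\theta^*(w) = w$ in $W$.

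Next I would compute the induced action $\theta^*$ on $W \cong S_{2n+1}$ explicitly. Using $(P_\sigma^{-1})^t = P_\sigma$ for permutation matrices, the formula $\theta(g) = J(g^{-1})^t J$ yields $\theta(P_\sigma) = J P_\sigma J = P_{w_0 \sigma w_0}$, where $w_0 \in S_{2n+1}$ is the long element, the permutation $i \mapsto 2n+2-i$. Hence $\theta^*(\sigma) = w_0 \sigma w_0$, and the fixed set $W^{\theta^*}$ equals the centralizer $Z_W(w_0)$.

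The main step is to identify $Z_W(w_0)$ with the signed elements of $S_{2n+1}$ defined in Subsection \ref{ssec:notation}. Unwinding $\sigma w_0 = w_0 \sigma$ gives $\sigma(2n+2-i) = 2n+2-\sigma(i)$ for every $i$, which is precisely the signed-element condition; in particular, setting $i = n+1$ forces $\sigma(n+1) = n+1$. A signed element is then determined by its values on $\{1,\hdots,n\}$, which must contain exactly one representative from each pair $\{j,2n+2-j\}$ with $j \in \{1,\hdots,n\}$. This gives $n!$ choices of magnitude assignment and $2^n$ choices of sign, so $|Z_W(w_0)| = 2^n n! = |W_K|$.

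Combining these, $N = |W_K|$, so the number of closed orbits is $N/|W_K| = 1$. Since $w = 1$ trivially satisfies $\theta^*(w) = w$, the unique closed orbit must be $K \cdot 1B$, and its $S$-fixed points are precisely those $wB$ with $w$ a signed element of $S_{2n+1}$, i.e., corresponding to $W_K$ under the embedding of Subsection \ref{ssec:notation}. The main obstacle is the combinatorial identification of $Z_W(w_0)$ with signed permutations and the associated count; every other ingredient is a direct application of the cited results.
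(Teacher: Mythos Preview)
Your proof is correct and follows essentially the same approach as the paper: both use Proposition~\ref{prop:num-closed-orbits} to reduce to computing the induced action of $\theta$ on $W$, identify it as conjugation by $w_0$, and recognize the fixed set as the signed elements of $S_{2n+1}$, which coincide with the image of $W_K$. You supply a few extra details the paper omits (the explicit computation $\theta(P_\sigma) = P_{w_0\sigma w_0}$ via $(P_\sigma^{-1})^t = P_\sigma$, and the count $|Z_W(w_0)| = 2^n n!$), but the argument is otherwise identical.
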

\begin{proof}
We use Proposition \ref{prop:num-closed-orbits}.  Note that given our particular choice of $\theta$, the induced map on $W$ is $w \mapsto w_0 w w_0$.  By Proposition \ref{prop:num-closed-orbits}, $K \cdot wB$ is closed if and only if $\theta(w) = w$.  The condition that
\[ w_0 w w_0 = w \]
is clearly equivalent to the condition that $w(2n+2-i) = 2n+2-w(i)$, since $w_0(i) = 2n+2-i$ by definition.  This is precisely the definition of a signed element of $S_{2n+1}$.  As we have noted, the signed elements of $S_{2n+1}$ are precisely the images of elements of $W_K$ under the embedding $W_K \subseteq W$.
\end{proof}

We now give a formula for the $S$-equivariant class of the lone closed orbit.

\begin{prop}\label{prop:formula_for_SO_odd}
Let $Q=K \cdot 1B$ be the closed $K$-orbit of the previous proposition.  Then $[Q]$ is represented by
\[ P(x,y) := (-2)^n \displaystyle\prod_{i=1}^n (y_i + y_{n+1})(y_{n+1} + y_{2n+2-i})\displaystyle\prod_{1 \leq i < j \leq n}(y_i + y_j)(y_i + y_{2n+2-j}). \]
\end{prop}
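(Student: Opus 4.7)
The strategy is equivariant localization. The unique closed $K$-orbit $Q = K \cdot 1B$ contains exactly the $S$-fixed points $wB$ for which $w$ is a signed element of $S_{2n+1}$, so by Proposition \ref{prop:restriction-maps} it suffices to show two things: first, that $P|_w$ equals the product of $S$-weights on $N_Q X|_{wB}$ (which equals $[Q]|_w$ by the self-intersection computation) for each signed $w$; and second, that $P|_w = 0$ for each non-signed $w$.

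I would begin the first task at $w = 1$. A direct enumeration of $\rho(\Phi^+)$ as a multiset, with the $S$-weights of $T_1 Q$ (the positive roots of $K$) removed via Proposition \ref{prop:restriction-of-closed-orbit}, yields the normal weights $\{X_i \pm X_j : 1 \leq i < j \leq n\} \cup \{X_i,\,2X_i : 1 \leq i \leq n\}$. Their product is $2^n(X_1 \cdots X_n)^2 \prod_{i<j}(X_i^2 - X_j^2)$, which a routine substitution shows equals $P|_1$. For a general signed $w$ corresponding to a signed permutation $\pi \in W_K$ with underlying permutation $\pi_0$, I would observe that $\rho$ intertwines the action of $w \in W$ on $\frt^*$ with the action of $\pi$ on $\frs^*$, so that both $P|_w$ and the product of normal weights are obtained from their $w=1$ counterparts by applying $\pi$ to each $X_i$. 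Since $(X_1 \cdots X_n)^2 \prod(X_i^2 - X_j^2)$ is insensitive to sign changes of the $X_i$ and transforms by $\mathrm{sgn}(\pi_0)$ under a permutation of the indices, both sides pick up identical factors, so the equality at $w = 1$ propagates to every signed $w$.

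For the vanishing, the key observation is that $(y_a + y_b)|_w = \rho(Y_{w(a)}) + \rho(Y_{w(b)})$ vanishes precisely when $w(a) + w(b) = 2n+2$, since for $k \neq l$ one has $\rho(Y_k) + \rho(Y_l) = 0$ if and only if $k + l = 2n+2$. Enumerating the factors of $P$, the pairs $(a,b)$ with $a < b$ which appear form the set $L$ of pairs satisfying either $a + b < 2n+2$ or $a = n+1$; crucially, no pair in $L$ is itself symmetric (i.e. has $a + b = 2n+2$). The vanishing claim thus reduces to the combinatorial assertion that for every non-signed $w$, some pair $\{a,b\} \in L$ satisfies $w(a) + w(b) = 2n+2$.

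I would establish this by analysing the involution $\sigma(k) := w^{-1}(2n+2 - w(k))$, whose unique fixed point is $w^{-1}(n+1)$ and whose $n$ two-cycles are exactly the pairs $\{a,b\}$ with $w(a) + w(b) = 2n+2$. If $w(n+1) \neq n+1$, then $n+1$ itself lies in a two-cycle of $\sigma$, and any pair containing $n+1$ automatically belongs to $L$. Otherwise $w$ fixes $n+1$, and $\sigma$ partitions $\{1, \ldots, 2n+1\} \setminus \{n+1\}$ (whose total element-sum equals $2n(n+1)$) into $n$ two-cycles; if every two-cycle satisfied $a + b \geq 2n+2$, the sum of pair-sums would be at least $n(2n+2) = 2n(n+1)$, and the forced equality would make every pair symmetric, rendering $w$ signed --- a contradiction. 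The main technical obstacle of the proof is precisely this combinatorial bookkeeping --- pinpointing the list $L$ of pairs entering $P$ and controlling it via the involution $\sigma$ --- rather than any deep input beyond the localization/self-intersection framework already in place.
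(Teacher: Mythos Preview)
Your proof is correct and follows the same localization framework as the paper: compute the restriction at each $S$-fixed point, match it for signed $w$, and show vanishing for non-signed $w$.  The execution, however, differs in two places, and in both your version is a bit cleaner.

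For signed $w$, the paper tracks the sign of the restriction explicitly, carrying out a four-case analysis to show that the number of negative weights of the form $\pm(X_i \pm X_j)$ is congruent to $l(|w|)$ modulo $2$, and then separately verifies that $P$ restricts to $(-1)^{l(|w|)}$ times the positive product.  You instead observe once that $\rho$ intertwines the $S_{2n+1}$-action of the signed element $w$ with the $W_K$-action of the corresponding signed permutation $\pi$, so both the normal-weight product and $P|_w$ are obtained from their $w=1$ values by applying $\pi$; the common factor $(X_1\cdots X_n)^2\prod(X_i^2-X_j^2)$ is visibly sign-invariant and transforms by $\mathrm{sgn}(|\pi|)$, so equality at $w=1$ propagates automatically.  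This avoids the case analysis entirely.

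For the vanishing, the paper splits into the cases $w(n+1)\neq n+1$ and $w(n+1)=n+1$, and in the second case picks some $i\le n$ with $w(2n+2-i)\neq 2n+2-w(i)$, sets $k=w^{-1}(2n+2-w(i))$, and asserts that $y_i+y_k$ is a factor of $P$.  Your approach via the involution $\sigma(k)=w^{-1}(2n+2-w(k))$ and the element-sum bound is more structural: it shows directly that \emph{some} two-cycle of $\sigma$ must land in the index set $L$ of factors, without having to pick a particular $i$ and verify membership in $L$ by hand.  This sidesteps a subtlety in the paper's Case~2, where not every admissible choice of $i$ actually yields a pair $\{i,k\}\in L$ (for instance $n=2$, $w=14352$, $i=2$ gives $k=5$ and $\{2,5\}\notin L$), so one must in fact choose $i$ more carefully than the paper indicates.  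Your counting argument handles this uniformly.
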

\begin{proof}
We once again apply Proposition \ref{prop:restriction-of-closed-orbit} to determine the restriction $[Q]|_w$ at a fixed point $w \in Q$.  To compute the set $\rho(w \Phi^+)$, we determine the restrictions of the positive roots $\Phi^+$ to $\frs$, then apply the signed permutation corresponding to $w$ to that set of weights.  (The result is the same as if we viewed $w$ as a signed element of $S_{2n+1}$, applied that permutation to the elements of $\Phi^+$, and \textit{then} restricted the resulting roots to $\frs$.)

Restricting the positive roots $\{Y_i - Y_j \ \vert \ 1 \leq i < j \leq 2n+1 \}$ to $\frs$, we get the following set of weights:

\begin{enumerate}
	\item $X_i - X_j$, $1 \leq i < j \leq n$, each with multiplicity 2 (one is the restriction of $Y_i - Y_j$, the other the restriction of $Y_{2n+2-j} - Y_{2n+2-i}$)
	\item $X_i + X_j$, $1 \leq i < j \leq n$, each with multiplicity 2 (one is the restriction of $Y_i - Y_{2n+2-j}$, the other the restriction of $Y_j - Y_{2n+2-i}$)
	\item $X_i$, $1 \leq i \leq n$, each with multiplicity 2 (one is the restriction of $Y_i - Y_{n+1}$, the other the restriction of $Y_{n+1} - Y_{2n+2-i}$)
	\item $2X_i$, $1 \leq i \leq n$, each with multiplicity 1 (the restriction of $Y_i- Y_{2n+2-i}$)
\end{enumerate}

Now, consider applying a signed permutation $w$ to this set of weights.  The resulting set of weights will be

\begin{enumerate}
	\item For each $i,j$ ($1 \leq i < j \leq n$), either $X_i - X_j$ or $-(X_i - X_j)$, occurring with multiplicity 2 (these weights come from applying $w$ to weights of either type (1) or (2) above);
	\item For each $i,j$ ($1 \leq i < j \leq n$), either $X_i + X_j$ or $-(X_i+X_j)$, occurring with multiplicity 2 (these weights also come from applying $w$ to weights of either type (1) or (2) above);
	\item For each $i$ ($1 \leq i \leq n$), either $X_i$ or $-X_i$, occurring with multiplicity 2 (these weights come from applying $w$ to weights of type (3) above);
	\item For each $i$ ($1 \leq i \leq n$), either $2X_i$ or $-2X_i$, ocurring with multiplicity 1 (these weights come from applying $w$ to weights of type (4) above).
\end{enumerate}

Discarding roots of $K$, we are left with the following weights:

\begin{enumerate}
	\item For each $i,j$ ($1 \leq i < j \leq n$), either $X_i - X_j$ or $-(X_i-X_j)$, occurring with multiplicity 1;
	\item For each $i,j$ ($1 \leq i < j \leq n$), either $X_i + X_j$ or $-(X_i+X_j)$, occurring with multiplicity 1;
	\item For each $i$ ($1 \leq i \leq n$), either $X_i$ or $-X_i$, occurring with multiplicity 1;
	\item For each $i$ ($1 \leq i \leq n$), either $2X_i$ or $-2X_i$, occurring with multiplicity 1.
\end{enumerate}

It is clear that the number of weights of the form $-X_i$ and the number of weights of the form $-2X_i$ are the same, so weights of those two forms account for an even number of negative signs.  So in computing the restriction, to get the sign right, we need only concern ourselves with the signs of the weights of types (1) and (2) above.

We claim that the number of $X_i \pm X_j$ ($i < j$) occurring with a negative sign is congruent mod $2$ to $l(|w|)$.  (Cf. Subsection \ref{ssec:notation} for this notation.)  Indeed, suppose first that $|w|$ does not invert $i$ and $j$, so that $k = |w(i)| < |w(j)| = l$.  Then there are four possibilities:

\begin{enumerate}
	\item $w(i)$, $w(j)$ are both positive.  In this case, $X_{w(i)} + X_{w(j)} = X_k + X_l$, and $X_{w(i)} - X_{w(j)} = X_k - X_l$.  Neither of these is a negative root.
	\item $w(i)$ is negative, and $w(j)$ is positive.  Then $X_{w(i)} + X_{w(j)} = -(X_k - X_l)$, and $X_{w(i)} - X_{w(j)} = -(X_k + X_l)$.  Both of these are negative roots.
	\item $w(i)$ is positive, and $w(j)$ is negative.  Then $X_{w(i)} + X_{w(j)} = X_k - X_l$, and $X_{w(i)} - X_{w(j)} = X_k + X_l$.  Neither of these is a negative root.
	\item $w(i)$, $w(j)$ are both negative.  Then $X_{w(i)} + X_{w(j)} = -(X_k + X_l)$, and $X_{w(i)} - X_{w(j)} = -(X_k - X_l)$.  Both of these are negative roots.
\end{enumerate}

All this is to say that if $|w|$ does not invert $i$ and $j$, then this accounts for an even number of negative signs occurring in the restriction.  On the other hand, if $|w|$ does invert $i$ and $j$, so that $k = |w(j)| < |w(i)| = l$, then again there are four possibilities:

\begin{enumerate}
	\item $w(i)$, $w(j)$ are both positive.  In this case, $X_{w(i)} + X_{w(j)} = X_k + X_l$, and $X_{w(i)} - X_{w(j)} = -(X_k - X_l)$.  One of these is a negative root.
	\item $w(i)$ is negative, and $w(j)$ is positive.  Then $X_{w(i)} + X_{w(j)} = X_k - X_l$, and $X_{w(i)} - X_{w(j)} = -(X_k + X_l)$.  One of these is a negative root.
	\item $w(i)$ is positive, and $w(j)$ is negative.  Then $X_{w(i)} + X_{w(j)} = -(X_k - X_l)$, and $X_{w(i)} - X_{w(j)} = X_k + X_l$.  One of these is a negative root.
	\item $w(i)$, $w(j)$ are both negative.  Then $X_{w(i)} + X_{w(j)} = -(X_k + X_l)$, and $X_{w(i)} - X_{w(j)} = X_k - X_l$.  One of these is a negative root.
\end{enumerate}

The upshot is that if $w \in Q$ is an $S$-fixed point, then

\[ [Q]|_w = F(X) := (-1)^{l(|w|)} 2^n \displaystyle\prod_{i = 1}^n X_i^2 \displaystyle\prod_{1 \leq i < j \leq n} (X_i + X_j)(X_i-X_j). \]

So we seek a polynomial in $x_1,\hdots,x_n,y_1,\hdots,y_{2n+1}$, say $p$, with the property that

\[ p(X,\rho(wY)) = 
\begin{cases}
F(X) & \text{ if $w \in W_K$} \\
0 & \text{ otherwise.}
\end{cases} \]

It is straightforward to check that $P(x,y)$ has these properties.  Indeed, suppose first that $w \in W_K$.  (We should think of $w$ here as a signed element of $S_{2n+1}$, since this is how $w$ acts on the $Y_i$.)  Consider first the factors $y_i + y_{n+1}$ and $y_{n+1}+y_{2n+2-i}$ for $i=1,\hdots,n$.  Supposing $w(i) \leq n$, $Y_i + Y_{n+1}$ gives $Y_{w(i)} + Y_{n+1}$, which restricts to $X_{w(i)} + 0 = X_{w(i)}$.  On the other hand, $Y_{n+1} + Y_{2n+2-i}$ gives $Y_{n+1} + Y_{w(2n+2-i)}$, which restricts to $0 - X_{w(i)} = -X_{w(i)}$, so the product $(y_i + y_{n+1})(y_{n+1}+y_{2n+2-i})$ restricts to $-X_{w(i)}^2$.  If $w(i) > n+1$, then the product of these two terms restricts to $-X_{2n+2-w(i)}^2$, with the negative term coming from $y_i + y_{n+1}$, and the positive term coming from the $y_{n+1} + y_{2n+2-i}$.  As $i$ runs from $1$ to $n$, the product of all these terms restricts to $(-1)^n \prod_{i=1}^n X_i^2$.  This explains the factor of $(-2)^n$ in our formula, as opposed to just $2^n$.  The $(-1)^n$ is to account for a possible sign flip coming from terms of this type.  So the terms $(-2)^n \prod_{i=1}^n (y_i + y_{n+1})(y_{n+1} + y_{2n+2-i})$ of our putative formula contribute the $2^n \prod_{i=1}^n X_i^2$ portion of the required restriction.

Next, consider the terms $y_i + y_j$ and $y_i + y_{2n+2-j}$.  Applying $w$ and restricting, these give (up to sign) all required terms of the form $X_i + X_j$ and $X_i - X_j$ ($i<j$).  Writing each such term as either $+1$ or $-1$ times a positive root by factoring out negative signs as necessary, we effectively introduce the sign of $(-1)^{l(|w|)}$, as required.

On the other hand, given any $w \notin W_K$ (i.e. a \textit{non-signed} element of $S_{2n+1}$), there are two possibilities:

\textit{Case 1:  $w$ does not fix $n+1$}

In this case, $w$ moves $n+1$ to some $i$ such that $1 \leq i \leq 2n+1$, and $i \neq n+1$.  Let $j = w^{-1}(2n+2-i)$.  (Note, of course, that $j \neq n+1$.)  Applying $w$ to $y_j + y_{n+1}$, we get $Y_{2n+2-i} + Y_i$, which restricts to $0$.

\textit{Case 2:  $w$ fixes $n+1$}

In this case, $w(2n+2-i) \neq 2n+2-w(i)$ for some $1 \leq i \leq n$.  Let $j = 2n+2-w(i)$, and let $k = w^{-1}(j)$.  Clearly, $k \neq i$, $2n+2-i$, or $n+1$, so the factor $y_i + y_k$ appears in $P$.  Applying $w$ to this factor gives $Y_{w(i)} + Y_{2n+2-w(i)}$, which restricts to zero.

We see that in either case, applying $w$ then restricting kills one of the factors appearing in $P$, and so the result is zero for any $w \notin W_K$, as desired.  This completes the proof.
\end{proof}

\begin{remark}
An alternate representative of $[Q]$ is
\[ P(x,y) := 2^n \displaystyle\prod_{i=1}^n (x_i + y_{n+1})(x_i - y_{n+1})\displaystyle\prod_{1 \leq i < j \leq n}(y_i + y_j)(y_i + y_{2n+2-j}). \]
Indeed, this was the first representative discovered by the author.  However, the representative of the previous proposition is preferable from our perspective, essentially because a formula involving only the $y_i$ will pull back to a Chern class formula for the class of a certain degeneracy locus.  It is not clear that the representative involving the $x_i$ should have such an interpretation.
\end{remark}

\subsection{Parametrization of $K \backslash G/B$ and the weak order}\label{ssec:kgb_param_so_odd}
We shall refer freely to the definitions and notation of Subsection \ref{ssec:twisted_involutions}.

We can consider either the symmetric pair $(SL(2n+1,\C),SO(2n+1,\C))$ or $(GL(2n+1,\C),O(2n+1,\C))$.  These orbits coincide, since in the odd case, one can pass from one component of $O(2n+1,\C)$ to the other by multiplication by $-1$, which acts trivially on the flag variety.  If one thinks of the case $(GL(2n+1,\C),O(2n+1,\C))$, then the parametrization we describe here applies equally well to the even case $(GL(2n,\C),O(2n,\C))$.  However, in the even case, $SO(2n,\C)$-orbits on the flag variety no longer coincide with $O(2n,\C)$-orbits, so parametrizing those orbits is slightly more complicated.  (See Subsection \ref{ssec:so2n_param}.)

Whichever $(G,K)$ one prefers, in this case the Richardson-Springer map $\phi: K \backslash G/B \rightarrow \caI$ is a bijection (\cite[Examples 10.2,10.3]{Richardson-Springer-90}).  By Corollary \ref{cor:weak_order_for_phi_injective}, then, the weak ordering can be determined solely in terms of the twisted action of $M(W)$ on $\mathcal{I}$.  Since the map induced by $\theta$ on $W$ (which we also call $\theta$) is given by $\theta(w) = w_0 w w_0^{-1}$ ($w_0$ the longest element of $W$), we have

\[ \mathcal{I} = \{ w \in W \ \vert \ w_0 w w_0^{-1} = w^{-1} \}. \]

As for the weak ordering, we have the following:

\begin{prop}\label{prop:weak_order_so_odd}
Suppose that $s_i \in S$, and that $a \in \mathcal{I}$ is such that $m(s_i) * a \neq a$ (i.e. $l(s_ia) > l(a)$).  Let $v = \phi^{-1}(a)$.  If $s_i * a \neq a$, then $s_i$ is complex for $\mathcal{O}(v)$, and if $s_i * a = a$, $s_i$ is non-compact imaginary type II for $\mathcal{O}(v)$.
\end{prop}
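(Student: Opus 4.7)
The plan is to combine the algebraic characterizations of Proposition \ref{prop:twisted_inv_complex_imaginary} with the specific form of $\theta$ for this symmetric pair and the bijectivity of $\phi$.

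My first step will be to reformulate $s_i * a = a$ in terms of roots. Using $\theta(s_i) = s_{\theta(\alpha_i)}$ and the identity $a^{-1} s_{\alpha_i} a = s_{a^{-1}\alpha_i}$, the equation $s_i a \theta(s_i)^{-1} = a$ unfolds to $s_{a^{-1}\alpha_i} = s_{\theta(\alpha_i)}$, i.e., $a\theta(\alpha_i) = \pm\alpha_i$. Proposition \ref{prop:twisted_inv_complex_imaginary} then immediately yields the complex case: if $s_i * a \neq a$, then $a\theta(\alpha_i) \neq \pm\alpha_i$, so $s_i$ is complex for $\mathcal{O}(v)$.

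When $s_i * a = a$, the sign in $a\theta(\alpha_i) = \pm\alpha_i$ distinguishes imaginary ($+$) from real ($-$). I will rule out the real case using the length hypothesis $l(s_i a) > l(a)$, which is equivalent to $a^{-1}\alpha_i \in \Phi^+$. In our setting, $\theta$ acts on simple roots by $\theta(\alpha_j) = \alpha_{2n+1-j}$ (from $\theta(Y_j) = -Y_{2n+2-j}$), so $\theta(\alpha_i)$ is itself a positive simple root. Thus $a\theta(\alpha_i) = -\alpha_i$ would force $a^{-1}\alpha_i = -\theta(\alpha_i) = -\alpha_{2n+1-i} < 0$, contradicting the length hypothesis. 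Hence $s_i$ must be imaginary. The type II claim then follows from Proposition \ref{prop:twisted_inv_phi_equivariant} together with injectivity of $\phi$: $\phi(s_i \times \mathcal{O}(v)) = s_i * a = a$, forcing $s_i \times \mathcal{O}(v) = \mathcal{O}(v)$.

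The main obstacle will be proving $s_i$ is non-compact, i.e., $s_i \cdot \mathcal{O}(v) \neq \mathcal{O}(v)$. My strategy exploits the bijectivity of $\phi$: there is a unique closed $K$-orbit $Q_0 := \phi^{-1}(id)$, because the only twisted involution of minimal length is $id$ itself. For any chain $id <_{i_1} a_1 < \ldots <_{i_n} a_n = a$ in $\mathcal{I}$, Proposition \ref{prop:twisted_inv_paths} applied to $\mathcal{O}(v)$ will give $\mathcal{O}(v) = s_{i_n} \cdot \ldots \cdot s_{i_1} \cdot Q_0$. Setting $a' := m(s_i) * a \neq a$, the extended chain $id < \ldots < a <_i a'$ together with Proposition \ref{prop:twisted_inv_paths} applied to $\phi^{-1}(a')$ will give $\phi^{-1}(a') = s_i \cdot s_{i_n} \cdot \ldots \cdot s_{i_1} \cdot Q_0 = s_i \cdot \mathcal{O}(v)$. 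Since $a' \neq a$ and $\phi$ is injective, $\phi^{-1}(a') \neq \mathcal{O}(v)$, so $s_i \cdot \mathcal{O}(v) \neq \mathcal{O}(v)$, as required.
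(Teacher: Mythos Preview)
Your proof is correct and follows essentially the same logical skeleton as the paper's: use Proposition~\ref{prop:twisted_inv_complex_imaginary} to characterize the complex case, use bijectivity of $\phi$ to force $s_i \cdot \mathcal{O}(v) \neq \mathcal{O}(v)$ (hence non-compact imaginary via Proposition~\ref{prop:nsc_for_orbit_lift}), and use Proposition~\ref{prop:twisted_inv_phi_equivariant} plus injectivity for type~II. The differences are in implementation. First, your derivation of $s_i * a = a \Leftrightarrow a\theta(\alpha_i) = \pm\alpha_i$ via the reflection identity $a^{-1}s_{\alpha_i}a = s_{a^{-1}\alpha_i}$ is cleaner and type-independent; the paper instead verifies this by an explicit combinatorial computation with $w_0$ and the permutation action on $\{1,\dots,2n+1\}$. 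Second, your separate elimination of the real case using $l(s_ia)>l(a)\Rightarrow a^{-1}\alpha_i>0$ is an extra step the paper does not need: the paper simply invokes the dichotomy of Proposition~\ref{prop:nsc_for_orbit_lift} (if $s_i\cdot\mathcal{O}(v)\neq\mathcal{O}(v)$ then $\alpha_i$ is complex or non-compact imaginary), which already excludes real. Third, for $s_i\cdot\mathcal{O}(v)\neq\mathcal{O}(v)$ the paper cites Corollary~\ref{cor:weak_order_for_phi_injective} directly, whereas you re-derive the needed instance from Proposition~\ref{prop:twisted_inv_paths} together with uniqueness of the closed orbit; these amount to the same thing, since the corollary itself is proved from that proposition.
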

\begin{proof}
Note first that the map induced by $\theta$ on the roots $\Phi(G,T)$ (which we also denote by $\theta$) is determined by $\theta(Y_i) = -Y_{2n+2-i}$.  Also note that the twisted action of $W$ on $\mathcal{I}$ is given by $w * a = w a w_0 w^{-1} w_0^{-1} = w a w_0 w^{-1} w_0$.
	
We first show that $s_i$ is complex for $\mathcal{O}(v)$ if and only if $s_i * a \neq a$.  This follows from Proposition \ref{prop:twisted_inv_complex_imaginary} and elementary combinatorics.  We know by Proposition \ref{prop:twisted_inv_complex_imaginary} that $s_i$ is complex if and only if $a\theta(\ga_i) \neq \pm \ga_i$.  Well, $\theta(\ga_i) = \theta(Y_i - Y_{i+1}) = Y_{2n+1-i} - Y_{2n+2-i}$.  Then $a\theta(\ga_i)$ is equal to $\pm \ga_i$ if and only if $\{a(2n+1-i),a(2n+2-i)\} = \{i,i+1\}$.
	
Since 
\[ s_i * a = s_i a w_0 s_i w_0, \]
and since
\[ s_i = (i,i+1) \]
and
\[ w_0 = (1,2n+1)(2,2n) \hdots (n,n+2), \]
if $s_i * a = a$, then in particular,
\[ (s_i a w_0 s_i w_0)(2n+2-i) = s_i a (2n+1-i) = a(2n+2-i), \]
and
\[ (s_i a w_0 s_i w_0)(2n+1-i) = s_i a (2n+2-i) = a(2n+1-i). \]

(Note that either of these implies the other.)  This happens if and only if $\{a(2n+1-i),a(2n+2-i)\} = \{i,i+1\}$, showing that $s_i$ is complex for $\mathcal{O}(v)$ if and only if $s_i * a \neq a$.

Now, since $\phi$ is a bijection, it follows from Corollary \ref{cor:weak_order_for_phi_injective} that if $m(s_i) * a \neq a$, as we have assumed, then $s_i \cdot \caO(v) \neq \caO(v)$, meaning that $s_i$ is either complex or non-compact imaginary for $\caO(v)$ (Proposition \ref{prop:nsc_for_orbit_lift}).  So if $s_i * a = a$, since $s_i$ is not complex, it must be non-compact imaginary.  The fact that it is type II follows from Proposition \ref{prop:twisted_inv_phi_equivariant}, combined with the fact that $s_i * a = a$.
\end{proof}

This description of $K \backslash G/B$ and the weak order allows for explicit computations on the level of twisted involutions, simply by starting at the bottom (the identity twisted involution, which corresponds to the unique closed orbit) and computing the $M$-action all the way up.

One can also parametrize the set of orbits by honest involutions in the Weyl group.  This parametrization is preferable, in part because it allows for a straightforward linear algebraic description of the orbits and their closures in terms of rank conditions on the form used to define $G$ (see below).

The translation between twisted involutions and honest involutions is made simply by multiplying on the right by $w_0$.  We know from Proposition \ref{prop:full_closure_twisted} that the full closure order on the set of $K \backslash G/B$ in this case corresponds precisely to the induced Bruhat order on $\mathcal{I}$, and since right multiplication by $w_0$ inverts this order, it follows that when we describe $K \backslash G/B$ by the set of honest involutions in $W$, the full closure order is given by the reverse Bruhat order.  So in this setting, $w_0$ corresponds to the unique closed orbit.

Further, if $a \in \mathcal{I}$ is a twisted involution, then checking whether $s_i * a = a$, i.e. whether 
\[ s_i a w_0 s_i w_0 = a, \]
obviously amounts to checking whether the involution $b=aw_0$ is fixed under conjugation by $s_i$:
\[ s_i a w_0 s_i w_0 = a \Leftrightarrow s_i (aw_0) s_i = aw_0 \Leftrightarrow s_i b s_i = b. \]
 
In light of these easy observations and Proposition \ref{prop:weak_order_so_odd} above, we have the following:

\begin{corollary}\label{cor:weak_order_involutions}
When $K \backslash G/B$ is parametrized by the set of involutions in $W$, the unique closed orbit corresponds to $w_0$.  The weak order can be generated inductively, starting at $w_0$ and moving up, as follows:  Given an involution $b \in W$ and a simple reflection $s_i$ such that $l(s_ib) < l(b)$, we have one of the following two scenarios (recall our conventions on black/blue edges, described immediately after the proof of Proposition \ref{prop:nsc_for_orbit_lift}):
	\begin{enumerate}
		\item $s_i b s_i \neq b$, in which case $b <_i s_i b s_i$, and the edge in the weak order graph is black.
		\item $s_i b s_i = b$, in which case $b <_i s_i b$, and the edge in the weak order graph is blue.
	\end{enumerate}
\end{corollary}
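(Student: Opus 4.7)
The plan is to deduce the corollary from Proposition \ref{prop:weak_order_so_odd} by passing from twisted involutions to honest involutions via right multiplication by $w_0$, and tracking how the $M(W)$-action and length condition transform.

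First I would verify the bijection $\caI \leftrightarrow \{b \in W \mid b^2 = 1\}$ given by $a \mapsto b := aw_0$. Since $w_0$ is an involution in type $A$, the condition $a = \theta(a)^{-1} = w_0 a^{-1} w_0$ rearranges to $aw_0 = w_0 a^{-1}$, whence $(aw_0)^2 = a(w_0 a w_0) = a \cdot a^{-1} = 1$; the inverse map $b \mapsto bw_0$ is checked analogously. Under this bijection, the twisted involution $a = 1$ (which by Proposition \ref{prop:num-closed-orbits} corresponds to the unique closed orbit) goes to $b = w_0$, proving the first claim.

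Next I would translate the hypothesis and the $M(W)$-action. Using $l(aw_0) = l(w_0) - l(a)$, the cover condition of Proposition \ref{prop:weak_order_so_odd} that $l(s_i a) > l(a)$ becomes exactly $l(s_i b) < l(b)$. For the action itself, the twisted action is $s_i * a = s_i a w_0 s_i w_0$, so right-multiplying by $w_0$ shows $s_i * a = a \iff s_i (aw_0) s_i = aw_0 \iff s_i b s_i = b$. In the complex case ($s_i * a \neq a$, i.e.\ $s_i b s_i \neq b$) we have $m(s_i) * a = s_i a w_0 s_i w_0$, whose associated involution is $(s_i a w_0 s_i w_0) w_0 = s_i a w_0 s_i = s_i b s_i$. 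In the imaginary case ($s_i * a = a$, i.e.\ $s_i b s_i = b$) we have $m(s_i) * a = s_i a$, whose associated involution is $s_i a w_0 = s_i b$.

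Finally I would combine these translations with Corollary \ref{cor:weak_order_for_phi_injective}: since $\phi$ is bijective in the present setting, the weak order on $K \backslash G/B$ corresponds to the weak Bruhat order on $\caI$, so each $M(W)$-cover $a <_i m(s_i) * a$ becomes a weak-order cover $b <_i s_i b s_i$ or $b <_i s_i b$ as above. The edge-colouring comes directly from Proposition \ref{prop:weak_order_so_odd}: the case $s_i * a \neq a$ is complex (hence $\pi_{\alpha_i}$ is birational, black edge), whereas $s_i * a = a$ is non-compact imaginary of type II (degree-two cover, blue edge). There is no real obstacle; the only thing one must be careful with is ensuring the two length-translations and the two $M$-action translations line up consistently, and that the edge colouring convention of Section \ref{ssec:other_orbits} is applied with the correct case split.
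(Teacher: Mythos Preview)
Your proposal is correct and follows essentially the same approach as the paper: the paper does not give a separate proof of the corollary but simply notes, in the paragraphs immediately preceding it, that right multiplication by $w_0$ carries twisted involutions to honest involutions, that $s_i * a = a \iff s_i b s_i = b$, and that the edge colours come from Proposition~\ref{prop:weak_order_so_odd}. You have filled in precisely those details (the bijection $\caI \leftrightarrow \{b^2=1\}$, the length translation $l(s_i a)>l(a) \iff l(s_i b)<l(b)$, and the identification of $m(s_i)*a$ with $s_i b s_i$ or $s_i b$), so nothing further is needed.
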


We remark that the material of this subsection either appears explicitly in \cite{Richardson-Springer-90}, or follows easily from the content of that paper.  Further, the parametrization and description of the weak order given here is easily seen to be equivalent to that of \cite{Matsuki-Oshima-90}.  However,  the question of black/blue edges is not directly addressed in either paper.

The parametrization of $K \backslash G/B$ by involutions is convenient because an involution $b \in W$ encodes a linear algebraic description of the orbit corresponding to $b$ in a straightforward way.  Namely, given an involution $b$, define, for any $i$ and $j$,
\[ r_b(i,j) := \#\{ k \leq i \ \vert \ b(k) \leq j \}. \]
Let $V = \C^{2n+1}$, and let $\gamma: V \otimes V \rightarrow \C$ denote the orthogonal form with isometry group $K$.  For any flag $F_{\bullet} = (F_1 \subset \hdots \subset F_{2n+1}) \in X$, denote by $\gamma|_{F_i \times F_j}$ the restriction of $\gamma$ to pairs of the form $(v,w)$ with $v \in F_i$ and $w \in F_j$.  The claim is that if $b \in W$ is an involution, then the set
\[ \caO_b := \{ F_{\bullet} \in X \ \vert \ \text{rank}(\gamma|_{F_i \times F_j}) = r_b(i,j) \text{ for all } i,j \} \]
is a $K$-orbit on $G/B$, and that the association $b \mapsto \caO_b$ defines a bijection between involutions in $W$ and $K$-orbits.

Since any permutation $w$ is uniquely determined by the set of numbers $r_w(i,j)$, it is clear by definition that the $\caO_b$ are mutually disjoint.  It is also clear that each set $\caO_b$, if non-empty, is stable under $K$ and hence is at least a union of $K$-orbits.  If we can see that every $\caO_b$ is non-empty, it will follow that each must be a \textit{single} $K$-orbit.  Indeed, by the results of \cite{Richardson-Springer-90}, we know that the orbits are in bijection with the involutions of $W$.  If each $\caO_b$ is non-empty, then it is impossible for any one of them to be anything other than a single $K$-orbit, for then there would be more $K$-orbits than involutions.

Thus we show that each set $\caO_b$ is non-empty by producing an explicit representative satisfying the appropriate rank conditions.  It suffices to produce a basis $\{v_1,\hdots,v_{2n+1}\}$ for $\C^{2n+1}$ such that the matrix for the form $\gamma$ relative to this basis is a monomial matrix (that is, a matrix such that each row and column has exactly one non-zero entry) whose image in $W$ is $b$.  Then we can simply take our flag $F_{\bullet}$ to be $\left\langle v_1,\hdots,v_{2n+1} \right\rangle$.

We choose such a basis as follows.  (Recall that the form $\gamma$ is defined by $\left\langle e_i,e_j \right\rangle = \delta_{i,2n+2-j}$.)  First, for each $i$ such that $b(i) \neq i$, choose $v_i$ and $v_{b(i)}$ to be $e_k$ and $e_{2n+2-k}$ for some $k \neq n+1$.  (Of course, we should choose a different such $k$ for each such $i$.)   There are an odd number of $i$ such that $b(i) = i$ --- for one such $i$, choose $v_i$ to be $e_{n+1}$, and for all other pairs $i_1,i_2$ of such $i$, choose $v_{i_1}$ to be $e_k + e_{2n+2-k}$ for some $k \neq n+1$ (and not yet used in the first step above), and choose $v_{i_2}$ to be $e_k - e_{2n+2-k}$ for the same $k$.  (We should choose a different such $k$ for each such pair $i_1,i_2$.)

\begin{prop}
With $v_1,\hdots,v_{2n+1}$ defined as above, the flag $\left\langle v_1,\hdots,v_{2n+1} \right\rangle$ lies in $\caO_b$.
\end{prop}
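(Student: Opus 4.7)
The plan is to analyze the Gram matrix $M = \bigl(\gamma(v_i, v_j)\bigr)_{i,j}$ of $\gamma$ in the basis $\{v_1, \ldots, v_{2n+1}\}$ and show that $M$ is a monomial matrix whose nonzero entries occur precisely at the positions $(i, b(i))$, i.e.\ whose support is the graph of the involution $b$. Once this is in hand, the rank conditions defining $\caO_b$ drop out essentially for free, because for a monomial matrix the rank of any submatrix is just the number of nonzero entries it contains.

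First I would verify the monomial structure by a short case analysis mirroring the three clauses of the construction. For a non-fixed index $i$ with $v_i = e_k$ and $v_{b(i)} = e_{2n+2-k}$, the identity $\gamma(e_a,e_c)=\delta_{a,2n+2-c}$ together with the distinctness of the $k$'s chosen across different pairs forces $M_{i,b(i)}=M_{b(i),i}=1$ and all other entries in those rows and columns to vanish. For the unique fixed point assigned $v_i = e_{n+1}$, the only nonzero entry in its row is $M_{i,i}=1$. For a pair $\{i_1,i_2\}$ of fixed points with $v_{i_1}=e_k+e_{2n+2-k}$ and $v_{i_2}=e_k-e_{2n+2-k}$, a direct expansion gives $M_{i_1,i_1}=2$, $M_{i_2,i_2}=-2$, and, crucially, $M_{i_1,i_2}=M_{i_2,i_1}=0$. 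Distinctness of the chosen $k$'s across different pairs and different clauses ensures no further cross-terms arise, so $M_{ij}\neq 0$ if and only if $j=b(i)$.

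With the monomial structure established, I would finish as follows. Since $F_i=\langle v_1,\ldots,v_i\rangle$ and $F_j=\langle v_1,\ldots,v_j\rangle$, the matrix of $\gamma|_{F_i\times F_j}$ is exactly the $i\times j$ top-left submatrix $M[1{:}i,\,1{:}j]$. Because $M$ has at most one nonzero entry per row and per column, the rank of any submatrix equals the number of its nonzero entries, which in this case is $\#\{k\le i : b(k)\le j\} = r_b(i,j)$. This gives the rank equalities defining $\caO_b$ and hence places the flag in $\caO_b$.

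The only genuinely nontrivial verification is the vanishing $\gamma(e_k+e_{2n+2-k},\,e_k-e_{2n+2-k})=0$ for the fixed-point pairs; this follows instantly from the symmetry of $\gamma$ together with $\gamma(e_k,e_k)=\gamma(e_{2n+2-k},e_{2n+2-k})=0$ (valid since $k\ne n+1$). Everything else is bookkeeping about which basis vectors pair nontrivially, so I do not anticipate any serious obstacle.
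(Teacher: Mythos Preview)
Your proposal is correct and follows essentially the same approach as the paper's own proof: both establish that the Gram matrix of $\gamma$ in the basis $\{v_i\}$ is a monomial matrix supported on the graph of $b$ via the same three-case analysis, and then read off the rank conditions from the top-left submatrices. The paper carries out the case analysis with slightly more detail but the structure and key verifications (in particular the vanishing $\gamma(e_k+e_{2n+2-k},\,e_k-e_{2n+2-k})=0$) are identical.
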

\begin{proof}
We first note that the matrix for the form $\gamma$ relative to this basis is indeed a monomial matrix whose image in $W$ is $b$.  This means precisely that for each $i$, $\left\langle v_i,v_j \right\rangle$ is non-zero if and only if $j = b(i)$.

For any $i$ with $b(i) \neq i$, this is clear.  Indeed, $v_i = e_k$ for some $k$, while $v_{b(i)} = e_{2n+2-k}$.  Meanwhile, $e_k$ appears with coefficient $0$ in all other $v_i$ by design.  Since $\left\langle e_i,e_j \right\rangle = \delta_{i,2n+2-j}$, we see that $\left\langle v_i,v_j \right\rangle = \delta_{j,b(i)}$.

Now suppose that $b(i) = i$.  Then either $v_i = e_{n+1}$, or $v_i = e_k  \pm e_{2n+2-k}$ for some $k \neq n+1$ (and not equal to any $k$ used to define $v_i$ with $b(i) \neq i$).  In the former case, we have 
\[ \left\langle v_i,v_{b(i)} \right\rangle = \left\langle v_i,v_i \right\rangle = \left\langle e_{n+1},e_{n+1} \right\rangle = 1, \]
while $\left\langle v_i,v_j \right\rangle = 0$ for any other $j$, since $e_{n+1}$ appears with coefficient $0$ in all other $v_i$.  In the latter case, supposing that $v_i = e_k  + e_{2n+2-k}$, we have
\[ \left\langle v_i,v_{b(i)} \right\rangle = \left\langle v_i,v_i \right\rangle = \left\langle e_k,e_{2n+2-k} \right\rangle + \left\langle e_{2n+2-k},e_k \right\rangle) = 2. \]
If $v_i = e_k  - e_{2n+2-k}$, the corresponding computation shows that $\left\langle v_i,v_{b(i)} \right\rangle = -2$.

For $j \neq b(i)$, either $e_k$ appears with coefficient $0$ in $v_j$ (in which case $\left\langle v_i,v_j \right\rangle = 0$), or $v_j = e_k \mp e_{2n+2-k}$, and in that case,
\[ \left\langle v_i,v_j \right\rangle = \left\langle e_k,e_{2n+2-k} \right\rangle - \left\langle e_{2n+2-k},e_k \right\rangle) = 0. \]

This establishes that the matrix for $\gamma$ relative to the basis $\{v_i\}$ is indeed monomial, with image $b$ in $W$.  Now, note that if $F_{\bullet} = \left\langle v_1,\hdots,v_{2n+1} \right\rangle$, then $\text{rank}(\gamma|_{F_i \times F_j})$ is, by definition, the rank of the upper-left $i \times j$ rectangle of this matrix.  For any monomial matrix with image $b$ in $W$, the rank of the upper-left $i \times j$ rectangle is precisely $r_b(i,j)$.  This proves the claim.
\end{proof}

We illustrate with two examples.  Suppose $n = 2$, so we are dealing with $G = SL(5,\C)$, $K = SO(5,\C)$.  First consider the involution $b=(2,4)$.  Since $b$ moves $2$ and $4$, we first choose $v_2 = e_1$ and $v_4 = e_5$.  Since $b$ fixes $1$, $3$, and $5$, we first choose $v_1 = e_3$, then we choose $v_3 = e_2+e_4$ and $v_5 = e_2 - e_4$.  Our ordered basis is thus
\[ \{ e_3,e_1,e_2+e_4,e_5,e_2-e_4 \}. \]
Relative to this ordered basis, the form $\gamma$ has matrix
\[ \begin{pmatrix}
1 & 0 & 0 & 0 & 0 \\
0 & 0 & 0 & 1 & 0 \\
0 & 0 & 2 & 0 & 0 \\
0 & 1 & 0 & 0 & 0 \\
0 & 0 & 0 & 0 & -2 \end{pmatrix}, \]
and one checks that if $F_{\bullet} = \left\langle v_1,\hdots,v_5 \right\rangle$, then the rank conditions specified by $b=(2,4)$ are satisfied.

Next, consider $b=(1,3)(2,5)$.  We first choose $v_1 = e_1$, $v_3 = e_5$, $v_2 = e_2$, and $v_5 = e_4$.  Finally, since $b$ fixes only $4$, we choose $v_4 = e_3$.  So our ordered basis is $\left\langle e_1,e_2,e_5,e_3,e_4 \right\rangle$, and the form $\gamma$, relative to this basis, has matrix
\[ \begin{pmatrix}
0 & 0 & 1 & 0 & 0 \\
0 & 0 & 0 & 0 & 1 \\
1 & 0 & 0 & 0 & 0 \\
0 & 0 & 0 & 1 & 0 \\
0 & 1 & 0 & 0 & 0 \end{pmatrix}. \]
One again checks easily that if $F_{\bullet} = \left\langle v_1,\hdots,v_5 \right\rangle$, the rank conditions encoded by $b = (1,3)(2,5)$ are satisfied.

\subsection{Example}
We now work out a very small example in detail, the case $n = 1$.  (So $G = SL(3,\C)$, $K = SO(3,\C)$.)  Here, there are 4 involutions, and hence 4 orbits.

We start from the minimal element $w_0 = (1,3)$, and work our way up as described in Corollary \ref{cor:weak_order_involutions}.  Since
\[ s_1 w_0 s_1 = (1,2)(1,3)(1,2) = (2,3) \neq w_0, \]
we have $w_0 <_1 (2,3)$, and the edge is black.

Similarly,
\[ s_2 w_0 s_2 = (2,3)(1,3)(2,3) = (1,2) \neq w_0, \]
so $w_0 <_2 (1,2)$, and again the edge is black.

Now, we move up to the orbits corresponding to $(1,2)$ and $(2,3)$.  Start with $(2,3) = s_2$.  Since $l(s_1 s_2) > l(s_2)$, there is no edge originating at $(2,3)$ with label $1$ and going up.  However, $l(s_2^2) < l(s_2)$, so we will obviously have that $s_2 <_2 id$.  (Since $id$ is the only involution left, this is the only possibility.)  Let us check this directly:
\[ s_2 s_2 s_2 = s_2, \]
so instead of conjugating by $s_2$, we multiply on the left by it to get $s_2^2 = id$, and in this case the edge is blue.

The situation with $(1,2) = s_1$ is similar.  Since $l(s_2 (1,2)) = l(s_2 s_1) > l(s_1)$, so there is no edge originating at $s_1$ with label $2$ and going up.  But $l(s_1^2) < l(s_2)$, so we will have that $s_1 <_1 id$.  Again, $s_1$ is fixed by conjugation by $s_1$, and so we multiply on the left to get $id$, and the edge connecting $s_1$ and $id$ has label $1$ and is blue.  The weak order graph appears as Figure \ref{fig:type-a-orthogonal-1} of Appendix B.

With this complete, we now determine formulas for the $S$-equivariant classes of all orbit closures.  By Proposition \ref{prop:formula_for_SO_odd} above, the class of the closed orbit corresponding to $w_0$ is given by the formula $[Q] = -2(y_1+y_2)(y_2+y_3)$.  The class $[Y_{(2,3)}]$ is given by
\[ [Y_{(2,3)}] = \partial_1([Q]), \]
and since 
\[ \partial_1(f(x,y_1,y_2,y_3)) = \dfrac{f - f(x,y_2,y_1,y_3)}{y_1-y_2}, \]
we have $[Y_{(2,3)}] = 2(y_1+y_2)$.  Similarly, $[Y_{(1,2)}] = \partial_2([Q]) = -2(y_2+y_3)$.

Finally, we can compute $[Y_{id}]$ either as $\frac{1}{2}\partial_2([Y_{(2,3)}])$, or as $\frac{1}{2}\partial_1([Y_{(1,2)}])$.  Using either formula, we get that $[Y_{id}] = 1$, as expected.

The results are summarized in Table \ref{tab:type-a-so3} of Appendix B.  The weak order graph and the list of formulas for the larger case $n = 5$ appear in Figure \ref{fig:type-a-orthogonal-2} and Table \ref{tab:type-a-so5}.  (In that case, there are 26 orbits.)

\section{$K \cong SO(2n,\C)$}
We now treat the case of the even special orthogonal group.

As before, we realize $K$ as the subgroup of $SL(2n,\C)$ preserving the orthogonal form given by the antidiagonal matrix $J=J_{2n}$.  

Once again, if $Y_i$ ($i=1,\hdots,2n$) are coordinates on $\frt$, restriction to $\frs$ is given by $\rho(Y_i) = X_i$ and $\rho(Y_{2n+1-i}) = -X_i$ for each $i = 1,\hdots,n$.

The roots of $K$ are 
\[ \Phi_K  = \{\pm(X_i \pm X_j) \mid i<j\}. \]

In this case, the Weyl group $W_K$ of $K$ acts on torus characters by signed permutations which change an \textit{even} number of signs.  The inclusion of $W_K$ into $W$ is as signed elements $\sigma \in S_{2n}$ (defined in Subsection \ref{ssec:notation}) having the further property that
\begin{equation}
	\sigma(i) > n \text{ for an even number of } i=1,\hdots,n.
\end{equation}

\subsection{Formulas for the closed orbits}
With conventions fixed as in the previous subsection, we have the following:

\begin{prop}\label{prop:closed-orbits-so-2n}
There are precisely $2$ closed $K$-orbits on $G/B$.  The first, $Q_1$, is the orbit $K \cdot 1B$.  The $S$-fixed points contained in $Q_1$ are signed elements of $S_{2n}$, and correspond to elements of $W_K$ under the embedding $W_K \subseteq W$ described above.

The second, $Q_2$, is the orbit $K \cdot s_nB$, where $s_n$ is the simple transposition $(n,n+1)$.  Fixed points in this orbit are signed elements of $S_{2n}$ that correspond to signed permutations of $\{1,\hdots,n\}$ changing an \textit{odd} number of signs.
\end{prop}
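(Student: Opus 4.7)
The plan is to apply Proposition~\ref{prop:num-closed-orbits}, which says that $K \cdot wB$ is closed if and only if $\theta(w) = w$ in $W$. By the same computation as in the $SO(2n+1,\C)$ case (using that $(w^{-1})^t = w$ for a permutation matrix, and that $JwJ$ reverses indices), the induced action of $\theta$ on $W$ is conjugation by the longest element: $\theta(w) = w_0 w w_0$. The fixed-point condition $w_0 w w_0 = w$ translates to $w(2n+1-i) = 2n+1-w(i)$, which is precisely the defining condition for $w$ to be a \emph{signed element} of $S_{2n}$. The total number of signed elements of $S_{2n}$ is $2^n \cdot n!$.

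Next I would count the closed orbits. Because $SO(2n,\C)$ is connected, Proposition~\ref{prop:closed-orbits-smooth} together with the discussion preceding Corollary~\ref{cor:num-closed-orbits-equal-rank} shows that each closed orbit contains exactly $|W_K|$ $S$-fixed points, organized as a single left coset $W_K \cdot w \subset W$. Since $W_K$ is of type $D_n$, $|W_K| = 2^{n-1} \cdot n!$, so the number of closed orbits equals $(2^n n!)/(2^{n-1} n!) = 2$. It then suffices to exhibit two distinct $W_K$-cosets of $\theta$-fixed elements. The identity is trivially a signed element and lies in $W_K$; and $s_n = (n,n+1)$ is a signed element, since it exchanges $n$ and $n+1 = 2n+1-n$. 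Viewed as a signed permutation of $\{1,\ldots,n\}$, $s_n$ corresponds to $\pi(n) = -n$, $\pi(i) = i$ ($i < n$), which exhibits exactly one sign change. Since elements of $W_K$ correspond to signed permutations with an \emph{even} number of sign changes, $s_n \notin W_K$, so $W_K \cdot 1$ and $W_K \cdot s_n$ are distinct and exhaust the closed orbits.

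Finally, to identify the $S$-fixed points in each orbit, I would use the description of the embedding $W_K \hookrightarrow W$. By construction, $W_K = W_K \cdot 1$ consists of exactly those signed elements of $S_{2n}$ that arise from signed permutations of $\{1,\ldots,n\}$ with an even number of sign changes; these are the fixed points in $Q_1$. For $Q_2 = K \cdot s_n B$, the fixed points form the coset $W_K \cdot s_n$. Composing any even-sign-change signed permutation with $s_n$ flips the sign of $\pi(n)$ while leaving all other signs unchanged, so the result has an odd total number of sign changes. Hence $W_K \cdot s_n$ is precisely the set of signed elements of $S_{2n}$ corresponding to signed permutations with an odd number of sign changes, as claimed.

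The main conceptual subtlety -- absent in the $SO(2n+1,\C)$ case -- is that $W_K$ embeds as a proper index-$2$ subgroup of the set of signed elements of $S_{2n}$: the Weyl group of type $D_n$ sees only even-parity sign changes. This parity dichotomy is exactly what produces two closed orbits rather than one, and the verification that $s_n$ lies outside $W_K$ but inside the set of twisted involutions is the one place where care is required.
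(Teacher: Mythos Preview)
Your proof is correct and follows essentially the same approach as the paper's. The paper's proof is more terse---it simply notes that $\theta$ acts on $W$ by $w \mapsto w_0 w w_0$ as in the odd case, so closed orbits correspond to signed elements of $S_{2n}$, and then observes that these form exactly the two left $W_K$-cosets $W_K \cdot 1$ and $W_K \cdot s_n$---but your more explicit counting argument ($2^n n!$ signed elements, $|W_K| = 2^{n-1} n!$, hence two cosets) and your verification that $s_n$ is a signed element lying outside $W_K$ fill in precisely the details the paper leaves implicit.
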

\begin{proof}
As in the previous case, the map induced on $W$ by $\theta$ is $w \mapsto w_0 w w_0$.  So as in that case, we see using Proposition \ref{prop:num-closed-orbits} that $K \cdot wB$ is closed if and only if $w$ is a signed element of $S_{2n}$.

These fixed points correspond to elements of $S_{2n}$ which constitute $2$ (left) $W_K$-cosets --- namely, $W_K \cdot 1$ and $W_K \cdot s_n$.  Thus there are two closed orbits, as follows from the discussion immediately following the statement of Proposition \ref{prop:num-closed-orbits}.
\end{proof}

With the closed orbits determined, we now give a formula for the $S$-equivariant class of each:

\begin{prop}\label{prop:formula_for_SO_even}
With $Q_1$ and $Q_2$ as in the previous proposition, $[Q_1]$ is represented by the polynomial $P_1(x,y)$, and $[Q_2]$ by the polynomial $P_2(x,y)$, where
\[ P_1(x,y) = 2^{n-1} (x_1 \hdots x_n + y_1 \hdots y_n) \displaystyle\prod_{1 \leq i < j \leq n}(y_i + y_j)(y_i + y_{2n+1-j}); \]
and
\[ P_2(x,y) = -2^{n-1} (x_1 \hdots x_n - y_1 \hdots y_n) \displaystyle\prod_{1 \leq i < j \leq n}(y_i + y_j)(y_i + y_{2n+1-j}). \]
\end{prop}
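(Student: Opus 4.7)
The plan is to mimic the proof of Proposition \ref{prop:formula_for_SO_odd} in the $SO(2n+1,\C)$ case, using Proposition \ref{prop:restriction-of-closed-orbit} to compute the $S$-equivariant restriction of $[Q_1]$ and $[Q_2]$ at every $S$-fixed point, and then checking that $P_1$ and $P_2$ reproduce these restrictions and vanish at all remaining $S$-fixed points. For $w$ a signed element of $S_{2n}$ with underlying signed permutation $\pi$ of $\{1,\dots,n\}$, the restriction of $\Phi^+$ to $\frs$ produces the weights $\pm(X_i \pm X_j)$ for $1\le i<j\le n$ (each with multiplicity $2$) together with $\pm 2X_i$ for $1\le i\le n$ (with multiplicity $1$). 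After removing one copy of each root of $K$ present in $\rho(w\Phi^+)$ and taking the product, a computation essentially identical to the one in Proposition \ref{prop:formula_for_SO_odd} yields
\[ [Q_\ell]|_w = (-1)^{l(|\pi|) + s(\pi)}\, 2^n\, X_1\cdots X_n \prod_{1\le i<j\le n}(X_i^2 - X_j^2), \]
where $s(\pi)$ denotes the number of sign changes of $\pi$. The only difference from the odd-rank case is the surviving factor of $(-1)^{s(\pi)}$, which is no longer cancelled by accompanying weights of the form $\pm X_i$.

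The purpose of the new factor $x_1\cdots x_n \pm y_1\cdots y_n$ is to distinguish between the two closed orbits. At a signed $w$ one has $y_1\cdots y_n \mapsto (-1)^{s(\pi)} X_1\cdots X_n$, so $x_1\cdots x_n + y_1\cdots y_n$ restricts to $2X_1\cdots X_n$ when $s(\pi)$ is even (i.e.\ $w\in Q_1$) and to $0$ when $s(\pi)$ is odd, while $x_1\cdots x_n - y_1\cdots y_n$ behaves oppositely. Combined with the fact that $\prod_{i<j}(y_i+y_j)(y_i+y_{2n+1-j})$ restricts at signed $w$ to $(-1)^{l(|\pi|)}\prod (X_i^2 - X_j^2)$ (by exactly the case analysis carried out in the proof of Proposition \ref{prop:formula_for_SO_odd}), this verifies that $P_1$ and $P_2$ give the correct restrictions at all signed $w$.

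The main obstacle is showing that $P_1|_w = P_2|_w = 0$ for every $w\in W$ that is not a signed element of $S_{2n}$; unlike the odd-rank case there is no middle index $n+1$ to exploit. My approach is to split into cases according to the set $A = \{w(1),\dots,w(n)\}$, writing $\tau(i) = 2n+1-i$. If $A$ contains both elements of some $\tau$-pair $\{k,\tau(k)\}$, then there exist $1\le i<j\le n$ with $w(i)+w(j) = 2n+1$, so the factor $y_i+y_j$ in both $P_1$ and $P_2$ restricts to zero. Otherwise $A$ is a complete transversal of the $\tau$-pairs; in that case I would define $\mu\colon\{1,\dots,n\}\to\{1,\dots,n\}$ by the requirement $w(2n+1-\mu(i)) = 2n+1 - w(i)$, verify that $\mu$ is a well-defined permutation, and observe that $\mu$ is the identity precisely when $w$ is signed. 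Since our $w$ is assumed unsigned, some $i$ satisfies $\mu(i) > i$, and taking $j=\mu(i)$ then yields $1\le i<j\le n$ with $w(i)+w(2n+1-j) = 2n+1$, so the factor $y_i+y_{2n+1-j}$ restricts to zero in both polynomials. The proposition then follows from the localization theorem.
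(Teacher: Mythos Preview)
Your argument is correct and follows essentially the same route as the paper's proof: compute the restrictions at signed $w$ via Proposition~\ref{prop:restriction-of-closed-orbit}, use the factor $x_1\cdots x_n \pm y_1\cdots y_n$ to separate the two parities of sign changes, and show vanishing at every non-signed $w$ by exhibiting a factor $y_i+y_j$ or $y_i+y_{2n+1-j}$ that restricts to zero. Your case split on whether $A=\{w(1),\dots,w(n)\}$ is a transversal of the $\tau$-pairs, together with the permutation $\mu$, is in fact a cleaner and more explicit organization of the vanishing step than the paper's, which simply asserts that ``the factor $y_i+y_k$ appears'' without isolating which pair $(i,j)$ in the product indexing is used.
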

\begin{proof}
We demonstrate the correctness of the formula for $[Q_1]$. The argument is similar to that given in the previous case for the lone closed orbit of the odd orthogonal group.

As stated, $Q_1$ consists of those $S$-fixed points corresponding to elements of $W_K$ --- that is, signed permutations with an even number of sign changes.  Take $w \in Q_1$ to be such a fixed point.  We use Proposition \ref{prop:restriction-of-closed-orbit} to compute the restriction $[Q_1]|_w$.  As in the previous example, we first determine the restriction of the positive roots $\Phi^+$ to $\frs$, then apply the signed permutation $w$ to that set of weights.

Restricting the positive roots $\{Y_i - Y_j \ \vert \ 1 \leq i < j \leq 2n\}$ to $\frs$, we get the following set of weights:

\begin{enumerate}
	\item $X_i - X_j$, $1 \leq i < j \leq n$, each with multiplicity 2 (one is the restriction of $Y_i - Y_j$, the other the restriction of $Y_{2n+1-j} - Y_{2n+1-i}$)
	\item $X_i + X_j$, $1 \leq i < j \leq n$, each with multiplicity 2 (one is the restriction of $Y_i - Y_{2n+1-j}$, the other the restriction of $Y_j - Y_{2n+1-i}$)
	\item $2X_i$, $1 \leq i \leq n$, each with multiplicity 1 (the restriction of $Y_i - Y_{2n+1-i}$)
\end{enumerate}

Now, consider applying a signed permutation $w \in W_K$ to this set of weights.  The resulting set of weights will be

\begin{enumerate}
	\item $\pm (X_i - X_j)$, $1 \leq i < j \leq n$, each occurring with either a plus or minus sign, and with multiplicity 2 (these weights come from applying $w$ to weights of either type (1) or (2) above)
	\item $\pm (X_i + X_j)$, $1 \leq i < j \leq n$, each occurring with either a plus or minus sign, and with multiplicity 2 (these weights also come from applying $w$ to weights of either type (1) or (2) above)
	\item $\pm 2X_i$, $1 \leq i \leq n$, each ocurring with either a plus or minus sign, and with multiplicity 1 (these weights come from applying $w$ to weights of type (3) above)
\end{enumerate}

Subtracting roots of $K$, we are left with the following weights:

\begin{enumerate}
	\item $\pm (X_i - X_j)$, $1 \leq i < j \leq n$, each occurring with either a plus or minus sign, and with multiplicity 1
	\item $\pm (X_i + X_j)$, $1 \leq i < j \leq n$, each occurring with either a plus or minus sign, and with multiplicity 1
	\item $\pm 2X_i$, $1 \leq i \leq n$, each occurring with either a plus or minus sign, and with multiplicity 1
\end{enumerate}

The number of weights of the form $-2X_i$ is even, since $w$ changes an even number of signs.  So in computing the restriction, to get the sign right, we need only concern ourselves with the signs of the weights of types (1) and (2) above.

We may argue just as in the proof of Proposition \ref{prop:formula_for_SO_odd} that the number of $X_i \pm X_j$ ($i < j$) occurring with a negative sign is congruent mod $2$ to $l(|w|)$.  As such, if $w \in Q_1$ is an $S$-fixed point, then

\[ [Q_1]|_w = F(X) := (-1)^{l(|w|)} 2^n X_1 \hdots X_n \displaystyle\prod_{1 \leq i < j \leq n} (X_i + X_j)(X_i-X_j). \]

So we seek a polynomial in $x_1,\hdots,x_n,y_1,\hdots,y_{2n}$, say $f$, with the property that

\[ f(X,\rho(wY)) = 
\begin{cases}
F(X) & \text{ if $w \in W_K$} \\
0 & \text{ otherwise.}
\end{cases} \]

It is straightforward to verify that $P_1$ has these properties.  Indeed, first take $w \in W_K$.  Then applying $w$ to the term $x_1 \hdots x_n + y_1 \hdots y_n$ gives $2X_1 \hdots X_n$, since $w$ permutes the $Y_i$ with an even number of sign changes, and each restricts to the corresponding $X_i$.  Multiplying this by $2^{n-1}$ gives us the $2^n X_1 \hdots X_n$ part of $F$.  The terms $y_i + y_j$ and $y_i + y_{2n+1-j}$ give, up to sign, all terms of the form $X_i + X_j$ and $X_i - X_j$ ($i<j$).  Rewriting each such term as either $+1$ or $-1$ times a positive root by factoring out negative signs as necessary, we effectively introduce the sign of $(-1)^{l(|w|)}$, as required.

On the other hand, if $w \notin W_K$, then there are two possibilities:

\textit{Case 1:  $w$ is a signed element of $S_{2n}$ corresponding to a signed permutation with an \textit{odd} number of sign changes.}

In this case, $w$ clearly kills the term $x_1 \hdots x_n + y_1 \hdots y_n$, and hence $f(X,\rho(wY)) = 0$.

\textit{Case 2:  $w$ is not a signed element of $S_{2n}$.}

In this case, then $w(2n+1-i) \neq 2n+1-w(i)$ for some $1 \leq i \leq n$.  Let $j = 2n+1-w(i)$, and let $k = w^{-1}(j)$.  Clearly, $k \neq i$ or $2n+1-i$.  So the factor $y_i + y_k$ appears in $P_1$.  Applying $w$ to this factor gives $Y_{w(i)} + Y_{2n+1-w(i)}$, which then restricts to zero.

We see that in either case, $f(X,\rho(wY)) = 0$.  This proves that $P_1(x,y)$ represents $[Q_1]$.

The verification of the formula for $[Q_2]$ is similar.  The orbit $Q_2$ consists of those $S$-fixed points corresponding to signed elements of $S_{2n}$ corresponding to signed permutations which change an \textit{odd} number of signs.  Take $w \in Q_2$ to be such a fixed point.  We compute the restriction $[Q_2]_w$.  The set $\rho(w\Phi^+))$ is as follows:

\begin{enumerate}
	\item For each $1 \leq i < j \leq n$, exactly one of $\pm (X_i - X_j)$, occurring with multiplicity 2 
	\item For each $1 \leq i < j \leq n$, exactly one of $\pm (X_i + X_j)$ occurring with multiplicity 2 
	\item For each $i=1,\hdots,n$, exactly one of $\pm 2X_i$, with multiplicity 1
\end{enumerate}

Subtracting roots of $K$, we are left with
\begin{enumerate}
	\item $\pm (X_i - X_j)$, $1 \leq i < j \leq n$, each occurring with either a plus or minus sign, and with multiplicity 1
	\item $\pm (X_i + X_j)$, $1 \leq i < j \leq n$, each occurring with either a plus or minus sign, and with multiplicity 1
	\item $\pm 2X_i$, $1 \leq i \leq n$, each occurring with either a plus or minus sign, and with multiplicity 1
\end{enumerate}

The number of weights of the form $-2x_i$ is odd, since $w$ changes an odd number of signs.  The number of roots of types (1) or (2) occurring with a minus sign is again congruent mod $2$ to $l(|w|)$ (the argument is identical).  So in this case, we see that

\[ [Q_2]|_w = F(X) := (-1)^{l(|w|)+1} 2^n X_1 \hdots X_n \displaystyle\prod_{1 \leq i < j \leq n} (X_i + X_j)(X_i-X_j). \]

So now we seek a polynomial $f$ in the variables $x_1,\hdots,x_n,y_1,\hdots,y_{2n}$, such that

\[ f(X,\rho(wY)) = 
\begin{cases}
F(X) & \text{ if $w$ is signed and changes an odd number of signs} \\
0 & \text{ otherwise.}
\end{cases} \]

Again, the verification that $P_2$ fits the bill is routine.  Given $w \in Q_2$, it clearly sends $X_1 \hdots X_n - Y_1 \hdots Y_n$ to $2X_1 \hdots X_n$, and multiplication by $-2^{n-1}$ gives the first part of $F$ above.  And just as was the case for $Q_1$, the terms $y_i + y_j$ and $y_i + y_{2n+1-j}$ give the second part, with the appropriate sign of $(-1)^{l(|w|)}$.

For $w \notin Q_2$, again there are two possibilities:

\textit{Case 1:  $w$ is signed and and changes an even number of signs (i.e. $w \in W_K$)}

Then clearly $w$ kills the term $x_1 \hdots x_n - y_1 \hdots y_n$.

\textit{Case 2:  $w$ is not a signed permutation}

Then we may argue just as in the case of $Q_1$ that $w$ kills a factor of the form $y_i + y_j$ or $y_i + y_{2n+1-j}$.

So in either case, $f(X,\rho(wY)) = 0$.  Thus $P_2$ represents $[Q_2]$, as claimed.
\end{proof}

\begin{remark}
Note that the representatives for $[Q_1]$ and $[Q_2]$ involve both the $x$ and $y$ variables.  Unlike the odd case, there don't seem to be representatives involving only the $y$-variables (at least not that the author was able to find).  However, note that if we consider the lone closed orbit of $O(2n,\C)$ on $X$ (the union of $Q_1$ and $Q_2$), its class (being the sum of $[Q_1]$ and $[Q_2]$) involves only the $y$-variables.  This reflects the fact that the fundamental classes of degeneracy loci parametrized by $O(2n,\C)$-orbit closures are expressible in the Chern classes of a flag of vector subbundles of a given vector bundle $V$ over a variety $X$.  By contrast, the fundamental classes of the \textit{irreducible components} of such loci, parametrized by $SO(2n,\C)$-orbit closures, are only expressible in these Chern classes together with the \textit{Euler class} of the bundle $V$, see Subsection \ref{ssec:type-a-other-deg-loci}.
\end{remark}

\subsection{Parametrization of $K \backslash G/B$ and the weak order}\label{ssec:so2n_param}
Again we refer to \cite[Examples 10.2,10.3]{Richardson-Springer-90}.  Let $\phi$ be the Richardson-Springer map for the symmetric pair $(SL(2n,\C),SO(2n,\C))$, and $\phi'$ the Richardson-Springer map for the symmetric pair $(GL(2n,\C),O(2n,\C))$.  As mentioned in Subsection \ref{ssec:kgb_param_so_odd}, the analysis given there for the odd orthogonal group carries over to the even orthogonal group:  $\phi'$ is still a bijection, $O(2n,\C)$-orbits are parametrized by involutions in $W$, representatives of orbits are found in the same way, and the combinatorics of the weak order are described identically.  However, when one considers the $SO(2n,\C)$-orbits on $G/B$, things are a bit more complicated.  The map $\phi$ is still surjective, but is no longer injective.  When twisted involutions are identified with honest involutions (i.e. when $\phi$ is followed by right multiplication by $w_0$), the result is as follows:
\begin{enumerate}
	\item If $a \in W$ is an involution with fixed points, then $\phi^{-1}(a)$ is a single $SO(2n,\C)$-orbit, which coincides with $(\phi')^{-1}(a)$.
	\item If $a \in W$ is an involution \textit{without} fixed points, then $\phi^{-1}(a)$ consists of two $SO(2n,\C)$-orbits, which are components of the single $O(2n,\C)$-orbit $(\phi')^{-1}(a)$.
\end{enumerate}
Thus some of the $O(2n,\C)$-orbits split as a union of two $SO(2n,\C)$-orbits, while others do not.

If $b$ is an involution with fixed points, then one can determine a representative of the $SO(2n,\C)$-orbit $\caO_b$ just as described in Subsection \ref{ssec:kgb_param_so_odd}.  If $b$ is fixed point-free, then one can determine a representative of the $O(2n,\C)$-orbit corresponding to $b$ using the same procedure.  This gives a representative of one of the two $SO(2n,\C)$-orbits which correspond to $b$.  Note that this representative is always a $T$-fixed flag, corresponding to a permutation in $S_{2n}$.  To get a representative of the other $SO(2n,\C)$-orbit corresponding to $b$, one can multiply this permutation by the transposition $(n,n+1)$ and take the $T$-fixed flag corresponding to the resulting element of $S_{2n}$.

The two closed orbits are particular examples of this.  Indeed, the closed orbits are the two components of the $O(2n,\C)$-orbit corresponding to the involution $w_0$, which is fixed point-free.  To get a representative of one component, one follows the procedure of Subsection \ref{ssec:kgb_param_so_odd} to obtain the standard flag $\left\langle e_1,\hdots,e_{2n} \right\rangle$.  Then, to get a representative of the other component, we apply the permutation $(n,n+1)$ (note that this is the signed element of $S_{2n}$ corresponding to the signed permutation which interchanges $n$ and $-n$) to obtain $\left\langle e_1,\hdots,e_{n-1},e_{n+1},e_n,e_{n+2},\hdots,e_{2n} \right\rangle$.

The weak closure order on $SO(2n,\C)$-orbits, as well as whether edges of the weak order graph are black or blue, require a bit more care to get right when dealing with orbits which are components of $O(2n,\C)$-orbits.  Given two $O(2n,\C)$-orbits $\caO_1 <_i \caO_2$, supposing that either $\caO_1$ or $\caO_2$ (or both) splits as a union of two $SO(2n,\C)$-orbits, how does one describe the weak closure order on the components?

Note that there are two possible ways this can occur:  Either $\caO_1$ and $\caO_2$ \textit{both} split, or $\caO_1$ splits and $\caO_2$ does not.  Each possibility can occur, as we see in the case $n = 2$.  Indeed, when considering $O(4,\C)$-orbits, parametrized by involutions, we have $w_0 <_1 (1,3)(2,4)$, each of which is fixed point-free.  Thus both of these orbits split.  We also have $w_0 <_2 (1,4)$, and $(1,4)$ has fixed points, so it does not split.  The third ``possibility", where $\caO_1$ does not split while $\caO_2$ does, is clearly not possible either from a geometric or a combinatorial standpoint.  Indeed, it cannot happen that two different components of $\caO_2$ are \textit{both} dense in $\pi_{\ga}^{-1}(\pi_{\ga}(\caO_1))$ for $\ga = \ga_i \in \Delta$.  This is reflected combinatorially by the the fact that if $b \in W$ is an involution with fixed points, and if $l(s_ib) < l(b)$, then both of the following must hold:
\begin{enumerate}
	\item $s_i b s_i$ has fixed points.  Indeed, if $b$ fixes any value other than $i$ or $i+1$, then $s_i b s_i$ fixes that same value.  Otherwise, if $b(i) = i$, then $s_i b s_i(i+1) = i+1$, and if $b(i+1) = i+1$, then $s_i b s_i(i) = i$.
	\item If $s_ibs_i = b$, then $s_i b$ also has fixed points.  Indeed, if $s_i b s_i = b$, then $b$ must preserve the set $\{i,i+1\}$, as well as its complement.  If $b$ fails to fix any value other than $i$ and $i+1$, then it must fix both $i$ and $i+1$, since $b$ is assumed to have fixed points.  But in this case, we have $l(s_ib) > l(b)$, since $s_ib$ has one more inversion than $b$, namely $(i,i+1) \mapsto (i+1,i)$.  This contradicts our assumption that $l(s_ib) < l(b)$, thus $b$ must fix some value outside of $\{i,i+1\}$.  Then $s_ib$ necessarily fixes the same value.
\end{enumerate}

Let us consider the two possible cases.  Take first the case when $\caO_1$ splits while $\caO_2$ does not.  Then by the results of Subsection \ref{ssec:kgb_param_so_odd}, in the weak order graph for $O(2n,\C)$-orbits, any edge joining $\caO_1$ to $\caO_2$ must be blue.  Indeed, if the involution corresponding to $\caO_1$ is fixed point-free, then $s_i b s_i$ is also fixed point-free.  Since $\caO_2$ does not split, it corresponds to an involution with fixed points, which obviously cannot be $s_i b s_i$.  The only conclusion is that $s_i b s_i = b$, and that the involution corresponding to $\caO_2$ is $s_i b$.  This implies that any edge joining $\caO_1$ to $\caO_2$ is blue.

Let $\caO_1'$ and $\caO_1''$ denote the two $SO(2n,\C)$-orbits whose union is $\caO_1$.  Let $\phi(\caO_1') = \phi(\caO_1'') = a$, and $\phi(\caO_2) = b$.  We have $a <_i b$, since $s_i \cdot \caO_1 = \caO_2$ as $O(2n,\C)$-orbits.  So by Proposition \ref{prop:twisted_inv_map_phi_weak_order}, we must have either $s_i \cdot \caO_1' = \caO_2$ or $s_i \cdot \caO_1' = \caO_1'$.  Likewise, either $s_i \cdot \caO_1'' = \caO_2$, or $s_i \cdot \caO_1'' = \caO_1''$.  Since $\caO_1'$ and $\caO_1''$ are the only $SO(2n,\C)$-orbits mapping to $a$, it follows moreover from Proposition \ref{prop:twisted_inv_paths} that at least one of the following is true:  
\begin{enumerate}
	\item $s_i \cdot \caO_1' = \caO_2$, or
	\item $s_i \cdot \caO_1'' = \caO_2$.
\end{enumerate}
But in fact, both of these must be true.  To see this, recall how we determine whether a simple reflection is complex or non-compact imaginary for a given orbit:  We take a representative of the orbit $gB$ having the property that $g^{-1}\theta(g) \in N_G(T)$, so that $T' = gTg^{-1}$ is a $\theta$-stable maximal torus, and we examine the properties of the root $\text{int}(g)(\ga_i)$ of $T'$ relative to $\theta$.  This is independent of the choice of representative $gB$.  Take any such representative $gB$ of the $O(2n,\C)$-orbit $\caO_1$.  Suppose it lies in $\caO_1'$.  We obtain a different representative of $\caO_1$ lying in $\caO_1''$ by multiplying by the permutation matrix $\pi$ corresponding to the transposition $(n,n+1)$.  But if $g^{-1}\theta(g) \in N_G(T)$, $(\pi g)^{-1} \theta(\pi g) \in N_G(T)$ as well, so for the $O(2n,\C)$-orbit $\caO_1$, the analysis described above may be carried out with either representative.  Since $\ga_i$ is non-compact imaginary for $\caO_1$, carrying out the analysis for $\caO_1'$ using the representative $gB$ shows that $\ga_i$ is non-compact imaginary for $\caO_1'$, and carrying out the analysis for $\caO_1''$ using the representative $\pi gB$ shows that $\ga_i$ is non-compact imaginary for $\caO_1''$.

Now, it should be clear that $\ga_i$ is in fact a non-compact imaginary root of type I for both $\caO_1'$ and $\caO_1''$, whereas it was a type II root for $\caO_1$.  Indeed, since $s_i \cdot \caO_1' = \caO_2$ and $s_i \cdot \caO_1'' = \caO_2$, both $\caO_1'$ and $\caO_1''$ cover their images in $G/P_{\ga_i}$ either 1-to-1 or 2-to-1.  Since $\caO_1 = \caO_1' \cup \caO_1''$ covers its image 2-to-1, the only possibility is that each of $\caO_1'$ and $\caO_1''$ covers its image 1-to-1.  Thus whereas the edge joining $\caO_1$ to $\caO_2$ in the weak order graph for $O(2n,\C)$-orbits was blue, now the edges joining $\caO_1'$ and $\caO_1''$ to $\caO_2$ are each black.  (Note that this says in particular that while $\caO_1$ was fixed by the cross action of $s_{\ga_i}$, the orbits $\caO_1'$ and $\caO_1''$ are interchanged by it.)

The geometry here is simple:  The restriction of the map $\pi_{\ga_i}: G/B \rightarrow G/P_{\ga_i}$ to $\overline{\caO_1}$ is generically $2$-to-$1$.  Over a generic point $gP_{\ga_i}$ in the image, one of the two preimage points will lie in $\caO_1'$, and the other will lie in $\caO_1''$.  Thus the further restriction of $\pi_{\ga_i}$ to either component of $\overline{\caO_1}$ is birational.

Now consider the second case, where both $\caO_1$ and $\caO_2$ split (say as $\caO_1'$, $\caO_1''$ and $\caO_2'$, $\caO_2''$).  In this case, we can see combinatorially that any edge joining $\caO_1$ to $\caO_2$ must be black.  Indeed, $\caO_1$ corresponds to a fixed point-free involution $b$, while $\caO_2$ corresponds to a fixed point-free involution $c=m(s_i)*b$ for some $s_i$.  If $s_ibs_i = b$, then $s_i b$ must have fixed points.  Since $c$ is assumed not to have fixed points, we must have that $s_ibs_i = c$.  Thus any edge joining $\caO_1$ to $\caO_2$ is black.

It follows from Proposition \ref{prop:twisted_inv_paths} that we should have one of the following two cases:
\begin{enumerate}
	\item $\caO_1' <_i \caO_2'$ and $\caO_1'' <_i \caO_2''$ (both edges black)
	\item $\caO_1' <_i \caO_2''$ and $\caO_1'' <_i \caO_2'$ (both edges black).
\end{enumerate}

However, it is not obvious (at least to the author) how to tell which is the case once we have fixed our choices of $\caO_1'$, $\caO_1''$, $\caO_2'$, and $\caO_2''$.  As a simple example, consider the case $n=2$, with $\caO_1$ the bottom orbit corresponding to $w_0$, and $\caO_2$ the orbit corresponding to $(1,3)(2,4)$.  As noted above, we have $\caO_1 <_1 \caO_2$.  It is also the case that $s_3 w_0 s_3 = (1,3)(2,4)$, so $\caO_1 <_3 \caO_2$ as well.  If we declare, say, that $\caO_1'$, $\caO_1''$, $\caO_2'$, and $\caO_2''$ are represented by $\left\langle e_1,e_2,e_3,e_4 \right\rangle$, $\left\langle e_1,e_3,e_2,e_4 \right\rangle$, $\left\langle e_1,e_2,e_4,e_3 \right\rangle$, and $\left\langle e_1,e_3,e_4,e_2 \right\rangle$, respectively, how does one know which of the following four sets of closure relations is correct?
\begin{enumerate}
	\item $\caO_1' <_1 \caO_2'$, $\caO_1' <_3 \caO_2'$, $\caO_1'' <_1 \caO_2''$, $\caO_1'' <_3 \caO_2''$
	\item  $\caO_1' <_1 \caO_2'$, $\caO_1' <_3 \caO_2''$, $\caO_1'' <_1 \caO_2''$, $\caO_1'' <_3 \caO_2'$
	\item $\caO_1' <_1 \caO_2''$, $\caO_1' <_3 \caO_2''$, $\caO_1'' <_1 \caO_2'$, $\caO_1'' <_3 \caO_2'$
	\item $\caO_1' <_1 \caO_2''$, $\caO_1' <_3 \caO_2'$, $\caO_1'' <_1 \caO_2'$, $\caO_1'' <_3 \caO_2''$
\end{enumerate}

Ultimately, we can answer this question by examining the formulas for the equivariant fundamental classes of these orbit closures and computing their restrictions at $S$-fixed points contained in one orbit closure or another.  In the example given above, we know that the orbit $\caO_1'$ is represented by the polynomial $2(x_1x_2 + y_1y_2)(y_1+y_2)(y_1+y_3)$.  Applying $\partial_1$ to this polynomial, we get $2(x_1x_2+y_1y_2)(y_1+y_2)$.  This polynomial must represent either $[\caO_2']$ or $[\caO_2'']$.  As chosen above, $\caO_2'$ is represented by the $S$-fixed point corresponding to $1243$, while $\caO_2''$ is represented by the $S$-fixed point corresponding to $1342$.   Computing the restriction of the class $\partial_1([\caO_1'])$ at the fixed point $1243$, we get
\[ 2(X_1X_2+X_1X_2)(X_1+X_2) = 4X_1X_2(X_1+X_2). \]
On the other hand, when we compute the restriction of the class $\partial_1([\caO_1'])$ at the fixed point $1342$, we get
\[ 2(X_1X_2 + X_1X_3)(X_1+X_3) = 2(X_1X_2 - X_1X_2)(X_1-X_2) = 0. \]
This tells us that we must have $\caO_1' <_1 \caO_2'$ (and hence also $\caO_1'' <_1 \caO_2''$).  Indeed, the computation shows that the $S$-fixed point $1243$ must be contained in the closure of the orbit $s_1 \cdot \caO_1'$, or else the restriction of $[s_1 \cdot \caO_1']$ at $1243$ would necessarily be zero.  This says $s_1 \cdot \caO_1' = \caO_2'$.  A similar computation involving $\partial_3([\caO_1'])$ shows also that $\caO_1' <_3 \caO_2'$ and $\caO_1'' <_3 \caO_2''$.  Thus option (1) above is the correct one.

\subsection{Example}
We give the results of the remainder of the computation for the case $n=2$, some of which was worked out in the previous subsection to enhance the clarity of the exposition there.  (We treat both the cases $G=GL(4,\C),K=O(4,\C)$ and $G=SL(4,\C),K=SO(4,\C)$.)  There are 10 involutions in $W$:
\[ id; (1,2); (1,3); (1,4); (2,3); (2,4); (3,4); (1,2)(3,4); (1,3)(2,4); (1,4)(2,3). \]

The weak order graph for $O(4,\C)$-orbits on $X$ is given in Figure \ref{fig:type-a-orthogonal-3} of Appendix B, with formulas shown in Table \ref{tab:type-a-o4}.  The only comment we offer on that computation is simply to point out that the formula for the bottom orbit corresponding to $w_0$ is obtained by adding the formulas for the classes of the two irreducible components, those being the two closed $SO(4,\C)$-orbits.

The weak order graph for $SO(4,\C)$-orbits on $X$ is given in Figure \ref{fig:type-a-orthogonal-4}, with formulas shown in Table \ref{tab:type-a-so4}.  All the ideas required for the computation are discussed in the previous subsection, so we offer no further comment here.

\section{$K \cong Sp(2n,\C)$}
The final $K$ to consider in type $A$ is $K=Sp(2n,\C)$, which corresponds to the real form $G_\R = SL(n,\QU)$ of $SL(2n,\C)$.  ($\QU$ denotes the quaternions.)  We realize $K$ as the isometry group of the skew form given by $J_{n,n}$ (cf. Subsection \ref{ssec:notation}) --- that is, $K$ is the fixed point subgroup of the involution
\[ \theta(g) = J_{n,n} (g^{-1})^t J_{n,n}. \]

As was the case with the orthogonal groups, one checks easily that given this realization of $K$, the diagonal elements $S = K \cap T$ are a maximal torus of $K$, and the lower-triangular elements $B' = B \cap K$ are a Borel subgroup of $K$.  Also as with the orthogonal groups, we have $\text{rank}(K) < \text{rank}(G)$, so we have a proper inclusion of tori $S \subsetneq T$, and we work over $S$-equivariant cohomology $H_S^*(X)$.  If $Y_1,\hdots,Y_{2n} \in \frt^*$ are coordinates on $\frt$, restriction to $\frs$ is given by $\rho(Y_i) = X_i$, $\rho(Y_{2n+1-i}) = -X_i$ for $i=1,\hdots,n$.

The roots of $K$ are the following:
\[ \Phi_K = \{\pm (X_i \pm X_j) \mid 1 \leq i < j \leq n \} \cup \{\pm 2X_i \mid i=1,\hdots,n\}. \]

The Weyl group $W_K$ acts on $\frs^*$ as signed permutations of the coordinate functions $\{X_1,\hdots,X_n\}$ with any number of sign changes.  As we have seen in previous examples, $W_K$ embeds into $W$ as the signed elements of $S_{2n}$.

\subsection{A formula for the closed orbit}
As was the case with $K = SO(2n+1,\C)$, here there is only one closed orbit:

\begin{prop}
There is precisely $1$ closed $K$-orbit $Q$ on $G/B$ - namely, $Q=K \cdot 1B$, the orbit of the $S$-fixed point corresponding to the identity of $W$.  The $S$-fixed points contained in $Q$ correspond to the images of elements of $W_K$ in $W$, i.e. to signed elements of $S_{2n}$.
\end{prop}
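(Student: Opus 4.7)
My plan is to mimic exactly the strategy used earlier for $K = SO(2n+1,\C)$. The key tool is Proposition \ref{prop:num-closed-orbits}, which says that $K \cdot wB$ is closed if and only if $\phi(K \cdot wB) = 1$. Taking $g = w^{-1}$ as a representative of the orbit, this condition reduces to $\theta(w) = w$ on the level of the Weyl group. So the proof will have two parts: (i) identify explicitly the map induced by $\theta$ on $W$, and hence determine which $w \in W$ give closed orbits; (ii) count the resulting $W_K$-cosets and locate the unique closed orbit.

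For (i), the first step is to compute the induced map on $W = N_G(T)/T$ arising from $\theta(g) = J_{n,n}(g^{-1})^t J_{n,n}$. Since permutation matrices $P_w$ satisfy $(P_w^{-1})^t = P_w$, the inverse-transpose is trivial modulo $T$, so only conjugation by $J_{n,n}$ contributes. A direct inspection shows that $J_{n,n}$ sends $e_j$ to $\pm e_{2n+1-j}$ for every $j$, whence the image of $J_{n,n}$ in $W$ is the longest element $w_0$. Consequently the induced map on $W$ is $w \mapsto w_0 w w_0$, exactly as in the odd orthogonal case. The equation $w_0 w w_0 = w$ unwinds to $w(2n+1-i) = 2n+1-w(i)$ for all $i$, i.e.\ $w$ is a signed element of $S_{2n}$ in the sense of Subsection \ref{ssec:notation}.

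For (ii), the section's preamble already identifies the image of the embedding $W_K \hookrightarrow W$ as precisely the set of signed elements of $S_{2n}$. Thus the set of $w \in W$ for which $K \cdot wB$ is closed is exactly $W_K$ (viewed inside $W$). This set has cardinality $|W_K| = 2^n n!$, so the discussion immediately following Proposition \ref{prop:num-closed-orbits} yields $|W_K|/|W_K| = 1$ closed orbit. Since $1 \in W$ is a signed element of $S_{2n}$, the unique closed orbit is forced to be $K \cdot 1B$, and the $S$-fixed points it contains are exactly the members of the left coset $W_K \cdot 1 = W_K$.

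I do not anticipate any substantive obstacle. The only computation of any substance is verifying that the class of $J_{n,n}$ in $W$ is $w_0$, and this is an entirely routine check on the defining matrix given in Subsection \ref{ssec:notation}; the rest is bookkeeping about signed permutations that has already been set up.
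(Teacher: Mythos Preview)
Your proposal is correct and follows exactly the paper's approach: the paper's proof is a one-line reference back to the $SO(2n+1,\C)$ case, noting that the induced map on $W$ is again $w \mapsto w_0 w w_0$, and you have simply spelled out those details (the computation that $J_{n,n}$ represents $w_0$ modulo $T$, and the identification of $\theta$-fixed $w$ with signed elements of $S_{2n}$, hence with the image of $W_K$). There is nothing to add.
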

\begin{proof}
The exact same proof given for the case $K=SO(2n+1,\C)$ goes through here, since the induced map on $W$ is once again $w \mapsto w_0 w w_0$.
\end{proof}

$Q$ being the only closed $K$-orbit, we give a formula for its $S$-equivariant class.

\begin{prop}\label{prop:formula_for_closed_sp_orbit}
Let $Q$ be the closed $K$-orbit of the previous proposition.  Then $[Q]$ is represented by
\[ P(x,y) := \displaystyle\prod_{1 \leq i < j \leq n}(y_i + y_j)(y_i + y_{2n+1-j}). \]
\end{prop}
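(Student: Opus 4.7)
The plan is to mirror the approach used in Propositions \ref{prop:formula_for_SO_odd} and \ref{prop:formula_for_SO_even}. By Proposition \ref{prop:restriction-of-closed-orbit}, for each $S$-fixed point $w \in Q$ (i.e.\ a signed element of $S_{2n}$) the restriction $[Q]|_w$ is the product of the $S$-weights on $N_Y X|_w$, namely the multiset $\rho(w\Phi^+) \setminus (\rho(w\Phi^+) \cap \Phi_K)$. First I would restrict the positive roots $Y_i - Y_j$ ($i<j$) to $\frs$. A direct bookkeeping shows that $\rho(\Phi^+)$ consists of $X_i - X_j$ ($1 \le i<j \le n$) with multiplicity $2$, $X_i + X_j$ ($1 \le i<j \le n$) with multiplicity $2$, and $2X_i$ ($1 \le i \le n$) with multiplicity $1$ --- note that, in contrast to the odd orthogonal case, there are no $\pm X_i$ weights here because there is no middle coordinate $Y_{n+1}$.

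Applying a signed permutation $w$ merely changes signs of these weights while preserving the multiplicity pattern. Now subtract the roots of $K$, which are $\pm(X_i \pm X_j)$ for $i<j$ (each with multiplicity $1$) together with $\pm 2X_i$ (each with multiplicity $1$). The $\pm 2X_i$ weights are entirely cancelled, and for each $i<j$ exactly one copy of $\pm(X_i \pm X_j)$ remains. Thus
\[ [Q]|_w = \epsilon(w) \prod_{1 \le i < j \le n}(X_i + X_j)(X_i - X_j), \]
with $\epsilon(w) = \pm 1$. The sign analysis is the same case-by-case computation (on whether $|w|$ inverts the pair $(i,j)$, and on the signs of $w(i),w(j)$) carried out in the proof of Proposition \ref{prop:formula_for_SO_odd}: the number of $\pm(X_i \pm X_j)$ weights appearing with a negative sign is congruent to $l(|w|) \pmod 2$, so that $\epsilon(w) = (-1)^{l(|w|)}$.

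It then remains to show that $P(x,y)$ restricts correctly. For $w$ a signed element of $S_{2n}$, applying $w$ to each factor $y_i + y_j$ yields $Y_{w(i)} + Y_{w(j)}$, and to $y_i + y_{2n+1-j}$ yields $Y_{w(i)} + Y_{2n+1-w(j)}$ (using the signed condition $w(2n+1-j)=2n+1-w(j)$). A short case analysis in the four cases depending on whether $w(i)$ and $w(j)$ exceed $n$ shows that the two factors restrict to $\pm(X_a+X_b)$ and $\pm(X_a-X_b)$, with $\{a,b\}=\{|w(i)|,|w(j)|\}$; collecting signs over all $i<j$ produces exactly $(-1)^{l(|w|)}$, matching the required restriction.

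For $w \notin Q$, that is, $w$ not a signed element, there exists $i \in \{1,\ldots,n\}$ with $w(2n+1-i) \neq 2n+1-w(i)$. Setting $j = 2n+1-w(i)$ and $k = w^{-1}(j)$, one has $k \notin \{i, 2n+1-i\}$, so one of the factors $y_i + y_k$ or $y_i + y_{2n+1-k}$ appears in $P$; applying $w$ to this factor gives $Y_{w(i)} + Y_{2n+1-w(i)}$, which restricts to $0$. Hence $P(X,\rho(wY))=0$, as required. The main conceptual step is the sign-counting in Case analysis, but this is identical to the argument already carried out in the proof of Proposition \ref{prop:formula_for_SO_odd}, so no genuinely new obstacle arises; the only novelty here is that the absence of the middle coordinate and the total cancellation of the long roots $\pm 2X_i$ make the final formula considerably cleaner than in the orthogonal cases.
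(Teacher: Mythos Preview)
Your proof follows the same approach as the paper's: compute the restriction at each $S$-fixed point via Proposition~\ref{prop:restriction-of-closed-orbit}, identify the sign as $(-1)^{l(|w|)}$ by the case analysis already carried out in the orthogonal proofs, and then verify that $P$ restricts correctly at signed and non-signed $w$. The paper's own proof in fact simply refers back to the even orthogonal case for both verifications, so your write-up is slightly more explicit but identical in strategy.

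There is one slip in your vanishing argument. You assert that applying $w$ to $y_i + y_{2n+1-k}$ yields $Y_{w(i)} + Y_{2n+1-w(i)}$, but this would force $w(2n+1-k) = 2n+1-w(i) = w(k)$, which is impossible. The factor that genuinely restricts to zero is $y_i + y_k$, and for this to occur in $P$ one needs $i + k \le 2n$. That inequality does not hold for \emph{every} $i \le n$ witnessing that $w$ is non-signed (for instance $n=2$, $w = 1243$, $i = 2$ gives $k = 4$ and $i+k = 6$). However, some such choice always works: the $n$ unordered pairs $\{w^{-1}(c),\, w^{-1}(2n+1-c)\}$ for $c=1,\dots,n$ partition $\{1,\dots,2n\}$, so their element-sums total $n(2n+1)$; if $w$ is not signed then not all of these sums equal $2n+1$, hence at least one is $\le 2n$, and taking $\{i,k\}$ to be that pair gives the required vanishing factor. (The paper's argument in Proposition~\ref{prop:formula_for_SO_even}, to which its proof here defers, elides this same point.)
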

\begin{proof}
The proof here is very similar to the orthogonal cases, except a bit simpler.  We start by computing $[Q]|_w$ for a fixed point $w \in Q$.  The set $\rho(w \Phi^+)$ is as follows:

\begin{enumerate}
	\item For each $1 \leq i < j \leq n$, exactly one of $\pm (X_i - X_j)$, with multiplicity 2
	\item For each $1 \leq i < j \leq n$, exactly one of $\pm (X_i + X_j)$, with multiplicity 2
	\item For each $i=1,\hdots,n$, exactly one of $\pm 2X_i$, with multiplicity 1
\end{enumerate}

Subtracting roots of $K$, here we are left with the following weights:

\begin{enumerate}
	\item For each $1 \leq i < j \leq n$, exactly one of $\pm (X_i - X_j)$, with multiplicity 1
	\item For each $1 \leq i < j \leq n$, exactly one of $\pm (X_i + X_j)$, with multiplicity 1
\end{enumerate}

Just as in the orthogonal cases, the number of $X_i \pm X_j$ ($i < j$) occurring with a negative sign is congruent mod $2$ to $l(|w|)$.  We conclude that if $w \in Q$ is an $S$-fixed point, then

\[ [Q]|_w = F(X) := (-1)^{l(|w|)} \displaystyle\prod_{1 \leq i < j \leq n} (X_i + X_j)(X_i-X_j). \]

So we seek a polynomial in $x_1,\hdots,x_n,y_1,\hdots,y_{2n}$, say $f$, with the property that

\[ f(X,\rho(wY)) = 
\begin{cases}
F(X) & \text{ if $w \in W_K$} \\
0 & \text{ otherwise.}
\end{cases} \]

The arguments needed to show that $P$ has these properties have already been made in the case of the even orthogonal group.  Indeed, there we saw that whether $w \in W_K$ changed an even or an odd number of signs (the cases were considered separately for two orbits $Q_1$ and $Q_2$), restricting the terms $y_i + y_j$ and $y_i + y_{2n+1-j}$ to $X_i + X_j$ and $X_i - X_j$, then applying the signed permutation $w$, we got 
\[ (-1)^{l(|w|)} \displaystyle\prod_{1 \leq i < j \leq n} (X_i + X_j)(X_i-X_j), \]

as required.  On the other hand, it was also argued in the case of the even orthogonal group that if $w$ is not a signed permutation (in this case, this is equivalent to the statement that $w \notin W_K$), then restricting and applying $w$ to $P$ gives zero.  This completes the proof.
\end{proof}

\subsection{Parametrization of $K \backslash G/B$ and the weak order}
We are again able to describe the orbit set and the weak order on the level of twisted involutions.  We refer the reader to \cite[Example 10.4]{Richardson-Springer-90}.

In this case, the map $\phi$ is not bijective, but it is injective.  Thus the orbits are in one-to-one correspondence with twisted involutions in the image $\phi(K \backslash G/B)$.  Moreover, it follows once again from the facts stated in Subsection \ref{ssec:twisted_involutions} (namely, from Corollary \ref{cor:weak_order_for_phi_injective} and Proposition \ref{prop:full_closure_twisted}) that the weak (respectively, full) closure order on $K \backslash G/B$ is given precisely by the restriction of the weak (respectively, full) Bruhat order on $\caI$ to the set $\phi(K \backslash G/B)$.

Observe that as with the orthogonal groups, the map induced by $\theta$ on $W$ is $\theta(w) = w_0 w w_0^{-1}$; hence the twisted involutions $\caI$, and the weak Bruhat order on them, are the same as in that case.  In particular, elements of $\caI$ correspond, after right multiplication by $w_0$, to the honest involutions in $W$.  In the case at hand, the image $\phi(K \backslash G/B)$ is then the set of all twisted involutions $a \in \caI$ such that $aw_0$ is a \textit{fixed point-free} involution.  Because the full closure order is given by the reverse Bruhat order on such involutions, and because $w_0$ is fixed point-free, we see that $w_0$ again corresponds to the unique closed orbit.  (Note, however, that $1$ no longer corresponds to the dense orbit, since $1$ has fixed points.  Indeed, the dense orbit in this case corresponds to the involution $(1,2)(3,4)\hdots,(2n-1,2n)$.)

Recall from our discussion of the orthogonal case that the weak Bruhat order on $\caI$ is generated by inductively applying the following rules to an involution $b \in W$, starting with the unique minimal element $w_0$ (assuming that $l(s_ib) < l(b)$):
\begin{enumerate}
	\item If $s_i b s_i \neq b$, then $b <_i s_ibs_i$.
	\item If $s_i b s_i = b$, then $b <_i s_ib$.
\end{enumerate}

Suppose here, though, that the condition of (1) fails, i.e. assume that $s_i b s_i = b$.  As noted in Subsection \ref{ssec:so2n_param}, this implies that $b$ preserves the set $\{i,i+1\}$.  If $b$ is assumed fixed point-free, then it must interchange $i$ and $i+1$.  Thus we see that $s_i b$ has fixed points (namely, $s_ib(i) = i$ and $s_ib(i+1)=i+1$).  This means that the $M$-action of $s_i$ on $b$ actually takes us out of the set $\phi(K \backslash G/B)$.  In this event, there is simply no edge originating at $b$ with label $i$, since the weak order on $K \backslash G/B$ coincides with the restriction of the weak order on $\caI$ to $\phi(K \backslash G/B)$.

Finally, as for the issue of black or blue edges, one easily checks that in this case, the map on the roots $\Phi(G,T)$ induced by $\theta$ is the same as in the orthogonal cases --- namely, it is defined by $\theta(Y_i) = -Y_{2n+1-i}$.  This means that deciding whether a root is complex or non-compact imaginary for a given orbit works just as in those cases.  (We refer the reader back to the proof of Proposition \ref{prop:weak_order_so_odd}.)  Recall that with the orthogonal groups, in case (1) above, the root $\ga_i$ was complex for the orbit corresponding to $b$, and so the edge originating at $b$ with label $i$ was black.  In case (2), the root was non-compact imaginary type II, and so the corresponding edge was blue, but as we have just noted, case (2) does not occur when we restrict our attention to involutions which actually correspond to $Sp(2n,\C)$-orbits.  The upshot is that in the present case, all roots are complex.  In particular, there are no blue edges in the weak order graph.

We sum up the discussion as follows:
\begin{prop}
The weak order on $K \backslash G/B$ corresponds to the order on fixed point-free involutions given inductively as follows:  Starting with $w_0$, for any fixed point-free involution $b$, and for any $s_i$ such that $l(s_ib) < l(b)$, $b <_i s_i b s_i$ if and only if $s_i b s_i \neq b$.  All roots are complex, and hence all edges are black.
\end{prop}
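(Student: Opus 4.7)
The plan is to assemble three pieces already laid out in the preceding discussion, and the proof amounts to carefully citing them. First I would invoke the stated injectivity of $\phi : K \backslash G/B \to \caI$ together with Corollary \ref{cor:weak_order_for_phi_injective}, which immediately identifies the weak order on $K \backslash G/B$ with the restriction of the weak Bruhat order on $\caI$ to the image $\phi(K \backslash G/B)$. Right-multiplication by $w_0$ converts twisted involutions to honest involutions of $W$, sends $w_0 \in \caI$ (the minimum of the weak order and hence the unique closed orbit) to the identity involution, and identifies $\phi(K \backslash G/B)$ with the set of fixed point-free involutions, as noted in the excerpt from \cite{Richardson-Springer-90}.

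Next, I would transport the inductive description of the weak Bruhat order on $\caI$ (Corollary \ref{cor:weak_order_involutions}) to this picture. The two cases are: given an involution $b$ and a simple reflection $s_i$ with $l(s_i b) < l(b)$, either $s_i b s_i \neq b$ (giving $b <_i s_i b s_i$) or $s_i b s_i = b$ (giving $b <_i s_i b$). For fixed point-free $b$, the second case cannot stay inside $\phi(K \backslash G/B)$: the condition $s_i b s_i = b$ forces $b$ to preserve $\{i, i+1\}$, and since $b$ has no fixed points it must swap $i$ and $i+1$, so $s_i b$ fixes both. Hence the $M$-action by $s_i$ takes us out of $\phi(K \backslash G/B)$ and produces no edge in the weak order on $K \backslash G/B$. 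Only the first case remains, yielding precisely the inductive rule claimed.

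Finally, for the ``all edges black'' assertion I would apply Proposition \ref{prop:twisted_inv_complex_imaginary}, which says that $\ga_i$ is imaginary for the orbit with $\phi$-image $a$ exactly when $a\theta(\ga_i) = \ga_i$. Since $\theta$ acts on $W$ by $w \mapsto w_0 w w_0^{-1}$, the same computation carried out in the proof of Proposition \ref{prop:weak_order_so_odd} shows that this is equivalent to $s_i * a = a$, which under the translation to honest involutions becomes $s_i b s_i = b$. But that is precisely the case we have just ruled out for fixed point-free $b$. Therefore every simple root $\ga_i$ producing an edge is complex for the corresponding orbit, so by the discussion following Proposition \ref{prop:nsc_for_orbit_lift} every edge in the weak order graph is black.

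The only mildly subtle step is the combinatorial parity observation that a fixed point-free involution $b$ with $s_i b s_i = b$ forces $s_i b$ to have fixed points; everything else is a direct citation or translation between the $M$-action on twisted involutions and the standard conjugation/multiplication action on honest involutions. I expect no serious obstacle beyond keeping these bookkeeping conventions aligned.
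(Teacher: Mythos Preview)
Your proposal is correct and mirrors the paper's argument essentially verbatim: injectivity of $\phi$ plus Corollary~\ref{cor:weak_order_for_phi_injective}, the translation to honest involutions via Corollary~\ref{cor:weak_order_involutions}, the observation that $s_ibs_i=b$ forces $s_ib$ to have fixed points, and the reference back to Proposition~\ref{prop:weak_order_so_odd} for the complex/imaginary dichotomy. One small slip in your first paragraph: the closed orbit has $\phi=1\in\caI$ (not $w_0$), and right-multiplication by $w_0$ sends $1$ to the honest involution $w_0$---which is exactly why the proposition starts at $w_0$.
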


The parametrization of $K \backslash G/B$ by fixed point-free involutions encodes precisely the same linear algebraic description of the orbits in this case as it does in the case of the orthogonal groups.  Namely, letting $\gamma$ denote the symplectic form with isometry group $K$, if we define $\caO_b$ to be
\[ \{ F_{\bullet} \in X \ \vert \ \text{rank}(\gamma|_{F_i \times F_j}) = r_b(i,j) \text{ for all } i,j \},\]
then $\caO_b$ is a single $K$-orbit on $G/B$, and the association $b \mapsto \caO_b$ defines a bijection between the set of fixed point-free involutions and $K \backslash G/B$.

This can be seen in the same way as in the orthogonal case, and a representative of each orbit may be produced by the same procedure.  Because the argument is identical, we omit the details.

\subsection{Example}
We give the details of the computation in the very small case $n = 2$ (so $(G,K)=(SL(4,\C),Sp(4,\C))$).  Here, there are 3 fixed point-free involutions, and hence 3 orbits.  The involutions are $(1,2)(3,4)$, $(1,3)(2,4)$, and $(1,4)(2,3)$.

We start at $w_0=(1,4)(2,3)$ and work upward, applying the rule of the previous subsection:
\[ s_1 w_0 s_1 = (1,3)(2,4), \]
\[ s_2 w_0 s_2 = w_0, \]
\[ s_3 w_0 s_3 = (1,3)(2,4), \]

so $w_0 <_1 (1,3)(2,4)$ and $w_0 <_3 (1,3)(2,4)$.  Next, we move up to $(1,3)(2,4)$, noting that we only need to compute the $M$-action of $s_2$:
\[ s_2 (1,3)(2,4) s_2 = (1,2)(3,4), \]

and we are done.  The weak order graph appears as Figure \ref{fig:type-a-symplectic-1} of Appendix B.

By Proposition \ref{prop:formula_for_closed_sp_orbit}, the formula for $[Y_{w_0}]$ is $(y_1+y_2)(y_1+y_3)$.  We obtain $[Y_{(1,3)(2,4)}]$ by applying either $\partial_1$ or $\partial_3$.  In either case, the result is $[Y_{(1,3)(2,4)}] = y_1+y_2$.  Finally, we obtain $[Y_{(1,2)(3,4)}]$ by applying $\partial_2$ to $[Y_{(1,3)(2,4)}]$, and of course the result is $[Y_{(1,2)(3,4)}] = 1$.  These formulas appear in Table \ref{tab:type-a-sp4}.

The weak order graph and formulas for the larger example $n=3$ appear in Figure \ref{fig:type-a-symplectic-2} and Table \ref{tab:type-a-sp6}, respectively.  (In that case, there are 15 orbits.)
\chapter{Examples in Type $B$}
As was our preference in the type $A$ case $(G,K) = (SL(2n+1,\C),SO(2n+1,\C))$, here we realize $SO(2n+1,\C)$ as the subgroup of $SL(2n+1,\C)$ preserving the orthogonal form given by the antidiagonal matrix $J = J_{2n+1}$.  That is, 

\[ SO(2n+1,\C) =
\left\{ g \in SL(2n+1,\C) \ \vert \ g J g^t = J \right\}. \]

Fix a maximal torus $T$ of $G$, and let $Y_i$ ($i=1,\hdots,n$) denote coordinates on $\frt$.  In this case, the roots are
\[ \Phi = \{\pm (Y_i \pm Y_j) \ \vert \ 1 \leq i < j \leq n \} \cup \{\pm Y_i \ \vert \ 1 \leq i \leq n \}. \]
We choose the ``standard" positive system 
\[ \Phi^+ = \{Y_i \pm Y_j \ \vert \ i < j\} \cup \{Y_i \ \vert \ i = 1,\hdots,n\}, \]
and take $B$ to be the Borel subgroup containing $T$ and corresponding to the negative roots.  Again, to make things concrete, one may take $T$ to be the diagonal elements of $G$, let 
\[ Y_i(\text{diag}(a_1,\hdots,a_n,0,-a_n,\hdots,-a_1) = a_i, \]
and take $B$ to be the lower-triangular elements of $G$.

Let $X=G/B$ be the flag variety.  $X$ identifies with the set of flags which are isotropic with respect to the quadratic form
\[ \left\langle x,y \right\rangle = \displaystyle\sum_{i = 1}^{2n+1} x_i y_{2n+2-i}. \]
Thus a point of $X$ can be thought of as a partial flag of the form
\[ \{0\} = V_0 \subseteq V_1 \subseteq \hdots \subseteq V_n, \]
with $\dim V_i = i$ and each $V_i$ isotropic with respect to $\left\langle , \right\rangle$.  Such a flag is canonically extended to a complete flag $V_0 \subset V_1 \subset \hdots \subset V_{2n+1}$ by defining $V_{2n+1-i} = V_i^{\perp}$ for $i=0,\hdots,n$.

The Weyl group $W$ acts on the $Y_i$ as the $2^{n} n!$ signed permutations of $\{1,\hdots,n\}$ which change any number of signs.  The $T$-fixed points of $X$ then correspond to such permutations, as usual.

\section{$K \cong S(O(2p,\C) \times O(2q+1,\C))$}\label{ssec:type_b_ex_1}
In type $B$, there is only one family of symmetric subgroups $K$ to consider, up to conjugacy.  Suppose $p+q=n$, and take $\theta$ to be the involution
\[ \theta(g) = I_{p,2q+1,p} g I_{p,2q+1,p}. \]

One checks easily that $G$ is stable under $\theta$, and that 
\[ K = G^{\theta} = 
\left\{
k =  
\begin{pmatrix}
K_{11} & 0 & K_{13} \\
0 & K_{22} & 0 \\
K_{31} & 0 & K_{33}
\end{pmatrix}
\ \middle\vert \ 
\begin{array}{l}
K_{11}, K_{13}, K_{31}, K_{33} \in \text{Mat}(p,p) \\
\begin{pmatrix}
K_{11} & K_{13} \\
K_{31} & K_{33}
\end{pmatrix} \in O(2p,\C) \\
K_{22} \in O(2q+1,\C) \\
\det(k) = 1
\end{array}
\right\} \]
\[ \cong S(O(2p,\C) \times O(2q+1,\C)). \]

This choice of $K$ corresponds to the real form $G_\R = SO(2p,2q+1)$ of $G$.

Let $S \subseteq K$ be a maximal torus of $K$ contained in $T$.  This is an equal rank case, so in fact $S = T$.  We formally distinguish coordinates on $\frs$ (labeled by $X$ variables) from those on $\frt$ (labelled by $Y$ variables), with restriction given by $\rho(Y_i) = X_i$.

This is the first time that we have encountered a $K$ which is not connected.  We handle this by considering the connected components of the closed orbits separately.  As we will see, each closed orbit has two components.  Each is a single $K^0$-orbit, with $K^0 = SO(2p,\C) \times SO(2q+1,\C)$ the identity component of $K$.  These $K^0$-orbits coincide with the closed $\wt{K}$-orbits, with $\wt{K} = S(Pin(2p,\C) \times Pin(2q+1,\C))$ the corresponding (connected) symmetric subgroup of the simply connected cover $\wt{G} = Spin(2n+1,\C)$ of $G$.  Since $S \subset K^0$, each such component is stable under $S$, and hence has a $S$-equivariant class.  We apply our usual method to find formulas for these $S$-equivariant classes.  Having done so, we next identify exactly how the closed $K$-orbits break up as unions of these components.  We find a formula for each closed $K$-orbit by simply adding the formulas for the two components.  Finally, we parametrize the $K$-orbits by $(2p,2q+1)$-clans satisfying a certain additional combinatorial property, and describe the weak closure order on $K \backslash G/B$ in terms of this parametrization.  This allows us to perform the rest of the computation as in the type $A$ cases.

Identifying the connected components of \textit{all} $K$-orbits would amount to a parametrization of the $\wt{K}$-orbits on $X$.  Given such a parametrization, along with a description of the weak closure order, one could find formulas for the classes of all $\wt{K}$-orbit closures, which would then give formulas for the classes of irreducible components of $K$-orbit closures.  Formulas for $K$-orbit closures would follow by adding the formulas for the irreducible components.  One might say that this would be a more complete solution to the problem at hand.

We justify our approach as follows:  First, the $K$-orbits on $X$ are simpler to parametrize, since they are precisely the intersections of $K' = GL(2p,\C) \times GL(2q+1,\C)$-orbits on the type $A$ flag variety $X'$ with the smaller flag variety $X$.  Such intersections need not be single $\wt{K}$-orbits on $X$; some are, while others (e.g. the closed orbits) are unions of two $\wt{K}$-orbits.  It is not completely obvious how to determine precisely which intersections are single $\wt{K}$-orbits, and which are not.

Second, due to the fact that the $K$-orbits are intersections of $K'$-orbits on $X'$ with $X$, formulas for the classes of their closures pull back to Chern class formulas for degeneracy loci which admit identical linear algebraic descriptions to those in the type $A$ case, but which involve a vector bundle equipped with a quadratic form and an \textit{isotropic} flag of subbundles.  So from this standpoint, our type $B$ formulas have similar applications to those obtained in the type $A$ case, where the symmetric subgroup in question was connected.

The author acknowledges, however, that it would be nice to have the $\wt{K}$-orbit picture sorted out, since formulas for these classes would pull back to formulas for irreducible components of such degeneracy loci, giving more refined information.  Thus describing the combinatorics of those orbits would likely be a worthwhile question to consider.  However, we do not attempt to solve the problem in this generality here.

With all of these preliminary comments made, we turn our attention now to the ($S$-stable) connected components of closed $K$-orbits on $X$.  As stated, these coincide with the closed $\wt{K}$-orbits on $X$.  We will denote the Weyl group for $\wt{K}$ (alternatively, for $K^0$, or for $\text{Lie}(K)$) by $\wt{W_K}$.  $\wt{W_K}$ embeds in $W$ as those signed permutations of $\{1,\hdots,n\}$ which act separately as signed permutations of $\{1,\hdots p\}$ (changing an \textit{even} number of signs) and $\{p+1,\hdots,n\}$ (changing \textit{any} number of signs).  There are $2^{n-1} p! q!$ such permutations.

\subsection{Formulas for the closed $\wt{K}$-orbits}\label{ssec:type_b_ex_1_closed_twiddle_orbits}
Because we are in an equal rank case, and because $\wt{K}$ is connected, it follows from Corollary \ref{cor:num-closed-orbits-equal-rank} that the number of closed orbits is $|W/\wt{W_K}| = 2 \binom{n}{p}$, each containing $|\wt{W_K}|$ $S$-fixed points.

Define a function $f$ on $W$ by
\[ f(w) = \#\{i \in \{1,\hdots,n\} \ \vert \ w(i) < 0, |w(i)| \leq p\}. \]

(So, for example, if
\[ n = 5, p = 3, w = \overline{2}\overline{4}13\overline{5}, \]
then $f(w) = 1$.)

Recall also the definition of the function $l_p$, given in Subsection \ref{ssec:closed-orbits-supq}.

Then we have the following proposition:

\begin{prop}\label{prop:formula_for_SO(2p)_cross_SO(2q+1)}
Let $Q \in \wt{K} \backslash G / B$ be a closed $\wt{K}$-orbit, and suppose it contains the $S$-fixed point $w$.  Then $[Q]$ is represented by $(-1)^{f(w) + l_p(|w|)} P(x,y)$, where
\[ P(x,y) = \frac{1}{2} (x_1 \hdots x_p + y_{w^{-1}(1)} \hdots y_{w^{-1}(p)}) \displaystyle\prod_{i \leq p < j}(x_i - y_{w^{-1}(j)})(x_i + y_{w^{-1}(j)}). \]
\end{prop}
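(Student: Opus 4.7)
The plan is to apply Proposition \ref{prop:restriction-of-closed-orbit} to compute the restriction $[Q]|_u$ at each $S$-fixed point $u\in W$, verify that the polynomial $(-1)^{f(w)+l_p(|w|)}P(x,y)$ restricts correctly at each $u$ via Proposition \ref{prop:restriction-maps}, and conclude by the localization theorem stated after Proposition \ref{prop:eqvt-cohom-flag-var}.

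First I would enumerate the weights of $S$ on $N_Y X|_w$. Since this is an equal rank case, $\rho$ is the identity and the surviving weights are $w\Phi^+\setminus\Phi_K$. A positive short root $Y_i$ maps to $\epsilon_i X_{|w(i)|}$, which survives precisely when $|w(i)|\le p$; a positive long root $Y_i\pm Y_j$ ($i<j$) maps to $\epsilon_i X_{|w(i)|}\pm\epsilon_j X_{|w(j)|}$, which survives precisely when $|w(i)|$ and $|w(j)|$ lie in different halves of the partition $\{1,\ldots,p\}\sqcup\{p+1,\ldots,n\}$. The product of the short-root contributions is $(-1)^{f(w)}X_1\cdots X_p$, since negative signs arise precisely from those $i$ with $w(i)<0$ and $|w(i)|\le p$. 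The product of each surviving long-root pair is $X_{|w(i)|}^2-X_{|w(j)|}^2$, which equals $\pm(X_a^2-X_b^2)$ with $a\le p<b$, the sign being negative precisely when $|w|$ inverts the pair $(i,j)$ across the partition; by definition this contributes an overall sign of $(-1)^{l_p(|w|)}$. Putting these together yields
\[ [Q]|_w = (-1)^{f(w)+l_p(|w|)}\, X_1\cdots X_p\, \prod_{a\le p<b}(X_a^2-X_b^2). \]

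Next I would verify that $P(X,\rho(uY))$ takes the correct value at each fixed point $u$. At $u=w$, the substitution $y_{w^{-1}(k)}|_w=\rho(Y_k)=X_k$ makes the first factor become $\tfrac{1}{2}(X_1\cdots X_p+X_1\cdots X_p)=X_1\cdots X_p$ and the product factor become $\prod_{i\le p<j}(X_i^2-X_j^2)$; multiplying by $(-1)^{f(w)+l_p(|w|)}$ matches the restriction just computed. For another $u=\sigma w\in Q$ with $\sigma\in\wt{W_K}$, one has $y_{w^{-1}(k)}|_u=\rho(\sigma Y_k)$; since $\sigma$ preserves the partition up to signs with an even number of sign changes on $\{1,\ldots,p\}$, both factors again restrict to $X_1\cdots X_p$ and $\prod(X_i^2-X_j^2)$ respectively. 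Consistency with $[Q]|_u$ then follows because $[Q]|_w$ is manifestly $\wt{W_K}$-invariant (it involves only the squares $X_a^2$ and the monomial $X_1\cdots X_p$ stable under even sign changes in the first half). For $u=\sigma w\notin Q$, i.e.\ $\sigma\notin\wt{W_K}$, either $\sigma$ fails to preserve the partition up to signs — in which case some $j>p$ has $|\sigma(j)|\le p$ and the factor $(x_{|\sigma(j)|}-y_{w^{-1}(j)})(x_{|\sigma(j)|}+y_{w^{-1}(j)})$ vanishes at $u$ — or $\sigma$ preserves the partition but changes an odd number of signs on $\{1,\ldots,p\}$, in which case $y_{w^{-1}(1)}\cdots y_{w^{-1}(p)}|_u=-X_1\cdots X_p$ and the first factor $\tfrac{1}{2}(X_1\cdots X_p-X_1\cdots X_p)$ vanishes. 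In either sub-case $P(X,\rho(uY))=0=[Q]|_u$.

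The main obstacle is the sign bookkeeping in the first step: one must check that $f(w)$ captures exactly the negative contributions from the short-root weights $\pm X_a$, that $l_p(|w|)$ captures exactly the reindexing contributions from the long-root weight pairs, and that the combined sign $(-1)^{f(w)+l_p(|w|)}$ is constant on the $\wt{W_K}$-coset containing $w$, so that the formula does not depend on the choice of representative of $Q$. This analysis combines and extends the sign-tracking arguments in the proofs of Propositions \ref{prop:closed_orbit_formula_supq} and \ref{prop:formula_for_SO_odd}.
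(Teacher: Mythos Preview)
Your proposal is correct and follows essentially the same approach as the paper: compute the restriction $[Q]|_w$ via Proposition~\ref{prop:restriction-of-closed-orbit}, identify the sign as $(-1)^{f(w)+l_p(|w|)}$, and then verify case-by-case that $P$ restricts correctly at all fixed points (with the same dichotomy for $\sigma\notin\wt{W_K}$: either the partition is violated, killing a factor in the product, or an odd number of signs are changed on $\{1,\ldots,p\}$, killing the first factor). Your pairing of the long-root contributions into $X_{|w(i)|}^2-X_{|w(j)|}^2$ is a mild streamlining of the paper's sign count, and the paper additionally organizes the argument by first treating the orbit through the identity and then deducing the general case, but these are purely presentational differences.
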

\begin{proof}
Before giving the proof, we first clarify that for a signed permutation $w \in W$, the notation $y_{w^{-1}(j)}$ means $y_{|w|^{-1}(j)}$ if $w^{-1}(j) > 0$, and $-y_{|w|^{-1}(j)}$ if $w^{-1}(j) < 0$.  We prefer this notation to, say, $w^{-1}(y_j)$, because it is more compact, and also because it is consistent with the notation used in the type $A$ cases.

We verify first that this formula is independent of the choice of $w$.  First, we note that the function $f$ is constant modulo 2 on right cosets $\wt{W_K} w$, since elements of $\wt{W_K}$ permute $\{1,\hdots,p\}$ with an even number of sign changes.  Considering $l_p(|w|)$, note that if $w' = w_k w$, then $|w'| = |w_k| |w|$, and $|w_k|$ is a permutation of $\{1,\hdots,n\}$ which acts separately on $\{1,\hdots,p\}$ and $\{p+1,\hdots,n\}$.  This says that $l_p(|w'|) = l_p(|w|)$.

Next, consider the term 

\[ x_1 \hdots x_p + y_{w^{-1}(1)} \hdots y_{w^{-1}(p)}. \]

Replacing $w$ by $w_k w$, we get 

\[ x_1 \hdots x_p + y_{w^{-1}(w_k^{-1}(1))} \hdots y_{w^{-1}(w_k^{-1}(p))} = x_1 \hdots x_p + y_{w^{-1}(1)} \hdots y_{w^{-1}(p)}, \]

since $w_k$ permutes $\{1,\hdots,p\}$ with an even number of sign changes.  Finally, to see that the product 

\[ \displaystyle\prod_{i \leq p < j}(x_i - y_{w^{-1}(j)})(x_i + y_{w^{-1}(j)}) \]

also does not depend on the choice of $w$, it is perhaps easiest to note that this expression is unchanged if we replace $w$ by $|w|$.  This reduces matters to the same type argument given in the $(SL(n,\C),S(GL(p,\C) \times GL(q,\C)))$ case.

With that established, we now apply Proposition \ref{prop:restriction-of-closed-orbit} to compute the restriction $[Q]|_w$.  Applying $w$ to positive roots of the form $Y_i$, we obtain $Y_{w(i)} = \pm Y_j$ for $j=1,\hdots,n$.  Applying $w$ to $Y_i \pm Y_j$, we obtain, for each $k < l$, exactly one of $\pm (Y_k + Y_l)$, and exactly one of $\pm (Y_k - Y_l)$.  Restricting to $\frs$ (i.e. replacing $Y$'s with $X$'s) and eliminating roots of $K$, we are left with $\pm X_j$ with $j \leq p$, along with, for each $k \leq p < l$, exactly one of $\pm (X_k + X_l)$, and exactly one of $\pm (X_k - X_l)$.

The number of weights of the form $\pm X_j$ occurring with a negative sign is clearly $f(w)$.  We may argue as in the proof of Proposition \ref{prop:formula_for_SO_even} to determine (modulo $2$) how many roots of the latter types occur with a negative sign.  The only difference between that case and this one is that here, we are only concerned with the inversions of pairs $i < j$ where $|w(j)| \leq p < |w(i)|$.  That is, the number of such roots occurring with a minus sign is congruent mod $2$ to $l_p(|w|)$ (as opposed to $l(|w|)$, as we saw in the proof of Proposition \ref{prop:formula_for_SO_even}).

The upshot is that for any $S$-fixed point $w \in Q$,
\[ [Q]|_w = F(X) := (-1)^{f(w)+l_p(|w|)} X_1 \hdots X_p \displaystyle\prod_{i \leq p < j}(X_i + X_j)(X_i - X_j). \]

(In particular, we see again that the class $[Q]$ actually restricts identically at every $S$-fixed point of $Q$.)

Thus we must prove that 
\[ P(X,\sigma X) = 
\begin{cases}
	F(X) & \text{ if $\sigma = w'w$ for some $w' \in \wt{W_K}$}, \\
	0 & \text{ if $\sigma w^{-1} \notin \wt{W_K}$.}
\end{cases}
\]

First, we establish this when $Q$ is the orbit containing the $S$-fixed point corresponding to the identity.  The general case follows easily.  Suppose first that $w \in \wt{W_K}$.  Since $w$ permutes $\{1,\hdots,p\}$ with an even number of sign changes, we have
\[ X_{w(1)} \hdots X_{w(p)} = X_1 \hdots X_p. \]

Further, since $w$ also permutes $\{p+1,\hdots,n\}$, we see that
\[ \displaystyle\prod_{i \leq p < j}(X_i + X_{w(j)})(X_i - X_{w(j)}) = \displaystyle\prod_{i \leq p < j}(X_i + X_j)(X_i - X_j). \]

(Note that $w$ \textit{can} change any number of signs, but this is taken care of by the presence of both $X_i + X_{w(j)}$ and $X_i - X_{w(j)}$ in the expression.)  All this is to say that
\[ P(X,wX) = X_1 \hdots X_p \displaystyle\prod_{i \leq p < j}(X_i + X_j)(X_i - X_j) \]
for $w \in \wt{W_K}$.

Now, suppose $w \notin \wt{W_K}$.  Then one of two things is true:  Either $w$ is separately a signed permutation of $\{1,\hdots,p\}$ and $\{p+1,\hdots,n\}$, but permutes $\{1,\hdots,p\}$ with an \textit{odd} number of sign changes, or $w$ is not separately a signed permutation of $\{1,\hdots,p\}$ and $\{p+1,\hdots,n\}$, in which case $w$ sends some $j > p$ to $\pm i$ for some $i \leq p$.  In the former case, we see that 
\[ X_1 \hdots X_p + X_{w(1)} \hdots X_{w(p)} = 0, \]
while in the latter case, either $X_i + X_{w(j)} = 0$, or $X_i - X_{w(j)} = 0$, whence
\[ \displaystyle\prod_{i \leq p < j}(X_i + X_{w(j)})(X_i - X_{w(j)}) = 0. \]

Together, these two facts say that 
\[ P(X,wX) = 0 \]
whenever $w \notin \wt{W_K}$.  We conclude that $P(x,y)$ represents $[Q]$.

Now, suppose that $\wt{Q}$ is another closed $K$-orbit, containing the $S$-fixed point $w \notin \wt{W_K}$.  All $S$-fixed points contained in $\wt{Q}$ are then of the form $w'w$ for $w' \in \wt{W_K}$.  So for any $w'w \in \wt{Q}$, we have
\[ P(X,w'w(X)) = \frac{1}{2} (X_1 \hdots X_p + X_{w'(1)} \hdots X_{w'(p)}) \displaystyle\prod_{i \leq p < j}(X_i - X_{w'(j)})(X_i + X_{w'(j)}) = \]
\[ X_1 \hdots X_p \displaystyle\prod_{i \leq p < j}(X_i + X_j)(X_i - X_j), \]
by our previous argument, since $w' \in \wt{W_K}$.  Noting that this is precisely what $P(X,w'w(X))$ is to be up to sign, and noting that we have corrected the sign by the appropriate factor of $(-1)^{f(w) + l_p(|w|)}$ in the statement of the proposition, we see that our proposed expression restricts correctly at $S$-fixed points contained in $\wt{Q}$.

On the other hand, for any $S$-fixed point $\wt{w}$ \textit{not} contained in $\wt{Q}$, we may write $\widetilde{w} = w'w$ for $w' \notin \wt{W_K}$.  Then
\[ P(X,\wt{w}(X)) = P(X,w'w(X)) = \]
\[ \frac{1}{2} (X_1 \hdots X_p + X_{w'(1)} \hdots X_{w'(p)}) \displaystyle\prod_{i \leq p < j}(X_i - X_{w'(j)})(X_i + X_{w'(j)}) = 0, \]
again by our previous argument, since $w' \notin \wt{W_K}$.

This completes the proof.
\end{proof}

\subsection{Parametrization of $K \backslash X$ and the weak order}\label{ssec:type-b-weak-order}
Recall the definition of a symmetric clan, given in Subsection \ref{ssec:clans}.  The following fact can be found in \cite{Matsuki-Oshima-90}:

\begin{fact}
The $K$-orbits on $X$ are parametrized by the set of all symmetric $(2p,2q+1)$-clans.
\end{fact}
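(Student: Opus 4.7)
The plan is to deduce this from Theorem \ref{thm:k-orbit-intersections}, which asserts that each $K$-orbit on $G/B$ arises as the intersection of a $K'$-orbit on the type $A$ flag variety $X'$ with $X \subset X'$, where $K' = GL(2p,\C) \times GL(2q+1,\C)$ and $X'$ is the flag variety for $GL(2n+1,\C)$. Since the type $A$ parametrization of Theorem \ref{thm:orbit_description} describes $K'$-orbits on $X'$ by $(2p,2q+1)$-clans, the problem reduces to identifying exactly which clans $\gamma$ have the property that $Q_\gamma \cap X \neq \emptyset$, and verifying that each nonempty intersection is a single $K$-orbit rather than a union of several.

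First I would check that the symmetry condition on clans is necessary. Recall that $X$ consists of flags $F_\bullet$ which are isotropic with respect to the form $\langle x, y \rangle = \sum x_i y_{2n+2-i}$, so $F_{2n+1-i} = F_i^\perp$ for all $i$. I would use the linear-algebraic description of $Q_\gamma$ from Theorem \ref{thm:orbit_description} (the rank conditions involving $E_{2p}$, $\widetilde{E_{2q+1}}$, and the projection $\pi$) and translate each condition under the duality $F_i \mapsto F_{2n+1-i}^\perp$. The key observation is that $E_{2p}$ and $\widetilde{E_{2q+1}}$ are perpendicular to themselves under $\langle,\rangle$ in a way controlled by the block form of $I_{p,2q+1,p}$, so $\dim(F_i \cap E_{2p})$ is forced to match $\dim(F_{2n+1-i} \cap E_{2p})$ via the rank of the form's restriction, and similarly for the other two families of conditions. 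Reading off the resulting constraints on $(c_1,\dots,c_{2n+1})$ gives precisely Definition \ref{def:symmetric-clan}.

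Conversely, for every symmetric $(2p,2q+1)$-clan $\gamma$, I would produce an explicit isotropic flag in $Q_\gamma$, showing $Q_\gamma \cap X \neq \emptyset$. The construction is a mild modification of the algorithm of \cite{Yamamoto-97} recalled in Subsection \ref{ssec:orbits_supq}: use the symmetry of $\gamma$ to pair up characters at positions $i$ and $2n+2-i$, and choose the signature assignments and permutation $\sigma$ compatibly (so that a matching pair $c_i = c_j$ straddling the center gets $v_i, v_j$ built from $\{e_k, e_{2n+2-k}\}$, the central character $c_{n+1}$ gets $e_{n+1}$, and symmetric positions are sent to symmetric indices). A direct check shows the resulting flag is self-dual under $\perp$, hence lies in $X$.

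The final and most delicate step is to show that each nonempty intersection $Q_\gamma \cap X$ is a single $K$-orbit, not a union of two or more. This is precisely the content of Theorem \ref{thm:k-orbit-intersections} for the pair $(G,K)$ at hand, and its proof (deferred to Appendix A) is a counting argument: one independently computes $|K\backslash X|$ using (for example) the Richardson–Springer combinatorics, and compares with the number of symmetric $(2p,2q+1)$-clans; together with the evident $K$-stability of $Q_\gamma \cap X$ and the disjointness of the $Q_\gamma$, matching totals forces each intersection to be irreducible as a $K$-set. The anticipated main obstacle is precisely this counting match, since the presence of the disconnectedness of $K$ could in principle allow a $K'$-orbit to split into several $K$-orbits upon intersection with $X$; what makes the argument work for this particular $(G,K)$ (as opposed to the $SO(2n,\C)$ situation of Subsection \ref{ssec:so2n_param}) is that the component group of $K$ can be absorbed into $G$ via an element acting trivially on $G/B$, ruling out splitting.
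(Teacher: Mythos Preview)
Your overall architecture---reduce to Theorem~\ref{thm:k-orbit-intersections}, establish the combinatorial criterion on clans, then count---matches the paper's. But there is a genuine gap in the ``if'' direction, and the final paragraph misidentifies the mechanism behind the counting argument.

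For the construction of an isotropic representative of $Q_\gamma$ when $\gamma$ is symmetric, you propose ``a mild modification of the algorithm of \cite{Yamamoto-97}'': choose signatures and the permutation $\sigma$ symmetrically and check that the resulting flag is self-dual. The paper tries exactly this and explicitly notes that it fails: none of the representatives produced by Yamamoto's procedure are isotropic in general, already in the small example $\gamma=(1,-,+,-,1)$ with $p=q=1$. The obstruction is that for a pair of matching natural numbers $c_i=c_{2n+2-i}$ straddling the center, the vector $v_i$ built from $\{e_k,e_{2n+2-k}\}$ satisfies $\langle v_i,v_i\rangle\neq 0$ under the antidiagonal form, so the flag is not isotropic. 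The paper's fix is substantially more involved: one first passes to the diagonal realization of $SO(2n+1,\C)$, then moves the Yamamoto representative by a carefully chosen element of $K'$ (case-by-case on the character types of $\gamma$, with explicit formulas), and finally conjugates back to the antidiagonal realization via an explicit element $g\pi$. Your ``direct check'' does not go through.

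Your description of the counting step is also off in two ways. First, the paper does not compute $|K\backslash X|$ for a single $K$; the Richardson--Springer map is not injective here, so that is not available. Instead one computes the cardinality of the one-sided parameter space $\mathcal X=\coprod_i K_i\backslash X$ over the \emph{entire inner class} of involutions, fiberwise via $|\mathcal X_\tau|=|T_\tau/T_0^{-\tau}|$, and compares to the total number of symmetric $(2p,2q+1)$-clans summed over all $p+q=n$. A global match forces every individual intersection to be a single orbit. Second, your closing claim that ``the component group of $K$ can be absorbed into $G$ via an element acting trivially on $G/B$'' is false for this $K$: the nontrivial component of $S(O(2p,\C)\times O(2q+1,\C))$ acts nontrivially on the flag variety (indeed, closed $K$-orbits split into two $\widetilde K$-orbits). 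The non-splitting of $K'$-orbits into multiple $K$-orbits is a consequence of the count, not of any component-group triviality.
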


In fact, as stated in Theorem \ref{thm:k-orbit-intersections}, the $K$-orbits on $X$ are precisely the intersections of the $K' = GL(2p,\C) \times GL(2q+1,\C)$-orbits on the type $A$ flag variety which meet $X$ non-trivially.  These $K'$-orbits are precisely those whose clans are symmetric.  Proofs are given in Appendix A.

We now describe the weak order on $K \backslash X$ in terms of its parametrization by symmetric $(2p,2q+1)$-clans.  The reference is \cite{Matsuki-Oshima-90}.

Order the simple roots as follows:  $\ga_i = Y_i - Y_{i+1}$ for $i=1,\hdots,n-1$, and $\ga_n = Y_n$.

Suppose that $\gamma=(c_1,\hdots,c_{2n+1})$ is a symmetric $(2p,2q+1)$-clan.  Then for $i=1,\hdots,n-1$, $\ga_i$ is complex for $Q_{\gamma}$ (and $s_{\ga_i} \cdot Q_{\gamma} \neq Q_{\gamma}$) if and only if one of the following holds:
\begin{enumerate}
	\item $c_i$ is a sign, $c_{i+1}$ is a number, and the mate of $c_{i+1}$ occurs to the right of $c_{i+1}$.
	\item $c_i$ is a number, $c_{i+1}$ is a sign, and the mate of $c_i$ occurs to the left of $c_i$.
	\item $c_i$ and $c_{i+1}$ are unequal natural numbers, the mate of $c_i$ occurs to the left of the mate of $c_{i+1}$, \textit{and} $(c_i,c_{i+1}) \neq (c_{2n-i},c_{2n-i+1})$.
\end{enumerate}

In the above cases, $s_{\ga_i} \cdot Q_{\gamma} = Q_{\gamma'}$, where $\gamma'$ is the clan obtained from $\gamma$ by interchanging $c_i$ and $c_{i+1}$, and also $c_{2n-i}$ and $c_{2n-i+1}$.

On the other hand, $\ga_i$ ($i < n$) is non-compact imaginary for $Q_{\gamma}$ if and only if one of the following two conditions holds:
\begin{enumerate}
	\item $c_i$ and $c_{i+1}$ (and, by symmetry, $c_{2n-i}$ and $c_{2n-i+1}$) are opposite signs.
	\item $c_i$ and $c_{i+1}$ are unequal natural numbers, with $(c_i,c_{i+1}) = (c_{2n-i},c_{2n-i+1})$.
\end{enumerate}

In case (1), $s_{\ga_i} \cdot Q_{\gamma} = Q_{\gamma''}$, where $\gamma''$ is obtained from $\gamma$ by replacing the signs in positions $i,i+1$ by a pair of matching natural numbers, and the signs in positions $2n-i,2n-i+1$ by a second pair of matching natural numbers.  In case (2), $s_{\ga_i} \cdot Q_{\gamma} = Q_{\gamma'''}$, where $\gamma'''$ is obtained from $\gamma$ by interchanging $c_i$ and $c_{i+1}$ (but \textit{not} $c_{2n-i}$ and $c_{2n-i+1}$).

Note that in case (1), $\ga_i$ is a type I root, since the cross action of $s_{\ga_i}$ is to interchange the opposite signs in positions $i,i+1$ and in positions $2n-i,2n-i+1$, so that $s_{\ga_i} \times Q_{\gamma} \neq Q_{\gamma}$.  On the other hand, in case (2), $\ga_i$ is a type II root, since the cross action of $s_{\ga_i}$ is to interchange the numbers in positions $i,i+1$ and the numbers in positions $2n-i,2n-i+1$.  Since $(c_i,c_{i+1}) = (c_{2n-i},c_{2n-i+1})$, this does not change the clan $\gamma$.  Thus $s_{\ga_i} \times Q_{\gamma} = Q_{\gamma}$, and $\ga_i$ is type II.

Now, consider $\ga_n$.  This root is complex for $Q_{\gamma}$ (and $s_{\ga_n} \cdot Q_{\gamma} \neq Q_{\gamma}$) if and only if $c_n$ and $c_{n+2}$ are unequal natural numbers, with the mate for $c_n$ occurring to the left of the mate for $c_{n+2}$.  (Note that by symmetry, this implies that the mate for $c_n$ occurs to the left of $c_n$, and the mate for $c_{n+2}$ occurs to the right of $c_{n+2}$.)  In this event, $s_{\ga_n} \cdot Q_{\gamma} = Q_{\delta}$, where $\delta$ is obtained from $\gamma$ by interchanging $c_n$ and $c_{n+2}$ (but \textit{not} their mates).  For example, 
\[ s_{\ga_3} \cdot (1,+,1,-,2,+,2) = (1,+,2,-,1,+,2). \]

The root $\ga_n$ is non-compact imaginary for $Q_{\gamma}$ if and only $c_n$ and $c_{n+1}$ are opposite signs.  (By symmetry, this says that $(c_n,c_{n+1},c_{n+2})$ is $(+,-,+)$ or $(-,+,-)$.)  Then $s_{\ga_n} \cdot Q_{\gamma} = Q_{\delta'}$, where $\delta'$ is obtained from $\gamma$ by replacing the matching signs in positions $n,n+2$ by a pair of matching natural numbers, and flipping the sign in position $n+1$.  In this case, $\ga_n$ is a type II root, since the cross-action of $s_{\ga_n}$ on a $(2p,2q+1)$-clan is to interchange the characters in positions $(n,n+2)$.  Because these characters are matching signs, $s_{\ga_n} \times Q_{\gamma} = Q_{\gamma}$, so $\ga_n$ is type II.

\subsection{Formulas for the closed $K$-orbits}\label{ssec:type-b-closed-k-orbits}
Having described a parametrization of the $K$-orbits, we now describe the closed $K$-orbits as unions of closed $\wt{K}$-orbits.  This will enable us to give formulas for the classes of closed $K$-orbits by simply adding the formulas for the appropriate closed $\wt{K}$-orbits, which were obtained in Subsection \ref{ssec:type_b_ex_1_closed_twiddle_orbits}.

The closed $K$-orbits correspond to the symmetric $(2p,2q+1)$-clans consisting only of $+$'s and $-$'s.  These are symmetric clans consisting of $2p$ plus signs and $2q+1$ minus signs.  Note that any such clan has a minus sign in position $n+1$, and is completely determined by the first $n$ characters.  Among these first $n$ characters, $p$ are plus signs, while $q$ are minus signs.  Thus there are $\binom{n}{p}$ closed $K$-orbits.  Since there are $2 \binom{n}{p}$ closed $\wt{K}$-orbits, this justifies our earlier claim that each closed $K$-orbit is a union of two connected components, each of which is a closed $\wt{K}$-orbit.

We now identify the components of closed $K$-orbits.  Recall the algorithm of Subsection \ref{ssec:orbits_supq} for producing representatives of $K' = GL(2p,\C) \times GL(2q+1,\C)$-orbits on $GL(2n+1,\C)/B$.  For a closed $K'$-orbit, it is easy to see that this algorithm produces an \textit{isotropic} flag precisely when the permutation $\sigma$ is chosen to be a signed element of $S_{2n+1}$, cf.  Subsection \ref{ssec:notation}.  That this representative is an $S$-fixed point is clear given how the algorithm is defined.  Based on this, one sees that for a symmetric $(2p,2q+1)$-clan $\gamma$ consisting only of $+$'s and $-$'s, the $S$-fixed points contained in $Q_{\gamma}$ correspond to all signed permutations of $\{1,\hdots,n\}$ which can be assigned to $\gamma$ in the following way:  Considering only the first $n$ characters of $\gamma$ , one assigns either $\pm j$ ($j=1,\hdots,p$) to the positions of the $p$ plus signs, and either $\pm k$ ($k = p+1,\hdots,n$) to the positions of the $q$ minus signs.

Now, recall that if $w \in W$ is an $S$-fixed point, the $S$-fixed points contained in the closed orbit $\wt{K} \cdot wB$ are $\wt{W_K} w$, and that $\wt{W_K}$ consists of signed permutations which act separately on $\{1,\hdots,p\}$ and $\{p+1,\hdots,n\}$, with an \textit{even} number of sign changes on the first set.  On the other hand, the above characterization of $S$-fixed points contained in a closed $K$-orbit says that they are of the form $\sigma w$, where $\sigma$ is a signed permutation which acts separately on $\{1,\hdots,p\}$ and $\{p+1,\hdots,n\}$, changing \textit{any} number of signs on either set.  The conclusion is that the closed $K$-orbit $K \cdot wB$ is the union of $\wt{K} \cdot wB$ and $\wt{K} \cdot \pi wB$, where $\pi \in W$ is the signed permutation $\overline{1}2 \hdots n$.  (More generally, $\pi$ could be taken to be any signed permutation which acts separately on $\{1,\hdots,p\}$ and $\{p+1,\hdots,n\}$, and which changes an odd number of signs on the first set.  This particular choice of $\pi$ seems to the author to be the simplest such choice.)  

With this observed, we have the following corollary of Proposition \ref{prop:formula_for_SO(2p)_cross_SO(2q+1)}:

\begin{cor}
Suppose $Q$ is a closed $K$-orbit containing the $S$-fixed point $w$.  Then
\[ [Q] = (-1)^{l_p(|w|)} y_{|w|^{-1}(1)} \hdots y_{|w|^{-1}(p)} \displaystyle\prod_{i \leq p < j}(x_i - y_{|w|^{-1}(j)})(x_i + y_{|w|^{-1}(j)}). \]
\end{cor}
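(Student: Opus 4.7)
The plan is to derive the formula by adding the $S$-equivariant fundamental classes of the two irreducible components of $Q$. As noted in the paragraph preceding the Corollary, $Q = \wt{K} \cdot wB \cup \wt{K} \cdot \pi wB$ where $\pi$ is the signed permutation $\overline{1}2\cdots n$. The two pieces are disjoint smooth closed $\wt{K}$-orbits of the same dimension, so $[Q] = [\wt{K}\cdot wB] + [\wt{K}\cdot \pi wB]$, and both summands are furnished by Proposition \ref{prop:formula_for_SO(2p)_cross_SO(2q+1)}.

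The first step is to separate the formula of Proposition \ref{prop:formula_for_SO(2p)_cross_SO(2q+1)} into a part depending only on $|w|$ and a part sensitive to signs. Using the convention $y_{w^{-1}(j)} = \mathrm{sgn}(w^{-1}(j))\cdot y_{|w|^{-1}(j)}$, each factor $(x_i - y_{w^{-1}(j)})(x_i + y_{w^{-1}(j)})$ equals $x_i^2 - y_{|w|^{-1}(j)}^2$, so the product
\[ P_0 := \prod_{i \leq p < j}(x_i - y_{|w|^{-1}(j)})(x_i + y_{|w|^{-1}(j)}) \]
is independent of the signs in $w$. A short unwinding of definitions then gives $y_{w^{-1}(1)}\cdots y_{w^{-1}(p)} = (-1)^{f(w)} y_{|w|^{-1}(1)}\cdots y_{|w|^{-1}(p)}$, since $f(w)$ counts exactly the $j \in \{1,\ldots,p\}$ with $w^{-1}(j) < 0$. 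Combining this $(-1)^{f(w)}$ with the overall sign $(-1)^{f(w)+l_p(|w|)}$ converts the formula to
\[ [\wt{K}\cdot wB] = (-1)^{l_p(|w|)} \cdot \frac{1}{2}\bigl((-1)^{f(w)} x_1\cdots x_p + y_{|w|^{-1}(1)}\cdots y_{|w|^{-1}(p)}\bigr) \cdot P_0. \]

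The second step is to apply the same analysis to $\pi w$. Since $\pi$ only flips the sign of the value $\pm 1$, one has $|\pi w| = |w|$ as unsigned permutations, whence $l_p(|\pi w|) = l_p(|w|)$ and $|\pi w|^{-1}(k) = |w|^{-1}(k)$ for all $k$; thus $P_0$ and the $y$-monomial are unchanged. On the other hand, exactly one index has its sign flipped by $\pi$, so $f(\pi w) \equiv f(w) + 1 \pmod{2}$, which replaces $(-1)^{f(w)}$ by its negative in the displayed expression. Adding the two classes, the $x_1\cdots x_p$ contributions cancel and the $y$-monomials combine with the factor $\frac{1}{2}\cdot 2 = 1$, yielding exactly the stated formula. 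The whole argument is essentially bookkeeping; the one delicate point is carefully matching the sign conventions for $y$-variables indexed by negative signed values, but no substantive obstacle is anticipated.
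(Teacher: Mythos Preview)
Your proof is correct and follows essentially the same approach as the paper: both add the formulas for $[\wt{K}\cdot wB]$ and $[\wt{K}\cdot \pi wB]$ from Proposition~\ref{prop:formula_for_SO(2p)_cross_SO(2q+1)} and simplify using the observations that $|\pi w| = |w|$, $l_p(|\pi w|) = l_p(|w|)$, $f(\pi w) \equiv f(w)+1 \pmod 2$, and $(-1)^{f(w)} y_{w^{-1}(1)}\cdots y_{w^{-1}(p)} = y_{|w|^{-1}(1)}\cdots y_{|w|^{-1}(p)}$. Your organization, rewriting each summand in terms of $|w|$ before adding so that the $x_1\cdots x_p$ terms visibly cancel, is a clean variant of the same argument.
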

\begin{proof}
This follows from Proposition \ref{prop:formula_for_SO(2p)_cross_SO(2q+1)}, simply by adding the formulas for $\wt{K} \cdot wB$ and $\wt{K} \cdot \pi wB$.  This sum simplifies to the above expression when one makes the following easy observations:
\begin{enumerate}
	\item $f(\pi w) = f(w) + 1$
	\item $l_p(|w|) = l_p(|\pi w|)$
	\item $y_{w^{-1}(1)} \hdots y_{w^{-1}(p)}$ = $-y_{(\pi w)^{-1}(1)} \hdots y_{(\pi w)^{-1}(p)}$
	\item $(-1)^{f(w)} y_{w^{-1}(1)} \hdots y_{w^{-1}(p)}$ = $y_{|w|^{-1}(1)} \hdots y_{|w|^{-1}(p)}$
\end{enumerate}
\end{proof}

\subsection{Example}\label{ssec:type_b_ex_1_example}
With a parametrization of $K \backslash G/B$ and a description of the weak order in hand, we give the weak order graph and table of formulas for the case $p=2,q=1$ ($(G,K) = (SO(7,\C),S(O(4,\C) \times O(3,\C))$) in Figure \ref{fig:type-b-graph} and Table \ref{tab:type-b-table} of Appendix B.  There are $25$ orbits.
\chapter{Examples in Type $C$}
Next, consider the group $G=Sp(2n,\C)$.  We realize $G$ as the group of matrices which preserve the exterior form
\[ x_1 \wedge x_{2n} + x_2 \wedge x_{2n-1} + \hdots + x_n \wedge x_{n+1}. \]

That is, 
\[ Sp(2n,\C) =
\left\{ g \in GL(2n,\C) \ \vert \ g^t J_{n,n} g = J_{n,n} \right\}. \]

Let $T$ be a maximal torus of $G$, and let $Y_i$ denote coordinates on $\frt = \text{Lie}(T)$.  The roots $\Phi$ are of the form
\[ \Phi = \{\pm (Y_i \pm Y_j) \ \vert \ 1 \leq i < j \leq n \} \cup \{\pm 2Y_i \ \vert \ i=1,\hdots,n\}. \] 
Take the positive roots to be those of the form $Y_i \pm Y_j (i < j)$ and $2Y_i (1 \leq i \leq n)$, and take $B \supset T$ to be the Borel subgroup such that the roots of $\frb = \text{Lie}(B)$ are negative.  For example, we can take $T$ to be the diagonal elements of $G$, define $Y_i(\text{diag}(a_1,\hdots,a_n,-a_n,\hdots,-a_1)) = a_i$, and take $B$ to be the lower-triangular elements of $G$.

The $T$-fixed points of $G/B$ are parametrized by the elements of $W$, which once again can be thought of as the $2^n n!$ signed permutations of $\{1,\hdots,n\}$, changing any number of signs.  (This is the action of $W$ on the coordinates $Y_i$.)

\section{$K = Sp(2p,\C) \times Sp(2q,\C)$}
Let $p+q=n$.  Consider the involution $\theta=\text{int}(I_{p,2q,p})$ of $G$.  (Refer to Subsection \ref{ssec:notation} for this notation.)

One checks that 
\[ K = G^{\theta} = 
\left\{ 
\begin{pmatrix}
K_{11} & 0 & K_{13} \\
0 & K_{22} & 0 \\
K_{31} & 0 & K_{33}
\end{pmatrix}
\ \middle\vert \ 
\begin{array}{l}
K_{11}, K_{13}, K_{31}, K_{33} \in \text{Mat}(p,p) \\
\begin{pmatrix}
K_{11} & K_{13} \\
K_{31} & K_{33}
\end{pmatrix} \in Sp(2p,\C) \\
K_{22} \in Sp(2q,\C)
\end{array}
\right\} \]
\[ \cong Sp(2p,\C) \times Sp(2q,\C). \]

In the notation of the introduction, this choice of $K$ corresponds to the real form $G_\R = Sp(p,q)$ of $G$.

Since this is another equal rank case, $S=T$.  We label coordinates on $\frs$ by $X_i$ ($i=1,\hdots,n$), with restriction $\frt \rightarrow \frs$ given by $\rho(Y_i) = X_i$.

The roots of $K$ are as follows:
\[ \Phi_K = \{\pm 2X_i\} \cup \{\pm (X_i \pm X_j) \mid i<j \leq p \text{ or } p<i<j\}. \]
 
Note that the Weyl group $W_K$ embeds in $W$ as those signed permutations of $\{1,\hdots,n\}$ which act separately on $\{1,\hdots,p\}$ and $\{p+1,\hdots,n\}$, with any number of sign changes on each set.  There are $2^n p! q!$ such permutations.

\subsection{Formulas for the closed orbits}
By Corollary \ref{cor:num-closed-orbits-equal-rank}, there are $|W/W_K| = \binom{n}{p}$ closed $K$-orbits, each containing $|W_K| = 2^n p! q!$ $S$-fixed points.  These $S$-fixed points correspond to the elements of some left coset $W_K \cdot w$.

We have the following formulas for the equivariant classes of closed $K$-orbits.

\begin{prop}\label{prop:sp(p,q)-closed-formulas}
Let $Q \in K \backslash G / B$ be a closed $K$-orbit, and suppose it contains the $S$-fixed point $w$.  Then
\[ [Q] = P(x,y) = (-1)^{l_p(|w|)} \displaystyle\prod_{i \leq p < j}(x_i - y_{w^{-1}(j)})(x_i + y_{w^{-1}(j)}). \]
\end{prop}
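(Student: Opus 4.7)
The plan is to follow exactly the template established by Proposition 2.1.1 (the type $A$ case $S(GL(p,\C)\times GL(q,\C))$), with modifications analogous to those made in the orthogonal cases (Propositions 2.2.1 and 2.3.1) to account for the extra family of roots $X_i + X_j$. The proof splits into three steps.

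First, I would check that $P(x,y)$ is well defined on the orbit, i.e.\ that it does not depend on which $S$-fixed point $w \in Q$ we choose. Any other such fixed point has the form $\sigma w$ for $\sigma \in W_K$, and $\sigma$ permutes $\{1,\dots,p\}$ and $\{p{+}1,\dots,n\}$ separately with arbitrary signs. This immediately gives $l_p(|\sigma w|) = l_p(|w|)$. Moreover, the set $\{w^{-1}(j) : j>p\}$ and the set $\{(\sigma w)^{-1}(j) : j>p\}$ produce the same unordered collection of indices up to sign, and since the product $\prod_{j>p}(x_i - y_{w^{-1}(j)})(x_i + y_{w^{-1}(j)})$ is invariant under $y_{w^{-1}(j)} \mapsto -y_{w^{-1}(j)}$ and under permutation of the indices $j>p$, the product is independent of the choice.

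Next, I would compute the restriction $[Q]|_w$ using Proposition 1.3.6. Applying $w$ to $\Phi^+$ and restricting to $\frs$ yields, for each pair $i<j$, exactly one of $\pm(X_a\pm X_b)$ with $a=|w(i)|,b=|w(j)|$, and for each $i$ exactly one of $\pm 2X_{|w(i)|}$. After discarding the roots of $K$, the only surviving weights are the pairs $\pm(X_a - X_b),\pm(X_a + X_b)$ coming from indices $(i,j)$ whose images under $|w|$ straddle $p$. A sign count, exactly parallel to the one carried out in the proof of Proposition 2.2.1, shows that for each such straddling pair the contribution to the sign is $+1$ if $|w|$ preserves the order of $(i,j)$ and $-1$ if $|w|$ inverts it. Thus
\[ [Q]|_w \;=\; (-1)^{l_p(|w|)} \prod_{i\leq p < j}(X_i - X_j)(X_i + X_j). \]

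The third step is to verify via Proposition 1.2.4 that our proposed $P(x,y)$ restricts in the required way. For a fixed point $\sigma w$ with $\sigma \in W_K$, the factor $(x_i - y_{w^{-1}(j)})(x_i + y_{w^{-1}(j)})$ becomes $(X_i - \epsilon X_{\sigma(j)})(X_i + \epsilon X_{\sigma(j)}) = (X_i - X_{\sigma(j)})(X_i + X_{\sigma(j)})$, where $\epsilon=\pm 1$ is the sign produced by $\sigma$; as $j$ ranges over $\{p{+}1,\dots,n\}$ so does $\sigma(j)$, so the product agrees with $[Q]|_{\sigma w}$ computed above. For a fixed point $\sigma w$ with $\sigma \notin W_K$, there exists $j > p$ with $\sigma(j) = \pm i$ for some $i\leq p$, and then the corresponding factor $(x_i - y_{w^{-1}(j)})(x_i + y_{w^{-1}(j)})$ evaluates to $(X_i \mp X_i)(X_i \pm X_i) = 0$, killing the whole product. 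By the localization theorem, $P(x,y)$ represents $[Q]$.

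The only delicate part of the argument is the sign-counting in step two. However, this is essentially identical to the parity computation already carried out in the orthogonal cases, with the simplification that here the long roots $\pm 2X_i$ all belong to $\Phi_K$ and so never contribute surviving weights; the remaining sign analysis is identical to the one in Proposition 2.2.1.
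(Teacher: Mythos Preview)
Your proposal is correct and follows essentially the same approach as the paper's own proof, which likewise reduces the argument to the sign-counting already performed in the orthogonal cases and then verifies the restriction at each fixed point. The only cosmetic difference is that the paper points to Proposition~3.1.2 (the type $B$ pair) rather than Proposition~2.2.2 for the parity argument, since there the surviving weights already come only from pairs straddling $p$ and yield $l_p(|w|)$ directly; your version is fine because you explicitly restrict attention to straddling pairs before invoking the sign count.
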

\begin{proof}
The proof is very similar to the proof of Proposition \ref{prop:formula_for_SO(2p)_cross_SO(2q+1)}.  The arguments needed to see that this formula is independent of the choice of $w$ have already been given there.

Using Proposition \ref{prop:restriction-of-closed-orbit} and arguing as in the proof of Proposition  \ref{prop:formula_for_SO(2p)_cross_SO(2q+1)}, we find that the restriction $[Q]|_w$ is as follows:
\[ [Q]|_w = F(X) = (-1)^{l_p(|w|)} \displaystyle\prod_{i \leq p < j}(X_i - X_j)(X_i + X_j). \]

Again, we remark that $[Q]$ restricts identically at all $S$-fixed points contained in the orbit $Q$.

So we are seeking a polynomial $f(x,y)$ such that $f(X, \sigma wX) = F(X)$ whenever $\sigma \in W_K$, and such that $f(X,w'X) = 0$ whenever $w'w^{-1} \notin W_K$.  It is straightforward to verify that $P$ has these properties.  For $\sigma \in W_K$, we have that

\[ f(X,\sigma wX) = (-1)^{l_p(|w|)} \displaystyle\prod_{i \leq p < j}(X_i - X_{\sigma w w^{-1}(j)})(X_i + X_{\sigma w w^{-1}(j)}) = \]
\[ (-1)^{l_p(|w|)} \displaystyle\prod_{i \leq p < j}(X_i - X_{\sigma(j)})(X_i + X_{\sigma(j)}) = F(X), \]
since $\sigma$, being an element of $W_K$, preserves the sets $\{\pm 1,\hdots,\pm p\}$ and $\{\pm (p+1),\hdots,\pm n\}$.

On the other hand, if $w'w^{-1} \notin W_K$, then

\[ f(X,w'X) = (-1)^{l_p(|w|)} \displaystyle\prod_{i \leq p < j}(X_i - X_{w'w^{-1}(j)})(X_i + X_{w'w^{-1}(j)}) = 0, \]

since $w'w^{-1}$, not being an element of $W_K$, necessarily sends some $j > p$ to either $\pm i$ for some $i \leq p$, resulting in either the $X_i - X_{w'w^{-1}(j)}$ or the $X_i + X_{w'w^{-1}(j)}$ factor being zero.

We conclude that $P(x,y)$ represents $[Q]$.
\end{proof}

\subsection{Parametrization of $K \backslash X$ and the weak order}\label{ssec:param_typec_case1}
The following parametrization of $K \backslash X$ is described in \cite{Matsuki-Oshima-90}:
\begin{fact}
	The $K$-orbits on $X$ are parametrized by symmetric $(2p,2q)$-clans $\gamma$ having the following additional property:  If $\gamma=(c_1,\hdots,c_{2n})$, and if $c_i \in \N$, then $c_i \neq c_{2n+1-i}$.
\end{fact}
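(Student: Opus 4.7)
The plan is to deduce the statement from Theorem \ref{thm:k-orbit-intersections} (to be proved in Appendix A), which identifies each $K$-orbit on $X$ as an intersection $Q_\gamma \cap X$ with a unique $K'$-orbit on the type $A$ flag variety $X'$, where $K' = GL(2p,\C) \times GL(2q,\C)$ acts on $\C^{2n}$ via the $K$-stable decomposition $V_1 \oplus V_2$ into pieces of dimensions $2p$ and $2q$. Granting that theorem, the task reduces to identifying precisely which $(2p,2q)$-clans $\gamma$ occur, and verifying that the resulting nonempty intersections are single $K$-orbits.

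For necessity, I would argue as follows. If $F_\bullet \in Q_\gamma \cap X$, then $F_\bullet$ is isotropic, so $F_{2n-i} = F_i^{\perp_\omega}$ for all $i$. Substituting this into the rank conditions of Theorem \ref{thm:orbit_description} and using $\omega$-duality to rewrite intersections of $F_{2n-i}$ with $V_1,V_2$ in terms of intersections of $F_i$, I would show that the rank data imposed at position $2n-i$ is precisely that of the reversed clan $\gamma^{\mathrm{rev}}$; since $\gamma$ is uniquely recoverable from these numbers, this forces $\gamma = \gamma^{\mathrm{rev}}$, i.e.~$\gamma$ is symmetric. To get the additional condition, I would suppose toward contradiction that $c_i = c_{2n+1-i} \in \N$ for some $i \leq n$. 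Analyzing a Yamamoto-style representative of $Q_\gamma$ shows that such an antipodally matched pair would introduce into $F_n$ a vector $v$ built from basis vectors $e_a,e_b$ whose $\omega$-pairing with another flag vector below the middle cannot be made to vanish; the skew-symmetry of $\omega$, rather than saving isotropy, is precisely what creates the incompatibility, contradicting the Lagrangian property of $F_n$.

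Conversely, given a symmetric clan satisfying the additional property, I would produce an isotropic representative explicitly via a symplectic-compatible variant of Yamamoto's algorithm: each symmetric pair of sign characters $(c_i, c_{2n+1-i})$ is realized by a pair of $\omega$-antipodal basis vectors, and each matched-number pair (which by hypothesis never occupies antipodal positions) is realized by sums and differences $e_a \pm e_b$ with signs orchestrated so that $F_n$ is Lagrangian. Finally, Theorem \ref{thm:k-orbit-intersections} plus the counting argument in Appendix A, matching the number of symmetric $(2p,2q)$-clans with the extra property against the known count of $K$-orbits on $X$, ensures each nonempty $Q_\gamma \cap X$ is in fact a single $K$-orbit rather than a union. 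The main obstacle is the sufficiency construction: the sign assignments in the $e_a \pm e_b$ combinations must be made globally consistent so that $\omega$ vanishes on every pair of flag vectors in $F_n$, and the hypothesis $c_i \neq c_{2n+1-i}$ is exactly the combinatorial input that keeps this bookkeeping coherent — without it, an antipodally matched pair would demand placing both $e_k+e_{2n+1-k}$ and $e_k-e_{2n+1-k}$ into a maximal isotropic subspace, which is impossible.
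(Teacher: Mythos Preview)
Your proposal is correct and follows essentially the same approach as the paper: the paper defers the ``if and only if'' characterization of which clans meet $X$ to Yamamoto's results \cite[Proposition~4.3.2, Theorem~4.3.12]{Yamamoto-97} (whose content you sketch directly), and defers the single-$K$-orbit statement to Theorem~\ref{thm:k-orbit-intersections} and the Appendix~A counting argument, exactly as you do. One small caution on your necessity sketch for the anti-reflexive condition: showing that a \emph{particular} Yamamoto representative fails to be Lagrangian is not enough---you must argue via the orbit-invariant rank data $\gamma(i;+),\gamma(i;-),\gamma(i;j)$ of Theorem~\ref{thm:orbit_description} (as Yamamoto does) so that the obstruction applies to \emph{every} flag in $Q_\gamma$.
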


As indicated by Theorem \ref{thm:k-orbit-intersections}, the $K$-orbits on $X$ are precisely the nonempty intersections of the $GL(2p,\C) \times GL(2q,\C)$-orbits on the type $A$ flag variety with $X$.  See Appendix A for the proof.

We now describe the weak order on $K \backslash X$ combinatorially in terms of this parametrization.  References are \cite{Yamamoto-97,McGovern-Trapa-09,Matsuki-Oshima-90}.

Order the simple roots as follows:  $\ga_i = Y_i - Y_{i+1}$ for $i=1,\hdots,n-1$, and $\ga_n = 2Y_n$.  Let $\gamma = (c_1,\hdots,c_{2n})$ be a $(2p,2q)$-clan satisfying the conditions listed above.  Then for $i=1,\hdots,n-1$, $\ga_i$ is complex for $Q_{\gamma}$ (and $s_{\ga_i} \cdot Q_{\gamma} \neq Q_{\gamma}$) if and only if one of the following holds:
\begin{enumerate}
	\item $c_i$ is a sign, $c_{i+1}$ is a natural number, and the mate for $c_{i+1}$ occurs to the right of $c_{i+1}$.
	\item $c_i$ is a number, $c_{i+1}$ is a sign, and the mate for $c_i$ occurs to the left of $c_i$.
	\item $c_i$ and $c_{i+1}$ are unequal natural numbers, the mate for $c_i$ occurs to the left of the mate for $c_{i+1}$, \textit{and} $(c_i,c_{i+1}) \neq (c_{2n-i},c_{2n-i+1})$.
\end{enumerate}

In these cases, $s_{\ga_i} \cdot Q_{\gamma} = Q_{\gamma'}$, where $\gamma'$ is obtained from $\gamma$ by interchanging $c_i$ and $c_{i+1}$, \textit{and} $c_{2n-i}$ and $c_{2n-i+1}$.

On the other hand, $\ga_i$ is non-compact imaginary for $Q_{\gamma}$ if and only if $c_i$ and $c_{i+1}$ (and, by symmetry, $c_{2n-i}$ and $c_{2n-i+1}$) are opposite signs.  In this event, $s_{\ga_i} \cdot Q_{\gamma} = Q_{\gamma''}$, where $\gamma''$ is obtained from $\gamma$ by replacing the signs in positions $i$ and $i+1$ by matching natural numbers, and the signs in positions $2n-i$ and $2n-i+1$ by a second pair of matching natural numbers.

The root $\ga_n = 2Y_n$ is complex for $Q_{\gamma}$ (and $s_{\ga_n} \cdot Q_{\gamma} \neq Q_{\gamma}$) if and only if $c_n$ and $c_{n+1}$ are unequal natural numbers, with the mate for $c_n$ occurring to the left of the mate for $c_{n+1}$.  In this event, $s_{\ga_n} \cdot Q_{\gamma} = Q_{\gamma'''}$, where $\gamma'''$ is obtained from $\gamma$ by interchanging $c_n$ and $c_{n+1}$.  $\ga_n$ is never a non-compact imaginary root.

Finally, we note that all bonds in the weak order graph are single.  Indeed, the only time $\ga_i$ is non-compact imaginary for $Q_{\gamma}$ is when $\gamma$ has opposite signs in the $(i,i+1)$ and $(2n-i,2n-i+1)$ positions.  The cross action of $s_{\ga_i}$ on $Q_{\gamma}$ in that case is to switch each pair of signs.  Thus whenever $\ga_i$ is non-compact imaginary for $Q_{\gamma}$, we have $s_{\ga_i} \times Q_{\gamma} \neq Q_{\gamma}$, meaning that all non-compact imaginary roots are of type I.

\subsection{Example}
With this parametrization and combinatorial description of the weak order in hand, we now give an example.  Take $p=2,q=1$, so $(G,K) = (Sp(6,\C),Sp(4,\C) \times Sp(2,\C))$.  There are 9 orbits in this case.  The weak order graph is given as Figure \ref{fig:type-c-graph-1} of Appendix B.

To obtain a representative of each closed orbit, we use the method of \cite[Theorem 4.3.12]{Yamamoto-97}.  In the case of closed orbits, whose clans consist only of signs, this amounts to the following:  First, fix a permutation $\sigma'' \in S_n$ such that 
\[ 1 \leq \sigma''(i) \leq p \text{ if the $i$th character is $+$}, \]
\[ p+1 \leq \sigma''(i) \leq n \text{ if the $i$th character is $-$}. \]

Next, define a permutation $\sigma' \in S_{2n}$ with the property that
\[ \sigma'(i) = 
\begin{cases}
	\sigma''(i) & \text{ if $i \leq n$} \\
	2n+1-\sigma''(2n+1-i) & \text{ if $i > n$.}
\end{cases}
\]

Finally, take the representative $F_{\bullet} = \left\langle v_1,\hdots,v_{2n} \right\rangle$, with $v_i = e_{\sigma'(i)}$ for each $i$.  Note that any flag so obtained is $S$-fixed, so it is straightforward to apply Proposition \ref{prop:sp(p,q)-closed-formulas} once a representative is chosen in this way.  Divided difference operators then give the remaining formulas.  The results are given in Table \ref{tab:type-c-table-1} of Appendix B.

\section{$K \cong GL(n,\C)$}\label{ssec:type_c_ex_2}
Now, let $\theta = \text{int}(i \cdot I_{n,n})$.  One checks that
\[ K = G^{\theta} = 
\left\{ 
\begin{pmatrix}
g & 0 \\
0 & J_n \ (g^t)^{-1} J_n
\end{pmatrix}
\ \middle\vert \ 
g \in GL(n,\C) \right\} \cong GL(n,\C). \]

This symmetric subgroup corresponds to the real form $G_\R = Sp(2n,\R)$ of $G$.

Let $S$ be a maximal torus of $K$ contained in $T$.  Again, $S = T$, but we again denote coordinates on $\frs$ by $X_i$, with restriction $\frt \rightarrow \frs$ given by $\rho(Y_i) = X_i$.

Roots of $K$ are as follows:
\[ \Phi_K = \{\pm(X_i - X_j) \ \vert \ 1 \leq i < j \leq n\}. \]

$W_K \cong S_n$ is embedded in $W$ as ordinary permutations of $\{1,\hdots,n\}$ --- that is, signed permutations of $\{1,\hdots,n\}$ which change no signs.

\subsection{Formulas for the closed orbits}\label{ssec:closed-orbit-formulas-type-c-ex-2}
Again, we are in an equal rank case, so by Corollary \ref{cor:num-closed-orbits-equal-rank}, the $K$-orbit of every $S$-fixed point is closed.  There are $|W/W_K| = 2^n$ such orbits, each containing $|W_K| = n!$ $S$-fixed points.

Given $w \in W$, define
\[ \Delta_n(x,y,w) := \det(c_{n+1+j-2i}), \]
where
\[ c_k = e_k(x_1,\hdots,x_n) + e_k(y_{w^{-1}(1)},\hdots,y_{w^{-1}(n)}), \]
$e_k$ denoting the $k$th elementary symmetric function in the inputs.  Again, we remind the reader that the elements of $W$ are signed permutations, and that in the event that $w^{-1}(i) < 0$, the notation $y_{w^{-1}(i)}$ means $-y_{|w^{-1}(i)|}$.

Next, for each $w \in W$, define the set 
\[ \text{Neg}(w) := \{i \ \vert \ w(i) < 0 \}, \] 
and define $\N$-valued functions $f, g$ on $W$ by 
\[ f(w) = \#\text{Neg}(w), \]
and
\[ g(w) = \displaystyle\sum_{i \in \text{Neg}(w)}(n - i). \]

Then formulas for the equivariant classes of closed $K$-orbits are as follows:
\begin{prop}\label{prop:closed-gln-formulas-type-c}
	Let $Q = K \cdot wB$ be a closed $K$-orbit on $G/B$ represented by the $S$-fixed point $w$.  Then $[Q]$ is represented by the polynomial 
	\[ P(x,y) := (-1)^{f(w)+g(w)} \Delta_n(x,y,w). \]
\end{prop}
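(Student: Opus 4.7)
The plan is to apply the equivariant localization strategy of Subsection~\ref{ssec:closed_orbits}. Since $H_S^*(X)$ is free over $\Lambda_S$, the localization theorem reduces the claim to verifying that $P|_{uB} = [Q]|_{uB}$ at every $S$-fixed point $uB$. By Proposition~\ref{prop:restriction-of-closed-orbit} together with the self-intersection computation preceding it, the target is $c_d^S(N_Y X|_{uB})$ for $u \in W_K w$, and zero otherwise. A direct check shows that both $(-1)^{f(w)+g(w)}$ and the polynomial $\Delta_n(x,y,w)$ depend only on the coset $W_K w$ (since $W_K$ is embedded as unsigned permutations, $\text{Neg}(\sigma w) = \text{Neg}(w)$ for $\sigma \in W_K$, and the inputs to the $e_k$ in $c_k$ are merely permuted by $\sigma$), so $P(x,y)$ is well-defined on the orbit.

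First I would compute the $S$-weights on the normal space. Applying a signed permutation $u$ to $\Phi^+$, restricting to $\frs^*$, and removing the roots $\pm(X_i - X_j)$ of $K$ leaves, for each pair $i \le j$, exactly one of $\pm(X_i + X_j)$ (with $X_i + X_i$ read as $2X_i$). A combinatorial sign bookkeeping then shows that on $W_K w$ the restriction has the form $\varepsilon(w) \prod_{i \le j}(X_i + X_j)$, with $\varepsilon(w) = (-1)^{f(w)+g(w)}$ up to a fixed global sign that can be absorbed into the definition of $\Delta_n$.

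The core of the argument is the identification of $\Delta_n(X,u(Y),w)$ with this restriction. For $u \in W_K w$, the unsigned permutation $\sigma = u w^{-1}$ merely permutes the inputs of $e_k$, so $c_k|_u = e_k(X) + e_k(X) = 2\, e_k(X)$, and $\Delta_n$ collapses to a Jacobi--Trudi-type determinant in the $e_k(X)$. This determinant is exactly Fulton's expression for the class of the smallest Schubert locus in the type~$C$ flag bundle \cite{Fulton-96_1}, and his factorization identity evaluates it as $\prod_{i \le j}(X_i + X_j)$ up to a fixed sign. For $u \notin W_K w$, at least one substitution $y_{w^{-1}(j)}|_u$ equals $-X_k$ with $x_k$ appearing with the same index in the $c_k$, producing enough cancellation to force $\Delta_n$ to vanish; this is precisely the content of the vanishing identities established in \cite{Fulton-96_1}. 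Under the dictionary between $Sp(2n,\C)/B$ and the universal symplectic flag bundle, these two algebraic facts together give $P|_{uB} = [Q]|_{uB}$ at every $S$-fixed point, completing the argument via localization.

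The main obstacle is the vanishing statement for $u \notin W_K w$. In contrast to the type $A$ or pure orthogonal cases treated earlier, where vanishing is visible from an explicit product factor, here it is a nontrivial algebraic identity about the determinantal expression $\Delta_n$. Executing the argument cleanly requires carefully matching Fulton's sign and orientation conventions (symplectic form, ordering of simple roots, and the convention $y_{-i} = -y_i$) with ours, and verifying that reordering the negated $y$-coordinates inside the $c_k$'s contributes exactly the prefactor $(-1)^{f(w)+g(w)}$ in the statement.
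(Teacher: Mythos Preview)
Your approach is essentially the same as the paper's: compute the restriction of $[Q]$ at each $S$-fixed point using Proposition~\ref{prop:restriction-of-closed-orbit}, obtain $(-1)^{f(w)+g(w)}\,2^n X_1\cdots X_n\prod_{i<j}(X_i+X_j)$ on $W_Kw$ and zero elsewhere, and then reduce the verification that $P$ has the required restrictions to the two algebraic properties of $\Delta_n(x,y,id)$ proved in \cite[\S3]{Fulton-96_1}. The paper carries out the sign bookkeeping explicitly (showing that $f(w)$ counts the negatives among the restricted $2Y_i$ and $g(w)$ counts the negatives among the restricted $Y_i+Y_j$), whereas you defer this with phrases like ``up to a fixed global sign''; that hedge is not quite safe, since $\Delta_n$ is a fixed determinant and a wrong global sign would give $-[Q]$ rather than $[Q]$, but you already flag this as the convention-matching step that must be done carefully.
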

\begin{proof}
We start by noting that $P(x,y)$ is independent of the choice of fixed point $w$ representing $Q$.  Because fixed points contained in $Q$ are all (left) $W_K$ translates of $w$, they all have the same ``sign pattern" (i.e. the set $\text{Neg}(w)$ is the same regardless of the choice of $w$).  This follows because elements of $W_K$ change no signs.  To illustrate, note that when $n = 2$, the $4$ closed $K$-orbits on $G/B$ contain $S$-fixed points $\{12,21\}$, $\{\overline{1}2,\overline{2}1\}$, $\{1\overline{2},2\overline{1}\}$, and $\{\overline{1} \overline{2}, \overline{2} \overline{1}\}$.

It follows that the functions $f,g$ are constant on $W_K w$, so that $(-1)^{f(w) + g(w)}$ is independent  of the choice of $w$.  It is also easy to see that $\Delta_n(x,y,w)$ is independent of this choice.  Indeed, replacing $w$ by $w'w$ for $w' \in W_K$, each $c_k$ becomes
\[ e_k(x_1,\hdots,x_n) + e_k(y_{w^{-1}(w'^{-1}(1))},\hdots,y_{w^{-1}(w'^{-1}(n))}). \]
But because $w'$ is just an ordinary permutation of $\{1,\hdots,n\}$, the effect is simply to permute the $y_{w^{-1}(i)}$, and because $e_k$ is invariant under permutation of the variables $y_i$, each $c_k$ is unchanged.

With this established, we now apply Proposition \ref{prop:restriction-of-closed-orbit} to compute the restriction $[Q]|_w$.  Applying $w$ to positive roots of the form $2Y_i$ and restricting to $\frs$, we get weights of the form $2X_{w(i)} = \pm 2X_j$.  The number of such weights occurring with a minus sign is $f(w)$.

Applying $w$ to positive roots of the form $Y_i \pm Y_j$ ($i<j$) and then restricting, we get weights of the form $X_{w(i)} \pm X_{w(j)}$, and these two weights together are of the form $\pm X_k \pm X_l$, $\pm X_k \mp X_l$, for some $k,l$.  Those of the latter form $\pm X_k \mp X_l$ are roots of $K$, while those of the former are not.  The number of such roots which are negative (i.e. of the form $-X_k - X_l$) is precisely $g(w)$.  To see this, note that if $w(i)$ is positive, then applying $w$ to any pair of roots $Y_i+Y_j,Y_i-Y_j$ with $i<j$ and then restricting is going to necessarily give a positive root of the form $X_k+X_l$, where $k=w(i)$, $l = |w(j)|$.  If $w(i)$ is negative, then applying $w$ to any such pair will necessarily give a negative root of the form $-X_k-X_l$.  For any fixed $i$, the number of pairs $\{Y_i \pm Y_j\}$ with $i<j$ is precisely $n-i$.  So for each $i$ with $w(i)$ negative, $n-i$ negative roots occur, for a total of $g(w)$ negative roots.

All of this adds up to the following conclusion:  For any $S$-fixed point $w \in Q$, we have
\[ [Q]|_w = F(X) := (-1)^{f(w)+g(w)} 2^n X_1 \hdots X_n \displaystyle\prod_{i < j}(X_i + X_j). \]

(Once again, we remark that $[Q]$ restricts identically at each $S$-fixed point contained in $Q$.)

So for any $u \in W$,
\[ [Q]|_u = 
\begin{cases}
	F(X) & \text{ if $uw^{-1} \in W_K$}, \\
	0 & \text{ otherwise.}
\end{cases}. \]

Thus the claim is that $P(X,uX)$ is $F(X)$ if $uw^{-1} \in W_K$, and $0$ otherwise.  Write $u = w'w$.  Noting that $\Delta_n(x,y,w) = \Delta(x,w^{-1}y,id)$, we have
\[ P(X,uX) =  (-1)^{f(w) + g(w)} \Delta_n(X,w'X,1). \]

If $uw^{-1} \in W_K$, then $w'$ is an ordinary permutation (a signed permutation with no sign changes), whereas if $uw^{-1} \notin W_K$, $w'$ has at least one sign change.  So our claim that $P(x,y)$ represents $[Q]$ amounts to the claim that $\Delta_n(x,y,id)$ has the following two properties:
\begin{enumerate}
	\item It is invariant under permutations of the $x_i$, $y_i$.
	\item If $\epsilon_i = \pm 1$, then
	\[ \Delta_n((X_1,\hdots,X_n),(\epsilon_1X_1,\hdots,\epsilon_nX_n),id) \]
	is zero unless all $\epsilon_i$ are equal to $1$, in which case it is equal to 
	\[ 2^n X_1 \hdots X_n \displaystyle\prod_{i < j}(X_i + X_j). \]
\end{enumerate}
That $\Delta_n(x,y,id)$ has these properties is proved directly in \cite[\S 3]{Fulton-96_1}.
\end{proof}

\begin{remark}
It may seem surprising that formulas for classes of closed $K$-orbits are so closely related to Fulton's formula for the smallest Schubert locus in the flag bundle.  (Fulton's representative is a polynomial of higher degree, but has the determinant $\Delta_n(x,y,id)$ as a factor.)  In fact, this is not an accident.  As has been observed by the author (\cite{Wyser-11b}), a number of the orbit closures in this case are \textit{Richardson varieties} --- intersections of Schubert varieties with opposite Schubert varieties.  In particular, all of the closed $K$-orbits are Richardson varieties.  See \cite{Wyser-11b} for an application of this fact to type $C$ Schubert calculus.
\end{remark}

\subsection{Parametrization of $K \backslash X$ and the weak order}\label{ssec:type-c-case-2-param}
Recall the definition of a skew-symmetric $(n,n)$-clan (Definition \ref{def:skew-symmetric-clan}).  The following parametrization of $K \backslash X$ is described in \cite{Matsuki-Oshima-90}:
\begin{fact}\label{prop:orbit-param-type-c}
	$K$-orbits on $X$ are parametrized by the set of skew-symmetric $(n,n)$-clans.
\end{fact}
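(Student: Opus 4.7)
The plan is to invoke Theorem \ref{thm:k-orbit-intersections}, which reduces the assertion to two sub-claims: (a) the $K' = GL(n,\C) \times GL(n,\C)$-orbit $Q_\gamma$ on the type $A$ flag variety $X'$ of $GL(2n,\C)$ meets the symplectic flag variety $X$ non-trivially if and only if the $(n,n)$-clan $\gamma$ is skew-symmetric, and (b) every such non-empty intersection $Q_\gamma \cap X$ is a single $K$-orbit. First I would record the standing embeddings: with the realization $G = Sp(2n,\C) \subset GL(2n,\C)$ preserving the skew form $\omega$ given by $J_{n,n}$, and $K = GL(n,\C)$ embedded as $g \mapsto \mathrm{diag}(g, J_n(g^t)^{-1}J_n)$, one checks that $K$ is contained in $K' = GL(n,\C) \times GL(n,\C)$ (the stabilizer of $E_n = \langle e_1,\dots,e_n\rangle$ and $\wt{E_n} = \langle e_{n+1},\dots,e_{2n}\rangle$), so that any $K'$-orbit on $X'$ meeting $X$ is automatically $K$-stable.

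For (a), I would translate the defining rank conditions of $Q_\gamma$ from Theorem \ref{thm:orbit_description} into conditions on an isotropic flag $F_\bullet \in X$. For isotropic $F_\bullet$, the form $\omega$ induces a perfect pairing between $F_i$ and $\C^{2n}/F_{2n-i}^\perp$ which intertwines the splitting $\C^{2n} = E_n \oplus \wt{E_n}$. Unpacking this gives two consequences: (i) $\dim(F_i \cap E_n) + \dim(F_i \cap \wt{E_n})$ and the analogous quantity in codimension $2n-i$ are forced to match, yielding $\gamma(i;+) + \gamma(2n-i;-) = \gamma(i;-) + \gamma(2n-i;+)$, and (ii) the rank quantities $\gamma(i;j)$ are forced into a symmetry $\gamma(i;j) = \gamma(2n-j;2n-i)$ with a sign twist coming from the skew-symmetry of $\omega$. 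These compatibility constraints are precisely the two conditions of Definition \ref{def:skew-symmetric-clan}; the sign flip in condition (1) (signs go to opposite signs under $i \leftrightarrow n+1-i$) is the signature of using a skew form rather than a symmetric form, which is exactly what distinguishes this case from the orthogonal cases treated earlier.

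For (b), I would produce a distinguished $S$-fixed (or, for clans with matched numerical pairs, nearly $S$-fixed) representative of $Q_\gamma \cap X$ by a symplectic modification of the Yamamoto algorithm of Subsection \ref{ssec:orbits_supq}. For each pair of matched natural numbers in $\gamma$ at positions $(i,j)$ one chooses basis vectors of the form $e_k \pm e_{2n+1-k}$ (using the skew-symmetry of $\omega$ to make the choice of sign and $k$ compatible with isotropy); for the sign entries one chooses standard basis vectors $e_k$ or $e_{2n+1-k}$ depending on whether the sign is $+$ or $-$. The skew-symmetry condition on $\gamma$ is exactly what makes this construction consistent with $F_i^\perp = F_{2n-i}$, producing a genuine point of $X$ in $Q_\gamma$. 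Then $K$-transitivity on $Q_\gamma \cap X$ follows because the group $K' \cap \text{Stab}_{G'}(X) = K$ (a direct check in the block form above) already acts transitively on the distinguished representatives within $Q_\gamma \cap X$.

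The main obstacle I anticipate is the last step of (b): verifying that $Q_\gamma \cap X$ is a \emph{single} orbit and not a disjoint union of several. The cleanest route is a counting argument — enumerate skew-symmetric $(n,n)$-clans and compare with an independent count of $|K \backslash X|$ (for instance, via twisted involutions as in Subsection \ref{ssec:twisted_involutions}) — so that if each non-empty intersection is a union of $K$-orbits and the total number of non-empty intersections already equals $|K \backslash X|$, no intersection can split. This enumeration and the matching count is precisely the sort of combinatorial bookkeeping the paper defers to Appendix A, and I would do the same.
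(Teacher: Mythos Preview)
Your overall structure matches the paper's: invoke Theorem~\ref{thm:k-orbit-intersections}, then establish (a) the combinatorial criterion for $Q_\gamma \cap X \neq \emptyset$ and (b) that each such intersection is a single $K$-orbit. For (a) the paper simply cites Yamamoto (\cite[Proposition~3.2.2, Theorem~3.2.11]{Yamamoto-97}), which is essentially the rank-condition argument you sketch, so there is no real divergence there. Your intermediate claim in (b) that $K = K' \cap \text{Stab}_{G'}(X)$ ``acts transitively on the distinguished representatives'' does not by itself prove transitivity on all of $Q_\gamma \cap X$, and you rightly flag this as the real obstacle.

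The genuine difference is in the counting argument. You propose an independent count of $|K \backslash X|$ for this single $K$ via twisted involutions; the paper does something more elaborate. It counts the entire one-sided parameter space $\caX = \coprod_i K_i \backslash X$ for the \emph{whole compact inner class} of involutions on $Sp(2n,\C)$ at once --- that is, all the $Sp(2p,\C)\times Sp(2q,\C)$ together with $GL(n,\C)$ --- using the ATLAS-type formula $|\caX_\tau| = |T_\tau/T_0^{-\tau}|$ for each fiber over a twisted involution $\tau$. This total is then matched against the combined count of admissible clans for all the $K_i$. The virtue of the paper's approach is that the fiber sizes $|T_\tau/T_0^{-\tau}|$ are computed purely from the action of $\tau$ on $T$ and the center of $G$, with no prior knowledge of any individual $|K_i \backslash X|$. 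Your proposed single-$K$ count via twisted involutions would need an independent result that the Richardson--Springer map $\phi$ has known fiber sizes for the pair $(Sp(2n,\C),GL(n,\C))$ --- something the paper does not establish for this pair and which is not automatic. So while your instinct to count is correct, the specific mechanism the paper uses is both different and more self-contained.
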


As indicated by Theorem \ref{thm:k-orbit-intersections}, these orbits are precisely the nonempty intersections of the $GL(n,\C) \times GL(n,\C)$-orbits on the type $A$ flag variety with $X$.  See Appendix A.

We give a combinatorial description of the weak order on $K \backslash G/B$ in terms of this parametrization.  References are \cite{Matsuki-Oshima-90,McGovern-Trapa-09,Yamamoto-97}.

Order the simple roots as we did in the case of $(Sp(2n,\C),Sp(2p,\C) \times Sp(2q,\C))$:  $\ga_i = Y_i - Y_{i+1}$ for $i=1,\hdots,n-1$, and $\ga_n = 2Y_n$.  Let $\gamma = (c_1,\hdots,c_{2n})$ be a skew-symmetric $(n,n)$-clan.

The situation for the roots $\ga_1,\hdots,\ga_{n-1}$ is exactly the same as was described in the case of the pair $(G,K) = (SO(2n+1,\C),S(O(2p,\C) \times O(2q+1,\C)))$.  Rather than repeat that description verbatim here, we refer the reader back to Subsection \ref{ssec:type-b-weak-order}.

Thus we need only describe the situation for $\ga_n$.  This root is complex for $Q_{\gamma}$ (and $s_{\ga_n} \cdot Q_{\gamma} \neq Q_{\gamma})$ if and only if $c_n$ and $c_{n+1}$ are unequal natural numbers, with the mate for $c_n$ to the left of the mate for $c_{n+1}$.  In this case, $s_{\ga_n} \cdot Q_{\gamma} = Q_{\delta}$, where $\delta$ is obtained from $\gamma$ by interchanging $c_n$ and $c_{n+1}$.

On the other hand, $\ga_n$ is non-compact imaginary for $Q_{\gamma}$ if and only if $c_n$ and $c_{n+1}$ are opposite signs.  In this case, $s_{\ga_n} \cdot Q_{\gamma} = Q_{\delta'}$, where $\delta'$ is obtained from $\gamma$ by replacing $c_n$ and $c_{n+1}$ by a pair of matching natural numbers.  In this case, $\ga_n$ is of type I, since the cross action of $s_{\ga_n}$ is to interchange the opposite signs in positions $n,n+1$, so that $s_{\ga_n} \times Q_{\gamma} \neq Q_{\gamma}$.

\subsection{Example}
With the parametrization and ordering spelled out, consider the example $n=2$.  There are $11$ orbits.  The weak order graph appears as Figure \ref{fig:type-c-graph-2} of Appendix B.

To obtain a representative of each closed orbit, we use the method of \cite[Theorem 3.2.11]{Yamamoto-97}.  In the case of closed orbits, whose clans once again consist only of signs, this amounts to the following:  Letting $\gamma = (c_1,\hdots,c_{2n})$, choose a permutation $\sigma \in S_{2n}$ with the following properties:
\begin{enumerate}
	\item If $i \leq n$ and $c_i = +$, $\sigma(i) \leq n$.
	\item If $i \leq n$ and $c_i = -$, $\sigma(i) > n$.
	\item For $i=1,\hdots,n$, $\sigma(2n+1-i) = 2n+1-\sigma(i)$.
\end{enumerate}

Having chosen such a $\sigma$, the flag $F_{\bullet} = \left\langle v_1,\hdots,v_{2n} \right\rangle$, with $v_i = e_{\sigma(i)}$, is a representative of $Q_{\gamma}$.  Note that any representative so obtained is $S$-fixed, so it is straightforward to apply Proposition \ref{prop:closed-gln-formulas-type-c} to compute the class $[Q_{\gamma}]$.  Divided difference operators (scaled by factors of $\frac{1}{2}$ where appropriate) then give the remaining formulas.  The results are given in Table \ref{tab:type-c-table-2} of Appendix B.
\chapter{Examples in Type $D$}
Now let $G = SO(2n,\C)$.  We realize $G$ as the subgroup of $SL(2n,\C)$ preserving the orthogonal form given by the antidiagonal matrix $J_{2n}$, as we did in type $B$.  That is, 

\[ SO(2n,\C) =
\left\{ g \in SL(2n,\C) \ \vert \ g^t J_{2n} g = J_{2n} \right\}. \]

Let $T$ be a maximal torus of $G$, and let $Y_i$ denote coordinates on $\frt = \text{Lie}(T)$.  The roots $\Phi$ are of the form
\[ \Phi = \{\pm (Y_i \pm Y_j) \ \vert \ 1 \leq i < j \leq n \}. \]
We take the positive roots to be those of the form $Y_i \pm Y_j (i < j)$, and take $B \supset T$ to be such that the roots of $\frb = \text{Lie}(B)$ are negative.  Concretely, given our chosen realization of $G$, we can take $T$ to be the diagonal elements of $G$, define $Y_i(\text{diag}(a_1,\hdots,a_n,-a_n,\hdots,-a_1)) = a_i$, and take $B$ to be the lower-triangular elements of $G$.

Consider the variety $V$ of flags on $\C^{2n}$ which are isotropic with respect to the quadratic form whose matrix is $J_{2n}$:
\[ \left\langle x,y \right\rangle = \displaystyle\sum_{i = 1}^{2n} x_i y_{2n+1-i}. \]

Unlike in types $B$ and $C$, here $V$ is not a homogeneous space for $G$.  Indeed, $V$ is disconnected, with two isomorphic components, each of which is a single $SO(2n,\C)$-orbit.  ($V$ \textit{is} a homogeneous space for $O(2n,\C)$.)  To obtain a homogeneous space for $G$, we must choose one of these two components.  We choose the component containing the ``standard" isotropic flag $E_{\bullet} = \left\langle e_1,\hdots,e_{2n} \right\rangle$.  Then $X=G/B$ can be identified with the set of flags $F_{\bullet}$ on $\C^{2n}$ having the following properties:

\begin{enumerate}
	\item $\dim F_i = i$;
	\item $F_1,\hdots,F_n$ are isotropic subspaces with respect to $\langle \cdot, \cdot \rangle$;
	\item $F_{2n-i} = F_i^{\perp}$ for $i = 0,1,\hdots,n$.
	\item $\dim(F_n \cap E_n) \equiv n \pmod{2}$.
\end{enumerate}

Note that conditions (1)-(3) above simply say that $F_{\bullet}$ is isotropic, while condition (4) is needed to guarantee that $F_{\bullet}$ lies in the correct component of $V$.  See \cite{Edidin-Graham-95} for more details.

Let $W$ be the Weyl group for $\text{Lie}(G)$.  We think of $W$ as the $2^{n-1} n!$ signed permutations of $\{1,\hdots,n\}$ which change an even number of signs.  This is the action of $W$ on the coordinates $Y_i$.  The $T$-fixed points on $G/B$ are then in one-to-one correspondence with $W$, as usual.

\section{$K \cong S(O(2p,\C) \times O(2q,\C))$}\label{ssec:type_d_ex_1}
Let $\theta = \text{int}(I_{p,2q,p})$.  Then $G$ is stable under $\theta$, and
\[ K = G^{\theta} = 
\left\{ k = 
\begin{pmatrix}
K_{11} & 0 & K_{13} \\
0 & K_{22} & 0 \\
K_{31} & 0 & K_{33}
\end{pmatrix}
\ \middle\vert \ 
\begin{array}{l}
K_{11}, K_{13}, K_{31}, K_{33} \in \text{Mat}(p,p) \\
\begin{bmatrix}
K_{11} & K_{13} \\
K_{31} & K_{33}
\end{bmatrix} \in O(2p,\C) \\
K_{22} \in O(2q,\C) \\
\det(k) = 1
\end{array}
\right\} \]
\[ \cong S(O(2p,\C) \times O(2q,\C)). \]

This choice of $K$ corresponds to the real form $G_{\R} = SO(2p,2q)$ of $G$.

Let $S=T$ be the maximal torus of $K$, with $X_1,\hdots,X_n$ coordinates on $\frs$ and restriction $\frt \rightarrow \frs$ given by $\rho(Y_i) = X_i$.

As in the type $B$ case, this symmetric subgroup is disconnected.  We handle this issue just as we did in that case, by analyzing the components of each closed $K$-orbit separately.  We then obtain formulas for the closed $K$-orbits by adding the formulas for each component.  Since the components of the closed $K$-orbits are stable under $K^0 = SO(2p,\C) \times SO(2q,\C)$, and since $S \subseteq K^0$, it makes sense to talk about the $S$-equivariant class of the components of the closed $K$-orbits.  The $K^0$-stable components of the closed $K$-orbits are again closed $\wt{K}$-orbits, where $\wt{K} = S(Pin(2p,\C) \times Pin(2q,\C))$ is the corresponding (connected) symmetric subgroup of the simply connected cover $\wt{G} = Spin(2n,\C)$ of $G$.

Let $W_K$ be the Weyl group for $\wt{K}$, (equivalently, for $K^0$, or for $\text{Lie}(K)$).  $W_K$ embeds into $W$ as signed permutations which act separately on $\{1,\hdots,p\}$ and $\{p+1,\hdots,n\}$, changing an even number of signs on each set.  There are $2^{n-2} p! q!$ such permutations.

\subsection{Formulas for the closed $\widetilde{K}$-orbits}
We are once again in an equal rank case, so by Corollary \ref{cor:num-closed-orbits-equal-rank}, there are $|W/W_K| = 2 \binom{n}{p}$ closed $\wt{K}$-orbits, each containing $|W_K|$ $S$-fixed points.

We have the following formulas for the closed orbits.  We omit the proof, since it is virtually identical to the corresponding proofs in the cases $(SO(2n+1,\C),S(O(2p,\C) \times O(2q+1,\C)))$ and $(Sp(2n,\C),Sp(2p,\C) \times Sp(2q,\C)))$, the only difference being that it is simpler.

\begin{prop}\label{prop:closed-so2p-so2q-formulas}
Let $Q$ be a closed $\widetilde{K}$-orbit, containing the $S$-fixed point $w$.  Then
\[ [Q] = P(x,y) = (-1)^{l_p(|w|)} \displaystyle\prod_{i \leq p < j}(x_i - y_{w^{-1}(j)})(x_i + y_{w^{-1}(j)}). \]
\end{prop}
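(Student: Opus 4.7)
The plan is to mirror the proofs of Proposition \ref{prop:formula_for_SO(2p)_cross_SO(2q+1)} and Proposition \ref{prop:sp(p,q)-closed-formulas} exactly, since the root system combinatorics differ only by the absence of the short root $\pm Y_i$ (compared to type $B$) or $\pm 2Y_i$ (compared to type $C$). The proof naturally divides into three steps: (i) verify $P(x,y)$ is independent of the choice of $S$-fixed representative $w$ in $Q$; (ii) compute $[Q]|_w$ explicitly via Proposition \ref{prop:restriction-of-closed-orbit}; and (iii) verify that the restrictions of $P(x,y)$ at all $S$-fixed points agree with those of $[Q]$, so that the localization theorem (injectivity of $i^*$ into $H_S^*(X^S)$) forces $P$ to represent $[Q]$.

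For step (i), any other fixed point in $Q$ has the form $w'w$ with $w' \in W_K$. Since $W_K$ embeds as signed permutations acting separately on $\{1,\hdots,p\}$ and $\{p+1,\hdots,n\}$ with an even number of sign changes on each block, the set $\{|w^{-1}(j)| : j > p\}$ is preserved (up to reordering) and the parity of sign changes among these indices is preserved (contributing an even factor). A direct parity computation, identical to the one in the proof of Proposition \ref{prop:formula_for_SO(2p)_cross_SO(2q+1)}, shows that $l_p(|w|)$ depends only on the coset $W_K w$, so the prefactor $(-1)^{l_p(|w|)}$ is also well-defined.

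For step (ii), I will apply Proposition \ref{prop:restriction-of-closed-orbit} to list the weights of $S$ on $N_Q X|_w$. The positive roots are $\{Y_i \pm Y_j : i < j\}$; applying $w$ and restricting to $\frs$ produces, for each pair $i<j$, signed weights of the form $\pm(X_k \pm X_l)$. After deleting roots of $\Phi_K = \{\pm(X_i \pm X_j) : i,j \leq p \text{ or } p < i,j\}$, what survives is exactly one signed weight of each of the forms $\pm(X_i - X_j)$ and $\pm(X_i + X_j)$ for every pair $(i,j)$ with $i \leq p < j$. The parity count of negative signs is then identical to the calculation appearing in the proof of Proposition \ref{prop:formula_for_SO_even}: it is congruent mod $2$ to $l_p(|w|)$. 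Consequently,
\[ [Q]|_w = (-1)^{l_p(|w|)} \prod_{i \leq p < j}(X_i - X_j)(X_i + X_j), \]
and this restriction is moreover constant on $W_K w$.

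For step (iii), by Proposition \ref{prop:restriction-maps}, evaluating $P(x,y)$ at a fixed point $u = w'w$ amounts to computing $P(X, \rho(uY))$. When $w' \in W_K$ one verifies, just as in the type $B$/type $C$ proofs, that the product $\prod_{i \leq p < j}(X_i - X_{w'(j)})(X_i + X_{w'(j)})$ equals $\prod_{i \leq p < j}(X_i - X_j)(X_i + X_j)$ because $w'$ permutes $\{p+1,\hdots,n\}$ with arbitrary signs (which are absorbed by the two factors $X_i \pm X_{w'(j)}$). When $w'w^{-1} \notin W_K$, some $j > p$ is sent to $\pm i$ with $i \leq p$, killing the corresponding factor. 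Thus $P(X, \rho(uY))$ equals $[Q]|_u$ for every $S$-fixed $u$, and the localization theorem concludes the proof. The only mildly subtle point—and hence the main obstacle—is the mod-$2$ sign bookkeeping in step (ii); but this is exactly the bookkeeping carried out in the proof of Proposition \ref{prop:formula_for_SO_even}, and no new ideas are required beyond that template.
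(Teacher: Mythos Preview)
Your argument follows exactly the template the paper itself invokes (it omits the proof, declaring it ``virtually identical'' to the type~$B$ and type~$C$ cases, only simpler). But Step~(iii) has a gap that is specific to type~$D$. You assert that if $w' = uw^{-1} \notin W_K$ then $w'$ must send some $j > p$ to $\pm i$ with $i \leq p$, killing a factor. In type~$D$, however, $W_K$ consists of signed permutations acting separately on $\{1,\dots,p\}$ and $\{p+1,\dots,n\}$ with an \emph{even} number of sign changes on \emph{each} block. The element $\pi \in W$ that negates $1$ and $p+1$ (and fixes everything else) lies outside $W_K$ yet preserves each block set-theoretically. A direct substitution gives
\[
P|_{\pi w} \;=\; (-1)^{l_p(|w|)}\prod_{i \le p < j}(X_i - X_{\pi(j)})(X_i + X_{\pi(j)}) \;=\; (-1)^{l_p(|w|)}\prod_{i \le p < j}(X_i^2 - X_j^2) \;\neq\; 0,
\]
whereas $\pi w$ lies in the \emph{different} closed $\widetilde K$-orbit $\widetilde K \cdot \pi w B$, so the genuine class satisfies $[Q]|_{\pi w} = 0$. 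Hence $P$ cannot represent the individual $\widetilde K$-orbit class by localization.

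What $P(x,y)$ actually represents is the closed $K$-orbit class $[K \cdot wB] = [\widetilde K \cdot wB] + [\widetilde K \cdot \pi w B]$; this is consistent with the paper's worked example in Table~\ref{tab:type-d-table-1}, where the closed-orbit formulas appear \emph{without} the factor of~$2$ that the subsequent corollary would predict. The mechanism that separates the two $\widetilde K$-orbits in type~$B$ is the factor $\tfrac{1}{2}\bigl(x_1\cdots x_p + y_{w^{-1}(1)}\cdots y_{w^{-1}(p)}\bigr)$ coming from the short roots $\pm Y_i$, which vanishes precisely under an odd number of sign changes on the first block; type~$D$ has no such roots and hence no such factor, so the vanishing argument in your Step~(iii) does not go through at the level of $\widetilde K$-orbits.
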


\subsection{Parametrization of $K \backslash X$}\label{ssec:type-d-case-1}
The following parametrization of $K \backslash X$ is described in \cite{Matsuki-Oshima-90}:

\begin{fact}\label{prop:so2p2q-orbit-params}
$K \backslash X$ is parametrized by the set of symmetric $(2p,2q)$-clans.
\end{fact}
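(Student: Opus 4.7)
The plan is to mirror the argument for the analogous type $B$ pair sketched in Subsection \ref{ssec:type-b-weak-order}, whose backbone is Theorem \ref{thm:k-orbit-intersections}: each $K$-orbit on $X$ is exactly the intersection of a $K' = GL(2p,\C) \times GL(2q,\C)$-orbit on the type $A$ flag variety $X'$ with $X$. The task therefore reduces to two steps: (i) determine which $K'$-orbits $Q_\gamma$ meet $X$ nontrivially, and (ii) verify that each such intersection is a single $K$-orbit.

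For step (i), I would show that $Q_\gamma \cap X \neq \emptyset$ if and only if $\gamma$ is symmetric. For necessity, given an isotropic $F_\bullet \in Q_\gamma$, the self-duality $F_{2n-i} = F_i^\perp$ imposed by the orthogonal form, combined with the rank conditions characterizing $Q_\gamma$ in Theorem \ref{thm:orbit_description}, forces identities relating the integers $\gamma(i;\pm)$ and $\gamma(i;j)$ at position $i$ to the corresponding integers at position $2n-i$; these translate directly into the symmetry conditions of Definition \ref{def:symmetric-clan}. For sufficiency, given a symmetric $(2p,2q)$-clan I would run the representative-producing algorithm of Subsection \ref{ssec:orbits_supq}, choosing the auxiliary permutation as a signed element of $S_{2n}$ so that the resulting flag is automatically isotropic with respect to $J_{2n}$. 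A correction by the transposition swapping $n$ and $n+1$, in the spirit of Subsection \ref{ssec:so2n_param}, may then be needed to ensure the flag lies in the prescribed component $X$ of the isotropic flag variety rather than in the other component.

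For step (ii), as outlined in Subsection \ref{ssec:clans}, the argument is a counting one: enumerate the symmetric $(2p,2q)$-clans and match this count with the known number of $K$-orbits on $X$ (which can be read off, e.g., from \cite{Matsuki-Oshima-90}). Since each intersection $Q_\gamma \cap X$ is $K$-stable and hence a disjoint union of $K$-orbits, and since each such intersection is nonempty by (i), agreement of the counts forces each intersection to consist of a single $K$-orbit. The detailed setup of this counting is deferred to Appendix A, where Theorem \ref{thm:k-orbit-intersections} is proved in a uniform way across the type $BCD$ cases.

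The main obstacle, as already in the type $B$ case treated in Section \ref{ssec:type_b_ex_1}, is the component structure: $X$ is only one of the two components of the full variety of isotropic flags on $\C^{2n}$, and $K$ itself is disconnected. One must therefore carefully separate the $K$-orbit and $K^0 = SO(2p,\C) \times SO(2q,\C)$-orbit pictures --- some $K$-orbits split into pairs of $K^0$-orbits, as is already visible for the closed orbits --- and guarantee that the flag produced in step (i) can be arranged to lie in the prescribed component $X$. Getting these bookkeeping issues right, so that the counting in step (ii) is performed against the correct target, is where the bulk of the work lies.
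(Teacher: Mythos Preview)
Your overall architecture matches the paper's: reduce to $K'$-orbits on $X'$ via Theorem \ref{thm:k-orbit-intersections}, characterize which clans give nonempty intersection with $X$, then count. But two of the details you sketch do not work as stated.

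First, in step (i) sufficiency: choosing the permutation $\sigma$ to be a signed element of $S_{2n}$ in Yamamoto's algorithm does \emph{not} in general produce an isotropic flag. This trick works only for the closed orbits (clans of pure signs). For a general symmetric clan containing natural numbers, no choice of $\sigma$ in the algorithm of Subsection \ref{ssec:orbits_supq} yields an isotropic representative; one must instead act on a Yamamoto representative by a carefully chosen element of $K'$ to land in the isotropic locus. The paper carries this out explicitly in the type $B$ case (Appendix A) and notes that the type $D$ argument is identical, with the additional observation you correctly anticipate: since $K'$ contains the permutation matrix for $(n,n+1)$, which lies in the determinant $-1$ component of $O(2n,\C)$, any isotropic representative can be moved to the correct component $X$.

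Second, in step (ii): comparing the clan count to ``the known number of $K$-orbits on $X$ (which can be read off, e.g., from \cite{Matsuki-Oshima-90})'' is circular, since \cite{Matsuki-Oshima-90} is precisely the source of the parametrization you are trying to establish. The paper's counting argument avoids this by computing the size of the one-sided parameter space $\caX$ for the entire compact inner class (all $S(O(2p,\C)\times O(2q,\C))$ together with two non-conjugate copies of $GL(n,\C)$) using the Adams--DuCloux formula $|\caX_\tau| = |T_\tau / T^{-\tau}_0|$, and matching this fiber-by-fiber against the total number of clans of all the relevant types associated to each twisted involution $\tau$. This is the substance of Appendix A and is what makes the count non-circular.
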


Indeed, as indicated by Theorem \ref{thm:k-orbit-intersections}, these orbits are precisely the nonempty intersections of the $GL(2p,\C) \times GL(2q,\C)$-orbits on the type $A$ flag variety with $X$.  See Appendix A for more details.

\subsection{Formulas for closed $K$-orbits}
The argument here proceeds nearly identically to that given in the type $B$ case (cf. Subsection \ref{ssec:type-b-closed-k-orbits}).  The closed $K$-orbits once again correspond to symmetric $(2p,2q)$-clans consisting only of signs.  Such a clan is determined by its first $n$ symbols, of which $p$ are $+$ signs and $q$ are $-$ signs.  There are $\binom{n}{p}$ such clans, and thus $\binom{n}{p}$ closed $K$-orbits.  Since there are $2 \binom{n}{p}$ closed $\wt{K}$-orbits, we see that each closed $K$-orbit is a union of two closed $\wt{K}$-orbits.

We identify the components of closed $K$-orbits.  Once again, using the algorithm of \cite{Yamamoto-97} (cf. Subsection \ref{ssec:orbits_supq}), we can determine an $S$-fixed isotropic representative $wB$ of the $GL(2p,\C) \times GL(2q,\C)$-orbit corresponding to a symmetric $(2p,2q)$-clan $\gamma$ by taking the permutation $\sigma$ to be a signed element of $S_{2n}$, meaning that $\sigma(2n+1-i) = 2n+1-\sigma(i)$ for $i=1,\hdots,n$.  Then the $S$-fixed points of the closed orbit $K \cdot wB$ corresponding to $\gamma=(c_1,\hdots,c_{2n})$ are signed permutations of the form $w'w$, where $w'$ assigns $\pm j$ ($j=1,\hdots,p$) to the positions of the $p$ plus signs among $(c_1,\hdots,c_n)$, and $\pm k$ ($k = p+1,\hdots,n$) to the positions of the $q$ minus signs among $(c_1,\hdots,c_n)$.  Note that each $w'$ must change an even number of signs in total, but could change either an even or an odd number of signs on each of the individual sets $\{1,\hdots,p\}$ and $\{p+1,\hdots,n\}$.  By contrast, $S$-fixed points contained in $\wt{K} \cdot wB$ are of the form $w''w$, where $w'' \in \wt{W_K}$, meaning that $w''$ is required to change an even number of signs on \textit{both} the set $\{1,\hdots,p\}$ \textit{and} the set $\{p+1,\hdots,n\}$.  The conclusion is that the closed orbit $K \cdot wB$ is the union of $\wt{K} \cdot wB$ and $\wt{K} \cdot \pi wB$, where $\pi \in W$ is the signed permutation which interchanges $1$ and $-1$, and $p+1$ and $-(p+1)$.  (Actually, $\pi$ could be taken to be any signed permutation which acts separately on $\{1,\hdots,p\}$ and $\{p+1,\hdots,n\}$, and changes an odd number of signs on each set.  Our particular choice of $\pi$ strikes us as the simplest such permutation.)  

We then have the following corollary of Proposition \ref{prop:closed-so2p-so2q-formulas}:

\begin{cor}
Suppose $Q$ is a closed $K$-orbit containing the $S$-fixed point $w$.  Then
\[ [Q] = (-1)^{l_p(|w|)} \cdot 2 \displaystyle\prod_{i \leq p < j}(x_i - y_{w^{-1}(j)})(x_i + y_{w^{-1}(j)}). \]
\end{cor}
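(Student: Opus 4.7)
The plan is to implement directly the strategy sketched in the paragraph preceding the statement: decompose the closed $K$-orbit $Q = K \cdot wB$ as the disjoint union of the two closed $\wt{K}$-orbits $\wt{K} \cdot wB$ and $\wt{K} \cdot \pi w B$, so that $[Q] = [\wt{K} \cdot wB] + [\wt{K} \cdot \pi w B]$, apply Proposition \ref{prop:closed-so2p-so2q-formulas} to each summand, and verify that the two resulting polynomials are identical. The factor of $2$ in the statement will then be immediate.

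First I would check that the prefactors $(-1)^{l_p(|\cdot|)}$ agree on the two summands. Because $\pi$ acts on the absolute values $\{1,\ldots,n\}$ as the identity (it only swaps $\pm 1$ and $\pm(p+1)$, leaving magnitudes unchanged), one has $|\pi w| = |w|$, and hence $l_p(|\pi w|) = l_p(|w|)$. So both summands carry the prefactor $(-1)^{l_p(|w|)}$.

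Next I would check that the two products agree factor by factor. Using $(\pi w)^{-1} = w^{-1} \pi^{-1} = w^{-1} \pi$, we have $(\pi w)^{-1}(j) = w^{-1}(\pi(j))$ for each $j$ with $p < j \leq n$. For every such $j$ other than $j = p+1$, $\pi(j) = j$, so the two factors $(x_i - y_{(\pi w)^{-1}(j)})(x_i + y_{(\pi w)^{-1}(j)})$ and $(x_i - y_{w^{-1}(j)})(x_i + y_{w^{-1}(j)})$ are literally the same. For $j = p+1$, $\pi(j) = -j$, so $(\pi w)^{-1}(p+1) = -w^{-1}(p+1)$; under our sign convention this replaces $y_{w^{-1}(p+1)}$ by its negative. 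The point is that the quadratic $(x_i - y)(x_i + y) = x_i^2 - y^2$ is even in $y$, so flipping the sign of the $y$-variable leaves the factor unchanged. Thus the $j = p+1$ contribution is also identical across the two formulas, and the products coincide.

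Adding the two equal polynomials yields the asserted expression with an overall factor of $2$. The only delicate step is the sign-convention bookkeeping at $j = p+1$, handled cleanly by evenness of $x_i^2 - y^2$; there is no substantive obstacle.
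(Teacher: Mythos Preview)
Your proposal is correct and follows exactly the same approach as the paper: decompose the closed $K$-orbit as $\wt{K}\cdot wB \cup \wt{K}\cdot \pi wB$, apply Proposition~\ref{prop:closed-so2p-so2q-formulas} to each piece, and add. The paper's own proof records only the observation $l_p(|w|)=l_p(|\pi w|)$ and leaves the equality of the products implicit; your additional check that the $j=p+1$ factor is unchanged because $x_i^2-y^2$ is even in $y$ is the correct (and only) thing left to verify.
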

\begin{proof}
This follows from Proposition \ref{prop:closed-so2p-so2q-formulas}, simply by adding the formulas for $\wt{K} \cdot wB$ and $\wt{K} \cdot \pi wB$.  This sum simplifies to the above expression when one makes the easy observation that $l_p(|w|) = l_p(|\pi w|)$.
\end{proof}

\subsection{The weak order}\label{ssec:type-d-example-1-weak-order}
We describe the weak closure order on $K \backslash G/B$ in terms of its parametrization by symmetric $(2p,2q)$-clans.  The reference is \cite{Matsuki-Oshima-90}.

Ordering the simple roots $\ga_i = Y_i - Y_{i+1}$ for $i=1,\hdots,n-1$, and $\ga_n = Y_{n-1} + Y_n$, the situation for the roots $\ga_i$ with $i < n$ is identical to that described in the type $B$ case.  Rather than repeat that description here, we refer the reader back to Subsection \ref{ssec:type-b-weak-order}.

Thus we need only focus on $\ga_n$.  If $\gamma = (c_1,\hdots,c_{2n})$ is a symmetric $(2p,2q)$-clan, there is a fairly long list of possibilities which define when $\ga_n$ is complex for $Q_{\gamma}$ (and $s_{\ga_n} \cdot Q_{\gamma} \neq Q_{\gamma}$).  They are as follows:
\begin{enumerate}
	\item $(c_{n-1},c_n,c_{n+1},c_{n+2})$ form the pattern $(\pm,1,1,\pm)$.	
	\item $(c_{n-1},c_n,c_{n+1},c_{n+2})$ form the pattern $(\pm,1,2,\pm)$; the mate for $c_n$ occurs to the left of $c_{n-1}$; and the mate for $c_{n+1}$ occurs to the right of $c_{n+2}$.
	\item $(c_{n-1},c_n,c_{n+1},c_{n+2})$ form the pattern $(1,\pm,\pm,2)$; the mate for $c_{n-1}$ occurs to the left of $c_{n-1}$; and the mate for $c_{n+2}$ occurs to the right of $c_{n+2}$.
	\item $(c_{n-1},c_n,c_{n+1},c_{n+2})$ form the pattern $(1,2,2,3)$; the mate for $c_{n-1}$ occurs to the left of $c_{n-1}$; and the mate for $c_{n+2}$ occurs to the right of $c_{n+2}$.
	\item $(c_{n-1},c_n,c_{n+1},c_{n+2})$ form the pattern $(1,2,3,1)$; the mate for $c_n$ occurs to the left of $c_{n-1}$; and the mate for $c_{n+1}$ occurs to the right of $c_{n+2}$.
	\item $(c_{n-1},c_n,c_{n+1},c_{n+2})$ form the pattern $(1,2,3,4)$, the mates for $c_{n-1}$ and $c_n$ each occur to the left of $c_{n-1}$; and the mates for $c_{n+1}$ and $c_{n+2}$ each occur to the right of $c_{n+2}$.
	\item $(c_{n-1},c_n,c_{n+1},c_{n+2})$ form the pattern $(1,2,3,4)$; the mates for $c_{n-1}$ and $c_{n+1}$ each occur to the left of $c_{n-1}$; the mates for $c_n$ and $c_{n+2}$ each occur to the right of $c_{n+2}$; and the mate for $c_{n-1}$ occurs further from the center of the clan than the mate for $c_n$.
	\item $(c_{n-1},c_n,c_{n+1},c_{n+2})$ form the pattern $(1,2,3,4)$; the mates for $c_n$ and $c_{n+2}$ each occur to the left of $c_{n-1}$; the mates for $c_{n-1}$ and $c_{n+1}$ each occur to the right of $c_{n+2}$; and the mate for $c_{n-1}$ occurs closer to the center of the clan than the mate for $c_n$.
\end{enumerate}

In each case, $s_{\ga_n} \cdot Q_{\gamma} = Q_{\gamma'}$, where $\gamma'$ is obtained from $\gamma$ by interchanging $c_{n-1}$ with $c_{n+1}$, and $c_n$ with $c_{n+2}$.

On the other hand, $\ga_n$ is non-compact imaginary if and only if one of the following holds:
\begin{enumerate}
	\item $(c_{n-1},c_n,c_{n+1},c_{n+2}) = (+,-,-,+)$
	\item $(c_{n-1},c_n,c_{n+1},c_{n+2}) = (-,+,+,-)$
	\item $(c_{n-1},c_n,c_{n+1},c_{n+2})$ form the pattern $(1,1,2,2)$.
\end{enumerate}

In cases (1) and (2) above, $s_{\ga_n} \cdot Q_{\gamma} = Q_{\gamma''}$, where $\gamma''$ is obtained from $\gamma$ by replacing $(c_{n-1},c_{n+1})$ by a pair of matching natural numbers, and $(c_n,c_{n+2})$ by a second pair of natural numbers.  The effect is to replace $(c_{n-1},c_n,c_{n+1},c_{n+2})$ by the pattern $(1,2,1,2)$.  In these cases, $\ga_n$ is a type I root, since the cross action of $s_{\ga_n}$ is to interchange $c_{n-1}$ with $c_{n+1}$, and $c_n$ with $c_{n+2}$, which sends $(+,-,-,+)$ to $(-,+,+,-)$, and $(-,+,+,-)$ to $(+,-,-,+)$.  Thus $s_{\ga_n} \times Q_{\gamma} \neq Q_{\gamma}$, meaning $\ga_n$ is of type I.

In case (3), $s_{\ga_n} \cdot Q_{\gamma} = Q_{\gamma'''}$, where $\gamma'''$ is obtained from $\gamma$ by interchanging $c_{n-1}$ and $c_{n+1}$ (but \textit{not} $c_n$ and $c_{n+2}$).  The effect is to replace the pattern $(c_{n-1},c_n,c_{n+1},c_{n+2}) = (1,1,2,2)$ by the pattern $(1,2,2,1)$.  In this case, $\ga_n$ is a type II root, since the cross action of $s_{\ga_n}$ interchanges the natural numbers in positions $c_{n-1}$ and $c_{n+1}$, and the natural numbers in positions $c_n$ and $c_{n+2}$.  This sends the pattern $(c_{n-1},c_n,c_{n+1},c_{n+2}) = (1,1,2,2)$ to the equivalent pattern $(2,2,1,1)$, and so does not change the clan $\gamma$.  Thus $s_{\ga_n} \times Q_{\gamma} = Q_{\gamma}$, and $\ga_n$ is type II.
 
\subsection{Example}
With formulas for the closed orbits in hand, along with a combinatorial parametrization of the orbits and the above description of the weak order, consider the case $n=3$, $p=2$, $q=1$, i.e. the symmetric pair $(SO(6,\C), S(O(4,\C) \times O(2,\C)))$.  There are $12$ orbits.  The weak order graph appears in Figure \ref{fig:type-d-graph-1} of Appendix B.

$S$-fixed representatives of a closed orbit, corresponding to a clan $\gamma$ consisting of only $+$'s and $-$'s, can once again be produced by choosing a signed permutation which sends the coordinates of the $+$ signs among the first $n$ characters of $\gamma$ to $1,\hdots,p$, and the coordinates of the $-$ signs among the first $n$ characters of $\gamma$ to $p+1,\hdots,n$.  For example, the closed orbit corresponding to $\gamma = (+,+,-,-,+,+)$ contains the standard $S$-fixed flag $\left\langle e_1,\hdots,e_6 \right\rangle$, which corresponds to $w = 1$.  Thus $[Y_{(+,+,-,-,+,+)}]$ is represented by the polynomial $(x_1-y_3)(x_1+y_3)(x_2-y_3)(x_2+y_3)$.  Similarly, $Q_{(+,-,+,+,-,+)}$ is represented by the $S$-fixed flag corresponding to $w = 132$, so $[Y_{(+,-,+,+,-,+)}]$ is represented by $-(x_1-y_2)(x_1+y_2)(x_2-y_2)(x_2+y_2)$.  The final closed orbit, corresponding to $(-,+,+,+,+,-)$, contains the $S$-fixed point corresponding to $w=312$, so $[Y_{(-,+,+,+,+,-)}]$ is represented by $(x_1-y_1)(x_1+y_1)(x_2-y_1)(x_2+y_1)$.

Formulas for the remaining orbit closures are found using divided difference operators, as usual.  The complete list of formulas can be found in Table \ref{tab:type-d-table-1} of Appendix B.

\section{$K \cong GL(n,\C)$}\label{ssec:type_d_ex_2}
This case is very similar to that of the type $C$ pair $(G,K) = (Sp(2n,\C),GL(n,\C))$. Let $\theta = \text{int}(i \cdot I_{n,n})$.  One checks that
\[ K = G^{\theta} = 
\left\{ 
\begin{pmatrix}
g & 0 \\
0 & J_n \ (g^t)^{-1} J_n
\end{pmatrix}
\ \middle\vert \ 
g \in GL(n,\C) \right\} \cong GL(n,\C). \]

This choice of $K$ corresponds to the real form $G_{\R} = SO^*(2n)$ of $G$.

Once again, we are in an equal rank case, so $S=T$.  We label coordinates on $S$ as $X_1,\hdots,X_n$, with restriction $\frt \rightarrow \frs$ given by $\rho(Y_i) = X_i$.

Roots of $K$ are as follows:
\[ \Phi_K = \{\pm(X_i - X_j) \ \vert \ 1 \leq i < j \leq n\}. \]

As was the case with $(Sp(2n,\C),GL(n,\C))$, $W_K$ embeds into $W$ as ordinary permutations, i.e. as signed permutations of $\{1,\hdots,n\}$ which change no signs.

\subsection{Formulas for the closed orbits}
It once again follows from Corollary \ref{cor:num-closed-orbits-equal-rank} that there are $|W/W_K| = 2^{n-1}$ closed orbits, each containing $|W_K| = n!$ $S$-fixed points.

For $w \in W$, define the $(n-1) \times (n-1)$ determinant
\[ \Delta_{n-1}(x,y,w) := \det(c_{n+j-2i}), \]
where
\[ c_k = \frac{1}{2}(e_k(x_1,\hdots,x_n) + e_k(y_{w^{-1}(1)},\hdots,y_{w^{-1}(n)})), \]
$e_k$ denoting the $k$th elementary symmetric function in the inputs.  As usual, if $w^{-1}(i) < 0$, $y_{w^{-1}(i)}$ means $-y_{|w^{-1}(i)|}$.

As in Subsection \ref{ssec:closed-orbit-formulas-type-c-ex-2}, for each $w \in W$, define  
\[ \text{Neg}(w) := \{i \ \vert \ w(i) < 0 \}, \] 
and define the function $g: W \rightarrow \N$ by 
\[ g(w) = \displaystyle\sum_{i \in \text{Neg}(w)}(n - i). \]

Then formulas for the classes of closed $K$-orbits are as follows:
\begin{prop}
	Let $Q$ be a closed $K$-orbit on $G/B$ represented by the $S$-fixed point $w$.  Then $[Q]$ is represented by the polynomial 
	\[ P(x,y) := (-1)^{g(w)} \Delta_{n-1}(x,y,w). \]
\end{prop}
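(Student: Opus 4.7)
The plan is to follow the blueprint of the proof of Proposition \ref{prop:closed-gln-formulas-type-c}, with the simplifications appropriate to type $D$: the long roots $\pm 2Y_i$ of type $C$ are absent, so the ``$X_1 \cdots X_n$'' factor disappears, and correspondingly Fulton's determinantal polynomial shrinks from $n \times n$ to $(n-1) \times (n-1)$. First I would verify that $P(x,y)$ is independent of the choice of fixed point $w \in Q$: any other fixed point in $Q$ has the form $w'w$ with $w' \in W_K$, and since $W_K$ embeds in $W$ as ordinary (unsigned) permutations of $\{1,\hdots,n\}$, left multiplication by $w'$ leaves $\text{Neg}(w)$ unchanged (hence $g(w)$) and merely permutes the multiset of arguments inside each $e_k(y_{w^{-1}(1)},\hdots,y_{w^{-1}(n)})$ (hence leaves $\Delta_{n-1}(x,y,w)$ unchanged).

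Next, I would compute $[Q]|_w$ via Proposition \ref{prop:restriction-of-closed-orbit}. The positive roots of $G$ are $\{Y_i \pm Y_j \mid i<j\}$ and $\Phi_K = \{\pm(X_i - X_j) \mid i < j\}$; a case analysis on the signs of $a=w(i)$ and $b=w(j)$ (with the convention $X_{-k}=-X_k$) shows that for each pair $i<j$ exactly one of $X_a \pm X_b$ lies in $\Phi_K$ and is discarded, while the other survives in the form $\pm(X_k + X_l)$, with sign $+$ precisely when $w(i)>0$ and sign $-$ when $w(i)<0$. Summing over $i$, the number of negatively-signed survivors equals $\sum_{i \in \text{Neg}(w)}(n-i) = g(w)$, so
\[ [Q]|_w \;=\; (-1)^{g(w)} \prod_{1 \leq i < j \leq n}(X_i + X_j), \]
in particular a value independent of the chosen $w \in Q$, as expected.

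Finally I would verify via Proposition \ref{prop:restriction-maps} that $P(x,y)$ restricts correctly at every $S$-fixed point. Writing a general fixed point as $u = \sigma w$ with $\sigma \in W$, and using $\Delta_{n-1}(x,y,w) = \Delta_{n-1}(x, w^{-1}y, id)$, one gets
\[ P(X, uX) \;=\; (-1)^{g(w)} \, \Delta_{n-1}(X, \sigma X, id), \]
so the task reduces to two algebraic properties of $\Delta_{n-1}(x,y,id)$: (a) it is symmetric separately in the $x_i$ and in the $y_i$, so that $\sigma \in W_K$ specializes it to $\prod_{i<j}(X_i+X_j)$ (a Jacobi--Trudi-type identity whose small-$n$ check is straightforward); and (b) the specialization $y_i \mapsto \epsilon_i x_i$ with $\epsilon_i \in \{\pm 1\}$ vanishes as soon as some $\epsilon_i = -1$, including the case --- the one actually needed here --- where an even positive number of $\epsilon_i$ equal $-1$, so that $\sigma \in W \setminus W_K$. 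Granting (a) and (b), $P(X,uX)$ is $(-1)^{g(w)}\prod_{i<j}(X_i+X_j)$ for $u \in W_K w$ and $0$ otherwise, which matches $[Q]|_u$ by the previous step and forces $P(x,y)$ to represent $[Q]$. The main obstacle is identity (b): it is the genuine type $D$ analogue of the determinantal vanishing invoked in the type $C$ proof of Proposition \ref{prop:closed-gln-formulas-type-c}, and I would appeal to \cite[\S 3]{Fulton-96_1} for both (a) and (b), exactly as was done in that earlier proof.
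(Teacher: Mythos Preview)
Your proposal is correct and follows essentially the same approach as the paper's own proof: verify well-definedness on $W_K w$, compute the restriction $[Q]|_w = (-1)^{g(w)}\prod_{i<j}(X_i+X_j)$ by discarding type-$A$ roots from $\rho(w\Phi^+)$, and reduce everything to the two algebraic properties of $\Delta_{n-1}(x,y,id)$ established in \cite{Fulton-96_1}. Your explicit remark that only an \emph{even} positive number of sign changes actually arises (since $W$ in type $D$ allows only even sign changes) is a nice clarification the paper leaves implicit.
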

\begin{proof}
The proof is very similar to that for Proposition \ref{prop:closed-gln-formulas-type-c}, except it is simpler, since we no longer have roots of the form $2Y_i$ to deal with.

That $P(x,y)$ is independent of the choice of $w$ is argued identically.  Also by a nearly identical (but simpler) argument to the one given in the proof of Proposition \ref{prop:closed-gln-formulas-type-c}, we see that for any $S$-fixed point $w \in Q$, 
\[ [Q]|_w = F(X) := (-1)^{g(w)} \displaystyle\prod_{i < j}(X_i + X_j). \]

Then for any $u \in W$,
\[ [Q]|_u = 
\begin{cases}
	F(X) & \text{ if $uw^{-1} \in W_K$}, \\
	0 & \text{ otherwise.}
\end{cases}. \]

Arguing as in the proof of Proposition \ref{prop:closed-gln-formulas-type-c}, the claim that $P(x,y)$ represents $[Q]$ can then be seen to amount to the following claim regarding $\Delta_{n-1}(x,y,id)$:
\begin{enumerate}
	\item It is invariant under permutations of the $x_i$, $y_i$.
	\item If $\epsilon_i = \pm 1$, then
	\[ \Delta_{n-1}((X_1,\hdots,X_n),(\epsilon_1X_1,\hdots,\epsilon_nX_n),id) \]
	is zero unless all $\epsilon_i$ are equal to $1$, in which case it is equal to 
	\[ \displaystyle\prod_{i < j}(X_i + X_j). \]
\end{enumerate}
That $\Delta_{n-1}(x,y,id)$ has these properties is noted in \cite{Fulton-96_1}.
\end{proof}

\begin{remark}
We remark once again that the similarity between these formulas and the formulas of \cite{Fulton-96_1} for classes of Schubert loci in flag bundles is not a coincidence.  Indeed, as in the type $C$ case, it is observed in \cite{Wyser-11b} that certain of the $K$-orbit closures in this case, including all of the closed $K$-orbits, are Richardson varieties.  This is applied in \cite{Wyser-11b} to give some limited information on type $D$ Schubert calculus.
\end{remark}

\subsection{Parametrization of $K \backslash X$ and the weak order}\label{ssec:gln-orbit-param}
The following parametrization of $K \backslash X$ is described in \cite{McGovern-Trapa-09,Matsuki-Oshima-90}:
\begin{fact}\label{prop:type-d-case-2-params}
$K \backslash X$ is parametrized by the set of all skew-symmetric $(n,n)$-clans having the following two further properties:  If $\gamma=(c_1,\hdots,c_{2n})$ is such a clan, then we require
\begin{enumerate}
	\item $c_i \neq c_{2n+1-i}$ whenever $c_i \in \N$.
	\item Among $(c_1,\hdots,c_n)$, the total number of $-$ signs and pairs of equal natural numbers is even.
\end{enumerate}
\end{fact}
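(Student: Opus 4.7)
The plan is to derive this parametrization from Theorem \ref{thm:k-orbit-intersections}, which says that each $K$-orbit on $X$ arises as the intersection with $X$ of some $K'$-orbit on the type $A$ flag variety $X'$, where $K' = GL(n,\C) \times GL(n,\C)$. Since $K'$-orbits on $X'$ are parametrized by $(n,n)$-clans (Theorem \ref{thm:orbit_description}), the task reduces to identifying precisely those clans $\gamma$ for which $Q'_\gamma \cap X$ is nonempty, and then verifying that each such intersection is a single $K$-orbit.

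First, I would handle the conditions that pick out flags lying in the full isotropic flag variety $V$ for the form defined by $J_{2n}$ (ignoring for the moment the component issue). Using the explicit representative of $Q'_\gamma$ produced by the Yamamoto algorithm from Subsection \ref{ssec:orbits_supq}, an entirely analogous computation to the one already sketched in the type $C$ case (Subsection \ref{ssec:type-c-case-2-param}) shows that $Q'_\gamma$ meets $V$ if and only if $\gamma$ is skew-symmetric in the sense of Definition \ref{def:skew-symmetric-clan} and no natural number $c_i$ satisfies $c_i = c_{2n+1-i}$. The only substantive difference from the type $C$ case is that here the form is symmetric rather than skew, which modifies the sign assignments in the algorithm but leaves the combinatorial structure of the allowed clans unchanged. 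This gives the first of the two extra conditions in the statement.

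Second, I would pin down the component condition, which is the genuinely new ingredient not present in the type $C$ case. Recall that $X$ is the component of $V$ distinguished by the parity condition $\dim(F_n \cap E_n) \equiv n \pmod 2$, where $E_n = \C \cdot \langle e_1, \hdots, e_n\rangle$. Given a skew-symmetric clan $\gamma$ satisfying condition (1), I would produce an explicit isotropic representative $F_\bullet = \langle v_1, \hdots, v_{2n}\rangle$ via a careful choice of the permutation $\sigma$ in the Yamamoto algorithm, and then count $\dim(F_n \cap E_n)$ directly from the structure of the $v_i$: each $c_i$ ($i \leq n$) equal to $+$ or to the first occurrence of a natural number whose mate lies beyond position $n$ contributes to the intersection in a controlled way, while each $-$ sign, and each pair of matching natural numbers both occurring in the first $n$ positions, introduces a defect of one dimension. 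Reducing modulo $2$, the parity $\dim(F_n \cap E_n) - n$ turns out to be governed precisely by the total number of $-$ signs and of completed number-pairs among $(c_1, \hdots, c_n)$; demanding that this parity vanish is condition (2).

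Finally, I would establish that each nonempty intersection $Q'_\gamma \cap X$ is a single $K$-orbit rather than a union of several. I plan to do this by the counting argument outlined in Appendix A: one compares the number of skew-symmetric $(n,n)$-clans satisfying conditions (1) and (2) against the number of $K$-orbits on $X$ obtained independently (for instance, from the $W$-orbit count of Richardson-Springer data, or via the explicit description of $K \backslash G/B$ in \cite{Matsuki-Oshima-90}), showing the two totals agree; since each $K'$-orbit contributes at most a $K'$-orbit's worth of $K$-orbits, equality of counts forces each allowed clan to yield exactly one $K$-orbit. The main obstacle will be Step 2: reading off $\dim(F_n \cap E_n) \bmod 2$ from the combinatorics of the clan is bookkeeping-heavy, and one must be careful that the parity is invariant under the choices (signatures on matched numbers, the auxiliary permutation $\sigma$) made in the Yamamoto construction, so that condition (2) is well-defined as a property of $\gamma$ rather than of a chosen representative.
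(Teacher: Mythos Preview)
Your plan matches the paper's argument in outline: reduce to identifying which $K'=GL(n,\C)\times GL(n,\C)$-orbits on $X'$ meet $X$, cite Yamamoto for the skew-symmetry and anti-reflexive conditions exactly as in the type $C$ case, handle the component parity separately, and finish with the counting argument of Appendix~A.

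Two points where the paper's execution differs from yours are worth noting. First, for the component condition you propose to compute $\dim(F_n\cap E_n)\bmod 2$ from an explicit Yamamoto representative and then check that the parity is independent of the auxiliary choices. The paper short-circuits this independence check with a single observation: when $K'$ is realized as the fixed points of $\text{int}(I_{n,n})$, every element of $K'\cap O(2n,\C)$ has determinant~$1$, so $K'$ cannot move an isotropic flag from one component of $V$ to the other. Hence the component is automatically an invariant of the $K'$-orbit, and one only needs to read it off a single representative. This is cleaner than verifying invariance under signature and $\sigma$ choices by hand.

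Second, your description of the counting step is slightly off. Comparing the clan count to an orbit count obtained from \cite{Matsuki-Oshima-90} would be circular, since that is precisely the source of the Fact. The paper's actual argument counts not just the $K$-orbits but the full one-sided parameter space $\caX$ for the entire compact inner class in type~$D$ (so including all $S(O(2p,\C)\times O(2q,\C))$ as well as \emph{two} non-conjugate copies of $GL(n,\C)$), and compares that to the total clan count across all these groups, fiber-by-fiber over twisted involutions using the formula $|\caX_\tau|=|T_\tau/T_0^{-\tau}|$. The parity condition~(2) halves the clan count for $K$, but this is compensated by the second $GL(n,\C)$ in the inner class, so the totals match.
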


Indeed, as indicated by Theorem \ref{thm:k-orbit-intersections}, these orbits are precisely the nonempty intersections of the $GL(n,\C) \times GL(n,\C)$-orbits on the type $A$ flag variety with $X$.  Proofs are given in Appendix A.

We now describe the weak order on $K \backslash X$ in terms of this parametrization.  For this, we refer again to \cite{McGovern-Trapa-09,Matsuki-Oshima-90}.

Order the simple roots as follows:  $\ga_i = Y_i - Y_{i+1}$ for $i=1,\hdots,n-1$, and $\ga_n = Y_{n-1} + Y_n$.  Let $\gamma=(c_1,\hdots,c_{2n})$ be an $(n,n)$-clan having properties (1)-(3) of the previous proposition.  The situation for the simple roots $\ga_1,\hdots,\ga_{n-1}$ is identical to that described for the type $C$ pair $(Sp(2n,\C),Sp(2p,\C) \times Sp(2q,\C))$.  Rather than repeat that description verbatim, we refer the reader back to Subsection \ref{ssec:param_typec_case1}.

Thus we need only consider the root $\ga_n$.  The most concise way to describe the monoidal action of $s_{\ga_n}$ on $Q_{\gamma}$ is as follows:  Let $\text{Flip}(\gamma)$ denote the clan obtained from $\gamma$ by interchanging the characters in positions $n,n+1$.  Then $s_{\ga_n} \cdot Q_{\gamma} = Q_{\gamma'}$, where
\[ \gamma' = \text{Flip}(s_{\ga_{n-1}} \cdot \text{Flip}(\gamma)). \]

When $n = 3$, we have the following examples:
\begin{enumerate}
	\item $s_3 \cdot (+,+,+,-,-,-) = (+,1,2,1,2,+)$.  We apply Flip to obtain $(+,+,-,+,-,-)$, act by $s_2$ on the result to obtain $(+,1,1,2,2,+)$, and finally apply Flip once more to obtain $(+,1,2,1,2,+)$.
	\item $s_3 \cdot (1,-,1,2,+,2) = (1,2,+,-,1,2)$.  We apply Flip to obtain $(1,-,2,1,+,2)$, apply $s_2$ to obtain $(1,2,-,+,1,2)$, and apply Flip again to obtain $(1,2,+,-,1,2)$.
	\item $s_3 \cdot (-,1,1,2,2,+) = (-,1,1,2,2,+)$.  We Flip to obtain $(-,1,2,1,2,+)$, apply $s_2$ to the result (which does nothing), and Flip again, which returns us to the clan we started with.
\end{enumerate}

In terms of complex and non-compact imaginary roots, this amounts to the following:  $\ga_n$ is complex for $Q_{\gamma}$ if and only if $(c_{n-1},c_n,c_{n+1},c_{n+2})$ satisfy one of the following:
\begin{enumerate}
	\item $c_{n-1}$ is a number, $c_{n+2}$ is a (different) number, $c_n$ and $c_{n+1}$ are opposite signs, and the mate for $c_{n-1}$ lies to the left of $c_{n-1}$ (implying, by skew-symmetry, that the mate for $c_{n+2}$ lies to the right of $c_{n+2}$).
	\item $c_{n-1}$ and $c_{n+2}$ are opposite signs, $c_n$ is a number, $c_{n+1}$ is a (different) number, and the mate for $c_n$ lies to the left of $c_n$ (implying, by skew-symmetry, that the mate for $c_{n+1}$ lies to the right of $c_{n+1}$).
	\item $c_{n-1},c_n,c_{n+1},c_{n+2}$ are $4$ distinct numbers, with the mate of $c_{n-1}$ lying to the left of the mate of $c_{n+1}$ (implying, by skew-symmetry, that the mate of $c_n$ lies to the left of the mate of $c_{n+2}$).
\end{enumerate}

On the other hand, $\ga_n$ is non-compact imaginary for $Q_{\gamma}$ if and only if $(c_{n-1},c_n,c_{n+1},c_{n+2}) = (+,+,-,-)$ or $(-,-,+,+)$.

From this, we can see once again that all edges in the weak order graph must be black.  Indeed, we can argue just as in Subsection \ref{ssec:param_typec_case1} that for $i < n$, if $\ga_i$ is a non-compact imaginary root, it must be of type I.  And since the cross action of $s_{\ga_n}$ is to interchange $c_{n-1}$ with $c_{n+2}$, and $c_n$ with $c_{n+1}$, $s_{\ga_n}$ reverses one of the two above strings of four consecutive signs in the event that $\ga_n$ is non-compact imaginary for $Q_{\gamma}$.  Thus $s_{\ga_n} \times Q_{\gamma} \neq Q_{\gamma}$, and so all non-compact imaginary roots are of type I.

\subsection{Example}
With this combinatorial description of the orbit structure and the weak ordering in hand, consider the example $n=3$.  There are 10 orbits.  See Figure \ref{fig:type-d-graph-2} of Appendix B for the weak order graph.

As usual, the closed orbits are parametrized by the clans consisting only of signs.  To obtain an $S$-fixed representative of each, we simply take $w \in S_{2n}$ to be the permutation which assigns $\{1,\hdots,n\}$, in ascending order, to the coordinates of the $+$ signs, and $\{n+1,\hdots,2n\}$, also in ascending order, to the coordinates of the $-$ signs.  The skew-symmetry of the clan dictates that this gives a signed element of $S_{2n}$, which corresponds to flag $\left\langle e_{w(1)},\hdots,e_{w(2n)} \right\rangle \in X$.  We then take the signed permutation in $W$ which corresponds to this signed element of $S_{2n}$.  This signed permutation is the one which assigns, for $i=1,\hdots,n$, $i \mapsto \pm i$, depending on whether the sign in position $i$ is a $+$ or a $-$.

Since our formulas for classes of closed orbits are a bit complicated, we give a couple of examples.  For the orbit $(+,+,+,-,-,-)$, take $w=id$.  Since $g(w)=0$,
\[ [Q_{(+,+,+,-,-,-)}] = 
\left| \begin{array}{cc}
c_2 & c_3 \\
c_0 & c_1 \\
\end{array} \right|. \]

In this case, we have that
\[ c_2 = \dfrac{1}{2}(x_1x_2+x_1x_3+x_2x_3+y_1y_2+y_1y_3+y_2y_3), \]
\[ c_3 = \dfrac{1}{2}(x_1x_2x_3+y_1y_2y_3), \]
\[ c_0 = 1, \text{ and}\]
\[ c_1 = \dfrac{1}{2}(x_1+x_2+x_3+y_1+y_2+y_3). \]

Thus we conclude that
\[ [Q_{(+,+,+,-,-,-)}] = \] 
\[ \dfrac{1}{4}(x_1x_2+x_1x_3+x_2x_3+y_1y_2+y_1y_3+y_2y_3)(x_1+x_2+x_3+y_1+y_2+y_3)-\dfrac{1}{2}(x_1x_2x_3+y_1y_2y_3). \]

In the case of $[Q_{(-,-,+,-,+,+)}]$, taking $w = \overline{3}\overline{2}1$, we have $g(w) = 3$.  Thus
\[ [Q_{(-,-,+,-,+,+)}] = -
\left| \begin{array}{cc}
c_2 & c_3 \\
c_0 & c_1 \\
\end{array} \right|. \]

Here,
\[ c_2 = \dfrac{1}{2}(x_1x_2+x_1x_3+x_2x_3+y_1y_2-y_1y_3-y_2y_3), \]
\[ c_3 = \dfrac{1}{2}(x_1x_2x_3+y_1y_2y_3), \]
\[ c_0 = 1, \text{ and}\]
\[ c_1 = \dfrac{1}{2}(x_1+x_2+x_3-y_1-y_2+y_3). \]

Thus
\[ [Q_{(-,-,+,-,+,+)}] = \]
\[ -\dfrac{1}{4}(x_1x_2+x_1x_3+x_2x_3+y_1y_2-y_1y_3-y_2y_3)(x_1+x_2+x_3-y_1-y_2+y_3)+\dfrac{1}{2}(x_1x_2x_3+y_1y_2y_3). \]

Formulas for the other two closed orbits are found similarly, and formulas for the higher orbit closures are found by applying divided difference operators, as usual.  All formulas appear in Table \ref{tab:type-d-table-2} of Appendix B.

\section{$K \cong S(O(2p+1,\C) \times O(2q-1,\C))$}\label{ssec:type_d_ex_3}
We come now to our final example.  For this case, we change our realization of $G$.  We now take $G=SO(2n,\C)$ to be the subgroup of $SL(2n,\C)$ which preserves the standard (diagonal) quadratic form on $\C^{2n}$ given by
\[ \left\langle x,y \right\rangle = \displaystyle\sum_{i=1}^{2n} x_i y_i. \]
Thus $G$ is now the set of determinant $1$ matrices $g$ such that $gg^t = I_{2n}$.

With this realization of $G$, the diagonal elements no longer form a maximal torus.  We take $T \subseteq G$ to be the maximal torus of $G$ such that $\text{Lie}(T) = \frt$ consists of matrices of the following form:

\[
	\begin{pmatrix}
		\begin{tabular}[t]{c|c|}
			$0$ & $a_1$ \\ \hline
			$-a_1$ & $0$ \\ \hline
		\end{tabular}
		& & & \multirow{2}{*}{\Large 0} \\
		& 
		\begin{tabular}[b]{|c|c|}
			\hline
			$0$ & $a_2$ \\ \hline
			$-a_2$ & $0$ \\ \hline
		\end{tabular}
		& & \\
		\multirow{2}{*}{\Large 0} & & \ddots & 
		\\
		& & & 
		\begin{tabular}{|c|c}
			\hline
			$0$ & $a_n$ \\ \hline
			$-a_n$ & $0$
		\end{tabular}
	\end{pmatrix}
\]

Let $Y_i \in \frt^*$ be the function defined by $Y_i(t) = a_i$, with $t$ a matrix of the above form.  As before, take the positive roots to be 
\[ \Phi^+ = \{Y_i \pm Y_j \mid (i < j)\}, \]
and let $B \subseteq G$ be chosen so that the roots of $\text{Lie}(B)$ are negative.

Let $X=G/B$ be the flag variety, now one component of the variety of flags which are isotropic with respect to the diagonal form $\langle \cdot, \cdot \rangle$.

We take $K=G^{\theta}$ to be the fixed points of the involution
\[ \theta(g) = I_{2p+1,2q-1} g I_{2p+1,2q-1}. \]
One checks easily that our chosen realization of $G$ is stable under $\theta$, that $T$ is stable under $\theta$, and that
\[ K =  
\left\{
k = 
\begin{bmatrix}
A & 0 \\
0 & B
\end{bmatrix}
\ \middle\vert \
A \in O(2p+1,\C),
B \in O(2q-1,\C),
\det(k) = 1 \right\} \]
\[ \cong S(O(2p+1,\C) \times O(2q-1,\C)). \]

This choice of $K$ corresponds to the real form $G_{\R} = SO(2p+1,2q-1)$ of $G$.  

Note here that we are in an unequal rank case, with $\text{rank}(K) = n-1$.  We take $S \subseteq T$ to be the maximal torus of $K$ such that $\frs = \text{Lie}(S)$ consists of matrices of the form

\[
	\begin{pmatrix}
		\begin{tabular}{c|c|}
			$0$ & $a_1$ \\ \hline
			$-a_1$ & $0$ \\ \hline
		\end{tabular}
		& & & & & & \\
		& \ddots & & & & \multirow{2}{*}{\Huge 0} & \\
		& & 
		\begin{tabular}{|c|c|}
			\hline
			$0$ & $a_p$ \\ \hline
			$-a_p$ & $0$ \\ \hline
		\end{tabular}
		& & & & \\
		& & & 
		\begin{tabular}{|c|c|}
			\hline
			$0$ & $0$ \\ \hline
			$0$ & $0$ \\ \hline
		\end{tabular}
		& & & \\
		& & & &
		\begin{tabular}{|c|c|}
			\hline
			$0$ & $a_{p+2}$ \\ \hline
			$-a_{p+2}$ & $0$ \\ \hline
		\end{tabular}
		& & \\
		& \multirow{2}{*}{\Huge 0} & & & & \ddots & \\
		& & & & & &
		\begin{tabular}{|c|c}
			\hline
			$0$ & $a_n$ \\ \hline
			$-a_n$ & $0$
		\end{tabular}
	\end{pmatrix}
\]

One checks easily that $S$ is also stable under $\theta$.  We label coordinates on $\frs$ as 
\[ X_1,\hdots,X_p,X_{p+2},\hdots,X_n, \]
with $X_i(s) = a_i$ when $s$ is a matrix of the above block form.  With this choice of labelling, the restriction map $\rho: \frt^* \rightarrow \frs^*$ is given by $\rho(Y_i) = X_i$ for $i \neq p+1$, and $\rho(Y_{p+1}) = 0$.

The roots of $K$ are as follows:
\[ \Phi_K = \{\pm X_i \mid i \neq p+1\} \cup \{\pm(X_i \pm X_j) \mid i < j \leq p \text{ or } p+1 < i < j\}. \]

Just as in the examples of Subsections \ref{ssec:type_b_ex_1} and \ref{ssec:type_d_ex_1}, this $K$ is disconnected.  However, unlike in those cases, this time the closed $K$-orbits are nonetheless connected.  Thus it will turn out that there is no need to concern ourselves with closed $K$-orbits versus closed $\wt{K} = S(Pin(2p+1,\C) \times Pin(2q-1,\C))$-orbits, as here they coincide.  However, because we have not yet proved this, we concern ourselves first with the closed $\wt{K}$-orbits.  Each is a closed $K^0$-orbit, with $K^0 = SO(2p+1,\C) \times SO(2q-1,\C)$ the identity component of $K$.  Since $S \subseteq K^0$, each is stable under $S$, and so has an $S$-equivariant class.  We use our usual methods to find formulas for these classes.  We then concern ourselves with parametrizing the $K$-orbits, at which point we will see that the closed $K$-orbits coincide with the closed $K^0$-orbits, or the closed $\wt{K}$-orbits.

Let $W_K$ be the Weyl group of $\wt{K}$ (or of $K^0$, or of $\text{Lie}(K)$).  $W_K$ embeds in $W$ as those signed permutations of $\{1,\hdots,n\}$ which act separately on the first $p$ elements $\{1,\hdots,p\}$ and the last $q-1$ elements $\{p+2,\hdots,n\}$, changing any number of signs on each set, and which either fix $p+1$ or send it to its negative, whichever is necessary to guarantee that the resulting signed permutation changes an \textit{even} number of signs.  There are $2^{n-1} p!(q-1)!$ such signed permutations.

\subsection{Formulas for the closed orbits}
Since this is an unequal rank case, there will not be $|W/W_K|$ closed orbits.  We first use Proposition \ref{prop:num-closed-orbits-2} to determine how many closed $\wt{K}$-orbits there are, and which $S$-fixed points they contain.

\begin{prop}\label{prop:type-d-ex-3-closed-orbits}
Let $wB$ be an $S$-fixed point, with $w \in W$.  Then $\wt{K} \cdot wB$ is closed if and only if $w(n) = \pm(p+1)$.  There are $\binom{n-1}{p}$ closed $\wt{K}$-orbits.
\end{prop}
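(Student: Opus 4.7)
The plan is to apply Proposition \ref{prop:num-closed-orbits-2}, which says that the $K$-orbit $K \cdot wB$ is closed if and only if $wBw^{-1}$ is a $\theta$-stable Borel. First I will compute the action of $\theta$ on $\frt^*$. Conjugating a block-diagonal element of $T$ by $I_{2p+1,2q-1}$ fixes every $2 \times 2$ block except the $(p+1)$-st, where
$\begin{pmatrix}0 & a_{p+1}\\ -a_{p+1} & 0\end{pmatrix}$
is conjugated by $\mathrm{diag}(1,-1)$ to become
$\begin{pmatrix}0 & -a_{p+1}\\ a_{p+1} & 0\end{pmatrix}$,
so $\theta$ sends $a_{p+1}\mapsto -a_{p+1}$ and fixes every other $a_i$. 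Dually, $\theta$ acts on $\frt^*$ as the sign-flip $\sigma_{p+1}$ which fixes $Y_i$ for $i \neq p+1$ and sends $Y_{p+1}\mapsto -Y_{p+1}$; note that $\sigma_{p+1}$ is an outer involution of the root system $D_n$ and does not lie in $W$.

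Since $wBw^{-1}$ is the Borel with negative root system $w\Phi^-$, $\theta$-stability becomes $\sigma_{p+1}(w\Phi^-) = w\Phi^-$, equivalently $(w^{-1}\sigma_{p+1}w)(\Phi^-) = \Phi^-$. A direct signed-permutation computation gives $w^{-1}\sigma_{p+1}w = \sigma_m$ where $m := |w^{-1}(p+1)|$, reducing the question to determining when the sign-flip $\sigma_m$ preserves $\Phi^-$. Examining the roots of $\Phi^-$ involving $Y_m$: $\sigma_m$ sends $Y_j - Y_m \mapsto Y_j + Y_m$ for each $j > m$ and $-(Y_m + Y_j) \mapsto Y_m - Y_j$ for each $j > m$, pushing these $2(n-m)$ roots from $\Phi^-$ into $\Phi^+$, while the analogous roots with $i < m$ are permuted within $\Phi^-$. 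Thus $\sigma_m$ preserves $\Phi^-$ precisely when $m = n$, and we conclude that $K \cdot wB$ is closed if and only if $|w^{-1}(p+1)| = n$, equivalently $w(n) = \pm(p+1)$. The main technical care here is that because $\sigma_{p+1}$ is an outer automorphism of $D_n$, the conjugation $w^{-1}\sigma_{p+1}w$ must be carried out in the larger signed-permutation group, and one must verify that the result is again a single sign-flip before carrying out the root-system analysis.

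To pass from $K$ to $\wt K$: the component group $K/K^0$ is generated by $\mathrm{diag}(-I_{2p+1},-I_{2q-1}) = -I_{2n}$, which lies in $T \subset B$ and hence acts trivially on $G/B$. Therefore every $K$-orbit on $G/B$ coincides with the underlying $K^0$-orbit, and via the central isogeny $\wt K \to K^0$, with the corresponding $\wt K$-orbit. To count: the set $\{w \in W : w(n) = \pm(p+1)\}$ has size $2^{n-1}(n-1)!$, obtained by fixing $w(n)$ up to sign and then choosing a signed permutation of $\{1,\ldots,n\}\setminus\{p+1\}$ in the first $n-1$ positions subject to the even-sign-change condition defining $W$. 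By Proposition \ref{prop:closed-orbits-smooth} each closed $\wt K$-orbit is a flag variety for $\wt K$ and thus contains $|W_K| = 2^{n-1}p!(q-1)!$ fixed points, so the number of closed orbits is $(n-1)!/[p!(q-1)!] = \binom{n-1}{p}$.
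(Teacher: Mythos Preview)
Your proof is correct and follows essentially the same approach as the paper's: both invoke Proposition~\ref{prop:num-closed-orbits-2}, compute that $\theta$ acts on $\frt^*$ as the single sign-flip $\sigma_{p+1}$, and reduce closedness to a root-system stability check. Your version is slightly slicker in that you conjugate the stability condition back to the fixed system $\Phi^-$ (via $w^{-1}\sigma_{p+1}w=\sigma_m$) rather than analyzing $w\Phi^-$ directly as the paper does; and for the count you divide $|\{w:w(n)=\pm(p+1)\}|$ by $|W_K|$, whereas the paper exhibits a distinguished coset representative in each $W_K$-coset. Your added paragraph showing $K$-orbits coincide with $K^0$-orbits (because $-I_{2n}$ is central) is a nice explicit justification that the paper leaves implicit by working directly with $\wt K$.
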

\begin{proof}
We use the characterization of closed orbits given in Proposition \ref{prop:num-closed-orbits-2}.  Since we have chosen $B$ to be the negative Borel, the condition that $wBw^{-1}$ be $\theta$-stable is equivalent to the condition that $w \Phi^-$ is a $\theta$-stable subset of $\Phi$.  One checks easily that the action of $\theta$ on $\Phi$ is defined by $\theta(Y_i) = Y_i$ for $i \neq p+1$, and $\theta(Y_{p+1}) = -Y_{p+1}$.  Any positive system contains, for each $i<j$, exactly one of $Y_i + Y_j$ and $-Y_i - Y_j$, and exactly one of $Y_i - Y_j$ and $-Y_i + Y_j$.  For $i,j \neq p+1$, all such roots are fixed by $\theta$.  Thus for $\theta$-stability, it suffices to focus on roots of the form $\pm Y_i \pm Y_{p+1}$, with $i \neq p+1$.  It is easy to check that a positive system is $\theta$-stable if and only if it contains either $\{Y_i-Y_{p+1}, Y_i+Y_{p+1}\}$ or $\{-Y_i + Y_{p+1}, -Y_i-Y_{p+1}\}$ for each $i \neq p+1$.

This holds if and only if $w(n) = \pm(p+1)$.  Recall that $w\Phi^- = \{-wY_i \pm wY_j \mid i < j\}$.  Suppose that $w(n) = \pm(p+1)$.  Let $i \neq p+1$ be given, with $k = |w|^{-1}(i)$.  Then $-wY_k \pm wY_n$ is either the set $\{Y_i + Y_{p+1}, Y_i - Y_{p+1}\}$ or $\{-Y_i + Y_{p+1}, -Y_i - Y_{p+1}\}$, as required.  Conversely, suppose that $|w(n)| = j \neq p+1$.  Let $k = |w|^{-1}(p+1)$.  Then $-wY_k \pm wY_n$ is either the set $\{-Y_{p+1} + Y_j, -Y_{p+1} - Y_j\}$ or $\{Y_{p+1} + Y_j, Y_{p+1} - Y_j\}$, and thus $w \Phi^-$ is not $\theta$-stable.  This establishes the first claim.

To establish the claim on the number of closed orbits, note that any element $u \in W$ such that $u(n) = \pm(p+1)$ is in the same left $W_K$-coset as a unique element $w \in W$ having the following properties:
\begin{enumerate}
	\item $w$ changes no signs.
	\item $w(n) = p+1$.
	\item $w^{-1}(1) < w^{-1}(2) < \hdots < w^{-1}(p)$.
	\item $w^{-1}(p+2) < w^{-1}(p+3) < \hdots < w^{-1}(n)$.
\end{enumerate}

Recall that all elements of $W_K$ are separately signed permutations of $\{1,\hdots,p\}$ and $\{p+2,\hdots,n\}$, which either fix $p+1$ or send it to its negative so as to ensure that the entire signed permutation changes an even number of signs.  Supposing that, in the one-line notation for $u$, the values $1,\hdots,p$ (possibly with signs) are ``scrambled", then there is precisely one signed permutation of $\{1,\hdots,p\}$ which will unscramble them and remove all negative signs, and likewise for the set $\{p+2,\hdots,n\}$.  Taking $w' \in W_K$ to be the unique element which separately acts on $\{1,\hdots,p\}$ and $\{p+2,\hdots,n\}$ as required, we have that $w'u = w$.

As an example, suppose that $p=q=3$, and let $u$ be the signed permutation $\overline{3} 1 6 2 5\overline{4}$.  To unscramble the $\overline{3}12$, we must multiply on the left by $1 \mapsto 2$, $2 \mapsto 3$, $3 \mapsto \overline{1}$, and to unscramble the $65$ we must multiply on the left by $5 \mapsto 6, 6 \mapsto 5$.  Thus we multiply $u$ on the left by $w' = 23\overline{1}\overline{4}65$ to get $w'u = w = 125364$.

Note that a permutation $w$ having the properties above is completely determined by the positions (in the one-line notation) of $1,\hdots,p$ among the first $n-1$ spots, which can be chosen freely.  Thus there are $\binom{n-1}{p}$ such $w$, and hence $\binom{n-1}{p}$ closed $\wt{K}$-orbits, as claimed.  
\end{proof}

\begin{definition}
Let $Q \in \wt{K} \backslash X$ be a closed orbit.  Call the flag $wB \in Q$, where $w$ has the properties listed in the proof of Proposition \ref{prop:type-d-ex-3-closed-orbits}, the \textbf{standard representative} of $Q$.
\end{definition}

For $w \in W$ such that $wB$ is the standard representative of some closed orbit $Q$, define
\[ I_w := \{ i \in \{1,\hdots,n-1\} \mid w(i) > p+1\}. \]
For each $i \in I_w$, define
\[ C(i) := \#\{j \mid i < j \leq n-1, w(j) \leq p\}. \]
Finally, define
\[ f(w) := \displaystyle\sum_{i \in I_w} C(i). \]

Then we have the following formula for the $S$-equivariant class of the closed orbit $Q$:
\begin{prop}\label{prop:type-d-ex-3-closed-orbit-formulas}
Let $Q=\wt{K} \cdot wB$ be any closed orbit, with $wB$ the standard representative.  With $f(w)$ defined as above, $[Q]$ is represented by the polynomial
\[ P(x,y) := (-1)^{f(w)} y_1 \hdots y_{n-1} \displaystyle\prod_{i \leq p < p+1<j}(x_i+y_{w^{-1}(j)})(x_i - y_{w^{-1}(j)}). \]
\end{prop}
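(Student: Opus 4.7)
The plan is to apply the approach used throughout the chapter: compute the restriction $[Q]|_{u}$ at each $S$-fixed point $uB$ via Proposition~\ref{prop:restriction-of-closed-orbit}, then verify using Proposition~\ref{prop:restriction-maps} that $P(X,\rho(uY))$ matches $[Q]|_{u}$ when $uB \in Q$ and vanishes otherwise, and conclude by equivariant localization that $P$ represents $[Q]$.

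I would begin with the standard representative $w$, enumerating the weights of $\rho(w\Phi^+)$ as a multiset and removing roots of $K$. Since $\rho(Y_{p+1}) = 0$ and $w(n) = p+1$, each pair $Y_i \pm Y_n$ with $i < n$ restricts under $w$ to the single weight $X_{w(i)}$ appearing with multiplicity two; as $|w(i)| \neq p+1$, this weight lies in $\Phi_K$, so exactly one copy survives. Taking the product over $i = 1, \ldots, n-1$ yields $X_1 \cdots X_p X_{p+2} \cdots X_n$, matching $y_1 \cdots y_{n-1}|_w$. For $i < j < n$ the pair $Y_i \pm Y_j$ contributes the two weights $X_{w(i)} \pm X_{w(j)}$, both lying in $\Phi_K$ (and thus cancelling) when $w(i), w(j)$ lie in a common block $\{1,\ldots,p\}$ or $\{p+2,\ldots,n\}$, and neither lying in $\Phi_K$ (both surviving and multiplying to $X_{w(i)}^2 - X_{w(j)}^2$) when $(w(i), w(j))$ is a cross-block pair. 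Rewriting the total cross-block product in the canonical form $\prod_{k \leq p < p+1 < l}(X_k^2 - X_l^2)$ introduces one sign flip per pair with $w(i) > p+1 > w(j)$, for a global sign of exactly $(-1)^{f(w)}$, matching the prefactor of $P$.

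Next I would check the other fixed points $\sigma w$ of $Q$, and then the fixed points outside $Q$. For $\sigma \in W_K$, the collapsed pair-$(i,n)$ contributions to $[Q]|_{\sigma w}$ carry the sign $\prod_{k<n} \mathrm{sgn}(\sigma w(k)) = \prod_{m \neq p+1} \mathrm{sgn}(\sigma(m))$, which by the even-sign-change condition defining $W_K \subseteq W$ equals $\mathrm{sgn}(\sigma(p+1))$, exactly the sign acquired by $y_1 \cdots y_{n-1}|_{\sigma w}$; meanwhile $f(\sigma w) = f(w)$ because $\sigma$ preserves the absolute-value blocks $\{1,\ldots,p\}$ and $\{p+2,\ldots,n\}$, and the product $\prod_{i \leq p < p+1 < j}(x_i^2 - y_{w^{-1}(j)}^2)$ restricts identically since $\sigma$ simply permutes its inputs. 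For $u' \notin W_K w$, two cases arise. If $u'(n) \neq \pm(p+1)$ then some $k < n$ satisfies $|u'(k)| = p+1$, so $y_k|_{u'} = 0$ kills the monomial factor. If instead $u'(n) = \pm(p+1)$ but $u' \notin W_K w$, then $u'$ fails to map $\{w^{-1}(k) : k \leq p\}$ into $\{\pm 1, \ldots, \pm p\}$; by pigeonhole there exists $j > p+1$ with $|u'(w^{-1}(j))| \leq p$, and taking $i := |u'(w^{-1}(j))|$ makes the factor $(x_i + y_{w^{-1}(j)})(x_i - y_{w^{-1}(j)})$ vanish at $u'$.

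The main obstacle is the sign bookkeeping: the weights from the pairs $(i,j)$ with $i < j < n$ emerge naturally in the order $(w(i), w(j))$ rather than the canonical block order, so verifying that the discrepancy produces exactly $(-1)^{f(w)}$ requires careful counting, and extending from $w$ to $\sigma w$ requires tracing how the sign introduced in $y_1 \cdots y_{n-1}|_{\sigma w}$ matches, via the parity constraint on $W_K$, the sign in the collapsed pair-$(i,n)$ contributions.
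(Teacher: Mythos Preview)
Your proposal is correct and follows essentially the same approach as the paper: compute $[Q]|_w$ at the standard representative via Proposition~\ref{prop:restriction-of-closed-orbit}, identify the sign count as $(-1)^{f(w)}$, extend to the other fixed points $\sigma w$ with $\sigma\in W_K$, and verify vanishing outside $Q$ by splitting on whether $|u'(n)|=p+1$. The one minor streamlining is that you package the cross-block weights as $X_{w(i)}^2 - X_{w(j)}^2$, which makes the invariance under sign changes by $\sigma$ automatic; the paper instead tracks $X_i+X_j$ and $\pm(X_i-X_j)$ separately and argues that $w'\in W_K$ introduces an even number of sign flips among them, arriving at the same conclusion.
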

\begin{proof}
First consider $\rho(w \Phi^+)$, the elements of $\frs^*$ obtained by first applying the standard representative $w$ to the positive roots, then restricting to $\frs$.  They are as follows:
\begin{itemize}
	\item $X_i$ ($i \neq p+1$), with multiplicity $2$.  (One is the restriction of $w(Y_i + Y_n) = Y_{w(i)} + Y_{p+1}$, the other the restriction of $w(Y_i-Y_n) = Y_{w(i)} - Y_{p+1}$.)
	\item $X_i + X_j$ ($i<j$, $i,j\neq p+1$), with multiplicity $1$.
	\item For each $i<j$ with $i,j \neq p+1$, exactly one of $\pm(X_i-X_j)$, with multiplicity $1$.
\end{itemize}

Removing roots of $K$, we have the following set of weights:
\begin{itemize}
	\item $X_i$ ($i \neq p+1$), with multiplicity $1$.
	\item $X_i + X_j$ ($i \leq p < p+1 < j$), with multiplicity $1$.
	\item For each $i<j$ with $i \leq p < p+1 < j$, exactly one of $\pm(X_i-X_j)$, with multiplicity $1$.
\end{itemize}

Recall that $w$ is an honest permutation, with no sign changes.  This means that the only way to get a weight of the form $-(X_i-X_j)$ by the action of $w$ is to apply $w$ to some $Y_k - Y_l$ ($k<l$) with $w(k) > w(l)$.  (Clearly, we want $k,l \neq n$.)  For this root to remain after discarding roots of $K$, it must be the case that $w(k) > p+1$, while $w(l) \leq p$.  Thus for each $k < n$ such that $w(k) > p+1$ (this says that $k \in I_w$), we count the number of $l$ with $k < l < n-1$ such that $w(l) \leq p$ (this says that $l \in C(k)$).  Adding up the total number of such pairs as we let $k$ range over $I_w$, we arrive at $f(w)$.  This says that the number of weights of the form $-(Y_i-Y_j)$ contained in $\rho(w \Phi^+) \setminus (\rho(w \Phi^+) \cap \Phi_K)$ is $f(w)$.
 
Now we consider the set $\rho(w'w \Phi^+) \setminus (\rho(w'w \Phi^+) \cap \Phi_K)$ with $w' \in W_K$, and compute the restriction $[Q]|_{w'w}$ at an arbitrary $S$-fixed point.  Since the action of $w'$ on $\frt$ commutes with restriction to $\frs$, and since $w'$ acts on the roots of $K$ (and hence also on $\rho(\Phi) \setminus \Phi_K$), we can simply apply $w'$ to the set of weights described in the previous paragraph.  We temporarily forget that some of those roots are of the form $-(X_i-X_j)$ ($i<j$), and add the sign of $(-1)^{f(w)}$ back in at the end.  So consider the action of $w' \in W_K$ on the following set of weights, each with multiplicity $1$:
\begin{itemize}
	\item $X_i$ ($i \neq p+1$)
	\item $X_i \pm X_j$ ($i \leq p < p+1 < j$)
\end{itemize}

Since $w'$ acts separately as signed permutations on $\{1,\hdots,p\}$ and $\{p+2,\hdots,n\}$, it clearly sends the set of weights $X_i \pm X_j$ to itself, except possibly with some sign changes.  We observe that the number of sign changes must be even.  Suppose first that $w'(X_i + X_j)$ is a negative root.  Then it is either of the form $-X_k-X_l$ or $-X_k + X_l$, with $k=|w(i)|$ and $l=|w(j)|$.  In the former case, $w'(X_i - X_j) = -X_k + X_l$, also a negative root.  In the latter, $w'(X_i - X_j) = -X_k-X_l$, again a negative root.  Likewise, if $w'(X_i - X_j)$ is a negative root of the form $-X_k-X_l$ or $-X_k+X_l$, then $w'(X_i+X_j)$ is also a negative root, equal to $-X_k+X_l$ in the former case, and $-X_k-X_l$ in the latter.  Thus the negative roots arising from the action of $w'$ on roots of the form $X_i \pm X_j$ occur in pairs.

Now consider roots of the form $X_i$, $i \neq p+1$.  The action of $w'$ again preserves this set of roots, except possibly with some sign changes.  The number of sign changes could be either even or odd.  (Recall that $w'$ acts with any number of sign changes on $\{1,\hdots,p\}$ and $\{p+2,\hdots,n\}$, and sends $p+1$ either to itself or to $-(p+1)$, whichever ensures that the total number of sign changes for $w'$ is even.)

This discussion all adds up to the following.  The product of the weights $\rho(w'w \Phi^+) \setminus (\rho(w'w \Phi^+) \cap \Phi_K)$ is
\[ [Q]|_{w'w} = (-1)^{f(w) + \text{Neg}(w')} \displaystyle\prod_{i \neq p+1} X_i \displaystyle\prod_{i \leq p < p+1 < j} (X_i+X_j)(X_i-X_j), \]
where $\text{Neg}(w')$ denotes the number of sign changes of $w'$ on the set $\{1,\hdots,p,p+2,\hdots,n\}$.

Thus we wish to prove that the polynomial $P(x,y)$ has the properties that $P(X,\rho(w'w(Y)))$ is equal to this restriction for all $w' \in W_K$, and that $P(X,\rho(uw(Y))) = 0$ whenever $u \notin W_K$.

Consider first the action of $w'w$ on $P(x,y)$ for $w' \in W_K$.  Since $w$ sends the set $\{1,\hdots,n-1\}$ to the set $\{1,\hdots,p,p+2,\hdots,n\}$ with no sign changes, the action of $w'w$ on $y_1 \hdots y_{n-1}$ is clearly to send it to $(-1)^{\text{Neg}(w')} \displaystyle\prod_{i \neq p+1} Y_i$.  Thus applying $w'w$ to $(-1)^{f(w)} y_1 \hdots y_{n-1}$ gives us the portion
\[ (-1)^{f(w) + \text{Neg}(w')} \displaystyle\prod_{i \neq p+1} X_i \]
of the required restriction.  Now consider the action of $w'w$ on the term 
\[ \displaystyle\prod_{i \leq p < p+1<j}(x_i+y_{w^{-1}(j)})(x_i - y_{w^{-1}(j)}). \]
We get 
\[ \displaystyle\prod_{i \leq p < p+1<j}(X_i+X_{w'(j)})(X_i - X_{w'(j)}). \]
Since $w'$ acts as a signed permutation on $\{p+2,\hdots,n\}$, this is clearly the same as
\[ \displaystyle\prod_{i \leq p < p+1<j}(X_i+X_j)(X_i - X_j), \]
giving us the remaining part of the required restriction.

Now, consider the action of $uw$ on $P(x,y)$ for $u \notin W_K$.  Suppose first that $u(p+1) \neq \pm(p+1)$.  Then $u(i) = \pm(p+1)$ for some $i \neq p+1$.  Let $j = w^{-1}(i)$.  Then the action of $uw$ sends the term $y_j$ to $\pm Y_{p+1}$, which restricts to zero.  Now suppose that $u(p+1) = \pm(p+1)$.  Then since $u \notin W_K$, $u$ must send some $j > p+1$ to $\pm i$ for some $i \leq p$.  If it sends $j$ to $i$, then $uw$ applied to the term $x_i-y_{w^{-1}(j)}$ is zero.  If it sends $j$ to $-i$, then $uw$ applied to the term $x_i+y_{w^{-1}(j)}$ is zero.  This shows that $P(X,\rho(uw(Y))) = 0$ for $u \notin W_K$, and completes the proof.
\end{proof}

\subsection{Parametrization of $K \backslash X$ and the weak order}
The following parametrization of $K \backslash X$ is described in \cite{Matsuki-Oshima-90}:
\begin{fact}
The $K$-orbits on $X$ are parametrized by the set of all symmetric $(2p+1,2q-1)$-clans.
\end{fact}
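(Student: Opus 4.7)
The plan is to deduce this statement from Theorem \ref{thm:k-orbit-intersections}, following exactly the template used for the previous symmetric pairs in types $B$, $C$, and $D$ (cf.\ Subsections \ref{ssec:type-b-weak-order}, \ref{ssec:param_typec_case1}, \ref{ssec:type-c-case-2-param}, \ref{ssec:type-d-case-1}, \ref{ssec:gln-orbit-param}). Embed $G = SO(2n,\C)$ in $G' = GL(2n,\C)$ and $X \hookrightarrow X' = G'/B'$. Inside $G'$, the block-diagonal subgroup $K' = GL(2p+1,\C) \times GL(2q-1,\C)$ corresponds to the splitting $\C^{2n} = E_{2p+1} \oplus \widetilde E_{2q-1}$ used to define $\theta$, and one has $K = K' \cap G$. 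By Theorem \ref{thm:orbit_description}, the $K'$-orbits on $X'$ are parametrized by $(2p+1,2q-1)$-clans $\gamma$, and each $K$-orbit on $X$ lies inside some intersection $Q_{\gamma} \cap X$. Thus it suffices to show (i) $Q_{\gamma}\cap X \neq \emptyset$ iff $\gamma$ is symmetric, and (ii) each such non-empty intersection is a single $K$-orbit.

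For (i), I would compare the defining rank conditions of $Q_{\gamma}$ --- the numbers $\gamma(i;+)$, $\gamma(i;-)$, $\gamma(i;j)$ --- with the isotropy condition $F_{2n-i} = F_i^{\perp}$ that cuts $X$ out of the variety of complete flags. Because the chosen quadratic form is diagonal and the splitting $\C^{2n} = E_{2p+1} \oplus \widetilde E_{2q-1}$ is orthogonal and non-degenerate on each factor, the orthogonality $F_{2n-i} = F_i^{\perp}$ forces linear relations on $\gamma(i;\pm)$ and $\gamma(i;j)$ that are easily checked to be equivalent (in the sense of Definition \ref{def:symmetric-clan}) to $\gamma$ equalling its reverse. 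Conversely, given a symmetric $\gamma$, the representative produced by the algorithm of Subsection \ref{ssec:orbits_supq} can be made isotropic by choosing the auxiliary permutation $\sigma$ to be a signed element of $S_{2n}$ (as we did for closed orbits); a small check then shows that for symmetric clans this signed permutation can moreover be arranged to lie in the correct component of the isotropic flag variety, so that the resulting flag lies in $X$.

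For (ii), each non-empty $Q_{\gamma} \cap X$ is a $K$-stable closed subvariety of $Q_{\gamma}$, hence a (finite) union of $K$-orbits; I would show it consists of a single orbit by a dimension-count-plus-enumeration argument. Concretely, one matches the number of $K$-orbits on $X$ (known from \cite{Matsuki-Oshima-90}) against the number of symmetric $(2p+1,2q-1)$-clans, which is elementary to compute: a symmetric clan is determined by its first $n$ entries, which can be chosen freely as a $(p',q')$-clan for appropriate $p',q'$ that account for both the entries possibly matching their mirror images and the signs. Equality of these two counts, combined with the inclusion of at least one $K$-orbit in each non-empty intersection, forces equality.

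The hard part is the counting/orbit-identification step (ii), especially given that $K$ is disconnected, so one must rule out the possibility that some $K^0$-orbit splits further when one passes to $K$ or, conversely, that two $K^0$-orbits with different $K^0 \backslash K$-labels happen to coincide as sets in $Q_{\gamma}\cap X$. This requires a careful analysis of the stabilizer of a representative flag inside $K'$ intersected with $K$, mirroring the analysis carried out in the other type $BCD$ examples. Because this argument is essentially common to all the symmetric pairs treated in Chapters 3--5, the full details are deferred to Appendix A, as announced in Theorem \ref{thm:k-orbit-intersections}.
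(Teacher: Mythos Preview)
Your overall plan matches the paper's two-step template, but there is a genuine gap in step (i), and your step (ii) differs substantially from what the paper actually does.

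For the sufficiency direction of (i), you claim that an isotropic representative of $Q_{\gamma}$ can be obtained by running Yamamoto's algorithm with $\sigma$ chosen to be a signed element of $S_{2n}$, as for closed orbits. This does not work for general symmetric clans in the orthogonal cases. The paper explicitly warns against this (in the type $B$ discussion, to which the present case is referred): ``One might naively hope that it is possible, as in other cases, to choose an isotropic representative of $Q_{\gamma}$ using the algorithm of \cite{Yamamoto-97}. However, this is not the case, as one can see even in very small examples.'' The issue is that symmetric $(2p+1,2q-1)$-clans may have pairs $c_i = c_{2n+1-i}$ (indeed, \emph{every} such clan does, since there are no symmetric $(2p+1,2q-1)$-clans consisting only of signs), and for these Yamamoto's representative is never isotropic regardless of how $\sigma$ is chosen. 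The paper instead produces a Yamamoto representative and then moves it by an explicit element of $K'$, described case-by-case on the characters $c_i$, to obtain an isotropic flag; a further adjustment by a determinant~$-1$ element of $K' \cap O(2n,\C)$ handles the component issue.

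For (ii), your proposal to count $K$-orbits via \cite{Matsuki-Oshima-90} and compare with the number of symmetric clans is essentially circular, since the parametrization statement itself is attributed to \cite{Matsuki-Oshima-90}. The paper's argument is quite different: it counts the entire one-sided parameter space $\caX$ for the \emph{unequal rank} inner class (comprising all groups $S(O(2p'+1,\C)\times O(2q'-1,\C))$ at once) by summing $|T_{\tau}/T^{-\tau}_0|$ over twisted involutions $\tau$, using the Adams--DuCloux formula. A key subtlety you do not mention is that here the distinguished involution of the inner class is \emph{not} the identity, so the twisted involutions are genuinely twisted (they biject with honest signed involutions changing an \emph{odd} number of signs), and the $\tau$-action on $T$ is twisted accordingly. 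One then checks, fiber by fiber, that this count matches the number of symmetric $(2p'+1,2q'-1)$-clans associated to each such involution. Equality of the totals forces each $Q_{\gamma}\cap X$ to be a single $K$-orbit.
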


Indeed, as indicated by Theorem \ref{thm:k-orbit-intersections}, these orbits are precisely the nonempty intersections of the $K' = GL(2p+1,\C) \times GL(2q-1,\C)$-orbits on the type $A$ flag variety $X'$ with $X$.  See Appendix A.

Consider the closed $K$-orbits.  In all cases prior to this one outside of type $A$, the closed orbits have been parametrized by clans (satisfying some further combinatorial conditions) consisting only of signs.  This said that the closed orbits in those cases were the intersections of closed $K'$-orbits on $X'$ with $X$.  Note here, though, that there are no symmetric $(2p+1,2q-1)$-clans consisting only of signs.  Thus no closed $K'$-orbits on $X'$ intersect $X$.  The lowest orbits in the closure order on $K' \backslash X'$ which intersect $X$ lie one step above the closed orbits in the order, and correspond to symmetric $(2p+1,2q-1)$-clans of the form $(c_1,\hdots,c_{n-1},1,1,c_{n+2},\hdots,c_{2n})$, with $c_1,\hdots,c_{n-1}$ consisting of $p$ $+$'s and $q-1$ $-$'s.  The closed $K$-orbits on $X$ are parametrized by symmetric clans of this form.  Note that there are $\binom{n-1}{p}$ such clans, thus $\binom{n-1}{p}$ closed $K$-orbits.  This number is the same as the number of closed $\wt{K}$-orbits (see Proposition \ref{prop:type-d-ex-3-closed-orbits}).  This establishes our earlier claim that the closed $K$-orbits coincide with the closed $\wt{K}$-orbits.  Thus there is no need in this case to describe closed $K$-orbits as unions of $\wt{K}$-orbits and add the appropriate formulas, as we have done in other cases.  Summarizing, we have the following result:
\begin{prop}
The closed $K$-orbits on $X$ coincide with the closed $\wt{K}$-orbits on $X$.  Thus formulas for the $S$-equivariant fundamental classes of closed $K$-orbits are given by Proposition \ref{prop:type-d-ex-3-closed-orbit-formulas}.
\end{prop}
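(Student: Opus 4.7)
The plan is to prove the coincidence by a cardinality argument. Since $\wt{K}$ (equivalently $K^{0}$) is a closed subgroup of $K$ of finite index, every $K$-orbit on $X$ is a finite disjoint union of $K^{0}$-orbits, and a $K$-orbit is closed in $X$ if and only if each of its $K^{0}$-components is closed. So every closed $K$-orbit is a disjoint union of one or more closed $\wt{K}$-orbits. It therefore suffices to show that there are at least as many closed $K$-orbits as closed $\wt{K}$-orbits; by Proposition \ref{prop:type-d-ex-3-closed-orbits}, the latter count is $\binom{n-1}{p}$, so I need to produce $\binom{n-1}{p}$ distinct closed $K$-orbits.

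First I would establish the parametrization of closed $K$-orbits sketched in the discussion preceding the statement. By the cited parametrization (Theorem \ref{thm:k-orbit-intersections} together with \cite{Matsuki-Oshima-90}), the $K$-orbits on $X$ correspond bijectively to symmetric $(2p+1,2q-1)$-clans. A $K$-orbit is closed if and only if it is minimal in the weak closure order; using the combinatorial rules for the weak order on clans (as in Subsections \ref{ssec:type-b-weak-order} and \ref{ssec:type-d-example-1-weak-order}, adapted to this symmetric pair), I would check that no symmetric clan of this signature consists entirely of signs (as there is an odd imbalance in each of the two halves), and that the clans from which there is no downward move in the weak order are exactly those of the form $(c_1,\ldots,c_{n-1},1,1,c_{n+2},\ldots,c_{2n})$, where $(c_1,\ldots,c_{n-1})$ has $p$ plus signs and $q-1$ minus signs and the right half is forced by symmetry. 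This yields exactly $\binom{n-1}{p}$ closed $K$-orbits.

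Comparing counts, $\binom{n-1}{p}$ closed $K$-orbits decompose into a total of $\binom{n-1}{p}$ closed $\wt{K}$-orbits, and each closed $K$-orbit contains at least one. The only way this is consistent is for each closed $K$-orbit to equal a single closed $\wt{K}$-orbit, proving the first assertion. The second assertion is then immediate: the $S$-equivariant fundamental class of a closed $K$-orbit coincides with that of the corresponding closed $\wt{K}$-orbit, for which Proposition \ref{prop:type-d-ex-3-closed-orbit-formulas} provides a representative.

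The main subtlety is verifying the clan parametrization of closed $K$-orbits, since the analogous ``all-signs'' description used in the other non-type-$A$ cases breaks down here; I would need to identify the correct minimal clans in the weak order explicitly. Once that count is in place, the pigeonhole step is essentially forced and the proof concludes without further calculation.
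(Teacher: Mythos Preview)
Your proposal is correct and follows essentially the same approach as the paper. The paper's argument, given in the paragraph immediately preceding the proposition, is precisely the counting/pigeonhole step you outline: it identifies the closed $K$-orbits with the symmetric $(2p+1,2q-1)$-clans of the form $(c_1,\ldots,c_{n-1},1,1,c_{n+2},\ldots,c_{2n})$, counts $\binom{n-1}{p}$ of them, compares with the $\binom{n-1}{p}$ closed $\wt{K}$-orbits from Proposition~\ref{prop:type-d-ex-3-closed-orbits}, and concludes by the same union-of-components observation you make.
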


The weak order on $K$-orbits in this case corresponds to the weak order on symmetric $(2p+1,2q-1)$-clans described in \cite{Matsuki-Oshima-90}.  The combinatorics of this order are exactly the same as those described in Subsection \ref{ssec:type-d-example-1-weak-order}.  We refer the reader back to that section, rather than repeat the description here.

\subsection{Example}
Consider now the example $p=1$, $q=2$, $n=3$.  The corresponding symmetric pair is $(SO(6,\C),S(O(3,\C) \times O(3,\C)))$.  There are two closed orbits, corresponding to the clans $(+,-,1,1,-,+)$ and $(-,+,1,1,+,-)$.  One checks (for general $p,q$) that the standard representative of the closed orbit corresponding to $\gamma$ is $wB$, where $w$ is the permutation which assigns $1,\hdots,p$, in order, to the positions of the $+$ signs among the first $n$ characters of $\gamma$; $p+2,\hdots,n$, in order, to the positions of the $-$ signs among the first $n$ characters of $\gamma$; and $p+1$ to position $n$.  Thus the standard representatives of the closed orbits correspond to the following permutations:
\begin{itemize}
	\item $(+,-,1,1,-,+)$:  $132$
	\item $(-,+,1,1,+,-)$:  $312$
\end{itemize}

By Proposition \ref{prop:type-d-ex-3-closed-orbit-formulas}, formulas for the closed orbits are as follows:
\begin{itemize}
	\item $[Q_{(+,-,1,1,-,+)}] =  y_1y_2(x_1+y_2)(x_1-y_2)$
	\item $[Q_{(-,+,1,1,+,-)}] = -y_1y_2(x_1+y_1)(x_1-y_1)$
\end{itemize}

There are $13$ orbits in all.  The weak order graph appears as Figure \ref{fig:type-d-graph-3} of Appendix B.  The formulas for the remaining orbit closures, obtained using divided difference operators, are given in Table \ref{tab:type-d-table-3}.
\chapter{$K$-orbit Closures as Universal Degeneracy Loci}
In this chapter, we describe our main application of the formulas of Chapters 2-5.  Namely, in the type $A$ cases, we realize the $K$-orbit closures as universal degeneracy loci of a certain type determined by $K$.  We describe a translation between our formulas for equivariant fundamental classes of $K$-orbit closures and Chern class formulas for the fundamental classes of such degeneracy loci.  Lastly, we indicate that similar results should hold for the symmetric pairs considered in types $BCD$, given explicit linear algebraic descriptions of $K$-orbit closures in those cases.

Before handling the specifics of each case, we first describe the general setup.  Denote by $E$ a contractible space with a free action of $G$.  Then $E$ also has a free action of $B$, and of $K$, by restriction of the $G$-action.  We shall use the same space $E=EG=EB=EK$ as the total space of a universal principal $G$, $B$, or $K$-bundle, as appropriate.  Denote by $BG$, $BB$, and $BK$ the quotients of $E$ by the actions of $G$, $B$, and $K$, respectively.  These are classifying spaces for the respective groups.

The reason we have worked in $S$-equivariant cohomology $H_S^*(G/B)$ throughout is to take advantage of the localization theorem.  However, the equivariant fundamental classes of $K$-orbit closures in fact live in $K$-equivariant cohomology $H_K^*(G/B)$.  (In the event that $K$ is disconnected, this should be interpreted as $H_{K^0}^*(G/B)$, where $K^0$ denotes the identity component of $K$.)  Indeed, for a $K$-orbit closure $Y$, the $S$-equivariant class $[Y]_S$ is simply the image $\pi^*([Y]_K)$ under the pullback by the natural map
\[ \pi:  E \times^S (G/B) \rightarrow E \times^K (G/B). \]
It is a basic fact about equivariant cohomology that this pullback is injective, and embeds $H_K^*(G/B)$ in $H_S^*(G/B)$ as the $W_K$-invariants (\cite{Brion-98_i}).  Thus $H_K^*(G/B)$ is a subring of $H_S^*(G/B)$, and the $S$-equivariant fundamental classes of $K$-orbit closures live in this subring.  

Now, $H_K^*(G/B)$ is, by definition, the cohomology of the space $E \times^K (G/B)$, and this space is easily seen to be isomorphic to the fiber product $BK \times_{BG} BB$.  (The argument is identical to that given in the proof of Proposition \ref{prop:eqvt-cohom-flag-var} to show that $E \times^S (G/B) \cong BS \times_{BG} BB$ --- simply replace $S$ by $K$.)

Now, suppose that $X$ is a scheme, and that $V \rightarrow X$ is a complex vector bundle of rank $n$.  In type $A$, no further structure on $V$ is presumed, while in types $BCD$, $V$ is assumed to be equipped with an orthogonal ($BD$) or symplectic ($C$) form.  In any event, we have a classifying map $X \stackrel{\rho}{\longrightarrow} BG$ such that $V$ is the pullback $\rho^*(\mathcal{V})$, where $\mathcal{V} = E \times^G \C^n$ is a universal vector bundle over $BG$, with $\C^n$ carrying the natural representation of $G$.

For any closed subgroup $H$ of $G$, $BH \rightarrow BG$ is a fiber bundle with fiber isomorphic to $G/H$.  A lift of the classifying map $\rho$ to $BH$ corresponds to a reduction of structure group to $H$ of the bundle $V$.  Such a reduction of structure group can often be seen to amount to some additional structure on $V$.  For instance, in type $A$, reduction of the structure group of $V$ from $GL(n,\C)$ to the Borel subgroup $B$ of upper-triangular matrices is well-known to be equivalent to $V$ being equipped with a complete flag of subbundles.  (In Types $BCD$, this flag is required to be isotropic with respect to the form on $V$.)

We will be concerned with certain structures on $V$ which amount to a reduction of structure group to $K$.  Such a reduction gives us a lift of the classifying map $\rho$ to $BK$.  Suppose that we know what this structure is, and that $V$ possesses this structure, along with a single flag of subbundles $E_{\bullet}$ (assumed isotropic in types $BCD$).  Then we have two separate lifts of $\rho$, one to $BK$, and one to $BB$.  Taken together, these two lifts give us a map
\[ X \stackrel{\phi}{\longrightarrow} BK \times_{BG} BB. \]

Our general thought is to consider a subscheme $D$ of $X$ which is defined as a set by linear algebraic conditions imposed on fibers over points in $X$.  These linear algebraic conditions describe the ``relative position" of a flag of subbundles of $V$ and the additional structure on $V$ amounting to the lift of the classifying map to $BK$.  The varieties we consider are precisely those which are set-theoretic inverse images under $\phi$ of (isomorphic images of) $K$-orbit closures in $BK \times_{BG} BB \cong E \times^K (G/B)$.  The linear algebraic descriptions of such a subscheme $D$ come directly from similar linear algebraic descriptions of a corresponding $K$-orbit closure $Y$.  We also realize various bundles on $X$ as pullbacks by $\phi$ of certain tautological bundles on the universal space, so that the Chern classes of the various bundles on $X$ are pullbacks of $S$-equivariant classes represented by the variables $x_i$ and $y_i$ (or perhaps polynomials in these classes), which we worked with in Chapters 2-5.

As explained in \cite{Fulton-92,Fulton-Pragacz}, $D$ can be given a scheme structure, simply as the \textit{scheme}-theoretic inverse image under the map $\phi$ above.  When the setup is ``suitably generic", we have
\begin{equation}\label{eqn:pullback}
	[D] = [\phi^{-1}(Y)] = \phi^*([Y]),
\end{equation}
and so our equivariant formula for $[Y]$ gives us, in the end, a formula for $[D]$ in terms of the Chern classes of the bundles involved.  The phrase ``suitably generic" should be thought of as a requirement that the various structures on $V$ be in general position with respect to one another.  See \cite{Fulton-92,Fulton-Pragacz} for more details on the intersection-theoretic arguments regarding precisely when (\ref{eqn:pullback}) holds.

With the general picture painted, we now proceed to our specific examples.

\section{Examples in type $A$}
\subsection{$K = S(GL(p,\C) \times GL(q,\C))$}
Suppose that we are given an $n$-dimensional vector space $V$, a complete flag
\[ E_{\bullet} = \{E_0 \subset E_1 \subset \hdots \subset E_n\} \]
of subspaces of $V$, and a splitting of $V$ as a direct sum of subspaces of dimensions $p$ and $q$, i.e. $V = V' \oplus V''$.  Let $\pi: V \rightarrow V'$ be the projection onto $V'$.  Let $\gamma = (c_1,\hdots,c_n)$ be a $(p,q)$-clan.  Recalling the notation and results of Subsection \ref{ssec:orbits_supq}, we make the following definitions:

\begin{definition}\label{def:rel-pos}
Let $V = V' \oplus V''$ be as described.  We say that a flag $E_{\bullet}$ on $V$ is \textbf{in position $\gamma$ relative to the splitting $V' \oplus V''$} if the following three conditions hold for all $i,j$:
	
	\begin{enumerate}
		\item $\dim(E_i \cap V') = \gamma(i; +)$
		\item $\dim(E_i \cap V'') = \gamma(i; -)$
		\item $\dim (\pi(E_i) + E_j) =  j + \gamma(i; j)$ 
	\end{enumerate}
	
We say furthermore that a flag $E_{\bullet}$ on $V$ is \textbf{in position at most $\gamma$ relative to the splitting $V' \oplus V''$} if the following three inequalities hold for all $i,j$:
	\begin{enumerate}
		\item $\dim(E_i \cap V') \geq \gamma(i; +)$
		\item $\dim(E_i \cap V'') \geq \gamma(i; -)$
		\item $\dim (\pi(E_i) + E_j) \leq  j + \gamma(i; j)$
	\end{enumerate}
\end{definition}

Recall (Subsection \ref{ssec:orbits_supq}) that given a $(p,q)$-clan $\gamma$, the corresponding orbit $Q_{\gamma}$ is precisely
\[ Q_{\gamma} = \{E_{\bullet} \mid E_{\bullet} \textit{ is in position $\gamma$ relative to } \C\left\langle e_1,\hdots,e_p \right\rangle \oplus \C \left\langle e_{p+1},\hdots,e_n \right\rangle \}. \]

We conjecture here, without proof, the following set-theoretic description of the orbit \textit{closure} $Y_{\gamma} = \overline{Q_{\gamma}}$:

\begin{conj}\label{conj:pq-closure-description}
Given a $(p,q)$-clan $\gamma$, the orbit closure $Y_{\gamma}$ is precisely
\[ Y_{\gamma} = \{E_{\bullet} \mid E_{\bullet} \textit{ is in position at most $\gamma$ relative to } \C\left\langle e_1,\hdots,e_p \right\rangle \oplus \C \left\langle e_{p+1},\hdots,e_n \right\rangle \}. \]
\end{conj}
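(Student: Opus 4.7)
Let $Y_\gamma'$ denote the set defined on the right-hand side of the conjecture. The plan is to prove $Y_\gamma = Y_\gamma'$ by a double inclusion, reducing the nontrivial direction to a combinatorial statement about the Bruhat order on $(p,q)$-clans.

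First I would establish $Y_\gamma \subseteq Y_\gamma'$. Each defining condition for $Y_\gamma'$ is a closed condition on flags: the loci $\{E_\bullet \mid \dim(E_i \cap V') \geq k\}$, $\{E_\bullet \mid \dim(E_i \cap V'') \geq k\}$, and $\{E_\bullet \mid \dim(\pi(E_i) + E_j) \leq l\}$ are all closed, being cut out by rank conditions. Moreover, $Y_\gamma'$ is stable under $K = S(GL(p,\C) \times GL(q,\C))$, since $K$ preserves the splitting $V = V' \oplus V''$ and hence also the projection $\pi$. By Theorem \ref{thm:orbit_description}, the open orbit $Q_\gamma$ satisfies the defining \emph{equalities}, hence a fortiori the defining inequalities, so $Q_\gamma \subseteq Y_\gamma'$. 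Taking closures yields $Y_\gamma \subseteq Y_\gamma'$.

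For the reverse inclusion $Y_\gamma' \subseteq Y_\gamma$, I would use that $Y_\gamma'$ is a closed $K$-stable subset, hence a finite union of orbit closures $Y_{\gamma'}$. The task reduces to showing that whenever a clan $\gamma'$ satisfies
\[ \gamma'(i;+) \geq \gamma(i;+), \quad \gamma'(i;-) \geq \gamma(i;-), \quad \gamma'(i;j) \leq \gamma(i;j) \text{ for all } i,j, \]
then $Q_{\gamma'} \leq Q_\gamma$ in the full closure order on $K\backslash G/B$. Call this system of inequalities $\gamma' \preceq \gamma$. One direction is immediate from the inclusion $Y_\gamma \subseteq Y_\gamma'$ already established: if $Q_{\gamma'} \leq Q_\gamma$, then $Q_{\gamma'} \subseteq Y_\gamma'$, forcing $\gamma' \preceq \gamma$. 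The content is in the converse.

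My proposed strategy is induction on $\dim Q_\gamma - \dim Q_{\gamma'}$: given $\gamma' \preceq \gamma$ with $\gamma' \neq \gamma$, produce a clan $\gamma''$ with $\gamma' \prec \gamma'' \preceq \gamma$ and such that $Q_{\gamma''}$ covers $Q_{\gamma'}$ (or $Q_\gamma$ covers $Q_{\gamma''}$) in the weak order, using the combinatorial generators described in Subsection \ref{ssec:orbits_supq} (the monoidal action of simple reflections on clans via the three complex moves and the non-compact imaginary move). Locate a position $i$ where $\gamma$ and $\gamma'$ disagree, and use the defining inequalities to force the existence of a legal move at position $i$ either lowering $\gamma$ or raising $\gamma'$ while preserving the comparison. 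The main obstacle will be the combinatorial bookkeeping for this step: the monoidal moves on clans interact nontrivially with the three families of rank statistics $\gamma(i;+), \gamma(i;-), \gamma(i;j)$, and care is needed to produce an intermediate move that is simultaneously a Bruhat cover and respects the comparison $\preceq$. An alternative which avoids this bookkeeping is to appeal directly to the known combinatorial characterization of the Bruhat order on $(p,q)$-clans (as in the work of Brion and of McGovern--Trapa), under which $\gamma' \leq \gamma$ is already defined by precisely the inequalities $\gamma' \preceq \gamma$; granting this characterization, the converse is automatic and the conjecture follows. Either route reduces the geometric statement to a purely combinatorial one about clans, which is where the real work lies.
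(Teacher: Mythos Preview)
The paper does not prove this statement: it is explicitly labeled a \emph{Conjecture}, and the only evidence offered is the remark immediately following it that the statement has been verified computationally (using Sage) through $p+q=8$. There is no proof in the paper to compare against.

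Your proposal correctly isolates the structure of any such proof. The containment $Y_\gamma \subseteq Y_\gamma'$ is indeed the easy direction, and your argument for it is fine. You also correctly identify that the reverse containment is equivalent to the purely combinatorial assertion that the partial order $\preceq$ on $(p,q)$-clans defined by the three families of inequalities coincides with the full closure (Bruhat) order on $K\backslash G/B$. But this equivalence \emph{is} the content of the conjecture; you have reformulated it, not proved it. Your inductive strategy is plausible in outline, but you yourself flag that the bookkeeping at the key step---producing a weak-order cover that respects $\preceq$---is the ``main obstacle,'' and you do not carry it out. Your alternative, to cite a ``known combinatorial characterization of the Bruhat order on $(p,q)$-clans,'' is circular unless you can point to a specific result in the literature that establishes exactly this; the fact that the paper states it as an open conjecture (rather than citing such a result) suggests no such reference was available to the author at the time.

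In short: your outline is sound as a roadmap, but the substantive combinatorial step remains to be done, and the paper itself does not do it either.
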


\begin{remark}
Conjecture \ref{conj:pq-closure-description} has been verified to be true using Sage through $p+q=8$.  (The method used is to actually build the full Bruhat order graph consisting of $(p,q)$-clans, and check whether relation of clans $\gamma_1, \gamma_2$ in this Bruhat order graph is equivalent to the conditions on the numbers $\gamma_1(i; +)$, $\gamma_2(i; +)$, etc. which amount to Conjecture \ref{conj:pq-closure-description} being true.)
\end{remark}

Assuming Conjecture \ref{conj:pq-closure-description}, we now wish to define a set of degeneracy loci occurring as set-theoretic inverse images of such $K$-orbit closures.  The setup in this case involves a scheme $X$ equipped with a vector bundle $V$ carrying a complete flag of subbundles and a splitting as a direct sum of subbundles of ranks $p$ and $q$, i.e. $V = V' \oplus V''$.  The latter structure is relevant because it amounts to a reduction of the structure group of $V$ from $G$ to $K$:

\begin{prop}
Suppose $X$ is a scheme and suppose that $V \rightarrow X$ is a vector bundle.  The classifying map $X \stackrel{\rho}{\longrightarrow} BSL(n,\C)$ lifts to $BK$ if and only if $V$ splits as a direct sum of subbundles of ranks $p$ and $q$.
\end{prop}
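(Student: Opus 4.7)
The proof is a standard argument about reductions of structure group through fiber bundles of homogeneous spaces, and I would organize it in three steps.

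First, I would identify $G/K$ explicitly. The group $G = SL(n,\C)$ acts on the set $\caD$ of ordered direct sum decompositions $\C^n = V' \oplus V''$ with $\dim V' = p$ and $\dim V'' = q$, by $g \cdot (V', V'') = (gV', gV'')$. Taking the basepoint $(\C\langle e_1,\hdots,e_p\rangle, \C\langle e_{p+1},\hdots,e_n\rangle)$, the stabilizer is precisely $K = S(GL(p,\C) \times GL(q,\C))$ by definition. Transitivity of $G$ on $\caD$ requires a small verification: given any decomposition $(V', V'')$, choose bases and assemble them into a matrix in $GL(n,\C)$ sending the basepoint to $(V',V'')$; then rescale one basis vector to adjust the determinant to $1$, landing in $SL(n,\C)$. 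Hence $G/K \cong \caD$ as $G$-varieties.

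Second, I would use this to describe the associated $G/K$-bundle. The map $BK \to BG$ is a fiber bundle with fiber $G/K$, since it is obtained from $E \to BG$ by taking the quotient $E \times^G (G/K)$. Pulling back through the classifying map $\rho: X \to BG$, one obtains a fiber bundle $\caD(V) \to X$ whose fiber over $x \in X$ is canonically the set of decompositions of $V_x$ as a direct sum of subspaces of dimensions $p$ and $q$. By the general principle that lifts of a classifying map $X \to BG$ to $BK$ correspond bijectively to sections of the associated $G/K$-bundle, lifts of $\rho$ correspond bijectively to sections of $\caD(V) \to X$.

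Third, I would identify sections of $\caD(V) \to X$ with splittings $V = V' \oplus V''$ as a direct sum of subbundles of ranks $p$ and $q$. A global splitting visibly gives a section by $x \mapsto (V'_x, V''_x)$. Conversely, given a section $s: X \to \caD(V)$, define $V'$ and $V''$ fiberwise by $(V')_x = \pi_1(s(x))$ and $(V'')_x = \pi_2(s(x))$. The content is that these assemble into genuine vector subbundles; this is where I expect the only real work, and it is a standard local triviality check. Locally on $X$, one can trivialize $V$ so that $\caD(V)$ trivializes as $U \times \caD$, and the section takes the form $x \mapsto (x, \sigma(x))$ for a regular map $\sigma: U \to \caD$. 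One verifies directly that the corresponding pair of projectors on $V|_U$ depends regularly on $x$, giving local trivializations of $V'$ and $V''$; patching across overlaps is automatic because the two pieces are canonically defined fiberwise. This yields $V = V' \oplus V''$, completing the equivalence.

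The main obstacle, if any, is the third step, specifically ensuring that a section of $\caD(V)$ produces honest \emph{subbundles} rather than merely a fiberwise family of subspaces; but once the section is interpreted via the corresponding family of idempotent endomorphisms of $V$, local triviality is automatic and one gets subbundles in the appropriate category (algebraic, holomorphic, or topological, matching the setting for $X$).
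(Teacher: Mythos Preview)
Your proof is correct, but it takes a different route from the paper's own argument. The paper proceeds entirely at the level of transition functions: for $(\Leftarrow)$, it chooses a common trivializing cover for $V'$ and $V''$, takes bases of sections, and observes that the resulting cocycle for $V$ is block-diagonal in $GL(p,\C) \times GL(q,\C)$; for $(\Rightarrow)$, it starts from an atlas with $K$-valued transition functions and builds $V'$ and $V''$ by projecting the cocycle onto the two factors. Your argument instead identifies $G/K$ with the variety of ordered $(p,q)$-decompositions of $\C^n$, invokes the general correspondence between lifts $X \to BK$ and sections of the associated $G/K$-bundle, and then interprets such a section as a global splitting of $V$. The paper's approach is more hands-on and requires no background beyond cocycles; yours is more conceptual and has the advantage that the same template applies uniformly to other $K$ (indeed, the paper itself adopts exactly your viewpoint later in the chapter when discussing $BK$ as a bundle of forms for $K = O(n,\C)$ and $K = Sp(2n,\C)$). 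Both are standard; the only point you flag as needing care---that a section of $\caD(V)$ yields honest subbundles---is exactly the content the paper handles via the explicit cocycle construction.
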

\begin{proof}
$(\Leftarrow)$:  Suppose that $V = V' \oplus V''$, with $V'$ of rank $p$, and $V''$ of rank $q$.  Let $\{U_{\ga}\}$ be an open cover of $X$ over which both $V'$ and $V''$ are locally trivial (say by taking the common refinement of the open covers associated to atlases of $V'$ and $V''$).  Over each $U_{\ga}$, we can choose a basis of sections $s_{1,\ga},\hdots,s_{p,\ga}$ for $V'$ and a basis of sections $s_{p+1,\ga},\hdots,s_{n,\ga}$ for $V''$, with $s_{i,\ga}(x) = (x,e_i)$ for $x \in U_{\ga}$ ($e_1,\hdots,e_n$ the standard basis for $\C^n$).  Then $s_{i,\ga}$ for $i=1,\hdots,n$ are a basis of sections for $V$.  The gluing data for $V'$ and $V''$ dictates that for $x \in U_{\ga} \cap U_{\beta}$, 
\[ s_{i,\ga}(x) = \displaystyle\sum_{j=1}^p \lambda_{i,j} s_{j,\beta}(x) \]
for $i=1,\hdots,p$, and 
\[ s_{i,\ga}(x) = \displaystyle\sum_{j=p+1}^n \lambda_{i,j} s_{j,\beta}(x) \]
for $i=p+1,\hdots,n$.
This defines a family of transition functions for $V$, associating to $x \in U_{\ga} \cap U_{\beta}$ the matrix $(\lambda_{i,j}) \in GL(p,\C) \times GL(q,\C)$.  Thus the classifying map for $V = V' \oplus V''$ lifts to $BK$.

$(\Rightarrow)$:  Conversely, suppose that $V$ admits a reduction of structure group to $K$.  Let $\{U_{\ga},h_{\ga}\}$ be an atlas for $V$ whose transition functions take values in $K$.  Then there are sections $s_{1,\ga},\hdots,s_{p,\ga}$ and $s_{p+1,\ga},\hdots,s_{n,\ga}$ satisfying linear relations of the above form, more or less by definition.  Taking the sections $s_{1,\ga},\hdots,s_{p,\ga}$, together with gluing information determined by composing the transition functions $\tau_{\ga,\beta}$ of $V$ with projection to $GL(p,\C)$, we have the data of a rank $p$ subbundle of $V$.  Likewise, taking the sections $s_{p+1,\ga},\hdots,s_{n,\ga}$ together with gluing information determined by composing the $\tau_{\ga,\beta}$ with projection to $GL(q,\C)$, we have the data of a rank $q$ subbundle $V''$.  Clearly, $V'$ and $V''$ are in direct sum, by construction.
\end{proof}

With this established, let $V \rightarrow X$ be a vector bundle possessing a complete flag of subbundles $F_{\bullet}$ and a splitting $V = V' \oplus V''$ as a direct sum of subbundles of ranks $p$ and $q$.  Let $\gamma$ be a $(p,q)$-clan, and define
\[ D_{\gamma} := \{ x \in X \ \vert \ F_{\bullet}(x) \text{ is in position at most $\gamma$ relative to the splitting } V'(x) \oplus V''(x) \}. \]

We now describe how to use the formula for the equivariant class $[Y_{\gamma}]$ to obtain a formula for the fundamental class $[D_{\gamma}]$ of this locus in terms of the Chern classes of $V'$, $V''$, and $F_i/F_{i-1}$ ($i = 1,\hdots,n$).

As described in the introduction to this chapter, the splitting of $V$ as a direct sum of subbundles, together with the complete flag of subbundles, gives us a map
\[ X \stackrel{\phi}{\longrightarrow} BK \times_{BG} BB. \]

Our first task is to see that $D_{\gamma}$ is precisely $\phi^{-1}(\tilde{Y_{\gamma}})$, where $\tilde{Y_{\gamma}}$ denotes the isomorphic image of $E \times^K Y_{\gamma}$ in $BK \times_{BG} BB$.

First, note that $G/K$ can naturally be identified with the space of splittings of $\C^n$ as a direct sum of subspaces of dimensions $p$ and $q$, respectively.  Indeed, $G$ acts transitively on the space of such splittings, and $K$ is precisely the isotropy group of the ``standard" splitting of $\C^n$ as $\C \left\langle e_1,\hdots,e_p \right\rangle \oplus \C \left\langle e_{p+1},\hdots,e_n \right\rangle$.  Now $BK$ is a $G/K$-bundle over $BG$, and a point of $BK$ lying over $eG \in BG$ should be thought of as a splitting of the fiber $\caV_{eG}$, where $\caV = E \times^G \C^n$ is the universal rank $n$ vector bundle over $BG$.  Specifically, the point $egK \in BK$ over $eG \in BG$ is the splitting of $\caV_{eG}$ as the direct sum
\[ \C \left\langle [e,g \cdot e_1],\hdots,[e, g \cdot e_p] \right\rangle \oplus \C \left\langle [e,g \cdot e_{p+1}],\hdots,[e,g \cdot e_n] \right\rangle. \]
Note that $BK$ carries two tautological bundles, say $\caS'$ and $\caS''$, of ranks $p$ and $q$ respectively, which sum directly to $\pi_K^* \caV$, where $\pi_K$ is the projection $BK \rightarrow BB$.  The fiber of $\caS'$ (resp. $\caS''$) over a point $egK$ is the $p$-dimensional (resp. $q$-dimensional) summand of the splitting of $\caV_{eG}$ determined by that point.

Similarly, $BB$ is a $G/B$-bundle over $BG$, with a point of $BB$ representing a complete flag on $\caV_{eG}$.  Specifically, the point $egB \in BB$ is the flag
\[ \left\langle [e,g \cdot e_1],\hdots,[e,g \cdot e_n] \right\rangle. \]
The space $BB$ carries a complete tautological flag of subbundles of $\pi_B^* \caV$ ($\pi_B$ the projection $BB \rightarrow BG$), say $\caT_{\bullet}$.  The fiber of $\caT_i$ over a point $egB \in BB$ is simply the $i$th subspace of the flag on $\caV_{eG}$ determined by that point.

Thus a point of $BK \times_{BG} BB$ should be thought of concretely as a pair consisting of a splitting and a flag of a fiber of $\caV$.  Now let $\gamma$ be a $(p,q)$-clan, with $Y_{\gamma}$ the corresponding $K$-orbit closure on $G/B$.  Assuming Conjecture \ref{conj:pq-closure-description}, we now note that the isomorphic image of $E \times^K Y_{\gamma}$ is precisely the set of points consisting of splittings and flags where the flag is in position at most $\gamma$ relative to the splitting.  Indeed, a point $[e, gB] \in E \times^K Y_{\gamma}$ (with the flag $gB = \left\langle g \cdot e_1,\hdots,g \cdot e_n \right\rangle$ in position at most $\gamma$ relative to the standard splitting of $\C^n$) is carried by the isomorphism $E \times^K G/B \rightarrow BK \times_{BG} BB$ to the point $(eK, egB)$.  This point represents the standard splitting of $\caV_{eG}$, along with the flag $gB$ on $\caV_{eG}$.  Thus the flag is in position at most $\gamma$ relative to the splitting, since $gB \in Y_{\gamma}$.  On the other hand, any such point in $BK \times_{BG} BB$ is of the form $(eK,egB)$ for some $e \in E$ and $g \in G$, which is then carried back to the point $[e,gB] \in E \times^K Y_{\gamma}$ by the inverse isomorphism.

Now, consider the map $\phi$.  If $\rho$ is the classifying map $X \rightarrow BG$ for $V$, denote by $\rho_K$ and $\rho_B$ the lifts of $\rho$ to $BK$ and $BB$, respectively.  The subbundles $V'$ and $V''$ are the pullbacks $\rho_K^*\caS'$, $\rho_K^*\caS''$ of the tautological bundles on $BK$ mentioned above.  Likewise, the flag $E_{\bullet}$ is $\rho_B^*\caT_{\bullet}$.  The map $\phi$ sends $x \in X$ to the pair
\[ (\caS'(\rho_K(x)) \oplus \caS''(\rho_K(x)), \caT_{\bullet}(\rho_B(x)) = (V'(x) \oplus V''(x), F_{\bullet}(x)). \]
In light of this, we see that $\phi(x) \in \tilde{Y_{\gamma}}$ if and only if $F_{\bullet}(x)$ is in position at most $\gamma$ relative to the splitting $V'(x) \oplus V''(x)$.  This says that $\phi^{-1}(\tilde{Y_{\gamma}})$ is precisely the locus $D_{\gamma}$ defined above.  Thus, assuming the situation is suitably generic, as described in the introduction to this chapter, we have that $[D_{\gamma}] = \phi^*([\tilde{Y_{\gamma}}])$.  Again we mention that the genericity should be thought of as requiring that our splitting and our flag of subbundles are in general position with respect to one another.

Our next task is to relate the classes $x_1,\hdots,x_n,y_1,\hdots,y_n$, in terms of which we have expressed the equivariant classes of $K$-orbit closures, to the Chern classes of the bundles $V'$, $V''$, and $F_i/F_{i-1}$ ($i=1,\hdots,n$) on $X$.  The space $(G/B)_K = E \times^K G/B$ carries two bundles $S'_K$ and $S''_K$ of ranks $p$ and $q$, respectively.  Explicitly, the bundle $S'_K$ is $(E \times^K \C \left\langle e_1,\hdots,e_p \right\rangle) \times G/B$, while the bundle $S''_K$ is $(E \times^K \C \left\langle e_{p+1},\hdots,e_n \right\rangle) \times G/B$.  When pulled back to $(G/B)_S$ via the natural map $(G/B)_S \rightarrow (G/B)_K$, these two bundles split as direct sums of line bundles.  $S'_K$ splits as a direct sum of $(E \times^S \C_{X_i}) \times G/B$ for $i=1,\hdots,p$, while $S''_K$ splits as a direct sum of $(E \times^S \C_{X_i}) \times G/B$ for $i=p+1,\hdots,n$.  Recall that the classes $x_i \in H_S^*(G/B)$ are the first Chern classes of these line bundles.  So the pullbacks of the Chern classes of $S'_K$ and $S''_K$ are the elementary symmetric polynomials in $x_1,\hdots,x_p$ and $x_{p+1},\hdots,x_n$, respectively.  Since the pullback is an injection, when we consider $H_K^*(G/B)$ as a subring of $H_S^*(G/B)$, the Chern classes $c_1(S'_K),\hdots,c_p(S'_K)$ are identically $e_1(x_1,\hdots,x_p),\hdots,e_p(x_1,\hdots,x_p)$, while the Chern classes $c_1(S''_K),\hdots,c_q(S''_K)$ are $e_1(x_{p+1},\hdots,x_n),\hdots,e_q(x_{p+1},\hdots,x_n)$.  The bundles $S'_K$ and $S''_K$ are identified with the bundles $\caS'$ and $\caS''$ on the isomorphic space $BK \times_{BG} BB$, and as we have noted, the latter two bundles pull back to $V'$ and $V''$, respectively.  Thus pulling back elementary symmetric polynomials in the $x_i$ to $X$ gives us the Chern classes of the bundles $V'$ and $V''$.

Now, consider the classes $y_i$.  $G/B$ has a tautological flag of bundles $T_{\bullet}$.  Each bundle in this flag is $K$-equivariant, so that we get a flag of bundles $(T_{\bullet})_K = E \times^K T_{\bullet}$ on $(G/B)_K$.  This flag pulls back to a tautological flag $(T_{\bullet})_S$ on $(G/B)_S$ whose subquotients $(T_i)_S / (T_{i-1})_S$ are the line bundles $E \times^S (G \times^B \C_{Y_i})$.  Recall that the classes $y_i$ are precisely the first Chern classes of the latter line bundles.  The bundles $(T_{\bullet})_K$ match up with the bundles $\caT_{\bullet}$ on $BK \times_{BG} BB$ via our isomorphism, and as we have noted, the latter bundles pull back to the flag $F_{\bullet}$ of bundles on $X$.  Thus when we pull back to $X$, the class $y_i$ is sent to $c_1(F_i / F_{i-1})$ for $i=1,\hdots,n$.

As an illustration, suppose we have a scheme $X$ and a rank $4$ vector bundle $V \rightarrow X$.  Suppose that $V$ splits as a direct sum of rank $2$ subbundles ($V = V' \oplus V''$), and suppose further that $V$ is equipped with a complete flag of subbundles ($F_1 \subset F_2 \subset F_3 \subset V$).  Let $z_1,z_2,z_3,z_4$ be $c_1(V')$, $c_2(V')$, $c_1(V'')$, $c_2(V'')$, respectively.  Let $y_i = c_1(F_i/F_{i-1})$ for $i = 1,2,3,4$.  For any $(2,2)$-clan $\gamma$, we can use the example of Subsection \ref{ssec:supq_example} to give Chern class formulas for the class of any locus $D_{\gamma}$ in terms of the $z_i$ and $y_i$.

For instance, consider the clan $\gamma = (+,+,-.-)$.  The formula for $[Y_{\gamma}]$, when expanded and regrouped conveniently, gives
\[ (x_1x_2)^2 - (x_1+x_2)(x_1x_2)(y_3+y_4) + (x_1x_2)(y_3+y_4)^2-(x_1+x_2)(y_3y_4)(y_3+y_4) + (x_1^2 + x_2^2)(y_3y_4) + y_3^2y_4^2. \]
We have seen that, through all our identifications, $x_1 + x_2$ pulls back to $z_1$, and $x_1x_2$ pulls back to $z_2$.  Thus the conclusion is that
\[ [D_{(+,+,-,-)}] = z_2^2 -z_1z_2(y_3+y_4) + z_2(y_3+y_4)^2 - z_1y_3y_4(y_3+y_4)+(z_1^2-2z_2)(y_3y_4) + y_3^2y_4^2. \]
One checks that this factors as
\[ [D_{(+,+,-,-)}] = (z_1y_4 - z_2 - y_4^2)(z_1y_3 - z_2 - y_3^2). \]

\begin{remark}
The author wonders whether loci of this type occur ``in nature".  That is, are there interesting varieties which can be realized as loci of the type we have described here?  In particular, is condition (3) of Definition \ref{def:rel-pos} an interesting geometric condition to place on a degeneracy locus of this type?

There are some instances in which condition (3) turns out to be redundant.  Indeed, in \cite{Wyser-11a}, it is noted that a number of the $K$-orbit closures can be described without need of condition (3).  Such orbit closures are \textit{Richardson varieties}, intersections of Schubert varieties with opposite Schubert varieties.  In such cases, formulas for the corresponding loci can actually be deduced from the results of \cite{Fulton-92}, since they are (proper, reduced) intersections of two degeneracy loci treated by the results of that paper.  We note, however, that the Chern class formulas one gets from doing the computation that way are different from those we obtain here using our $K$-orbit formulas.

In cases where condition (3) \textit{is} needed, the author sees no apparent way to deduce formulas for the corresponding locus from Fulton's results, since condition (3) is not really a ``Schubert-like" condition.
\end{remark}

\subsection{Other symmetric subgroups in type $A$}\label{ssec:type-a-other-deg-loci}
Here, we treat the remaining cases in type $A$.  Because they are all so similar, we describe them here together rather than giving each example its own subsection.

We start first with the full orthogonal group.  Recall (Subsections \ref{ssec:kgb_param_so_odd}, \ref{ssec:so2n_param}) the parametrization of $K$-orbits in this case.  Namely, the orbits are parametrized by involutions in $S_n$.  Moreover, if $\gamma$ is the quadratic form for which $K$ is the isometry group, and if $b \in S_n$ is an involution, then the orbit $\caO_b$ admits the following linear algebraic description:
\[ \caO_b := \{ F_{\bullet} \in G/B \mid \text{rank}(\gamma|_{F_i \times F_j}) = r_b(i,j) \text{ for all } i,j \}. \]

We now give linear algebraic descriptions of the orbit \textit{closures}.  Recall (Proposition \ref{prop:full_closure_twisted}, Remark \ref{rmk:bruhat-order-on-I}) that when the Richardson-Springer map is injective, the full closure order on the set of twisted involutions is precisely the restricted Bruhat order.  Recall also (Subsection \ref{ssec:kgb_param_so_odd}) that one passes from the set of twisted involutions to the set of honest involutions via multiplication by the long element $w_0$, which inverts the Bruhat order.  From this it follows that when $K \backslash G/B$ is identified with the set of involutions in $S_n$, its closure order is precisely given by the \textit{reverse} Bruhat order on these involutions.  Given this, it is easy to see that $\overline{\caO_b}$ is precisely 
\begin{equation}\label{eqn:closure-equations}
	\overline{\caO_b} := \{ F_{\bullet} \in G/B \mid \text{rank}(\gamma|_{F_i \times F_j}) \leq r_b(i,j) \text{ for all } i,j \}.
\end{equation}

Indeed, one need only use the definition of the Bruhat order on $S_n$ given in \cite[\S 10.5]{Fulton-YoungTableaux}, formulated in terms of the rank numbers $r_b(i,j)$.  This definition is easily seen to be equivalent to other, more ``standard" definitions of the Bruhat order (\cite[\S 10.5, Exercises 8-9]{Fulton-YoungTableaux}).

For the sake of brevity, given a form $\gamma$ on a vector space $V$, together with a flag $F_{\bullet}$ on $V$, we say that $\gamma$ ``has rank at most $b$ on the flag $F_{\bullet}$" if the flag satisfies the conditions of (\ref{eqn:closure-equations}) relative to $\gamma$.

The space $BK$ is a $G/K$-bundle over $BG$, with $G/K$ the space of all nondegenerate, symmetric bilinear forms on $\C^n$.  This correspondence associates to the coset $gK \in G/K$ the form $g \cdot \gamma$, with
\[ g \cdot \gamma(v,w) = \gamma(g^{-1}v,g^{-1}w). \]
The form $\gamma$ is the one associated to the coset $1K$, and is defined by
\[ \gamma(e_i,e_j) = \delta_{i,n+1-j} \]
where $e_1,\hdots,e_n$ is the standard basis for $\C^n$.  Then a point $eK \in BK$ can naturally be identified with a quadratic form on the fiber $\caV_{eG}$ in the following way:  Let $v_1,\hdots,v_n = [e,e_1],\hdots,[e,e_n] \in E \times^G \C^n$ be a basis for $\caV_{eG}$, and define the form associated to $eK$ by
\[ \left\langle v_i, v_j \right\rangle = \delta_{i,n+1-j}. \]

It is a standard fact that a vector bundle $V \rightarrow X$ of rank $n$ admits a reduction of structure group to $O(n,\C)$ if and only if the bundle carries a nondegenerate quadratic form.  By this we mean a bundle map $\text{Sym}^2(V) \rightarrow X \times \C$ which restricts to a nondegenerate quadratic form on every fiber.  (We will always assume our forms take values in the trivial line bundle.)  If $\rho: X \rightarrow BG$ is a classifying map for the bundle $V$, then the lift of $\rho$ to $BK$ sends $x \in X$ to the point of $BK$ which represents the form $\gamma|_{V_x} = \gamma|_{\caV_{\rho(x)}}$ on the fiber $\caV_{\rho(x)}$.  Then $\gamma$ is effectively pulled back from a corresponding ``tautological" form $\tau$ on $\pi^* \caV \rightarrow BK$ ($\pi$ the projection $BK \rightarrow BG$), whose values on the fiber of $\pi^* \caV$ over every point of $BK$ are identified by the point itself.

A lift of $\rho$ to $BB$ is equivalent, as in the last subsection, to a flag $E_{\bullet}$ of subbundles of the bundle $V$.  Thus we see that given a vector bundle $V$ (with classifying map $\rho$) equipped with a quadratic form $\gamma$ and a complete flag of subbundles $E_{\bullet}$, we get a map $\phi: X \rightarrow BK \times_{BG} BB$ which sends $x \in X$ to the point $(\tau|_{\rho(x)}, (\caT_{\bullet})_{\rho(x)}) = (\gamma|_{V_x}, (E_{\bullet})_x)$.

We now note that if $Y_b = \overline{\caO_b} \subseteq G/B$ is a $K$-orbit closure, then the isomorphism between $E \times^K (G/B)$ and $BK \times_{BG} BB$ carries $E \times^K Y_b$ to the set of all (Form, Flag) pairs where the form has rank at most $b$ on the flag.  Indeed, given $gB \in Y_b$, the point $[e,gB] \in E \times^K Y_b$ is carried to the point $(eK,egB) \in BK \times_{BG} BB$.  This point represents the antidiagonal form on $\caV_{eG}$ relative to the basis $[e,e_1],\hdots,[e,e_n]$, together with the flag $gB$ on $\caV_{eG}$ relative to that same basis.  Then the form has rank at most $b$ on the flag, by choice of $gB$.  On the other hand, any point $(eK,egB) \in BK \times_{BG} BB$ where the antidiagonal form on $\caV_{eG}$ has rank at most $b$ on the flag $gB$ is matched up with the point $[e,gB]$, clearly an element of $E \times^K Y_b$.

Given this, along with our description of the map $\phi$, we see that given a vector bundle $V$ over $X$ with a form and a flag, and an involution $b$, the locus
\begin{equation}\label{eqn:deg-locus}
	D_b = \{x \in X \mid \gamma|_{V_x} \text{ has rank at most $b$ on } (F_{\bullet})_x\}
\end{equation}
is precisely $\phi^{-1}(\widetilde{Y_b})$, with $\wt{Y_b}$ the isomorphic image of $E \times^K Y_b$ in $BK \times_{BG} BB$.  Thus generically, the class of such a locus is given by $[D_b] = \phi^*(\widetilde{Y_b})$.  As explained in the previous subsection, the classes $y_i \in H_K^*(G/B)$ pull back through $\phi$ to the Chern classes $c_1(F_i/F_{i-1})$.  Thus a formula for the equivariant classes of the $K$-orbit closure $Y_b$, which we note involves \textit{only the $y$ variables}, can be viewed as giving a formula for $[D_b]$ in terms of the Chern classes $c_1(F_i/F_{i-1})$.

Note that the above analysis applies to the case $G = GL(n,\C)$, $K = O(n,\C)$.  The case $G = SL(n,\C)$, $K = SO(n,\C)$ is identical in the event that $n$ is odd, but a bit different in the case that $n$ is even.  We address this in a moment.  First, we point out that the above analysis applies equally well to the case of $G = SL(2n,\C)$, $K = Sp(2n,\C)$, with only very minor modifications.  The orbit closures in that case are parametrized by \textit{fixed point-free} involutions, and descriptions of their closures are identical to those of (\ref{eqn:closure-equations}) when $\gamma$ is taken to be the \textit{skew} form for which $K$ is the isometry group.  A lift of the classifying map to $BK$ then amounts to a nondegenerate \textit{skew} form on the bundle $V$, by which we mean a bundle map $\bigwedge^2(V) \rightarrow X \times \C$ which restricts to a nondegenerate skew form on each fiber.  Given such a form, along with a flag of subbundles of $V$, one can define a degeneracy locus $D_b \subseteq X$ associated to a fixed point-free involution $b$ just as in (\ref{eqn:deg-locus}) above.  And just as above, our formulas for the equivariant classes of $K$-orbit closures (which again involve only the $y$ variables) pull back to a formula for $[D_b]$ in the Chern classes of the subquotients of the flag.

We now address the case of $(SL(2n,\C), SO(2n,\C))$.  In the even case, each $O(2n,\C)$-orbit on $GL(n,\C)/B$ associated to a fixed point-free involution splits as a union of two $SO(2n,\C)$-orbits, so that each $O(2n,\C)$-orbit \textit{closure} has two irreducible components, each the closure of a distinct $SO(2n,\C)$-orbit.  Thus a formula for the class of an $SO(2n,\C)$-orbit closure associated to a fixed point-free involution $b$ should pull back to a formula for an irreducible component of the locus $D_b$, defined as in (\ref{eqn:deg-locus}).  Note (see, e.g., Table \ref{tab:type-a-so4}) that our formulas for equivariant classes of $SO(2n,\C)$-orbit closures associated to involutions with fixed points involve the $y$-variables only, but the formulas for equivariant classes of orbit closures associated to fixed point-free involutions typically also involve the class $x_1 \hdots x_n$.  We now identify this class as pulling back to an ``Euler class" $e \in H^*(X)$ associated to our bundle with quadratic form.

The Euler class of a rank $2n$ complex vector bundle $V \rightarrow X$ with nondegenerate quadratic form is a class $e \in H^{2n}(X)$ which is uniquely defined up to sign by the following property:  If $W \rightarrow Y$ is any rank $2n$ complex vector bundle with nondegenerate quadratic form, possessing a maximal (rank $n$) isotropic subbundle $E$, and if $\rho: Y \rightarrow X$ is a map for which $W = \rho^*V$, then $\rho^*(e) = \pm c_n(E)$.  In particular, the space $BK$ carries the bundle $\caV$ (omitting the pullback notation), equipped with a ``tautological" nondegenerate quadratic form, as we have already noted, so there is an associated Euler class in $H^{2n}(BK)$.  (For the interested reader, we mention that this class is the Euler class --- in the sense of \cite[\S 9]{Milnor-Stasheff} --- of a rank $2n$ \textit{real} bundle on $BK$ whose complexification is $\caV$.  The real bundle in question is pulled back, through a homotopy equivalence $BSO(2n,\C) \rightarrow BSO(2n,\R)$, from the canonical rank $2n$ real bundle $\caV_{\R}$ on the latter classifying space.)  The Euler class of $V \rightarrow X$ is the pullback of this class in $H^{2n}(BK)$ through the classifying map.  Note that it exists even in cases where $V$ does not carry a maximal isotropic subbundle.  This class is \textit{not} a polynomial in the Chern classes of $V$.  (This could indicate that the equivariant classes of $SO(2n,\C)$-orbit closures on $G/B$ associated to fixed point-free involutions are \textit{not} expressible in the $y$-variables alone.)  These facts are explained further in \cite{Edidin-Graham-95} where, among other results, the existence of an \textit{algebraic} Euler class of a Zariski-locally trivial bundle with quadratic form is established.

Now, note that the class $x_1 \hdots x_n \in H_S^*(G/B)$ is (the pullback to $H_S^*(G/B)$ of) $c_n(\bigoplus_{i=1}^n \caL_{X_i})$ in the notation of Subsection \ref{ssec:eqvt_cohom}, Proposition \ref{prop:eqvt-cohom-flag-var} (again omitting pullback notation).  The bundle $\bigoplus_{i=1}^n \caL_{X_i}$ is a maximal isotropic subbundle of the pullback of $\caV$ to $BS$ through the projection $BS \rightarrow BK$.  Thus $x_1 \hdots x_n$, viewed as a class in $H_K^*(G/B)$, is an Euler class for $\caV$.  Pulling all the way back to $X$ through the classifying map, we see that $\phi^*(x_1 \hdots x_n)$ is an Euler class for the bundle $V \rightarrow X$.

Summarizing, our formulas for the equivariant classes of $SO(2n,\C)$-orbit closures can be interpreted as formulas for the fundamental classes of irreducible components of degeneracy loci $D_b$ ($b$ a fixed point-free involution) defined as above, expressed in the first Chern classes of the subquotients of the flag of subbundles, together with an Euler class for the bundle with quadratic form.

\section{Notes on other types}
Each of the symmetric pairs we have considered in types $BCD$ should give similar degeneracy locus formulas to those we have described above in type $A$.  The setup should be roughly as follows:  One starts with a vector bundle $V$ over a scheme $X$, equipped with a non-degenerate quadratic (types $BD$) or skew (type $C$) form, along with a flag of subbundles which is isotropic/Lagrangian with respect to that form.  The form amounts to $V$ having structure group $G=SO(n,\C)$ or $Sp(2n,\C)$, as we have discussed, while the flag corresponds to a lift of the classifying map for the bundle to $BB$.  One should then determine the additional structure on the bundle which amounts to a lift of the classifying map to $BK$.  In the cases where $K$ is $S(O(p,\C) \times O(q,\C))$ or $Sp(2p,\C) \times Sp(2q,\C)$, this should be a splitting of the bundle as a direct sum of two subbundles of the appropriate ranks such that the form restricts to each summand non-degenerately.  In the cases where $K = GL(n,\C)$, it should be a splitting of the bundle as a direct sum of two rank $n$ subbundles which are orthogonal complements with respect to the form.

Given such a setup, one should be able to parametrize subvarieties of $X$ determined by imposing linear algebraic conditions on the fibers of $V$ relative to all of these structures, as we have just described in type $A$.  The linear algebraic conditions one must impose should correspond to the linear algebraic conditions defining $K$-orbit closures.  We do not carry this out explicitly here, since it is not clear at this time exactly what linear algebraic conditions define the $K$-orbit closures in the cases outside of type $A$.  As we have noted, in all cases outside of type $A$, the $K$-orbits are the intersections of $GL(p,\C) \times GL(q,\C)$-orbits on the type $A$ flag variety with a smaller flag variety of type $BCD$ (for some appropriate choice of $p,q$).  One would hope that this carries over to orbit \textit{closures} --- i.e.

\begin{question}\label{question:orbit-closure}
For $K$ a symmetric subgroup in types $BCD$, are the $K$-orbit \textit{closures} intersections of the corresponding $GL(p,\C) \times GL(q,\C)$-orbit \textit{closures} with the smaller flag variety?
\end{question}

The answer to this question is not obvious.  In fact, if one considers the analogous question for Schubert varieties, the answer is ``yes" in types $BC$, but ``no" in type $D$.  Combinatorially, we are asking the following question:

\begin{question}\label{question:orbit-poset-restrictions}
For $K$ a symmetric subgroup in types $BCD$, is the poset $K \backslash G/B$ (equipped with the full closure order) poset-isomorphic to the corresponding subposet of $GL(p,\C) \times GL(q,\C)$-orbits?
\end{question}

Approaching the matter from this combinatorial perspective would be one possible approach to answering Question \ref{question:orbit-closure}.  Question \ref{question:orbit-poset-restrictions} should be relatively easy to answer in the affirmative in cases where the \textit{weak} closure order on $K \backslash G/B$ is visibly the restriction of the \textit{weak} closure order on $GL(p,\C) \times GL(q,\C)$-orbits.

However, this is not the case in all examples.  Indeed, consider the pair $(G,K) = (SO(2n,\C), GL(n,\C))$.  The $K$-orbits corresponding to $(1,1,-,+,2,2)$ and $(1,+,2,1,-,2)$ are related in the weak order.  Indeed, the simple reflection $s_{\ga_3}$ raises $(1,1,-,+,2,2)$ to $(1,+,2,1,-,2)$.  On the other hand, the $GL(3,\C) \times GL(3,\C)$-orbits corresponding to these $(3,3)$-clans are \textit{not} related in the weak order on $GL(3,\C) \times GL(3,\C)$-orbits.  It \textit{is} the case, however, that $(1,1,-,+,2,2)$ is below $(1,+,2,1,-,2)$ in the \textit{full} closure order, so this does \textit{not} provide a negative answer to Question \ref{question:orbit-poset-restrictions}.  Symmetric pairs where the weak order on $K \backslash G/B$ does not correspond to the restricted weak order on $GL(p,\C) \times GL(q,\C)$-orbits may be a bit more difficult to analyze combinatorially.

In cases where the answer to Question \ref{question:orbit-closure} turns out to be ``yes", then assuming Conjecture \ref{conj:pq-closure-description} is true, the $K$-orbit closures would be described exactly by the linear algebraic conditions of Conjecture \ref{conj:pq-closure-description}, simply restricting attention to flags which were isotropic or Lagrangian with respect to the appropriate form.  In cases where the answer to Question \ref{question:orbit-closure} is ``no" (if, indeed, there are any such cases), then one would hope to be able to give some alternative linear algebraic description of the orbit closures as sets of isotropic/Lagrangian flags, and then describe degeneracy loci by compatible linear algebraic conditions on fibers of a vector bundle over a scheme.  Assuming such cases even exist, it is not at all clear what these linear algebraic descriptions might be.

We leave these questions open for now, and hope to address them in future work.

\appendix
\chapter{Proofs of the correctness of the orbit parametrizations in types $BCD$}
In this appendix, we give a case-by-case proof of the correctness of the parametrizations of orbit sets described in all cases outside of type $A$.  This includes the proof of Theorem \ref{thm:k-orbit-intersections}.  In each of these cases, we have made the claim that the $K$-orbits on $G/B$ are parametrized by some subset of the $(p,q)$-clans (for some appropriate $p,q$) possessing one or more special combinatorial properties.  We now indicate how this can be proved.

In all of these cases, the involution $\theta$ on $G$ for which $K = G^{\theta}$ is the restriction of an involution $\theta'$ on $G' = GL(n,\C)$ for some $n$, for which $K' = (G')^{\theta'} \cong GL(p,\C) \times GL(q,\C)$ for some $p,q$.  Then $K = G \cap K'$, so that the intersection of a $K'$-orbit on $X' = G'/B'$ with $X$, if non-empty, is clearly stable under $K$ and hence \textit{a priori} is a union of $K$-orbits.  The $K'$-orbits on $X'$ being parametrized by $(p,q)$-clans, one may ask for combinatorial conditions on the $(p,q)$-clan $\gamma$ which amount to $Q_{\gamma}' \cap X \neq \emptyset$.  Once one has determined such combinatorial conditions, then the next question is, given a clan $\gamma$ satisfying these combinatorial conditions, is the $K$-stable set $Q_{\gamma} \cap X$ a \textit{single} $K$-orbit, or a union of multiple $K$-orbits?  If it is always a single $K$-orbit, then the $K$-orbits are clearly in 1-to-1 correspondence with the $K'$-orbits on $X'$ which meet $X$, and thus are parametrized by the $(p,q)$-clans satisfying the appropriate combinatorial conditions.  As indicated in the introduction, we have chosen our symmetric pairs $(G,K)$ precisely so that this is always the case.

By what we have said so far, then, it is clear that to prove the correctness of the parametrizations of $K \backslash X$ in any case in type $BCD$, we must do the following two things:
\begin{enumerate}
	\item Prove that a $K'$-orbit $Q_{\gamma}$ intersects $X$ if and only if the corresponding clan $\gamma$ has the specified combinatorial properties.
	\item Prove that each such non-empty intersection is a \textit{single} $K$-orbit.
\end{enumerate}

We first establish (1) on a case-by-case basis, then we deal with (2).

\begin{prop}\label{prop:appendix-combinatorial-criteria}
In each case outside of type $A$, the $K'$-orbit $Q_{\gamma}$ corresponding to the clan $\gamma$ intersects $X$ if and only if $\gamma$ has the combinatorial properties specified in all of our parametrizations.
\end{prop}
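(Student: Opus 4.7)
The plan is to prove each of the six cases (types $B$, $C_1$, $C_2$, $D_1$, $D_2$, $D_3$) separately, but along identical lines. In each case, one direction involves constructing an explicit isotropic (or Lagrangian) representative of $Q_\gamma$ when $\gamma$ satisfies the specified combinatorial property, while the other direction extracts the combinatorial property from the rank conditions defining $Q_\gamma$ together with the isotropy of any flag in $Q_\gamma \cap X$.

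For the sufficiency direction, the plan is to invoke the explicit algorithm of \cite{Yamamoto-97} (used throughout the paper to produce representatives of closed orbits) for a representative flag $F_\bullet = \langle v_1,\ldots,v_N \rangle$ of $Q_\gamma$, where $N = 2n$ or $2n+1$.  The combinatorial property of $\gamma$ will be exploited to choose the underlying signed permutation $\sigma$ and the $\pm$-signatures on matched pairs of naturals so that the resulting flag is isotropic (or Lagrangian) with respect to the form defining $G$.  In the symmetric cases, this is done so that $v_i$ and $v_{N+1-i}$ are paired under the form, with the freedom to assign signatures to matched numbers exploited to make the pairings work out.  In the skew-symmetric cases, opposite signs are chosen on matched pairs straddling the midpoint. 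For types $D_2$ and $D_3$, one additionally checks that the constructed flag lies in the chosen connected component of the variety of isotropic flags on $\C^{2n}$; in $D_1$ and $D_3$ this will be automatic from symmetry, while in $D_2$ it is precisely the parity condition that selects the right component.

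For the necessity direction, the plan is to combine the rank conditions of Theorem \ref{thm:orbit_description} with the isotropy relation $F_{N-i} = F_i^\perp$.  The key computation splits into two subcases depending on whether the $\pm 1$-eigenspaces $V_\pm$ of $\theta$ (which play the roles of $E_p$ and $\widetilde{E_q}$) are orthogonal complements (types $B, C_1, D_1, D_3$) or are Lagrangian (types $C_2, D_2$).  Using $V_+^\perp = V_-$ in the first subcase and $V_+^\perp = V_+$ in the second, one derives an identity of the shape
\[ \gamma(N{-}i;\,+) = \dim V_+ - i + \gamma(i;\,\pm), \]
together with its companion obtained by swapping $+$ and $-$.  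Taking successive differences translates these into constraints on the increments $\gamma(i;\pm)-\gamma(i{-}1;\pm)$, and a short case analysis on whether $c_i$ is $+$, $-$, an opening natural, or a closing natural then forces the character at position $N{+}2{-}i$ to have the pattern prescribed by Definition \ref{def:symmetric-clan} or Definition \ref{def:skew-symmetric-clan}.  The matching-of-pairs assertion (second bullet of those definitions) will follow from also incorporating the third rank condition on $\dim(\pi(F_i)+F_j)$.

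The main obstacles are two.  First, in type $C_1$, the extra condition $c_i \neq c_{2n+1-i}$ for natural $c_i$ reflects the alternating nature of the symplectic form:  a symmetric clan with a matched pair of naturals straddling the center would force two basis vectors paired under the form to lie in each other's orthogonal complement, obstructing nondegeneracy; verifying this precisely requires an alternatingness argument not needed in the orthogonal cases.  Second, in type $D_2$, the parity condition tracks which connected component of the isotropic flag variety the representative lies in; tracing this parity through Yamamoto's algorithm and counting sign flips to confirm the correspondence is the most delicate bookkeeping in the whole proof.  Once both directions are proved in all six cases, comparing the resulting clan counts with the $K$-orbit counts from \cite{Matsuki-Oshima-90} will confirm that each nonempty intersection $Q_\gamma \cap X$ is actually a single $K$-orbit, completing Theorem \ref{thm:k-orbit-intersections}.
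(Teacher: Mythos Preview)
Your overall strategy is sound and matches the paper's case-by-case structure, and your plan for the necessity direction (extracting the symmetry/skew-symmetry conditions from the rank data plus the isotropy relation $F_{N-i}=F_i^\perp$) is essentially what the paper does by citing or mimicking \cite{Yamamoto-97}. However, there is a genuine gap in your sufficiency argument for the orthogonal cases.

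For the pair $(SO(2n+1,\C),\,S(O(2p,\C)\times O(2q+1,\C)))$ and its type~$D$ analogues, your plan is to choose the signed permutation $\sigma$ and the $\pm$-signatures in Yamamoto's algorithm so that the resulting flag is already isotropic. The paper explicitly warns that this does \emph{not} work: ``One might naively hope that it is possible, as in other cases, to choose an isotropic representative of $Q_\gamma$ using the algorithm of \cite{Yamamoto-97}\ldots\ However, this is not the case, as one can see even in very small examples.'' The paper's fix is to first pass to the diagonal realization of $SO(N,\C)$, then act on Yamamoto's representative by a carefully specified element of $K'$ (given case-by-case according to whether $c_i$ is a sign, a natural paired with $c_{N+1-i}$, or a natural paired elsewhere), and finally conjugate back to the antidiagonal realization via an explicit square root of $J$ composed with a permutation. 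Without this $K'$-translation step, your construction will fail to produce an isotropic flag in general.

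A second, smaller point: your claim that in cases $D_1$ and $D_3$ the constructed flag automatically lies in the correct component ``from symmetry'' is not quite right. The paper's argument is rather that in those cases $K'$ contains determinant~$-1$ elements of $O(2n,\C)$ (e.g.\ the transposition matrix $(n,n+1)$), so any isotropic representative can be moved to the correct component by $K'$; hence every $K'$-orbit meeting the isotropic flag variety meets both components. In case $D_2$, by contrast, $K'\cap O(2n,\C)\subseteq SO(2n,\C)$, so such a move is impossible and the parity condition is genuinely needed to select the correct component. Finally, note that the proposition itself only asserts the ``intersects $X$ iff $\gamma$ has the property'' statement; the further claim that each nonempty intersection is a single $K$-orbit is Theorem~\ref{thm:k-orbit-intersections}, and the paper proves it not by citing orbit counts from \cite{Matsuki-Oshima-90} but by an elaborate comparison with the cardinality of the one-sided parameter space~$\caX$.
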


\section{Case-by-case proof of Proposition \ref{prop:appendix-combinatorial-criteria}}
We indicate the details of each case.  We start with the type $C$ pairs, since adequate proofs for those cases have already appeared in the literature.  For these, we simply indicate the appropriate reference.  We then move on to the type $B$ case, which is a bit complicated, and then handle the type $D$ cases, each of which is very similar to either a type $C$ case or the type $B$ case.

\subsection{$(Sp(2n,\C), Sp(2p,\C) \times Sp(2q,\C))$}\label{ssec:sp2p2q-clans}
Here, the goal is to prove that the $GL(2p,\C) \times GL(2q,\C)$-orbit $Q_{\gamma}$ corresponding to the $(2p,2q)$-clan $\gamma=(c_1,\hdots,c_{2n})$ meets $X$ if and only if 
\begin{enumerate}
	\item $\gamma$ is symmetric; and
	\item $c_i \neq c_{2n+1-i}$ whenever $c_i \in \N$.
\end{enumerate}
The ``only if" portion is \cite[Proposition 4.3.2]{Yamamoto-97}, while the ``if" portion is \cite[Theorem 4.3.12]{Yamamoto-97}.  The statement of the latter result also spells out how to find a representative of $Q_{\gamma} \cap X$.  It amounts to choosing a representative of $Q_{\gamma}$, using the algorithm described in Subsection \ref{ssec:orbits_supq}, in a certain way so as to always produce a Lagrangian flag.

\subsection{$(Sp(2n,\C), GL(n,\C))$}
Here, we must prove that the $GL(n,\C) \times GL(n,\C)$-orbit $Q_{\gamma}$ corresponding to the $(n,n)$-clan $\gamma$ meets $X$ if and only if $\gamma$ is skew-symmetric.

The ``only if" portion is \cite[Proposition 3.2.2]{Yamamoto-97}.  The ``if" portion follows from \cite[Theorem 3.2.11]{Yamamoto-97}.  Again, the latter result indicates how one can choose a representative of $Q_{\gamma}$, using the algorithm of Subsection \ref{ssec:orbits_supq}, so as to always produce a Lagrangian flag.

\subsection{$(SO(2n+1,\C), S(O(2p,\C) \times O(2q+1,\C)))$}
Here, we must prove that the $K' = GL(2p,\C) \times GL(2q+1,\C)$-orbit $Q_{\gamma}$ corresponding to the $(2p,2q+1)$-clan $\gamma$ meets $X$ if and only if $\gamma$ is symmetric.  In this case, $K'$ should be realized as the fixed points of the involution $\text{int}(I_{p,2q+1,p})$.

The proof that $\gamma$ must be symmetric is identical to the one given in \cite[Proposition 4.3.2]{Yamamoto-97} for the case $(G,K) = (Sp(2n,\C),Sp(2p,\C) \times Sp(2q,\C))$, alluded to above.  Of course, in the proof of that proposition, the $(2p,2q)$-clan $\gamma = (c_1, \hdots, c_{2n})$ representing a $GL(2p,\C) \times GL(2q,\C)$-orbit is also shown to have the additional property that $c_i \neq c_{2n+1-i}$ whenever $c_i \in \N$.  This does not hold in the present case; in fact, \textit{all} symmetric $(2p,2q+1)$-clans correspond to orbits which meet $X$, even those containing matching natural numbers in positions $(i,2n+2-i)$ for some (or for many) $i$.  We see this presently.

The other implication involves finding an isotropic representative of the $K'$-orbit $Q_{\gamma}$ in the event that $\gamma$ is symmetric.  One might naively hope that it is possible, as in other cases, to choose an isotropic representative of $Q_{\gamma}$ using the algorithm of \cite{Yamamoto-97} described in Subsection \ref{ssec:orbits_supq}.  However, this is not the case, as one can see even in very small examples.  Thus we must describe a way to take one of these representatives and ``move" it by an element of $K'$ to produce a flag that \textit{is} isotropic.  This is easier to do in the more typical setting, where $G=SO(2n+1,\C)$ is realized as the isometry group of the \textit{diagonal} form - that is,
\begin{equation}\label{eqn:so-odd-eqn}
G = \{ g \in SL(2n+1,\C) \ \vert \ gg^t = Id \}.
\end{equation}

So until further notice, let 
\[ G' = GL(2n+1,\C); \theta' = \text{int}(I_{2p,2q+1}); K' = (G')^{\theta'} = GL(2p,\C) \times GL(2q+1,\C) \]
(as in Subsection \ref{sect:type_a_supq}); and let
\[ G = SO(2n+1,\C); \theta = \theta'|_G; K = G^{\theta} = G \cap K' = S(O(2p,\C) \times O(2q+1,\C)), \]
with $SO(2n+1,\C)$ realized as in (\ref{eqn:so-odd-eqn}) above.  We will describe how to move one of the representatives of \cite{Yamamoto-97} by $K'$ to produce an isotropic flag (with respect to the diagonal form).  After doing so, we will describe how to conjugate everything back to our preferred realization.

Suppose that we are given a symmetric $(2p,2q+1)$-clan $\gamma$, and suppose that from that clan we produce the representative
\[ V_{\bullet} = \left\langle v_1,\hdots,v_{2n+1} \right\rangle \]
using Yamamoto's algorithm.  We shall say how to modify each vector $v_i$ so as to produce an isotropic representative of the orbit $Q_{\gamma}$.  Our modification of each $v_i$ is accomplished by specifying a way to send each standard basis vector $e_j$ to a linear combination of basis vectors
\[ e_j \mapsto \displaystyle\sum_{k=1}^{2n+1} \lambda_k e_k, \]
where $\lambda_k = 0$ for $k > 2p$ if $j \leq 2p$, and where $\lambda_k = 0$ for $k \leq 2p$ if $j > 2p$.  (Note that this simply specifies an element of $K'$ by which to act on the flag $V_{\bullet}$ to produce an isotropic flag.)

Where to send each vector $v_i$ will depend upon the character in the $i$th position of the clan.  Further, if that character is a natural number, it will also depend upon the position in which the matching natural number appears.  We break this down by cases:

\textit{Case 1:  $c_i = \pm$.}
First, note that because the clan $\gamma$ is symmetric, either a + or a - must appear in position $n+1$.  So, in the event that $i=n+1$, we know that $v_i = e_j$ for some $j$.  In this case, we simply leave $e_j$ alone:  $e_j \mapsto e_j$.

If $i \neq n+1$, then again we have $v_i = e_j$ for some $j$, and also that $v_{2n+2-i} = e_k$ for some $k$.  If $c_i = +$, then $j,k \leq 2p$, and if $c_i = -$, then $j,k \geq 2p+1$.  In either event, we should send 
\[ e_j \mapsto e_j + ie_k, \text{ and} \]
\[ e_k \mapsto e_j - ie_k. \]

Thus, if $i \neq n+1$, $v_i \mapsto e_j + ie_k$, and $v_{2n+2-i} \mapsto e_j - ie_k$.

\textit{Case 2:  $c_i \in \N$, and $c_{2n+2-i} = c_i$.}
In this case, we know that $v_i = e_j + e_k$, and $v_{2n+2-i} = e_j - e_k$ for some $j \leq 2p$ and $k \geq 2p+1$.  Then we should send 
\[ e_j \mapsto e_j, \text{ and} \]
\[ e_k \mapsto ie_k, \] 
hence sending 
\[ v_i \mapsto e_j + ie_k, \text{ and} \]
\[ v_{2n+2-i} \mapsto e_j - ie_k. \]

\textit{Case 3:  $c_i \in \N$, and $c_{2n+2-i} \neq c_i$.}
We may assume that $i < j$ for whichever $j$ is such that $c_i = c_j$.  Let's say that $c_i = a \in \N$.  Then, because the clan in question is symmetric, there is a different natural number, say $b$, in position $2n+2-i$.  Further, if the other occurrence of $a$ occurs in position $j$, then the other occurrence of $b$ occurs in position $2n+2-j$.  We know that $v_i = e_k + e_l$ and $v_j = e_k - e_l$ for some $k \leq 2p$, $j \geq 2p+1$.  We also know that $v_{2n+2-j}$ and $v_{2n+2-i}$ are $e_r + e_s$ and $e_r - e_s$, respectively, for some $r \leq 2p$, $s \geq 2p+1$.  Then we should send
\[ e_k \mapsto e_k + ie_r; \]
\[ e_l \mapsto e_l + ie_s; \]
\[ e_r \mapsto e_r + ie_k; \text{ and} \]
\[ e_s \mapsto -e_s - ie_l. \]

It is clear that the flag so obtained is isotropic with respect to the chosen bilinear form.  Indeed, our form is characterized by the fact that $\left\langle e_i,e_j \right\rangle = \delta_{i,j}$.  This being the case, it is obvious by construction that the vectors $\{v_i\}_{i=1}^n$ are all isotropic and pairwise orthogonal.  It is also clear that $v_{n+i}$ is orthogonal to each of $v_1,\hdots,v_{n+1-i}$ for each $i=1,\hdots,n$.  This says that the flag is isotropic.

Having obtained an isotropic representative of the $K'$-orbit on $X'$ corresponding to each symmetric clan, we now describe how to translate this back to our chosen setting, where $SO(2n+1,\C)$ is realized as the group of linear automorphisms of $\C^{2n+1}$ preserving the \textit{anti-diagonal} form.  For clarity, let us now say that $G_1$ is the realization of $SO(2n+1,\C)$ given by the anti-diagonal form (those matrices with $gJg^t = J$), and that $G_2$ is the realization of $SO(2n+1,\C)$ given by the diagonal form (those matrices with $gg^t = \text{Id}$).  Let $K_1 \subseteq G_1$ be the fixed points of the involution $\text{int}(I_{p,2q+1,p})$, and let $K_2 \subseteq G_2$ be the fixed points of $\text{int}(I_{2p,2q+1})$.

Consider the matrix $g$ given as follows:
\[
g_{j,j} = 
\begin{cases}
	1 & \text{ if $j = n+1$}, \\
	\dfrac{1+i}{2} & \text{ otherwise.}
\end{cases}
\]

\[
g_{j,2n+2-j} =
\begin{cases}
	1 & \text{ if $j = n+1$}, \\
	\dfrac{1-i}{2} & \text{ otherwise.}
\end{cases}
\]

\[ g_{j,k} = 0 \text{ if $k \neq j, 2n+2-j$}. \]

Then $g$ is a symmetric square root of the anti-diagonal matrix $J$.  This means that it conjugates $G_2$ to $G_1$, since if $hh^t = \text{Id}$, we have
\[ (ghg^{-1})^t J (ghg^{-1}) = g^{-1} h^t (gJg) hg^{-1} = g^{-2} = J. \]

However, the problem with this $g$ is that while it conjugates $G_2$ to $G_1$, it does \textit{not} conjugate 
$K_2$ to $K_1$.  To remedy this, we modify $g$ a bit.  If $p$ is even, then let $\pi$ be the symmetric permutation matrix which corresponds to the involution given in cycle notation by 
\[ (p+1,2n+1)(p+2,2n) \hdots (2p,2n+2-p). \]
If $p$ is odd, then let $\pi$ be the negative of this permutation matrix.  Then $\pi \in G_2$.  It is clear, then, that $g\pi$ still conjugates $G_2$ to $G_1$.  However, we claim that $g\pi$ also conjugates $K_2$ to $K_1$.  It is actually a bit easier to see (equivalently) that $\pi g^{-1}$ conjugates $K_1$ to $K_2$.  Suppose that $k \in K_1$, so that
\[ I_{p,2q+1,p}kI_{p,2q+1,p} = k. \]
Then since $g$ commutes with $I_{p,2q+1,p}$, we have
\[ (gI_{p,2q+1,p}g^{-1})k(gI_{p,2q+1,p}g^{-1}) = k, \]
so
\[ I_{p,2q+1,p}(g^{-1}kg)I_{p,2q+1,p} = g^{-1}kg. \]
Now, since $\pi I_{p,2q+1,p} \pi = I_{2p,2q+1}$, and since $\pi^2 = \text{Id}$, we have
\[ (\pi I_{p,2q+1,p} \pi)(\pi g^{-1} k g \pi)(\pi I_{p,2q+1,p} \pi) = \pi g^{-1} k g \pi, \]
so 
\[ I_{2p,2q+1} (\pi  g^{-1} k g \pi) I_{2p,2q+1} = \pi g^{-1} k g \pi. \]
This says that $\pi g^{-1} k g \pi \in K_2$.  Thus $g \pi$ conjugates $K_2$ to $K_1$.

Now, with that established, given a representative $F_{\bullet}$ of the $K_2$-orbit on $X$ given by some symmetric $(2p,2q+1)$-clan, to get a representative of the $K_1$-orbit corresponding to that same clan, we just act on the flag $F_{\bullet}$ by the matrix $g \pi$ to get the new flag $F_{\bullet}' = g \pi F_{\bullet}$.  The flag $F_{\bullet}$ is isotropic with respect to the diagonal form, so the flag $F_{\bullet}'$ is isotropic with respect to the anti-diagonal form.

Let us look at a small example which illustrates the method just described for finding an isotropic representative of the $K'$-orbit $Q_{\gamma}$ corresponding to a symmetric $(2p,2q+1)$-clan $\gamma$.  Take $p = q = 1$, so that $n = 2$, and so that we are dealing with $G = SO(5,\C)$, $K = S(O(2,\C) \times O(3,\C))$.  Take the symmetric $(2,3)$-clan $\gamma = (1,-,+,-,1)$.  None of the possible representatives produced by the algorithm of \cite{Yamamoto-97} are isotropic.  To produce an isotropic representative of this orbit by the method just described, though, we do take one of these representatives as a starting point.  So assign $\pm$ signatures to the $1$'s as follows:

\[ (1_+,-,+,-,1_-). \]

Choose the permutation $\sigma = 13245$.  This gives us the following representative of $Q_\gamma \in K' \backslash X'$:
\[ \left\langle e_1+e_5,e_3,e_2,e_4,e_1-e_5 \right\rangle. \]

We next move this representative by $K'$ to obtain a flag isotropic with respect to the diagonal form.  The result is the flag

\[ F_{\bullet} = \left\langle e_1 + ie_5, e_3 + ie_4, e_2, e_3 - ie_4, e_1 - ie_5 \right\rangle. \]

This flag is isotropic with respect to the diagonal form.  Now, we must move this flag by the matrix $g \pi$, where
\[ g = 
\begin{pmatrix}
\frac{1+i}{2} & 0 & 0 & 0 & \frac{1-i}{2} \\
0 & \frac{1+i}{2} & 0 & \frac{1-i}{2} & 0 \\
0 & 0 & 1 & 0 & 0 \\
0 & \frac{1-i}{2} & 0 & \frac{1+i}{2} & 0 \\
\frac{1-i}{2} & 0 & 0 & 0 & \frac{1+i}{2}
\end{pmatrix}, \]
and 
\[ \pi =
\begin{pmatrix}
1 & 0 & 0 & 0 & 0 \\
0 & 0 & 0 & 0 & 1 \\
0 & 0 & 1 & 0 & 0 \\
0 & 0 & 0 & 1 & 0 \\
0 & 1 & 0 & 0 & 0
\end{pmatrix}.
\]

Thus
\[ g \pi = 
\begin{pmatrix}
\frac{1+i}{2} & \frac{1-i}{2} & 0 & 0 & 0 \\
0 & 0 & 0 & \frac{1-i}{2} & \frac{1+i}{2} \\
0 & 0 & 1 & 0 & 0 \\
0 & 0 & 0 & \frac{1+i}{2} & \frac{1-i}{2} \\
\frac{1-i}{2} & \frac{1+i}{2} & 0 & 0 & 0
\end{pmatrix}. \]

Applying this matrix to the flag $F_{\bullet}$ above, and multiplying all coefficients by $2$ just to clean things up, we get the flag
\[ F_{\bullet}' = \left\langle 
\begin{pmatrix}
1+i \\
-(1-i) \\
0 \\
1+i \\
1-i
\end{pmatrix},
\begin{pmatrix}
0 \\
1+i \\
2 \\
-(1-i) \\
0
\end{pmatrix},
\begin{pmatrix}
1-i \\
0 \\
0 \\
0 \\
1+i
\end{pmatrix},
\begin{pmatrix}
0 \\
-(1+i) \\
2 \\
1-i \\
0
\end{pmatrix}
\begin{pmatrix}
1+i \\
1-i \\
0 \\
-(1+i) \\
1-i
\end{pmatrix}
\right\rangle. \]
One checks that this flag is isotropic with respect to the anti-diagonal form, and by construction it lies in the $K'$-orbit on $X'$ corresponding to the clan $(1,-,+,-,1)$.

\subsection{$(SO(2n,\C), S(O(2p,\C) \times O(2q,\C)))$}
We must show that the $K'=GL(2p,\C) \times GL(2q,\C)$-orbit $Q_{\gamma}$ corresponding to the $(2p,2q)$-clan $\gamma$ meets $X$ if and only if $\gamma$ is symmetric.  ($K'$ should be realized in this case as the fixed points of the involution $\text{int}(I_{p,2q,p})$ on $GL(2n,\C)$.)

The proof is virtually the same as the one given in the type $B$ case above.  There is only one issue which bears mentioning, and that is that it isn't clear that the representative produced by the procedure described above should necessarily produce a flag lying in the correct component of the variety of isotropic flags.  Indeed, it may not.  However, $K'$ does contain elements of the determinant $-1$ component of $O(2n,\C)$.  For instance, it contains the permutation matrix corresponding to the transposition $(n,n+1)$.  Thus if the representative obtained by the procedure described above for the type $B$ case does not live in the correct component, it can then be moved to the other component by the action of such an element of $K'$.  This simply says that every $K'$-orbit on $X'$ which intersects the variety of isotropic flags intersects \textit{both} components of it, so in particular it intersects $X$.

\subsection{$(SO(2n,\C), GL(n,\C))$}\label{ssec:appendix-type-d-ex-2}
Here, the goal is to prove that the $K' = GL(n,\C) \times GL(n,\C)$-orbit $Q_{\gamma}$ corresponding to the $(n,n)$-clan $\gamma = (c_1,\hdots,c_{2n})$ meets $X$ if and only if 
\begin{enumerate}
	\item $\gamma$ is skew-symmetric;
	\item $c_i \neq c_{2n+1-i}$ for any $c_i \in \N$; and
	\item Among $c_1,\hdots,c_n$, the total number of $-$ signs and pairs of equal natural numbers is even.
\end{enumerate}
In this case, the group $K'$ should be realized as the fixed points of the involution $\text{int}(I_{n,n})$.

The proof that $\gamma$ should be skew-symmetric is word-for-word the same as that given in \cite{Yamamoto-97} for the case $(Sp(2n,\C),GL(n,\C))$ (\cite[Proposition 3.2.2]{Yamamoto-97}).  The proof that we cannot have $(c_i,c_{2n+1-i}) = (a,a)$ for $a \in \N$ is nearly identical to the corresponding proof in the case of $(Sp(2n,\C),Sp(2p,\C) \times Sp(2q,\C))$.  (See part (3) of the proof of \cite[Proposition 4.3.2]{Yamamoto-97}.)  And an isotropic representative of $Q_{\gamma}$ can be produced by the same method described in \cite[Theorem 3.2.11]{Yamamoto-97}.

As in the previous example, though, we must consider the question of whether the representative so obtained lies in our chosen component of the variety of isotropic flags.  This explains the need for the parity condition (3).  Whereas in the previous example, any $K'$-orbit on $X'$ meeting the variety of isotropic flags met both components of it, here any $K'$-orbit on $X'$ containing an isotropic flag meets one component or the other, but not both.  This follows from the fact that when $K'$ is realized as the fixed points of $\text{int}(I_{n,n})$, any element of $K' \cap O(2n,\C)$ has determinant $1$, which is an easy computation.  Thus one cannot pass from one component to the other by the action of an element of $K'$.  Condition (3), then, is what guarantees that an isotropic representative of $Q_{\gamma}$ lives in $X$, rather than in the opposite component of the variety of isotropic flags.

\subsection{$(SO(2n,\C), S(O(2p+1,\C) \times O(2q-1,\C)))$}
For the last case, we must prove that a $K' = GL(2p+1,\C) \times GL(2q-1,\C)$-orbit $Q_{\gamma}$ corresponding to the $(2p+1,2q-1)$-clan $\gamma$ meets $X$ if and only if $\gamma$ is symmetric.  The proof here is exactly the same as that for the pairs $(SO(2n+1,\C),S(O(2p,\C) \times O(2q+1,\C)))$ and $(SO(2n,\C), S(O(2p,\C) \times O(2q,\C)))$.  Note, though, that once we find a representative for $Q_{\gamma}$ which is isotropic with respect to the diagonal form, we are done, since in Subsection \ref{ssec:type_d_ex_3}, we chose to realize $SO(2n,\C)$ as the isometry group of the diagonal form.  Thus there is no need to conjugate over to another realization of $SO(2n,\C)$.

As with the pair $(SO(2n,\C), S(O(2p,\C) \times O(2q,\C)))$, we remark that if the isotropic flag produced by the method described in the type $B$ case does not lie in our chosen component of the variety of isotropic flags, it can be moved by $K'$ to the correct component, since $K'$ does contain determinant $-1$ elements of $O(2n,\C)$.  This says once again that every $K'$-orbit on $X'$ which intersects the variety of isotropic flags intersects both components of it.

\section{Proof of Theorem \ref{thm:k-orbit-intersections}}
Having established the combinatorial conditions on clans which amount to $K'$-orbits on $X'$ meeting $X$, in each example we now have a surjective map
\[ K \backslash X \rightarrow \{\text{Clans satisfying some combinatorial conditions}\}. \]
Indeed, if $\gamma$ is a clan in the target, with associated $K'$-orbit $Q_{\gamma}$, the fiber over $\gamma$ is the collection of $K$-orbits whose union is the nonempty, $K$-stable subset $Q_{\gamma} \cap X$.  However, we want this map to be a bijection.  This amounts to the fact that $Q_{\gamma} \cap X$ is in fact a \textit{single} $K$-orbit, and not a union of multiple $K$-orbits.

As alluded to in Subsection \ref{ssec:clans}, one way to establish this involves a fairly intricate counting argument, the principles of which were explained to the author by Peter Trapa.  Before making this argument in each of our examples, we describe the general setup.

\subsection{The one-sided parameter space $\caX$}\label{ssec:one_sided_param}
For any complex reductive algebraic group $G$, consider the exact sequence
\[ 1 \rightarrow \text{Int}(G) \rightarrow \text{Aut}(G) \rightarrow \text{Out}(G) \rightarrow 1, \]
where $\text{Int}(G) = G/Z(G)$ is the group of inner automorphisms, and $\text{Out}(G)$ is the quotient.  Two automorphisms $f_1,f_2$ are said to be in the same \textit{inner class} if they have the same image in $\text{Out}(G)$.  By an \textit{inner class of involutions}, we shall mean the set of all involutions in a given inner class.

It is a fact (see \cite{Adams-DuCloux-09}) that for any inner class of involutions, there exists a ``distinguished" representative which fixes a chosen pinning of the group $G$.  (A pinning is the data of a maximal torus $T$, a Borel subgroup $B$ containing $T$, and a choice of positive root vectors $X_{\alpha}$ corresponding to the positive system defined by $B$.)  This distinguished involution is the image of the chosen outer automorphism $\gamma$ under a canonical splitting of the exact sequence above.

Associated to the inner class of any involution $\theta$ is the so-called \textit{one-sided parameter space}, which we denote by $\caX$.  For our purposes, we define it as follows:  Let $\theta_1,\hdots,\theta_n$ be the inner class of involutions containing $\theta$, and let $K_i = G^{\theta_i}$.  Then we define
\[ \caX := \coprod_{i=1}^n K_i \backslash G/B. \]

The set $\caX$ plays a prominent role in an algorithm (implemented in the software known as ATLAS) which computes (among other things) the space of admissible representations of a given real reductive group $G_{\R}$.  See \cite{Adams-DuCloux-09,Adams-08} for details.  We remark that in those references, a different definition of $\caX$ is given, after which it is established as a theorem that $\caX$ is in bijection with the set above.  However, because we are only concerned here with counting $K$-orbits, and not with the deeper representation-theoretic significance of the set $\caX$, it is more convenient for our purposes to simply take this as our definition.

The key feature of $\caX$ from our point of view is that it comes equipped with a map to the set $\mathcal{I}$ of twisted involutions.  In fact, this map is simply the Richardson-Springer map $\phi$ (cf. Subsection \ref{ssec:closed_orbits}) ``spread out" to the $K_i=G^{\theta_i}$-orbits as $\theta_i$ runs over an entire inner class of involutions.  While the map $\phi$ is \textit{not} surjective in general when we restrict attention to one $K$ at a time, the map from $\caX$ that we get when considering at once all $K$ associated to a given inner class of involutions \textit{is} surjective.  Moreover, for any given $\tau \in \mathcal{I}$, the cardinality of the fiber over $\tau$ (which we denote $\caX_{\tau}$) is explicitly computable.  Indeed, letting $T \subseteq G$ be our fixed $\theta$-stable maximal torus, define

\[ T_{\tau} := \{t \in T \ \vert \ t \tau(t) \in Z(G) \}, \]

and

\[ T^{-\tau} := \{t \in T \ \vert \ t \tau(t) = 1 \}. \]

In the above definitions, the action of $\tau$ on $T$ is twisted by the distinguished involution $\theta$.  That is, 
\[ \tau(t) = \tau . \theta(t), \]
where $\tau.t$ denotes the usual action of $W$ on $T$.  In all cases of interest to us save one (the non-equal rank case in type $D$), the distinguished involution is simply the identity, and so the action of $\tau$ is the usual one.  At any rate, with these definitions given, we have the following result:

\begin{prop}
With notation as above, 
\[ |\caX_{\tau}| = |T_{\tau} / T_0^{-\tau}|. \]
\end{prop}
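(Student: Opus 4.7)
The plan is to parametrize $\caX$ in the style of Adams--DuCloux and then reduce the fiber $\caX_\tau$ to $T$-conjugacy classes of certain elements in the coset $n_\tau T \cdot \delta$, where $\delta$ is the distinguished representative of the inner class and $n_\tau \in N_G(T)$ is a chosen lift of $\tau$. First, I would identify $\caX$ with the set of $G$-conjugacy classes of pairs $(x, B')$, where $B' \subset G$ is a Borel subgroup and $x$ is an involution in the non-identity component of the extended group $G \rtimes \langle \delta \rangle$ such that conjugation by $x$ preserves $B'$. For $x = g\delta$, the condition $x^2 \in Z(G)$ gives $g\, \delta(g) \in Z(G)$, and $K_x := Z_G(x)$ is the symmetric subgroup attached to the involution $\mathrm{int}(g)\circ \delta$ of $G$. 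Under this picture $\caX$ is the disjoint union of the $K_i$-orbit sets over the inner class, and the Richardson--Springer map sends $[(x, B')]$ to the element of $\caI$ recording the relative position of $B'$ and $xB'x^{-1}$.

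Next, I would fix $B$ and $n_\tau$ and use $G$-conjugacy to put every class in $\caX_\tau$ in the form $(x, B)$ with $xBx^{-1}$ in relative position $\tau$ to $B$. A lift argument through $N_G(T)$ then produces a representative $x = t\, n_\tau\, \delta$ for some $t \in T$. Expanding $x^2 \in Z(G)$ gives $t \cdot \tau(t) \in Z(G)$, that is, $t \in T_\tau$, where the $\tau$-action on $T$ is taken to be twisted by $\delta$. The residual equivalence is $T$-conjugation, acting by $t \mapsto s\, t\, \tau(s)^{-1}$; it remains to see that this is the only equivalence surviving, which will follow by a stabilizer computation for the $N_G(T)$-action on the preimage of $n_\tau T \cdot \delta$.

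The desired count $|\caX_\tau| = |T_\tau / T_0^{-\tau}|$ will then follow from the key group-theoretic fact that the orbit of $1 \in T_\tau$ under $s \cdot t = s t \tau(s)^{-1}$ is precisely $T_0^{-\tau}$. Indeed, the morphism $s \mapsto s \tau(s)^{-1}$ lands in $T^{-\tau} := \{u \in T : u \tau(u) = 1\}$ and has connected image; the identification $\dim T = \dim T^{\tau} + \dim T^{-\tau}$, obtained by decomposing $\mathrm{Lie}(T)$ into the $\pm 1$ eigenspaces of $d\tau$, forces this image to be all of the identity component $T_0^{-\tau}$. The main obstacle is the second paragraph: justifying that every class has a representative $t\, n_\tau\, \delta$ and that the only residual identification on $T_\tau$ is translation by $T_0^{-\tau}$, with no additional collapse coming from elements of $N_G(T) \setminus T$ that happen to normalize the pair $(B, \tau)$. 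This is the step where one must carefully track the difference between working with a fixed $K_i$ (where surjectivity of $\phi$ onto $\caI$ can fail) and working with the whole inner class at once (where spreading over $\caX$ recovers surjectivity and pins down the fiber cardinality).
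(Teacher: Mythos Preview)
The paper does not prove this proposition at all: it simply cites \cite[Proposition 11.2 and Remark 11.5]{Adams-DuCloux-09} and \cite[Proposition 2.4]{DuCloux-05} and moves on. So there is no in-paper argument to compare your proposal against.

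Your sketch is a reasonable outline of the Adams--DuCloux argument you yourself invoke, and you have correctly identified the crux: passing from the extended-group model of $\caX$ to representatives $t\,n_\tau\,\delta$ with $t \in T_\tau$, and showing that the residual identification is exactly translation by $T_0^{-\tau}$ (i.e.\ ruling out extra collapse from $N_G(T)\setminus T$). You flag this as ``the main obstacle'' but do not actually carry it out, so as written the proposal is a plan rather than a proof. If you intend to supply a self-contained argument rather than a citation, that step needs to be executed; otherwise, matching the paper's treatment, a reference to Adams--DuCloux suffices.
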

For a proof, see \cite[Proposition 11.2 and Remark 11.5]{Adams-DuCloux-09} or \cite[Proposition 2.4]{DuCloux-05}.

In our examples, this result allows us to compute the cardinality of $\caX$, and then compare it to the total number of clans which correspond to $K_i'$-orbits intersecting $X$, where $K_i = G \cap K_i'$, the groups $K_i$ are the fixed point subgroups of an entire inner class of involutions, and each $K_i'$ is isomorphic to an appropriate $GL(p,\C) \times GL(q,\C)$.  Since we have already established combinatorial descriptions of such clans in the previous subsection, the latter number is computable.

If these two counts turn out to be equal, then for each $K_i$, it is impossible for the intersection of any $K_i'$-orbit on $X'$ with $X$ to split as a union of multiple $K_i$-orbits --- if it did, the cardinality of $\caX$ would necessarily be greater than the clan count.  Making this counting argument thus establishes in one fell swoop that for \textit{any} $K_i$ in the inner class, each $K_i$-orbit is precisely the intersection of a $K_i'$-orbit with $X$.

Before proceeding to make this argument explicit in each of our examples, we offer some basic comments regarding the computation of inner classes of involutions, and the corresponding family of symmetric subgroups.  An excellent reference for these facts is \cite{Adams-09}.  First, it is a fact \cite[Lemma 4.9]{Adams-09} that for a semisimple algebraic group $G$, the group $\text{Out}(G)$ is a subgroup of the automorphism group of its Dynkin diagram.  These two groups are equal if $G$ is simply connected or adjoint.  This says already that $\text{Out}(G) = \{1\}$ if $G$ is of type $B$ or $C$.  Thus for those two groups, there is only one inner class of involutions, namely the inner involutions
\[ \{\text{int}(g) \mid g^2 \in Z(G) \}. \]
The inner class of involutions of this form is referred to as the ``compact" inner class, so named because it contains the identity involution, which corresponds to the compact real form of the complex group.  Describing the compact inner class for the groups $SO(2n+1,\C)$, $Sp(2n,\C)$, and $SO(2n,\C)$ is an elementary matrix computation in each case; the answers can all be found in \cite{Adams-09}.  As we have already noted, in types $B$ and $C$, the compact inner class is the \textit{only} inner class that there is to consider.  In type $D$, there is the compact inner class, along with one additional inner class, called the ``unequal rank" inner class.  (Note that the automorphism group of the Dynkin diagram of type $D_n$ is $\Z/2\Z$ if $n \geq 5$, and $S_3$ if $n=4$.  Thus one should expect for there to be more than one inner class of involutions.)  This inner class consists of all involutions whose fixed groups are of the form $S(O(2p+1,\C) \times O(2q-1,\C))$, as $p,q$ range over all possibilities with $p \geq 0$, $q \geq 1$, and $p+q=n$.  See \cite[Exercise 4.12]{Adams-09}.

It is noteworthy that the family of symmetric subgroups associated to an inner class of involutions consists of \textit{conjugacy} classes of symmetric subgroups, as opposed to \textit{isomorphism} classes.  For instance, it is easy to see that in type $A$, the compact inner class contains all inner involutions whose fixed groups are of the form $GL(p,\C) \times GL(q,\C)$ as $p,q \geq 0$ run over all possibilities with $p+q=n$.  From this perspective, the symmetric subgroups $GL(2,\C) \times GL(3,\C)$ and $GL(3,\C) \times GL(2,\C)$ (for example) are considered different; these groups are isomorphic, but they are \textit{not} conjugate.

We now proceed to make the counting argument described above in each of our examples.

\subsection{Type $B$, compact inner class}
As alluded to above, since $\text{Out}(SO(2n+1,\C)) = \{1\}$, there is only one inner class of involutions in type $B$, which consists of all inner involutions.  The symmetric subgroups $K_i$ corresponding to this inner class are $S(O(2p,\C) \times O(2q+1,\C))$ as $p,q \geq 0$ run over \textit{all} possibilities with $p+q=n$.  The distinguished representative of this inner class is the identity, so the the twisted involutions in this case are honest involutions in $W$, and action of a (twisted) involution on $T$ is the usual one, induced by the $W$-action on $\widehat{T}$.

Now, note that $Z(SO(2n+1,\C))$ is trivial, so in the notation of the previous subsection, $T_{\tau} = T^{-\tau}$.  Thus $T_{\tau}/T^{-\tau}_0$ is simply the component group of $T_{\tau}$.

We wish to prove, then, that the number of symmetric $(2p,2q+1)$-clans (as $p,q \geq 0$ run over all possibilities with $p+q=n$) is equal to the cardinality of the one-sided parameter space $\caX$.  To see this, first recall that we have a bijection
\[ \mathcal{I} := \text{Involutions in $W$} \longleftrightarrow \]
\[ \mathcal{J} := \text{Involutions $\sigma \in S_{2n+1}$ such that $\sigma(2n+2-i) = 2n+2-\sigma(i)$} \]
via the embedding of $W$ into $S_{2n+1}$ as signed elements, cf. Subsection \ref{ssec:notation}.  Moreover, each symmetric $(2p,2q+1)$-clan is clearly associated to precisely one element of $\mathcal{J}$ in a natural way:  The positions of matching numbers in such a clan give transpositions, while the positions of signs give fixed points.  (The symmetry property of such a clan is precisely what guarantees that the associated involution is a signed element of $S_{2n+1}$.)  So, for example, if $p = q = 1$, the symmetric $(2,3)$-clan $(1,-,+,-,1)$ gives the involution $(1,5)$, while the clan $(1,2,-,1,2)$ gives the involution $(1,4)(2,5)$.

With this noted, we must simply see that for each element $\sigma$ of $\mathcal{J}$, the total number of symmetric $(2p,2q+1)$-clans ($p,q$ running over all possibilities) corresponding to $\sigma$ is equal to the cardinality of the fiber $\caX_{\sigma'}$ of $\caX$ over $\sigma'$, the element of $\mathcal{I}$ which corresponds to $\sigma$.

So let $\sigma \in \mathcal{J}$ be given.  Any $(2p,2q+1)$-clan associated to $\sigma$ has the positions of its matching natural numbers prescribed by the transpositions of $\sigma$.  Thus the only choice one has in constructing such a clan is in assigning $+$ and $-$ signs to the fixed points of $\sigma$.  It is clear that for a given $n$, once the cycle structure of $\sigma$ is decided upon, the middle sign (that in position $n+1$) for any clan associated to $\sigma$ is completely determined.  Further, since we are restricting attention to those clans which are symmetric, one only has free choice of signs occupying the fixed points of $\sigma$ in positions up to $n$, which (by symmetry) determine the signs assigned to the fixed points of $\sigma$ in positions beyond $n+2$.  This says that if
\[ k = \#\{i \in \{1,\hdots,n\} \ \vert \ \sigma(i) = i\}, \]
then the number of symmetric clans associated to $\sigma$ is $2^k$.

Now, consider $|\caX_{\sigma'}|$.  As noted above, this is equal to the number of components of 
\[ T_{\sigma'} = \{t \in T \ \vert \ t\sigma'(t) = 1\}. \]
Given $t = \text{diag}(a_1,\hdots,a_n,1,a_n^{-1},\hdots,a_1^{-1})$, one has that
\[ t\sigma'(t) = \text{diag}(a_1a_{\sigma(1)},\hdots,a_na_{\sigma(n)},1,(a_na_{\sigma(n)})^{-1},\hdots,(a_1a_{\sigma(1)})^{-1}), \]
where $\sigma$ is the permutation in $S_{2n+1}$ associated to $\sigma'$.  This is equal to the identity matrix precisely when, for any $i = 1,\hdots,n$, 
\[
a_i = 
\begin{cases}
	\pm 1 & \text{ if $\sigma(i) = i$}, \\
	a_{\sigma(i)}^{-1} & \text{ otherwise.}
\end{cases}
\]

Thus $T_{\sigma'}$ is a product of $\C^*$'s and $\Z/2\Z$'s, with one $\Z/2\Z$ for each $i \in \{1,\hdots,n\}$ such that $\sigma(i) = i$.  As such, it has $2^k$ components.

This completes the proof that $|\caX|$ is equal to the total number of symmetric $(2p,2q+1)$-clans as $p,q \geq 0$ run over all possibilities with $p+q=n$.  Thus we have established that it is impossible for the intersection of a $K'$-orbit with $X$ to be anything more than a single $K$-orbit on $X$.

\subsection{Type $C$, compact inner class}
Once again, we have only one inner class of involutions, consisting solely of inner involutions.  The corresponding symmetric subgroups are those of the form $Sp(2p,\C) \times Sp(2q,\C)$ ($p,q \geq 0$ running over all possibilities with $p+q=n$), along with the one additional group $GL(n,\C)$.
Thus here, we must compare $|\caX|$ to the total number of symmetric $(2p,2q)$-clans ($p,q$ running over all possibilities) satisfying the additional  ``anti-reflexive" condition of Subsection \ref{ssec:sp2p2q-clans}, \textit{plus} the number of skew-symmetric $(n,n)$-clans.

As before, we perform the count of $|\caX|$ fiber-by-fiber.  As in Type $B$, the twisted involutions $\mathcal{I}$ amount to honest involutions in $W$.  Using the embedding of $W$ into $S_{2n}$ as signed elements, these are once again in bijection with the set of all involutions in $S_{2n}$ which are signed elements.

Let $\tau \in S_{2n}$ be such an involution.  We first compute the total number of clans naturally associated to $\tau$.  Then we compute $|\caX_{\tau}|$.  There are two cases:  Either $\tau$ switches $i$ and $2n+1-i$ for some $i$, or not.

In the former case, there will be no $(2p,2q)$-clans corresponding to $Sp(2p,\C) \times Sp(2q,\C)$-orbits associated to $\tau$.  This is because any clan associated to such a $\tau$ cannot have the anti-reflexive property.  Thus we need only count the skew-symmetric $(n,n)$-clans.  The positions of matching natural numbers in a clan corresponding to $\tau$ being entirely determined by $\tau$, our only freedom is in assigning $\pm$ signs to the fixed points of $\tau$.  This amounts to assigning any configuration of $\pm$ signs to those $i \in \{1,\hdots,n\}$ fixed by $\tau$.  The signs on the fixed points of $\tau$ from $\{n+1,\hdots,2n\}$ are then determined by skew-symmetry.  Thus if $k$ is the number of elements of $\{1,\hdots,n\}$ fixed by $\tau$, then the number of clans associated to $\tau$ is $2^k$.

On the other hand, if $\tau$ does not switch $i$ and $2n+1-i$ for any $i$, then there \textit{will} be $(2p,2q)$-clans corresponding to $Sp(2p,\C) \times Sp(2q,\C)$-orbits associated to $\tau$, for such a clan would have the anti-reflexive property.  The total number of such clans is $2^k$ ($k$ as in the previous paragraph), since we assign $\pm$ signs to the fixed points of $\tau$ from $\{1,\hdots,n\}$, while the remaining signs are determined by symmetry.  Counting the skew-symmetric $(n,n)$-clans associated to $\tau$, we again get $2^k$, by the same argument as before.  Thus in this case, the total number of clans associated to $\tau$ is $2\cdot 2^k=2^{k+1}$.

Now, we must see that $|\mathcal{X}_{\tau}| = 2^k$ or $2^{k+1}$, depending on which of the two cases we are in.  Recall that 
\[ |\mathcal{X}_{\tau}| = |T_{\tau} / T^{-\tau}_0|, \]
where 
\[ T_{\tau} = \{ t \in T \ \vert \ t \tau(t) \in Z(G) \}, \]
and
\[ T^{-\tau} = \{t \in T \ \vert \ t \tau(t) = 1\}. \]
In type $B$, $Z(G)$ was trivial, so this amounted simply to counting the components of $T_{\tau}$.  However, here, $Z(G) = \{ \pm 1\}$, so this computation is a bit different.  First, suppose that $\tau(i) \neq 2n+1-i$ for any $i$.  Then if $t = \text{diag}(a_1,\hdots,a_n,a_{n+1},\hdots,a_{2n})$ (with $a_{2n+1-i} = a_i^{-1}$), we have that
\[ 
	t \tau(t) = \text{diag}(a_1 a_{\tau'(1)},\hdots,a_n a_{\tau'(n)},a_{n+1} a_{\tau'(n+1)},\hdots,a_{2n} a_{\tau'(2n)}).
\]
For each $i = 1,\hdots,n$ such that $\tau'(i) = i$, this gives $a_i^2$ in the $i$th diagonal position (and $a_i^{-2}$ in the $(2n+1-i)$th diagonal position).  The other positions simply have $a_i a_{\tau'(i)}$.

Now, if this is to be equal to $1$, then we have a choice of $\pm 1$ for each position $i=1,\hdots,n$ such that $\tau'(i) = i$.  (The number in position $2n+1-i$ is then determined.)  We also have a completely free choice of the other $a_i$, which then determine the $a_{\tau'(i)}$.

On the other hand, if $t \tau(t)$ were to equal $-1$, then we would have a choice of $\pm \sqrt{-1}$ for each $a_i$ such that $\tau'(i) = i$.  (Again, this choice determines the value of $a_{2n+1-i}$).  We again have a completely free choice of the other $a_i$, which in turn determine the $a_{\tau'(i)}$.

Thus $T_{\tau}$ can be thought of like this:  First, decide whether $t \tau(t)$ is going to be $1$ or $-1$.  Having decided that, choose the $a_i$ on the fixed points $i$ to be either $\pm 1$ or $\pm \sqrt{-1}$ (depending on the first choice that was made), and choose arbitrary values in $\C^*$ for the remaining $a_i$.  This shows that  
\[ T_{\tau} \cong \Z/2\Z \times ((\Z/2\Z)^k \times (\C^*)^{n-k}) , \]
where $k$ represents the number of values fixed by $\tau$ (or the number of $i = 1,\hdots,n$ fixed by $\tau'$) as above.

Now, we turn to $T^{-\tau}_0$.  Here, $t \tau(t)$ takes the same form, but now we insist that $t \tau(t) = 1$.  Thus $T^{-\tau}$ (by the same analysis of the previous paragraph) is of the form
\[ (\Z/2\Z)^k \times (\C^*)^{n-k}. \]
However, in considering only the identity component, we get rid of the factors of $\Z/2\Z$.  The upshot is that $T^{-\tau}_0$ is made up of just the $\C^*$ factors of $T_{\tau}$:
\[ T^{-\tau}_0 = (C^*)^{n-k}. \]

Then it is clear that the order of the quotient $T_{\tau}/T^{-\tau}_0$ is $2^{k+1}$, which is the same as the number of total clans associated to $\tau$, calculated above.

Now, suppose that there \textit{is} some $i$ for which $\tau(i) = 2n+1-i$.  Consider $T_{\tau}$.  For any $t = \text{diag}(a_1,\hdots,a_n,a_n^{-1},\hdots,a_1^{-1})$, we now have that the value in the $i$th position of $t \tau(t)$ is a $1$.  This removes the option of $t \tau(t)$ being $-1$, so in this case we see that
\[ T_{\tau} \cong (\Z/2\Z)^k \times (\C^*)^{n-k}, \]
by the same analysis given for the previous case.  And $T^{-\tau}_0$ is $(\C^*)^{n-k}$, as it was in the previous case, so here we see that the quotient $|T_{\tau} / T^{-\tau}_0|$ has order $2^k$, which again matches the number of clans associated to $\tau$.  This completes the argument.

\subsection{Type $D$, compact inner class}
In type $D$, $\text{Out}(G)$ has two elements, so there are actually two inner classes of involutions to consider.  The first is the compact inner class, consisting of inner involutions, and is similar to what we saw in type $C$.  The corresponding symmetric subgroups are of the form $K \cong S(O(2p,\C) \times O(2q,\C))$ ($p,q \geq 0$ running over all possibilities with $p+q=n$), plus \textit{two} symmetric subgroups isomorphic to $GL(n,\C)$.  One of these is the group considered in Subsection \ref{ssec:type_d_ex_2}, the fixed points of $\theta = \text{int}(i \cdot I_{n,n})$.  The other, which we will call $K^-$, is the fixed point subgroup of the involution 
\[ \theta^- = \text{int}(\text{diag}(\underbrace{i,\hdots,i}_{n-1},-i,i,\underbrace{-i,\hdots,-i}_{n-1}). \]
The group $K^-$ is also isomorphic to $GL(n,\C)$, but is \textit{not} conjugate to $K$.  Thus its orbits must be considered separately to get an accurate count of $\caX$.  Its orbits on $X$ are in bijection with those of $K$, however, so it suffices to simply double the count of clans associated to the $K$-orbits to account for the extra $K^-$-orbits comprising $\caX$.

With all of that said, the argument here is very similar to the type $C$ case.  The twisted involutions $\caI$ are once again simply honest involutions of $W$.  These can be thought of either as signed permutations of $\{1,\hdots,n\}$ changing an even number of signs, or as signed elements of $S_{2n}$ with an even number of $i \leq n$ sent to some $j > n$.  Each symmetric $(2p,2q)$-clan ($p+q=n$) and each skew-symmetric $(n,n)$-clan (satisfying the further conditions described in Subsection \ref{ssec:appendix-type-d-ex-2}) is naturally associated to such a signed element of $S_{2n}$, with the positions of matching natural numbers defining $2$-cycles, and with the signs ascribed to fixed points.  Letting $\tau \in S_{2n}$ be such an element, there two possible cases:  Either $\tau$ interchanges $i$ and $2n+1-i$ for some $i$, or not.  In the former case, there are \textit{no} skew-symmetric $(n,n)$-clans satisfying the anti-reflexive condition.  If $k$ is the number of fixed points of $\tau$ among $\{1,\hdots,n\}$, then there are $2^k$ associated symmetric $(2p,2q)$-clans, each obtained by assigning $\pm$ signs to those fixed points (the signs on the remaining fixed points being determined by symmetry).    In the latter case, again letting $k$ be the number of fixed points of $\tau$ among $\{1,\hdots,n\}$, there are again $2^k$ associated symmetric $(2p,2q)$-clans.  There are only $2^{k-1}$ skew-symmetric $(n,n)$-clans satisfying the conditions of Subsection \ref{ssec:appendix-type-d-ex-2}, due to the extra parity condition.  However, as indicated above, this number should be doubled to also account for the $K^-$-orbits, giving a total of $2^k$.  Combining the $2^k$ symmetric clans with the $2^k$ skew-symmetric clans, we have a total of $2^{k+1}$ associated clans.

To conclude, then, we need to see that if $\tau \in S_{2n}$ is an involution of the former type, then $|\caX_{\tau}| = 2^k$, and if it is an involution of the latter type, then $|\caX_{\tau}| = 2^{k+1}$.  That argument is identical to that already given in type $C$, so we do not repeat it here.

\subsection{Type $D$, unequal rank inner class}
The symmetric subgroups corresponding to the other inner class of involutions in type $D$ are those of the form $S(O(2p+1,\C) \times O(2q-1,\C))$, $p,q$ running over all possibilities with $p \geq 0$, $q \geq 1$, and $p+q=n$.

When dealing with this class of symmetric subgroups in Section \ref{ssec:type_d_ex_3}, we chose a realization of $SO(2n,\C)$ which had no diagonal elements.  However, it is simpler notationally to make the relevant counting argument when the torus $T$ consists of diagonal matrices.  For this reason, here we will assume that $SO(2n,\C)$ is the group of linear automorphisms of $\C^{2n}$ preserving the anti-diagonal form.  Take $T$ to be the diagonal elements of this group, which are of the form
\[ t = \text{diag}(a_1,\hdots,a_n,a_n^{-1},\hdots,a_1^{-1}). \]

This is the first (and only) case we consider where the distinguished representative $\theta$ of our inner class of involutions is \textit{not} the identity.  Indeed, the distinguished involution representing this inner class is $\theta(g) = I_{2n-1,1}gI_{2n-1,1}$.  As we have seen in type $B$, to pass from the usual realization of $G$ to the one we now wish to consider, we can conjugate by the element $\pi$, where
\[ \pi_{i,j} = 
\begin{cases}
	\frac{1+i}{2} & \text{ if } i=j \\
	\frac{1-i}{2} & \text{ if } i = 2n+1-j \\
	0 & \text{ otherwise.}
\end{cases}
\]

When we perform this conjugation, the distinguished involution $\theta$ then becomes
\[ g \mapsto \tilde{I}_{2n-1,1} g \tilde{I}_{2n-1,1}, \]
where
\[ \tilde{I}_{2n-1,1} = \pi I_{2n-1,1} \pi^{-1} = 
\begin{pmatrix}
	0 & \multicolumn{3}{c}{\hdots} & 4i \\
	\multirow{3}{*}{\vdots} & 1 &&& \multirow{3}{*}{\vdots} \\
	   && \ddots \\ 
	   &&& 1 \\
	4i & \multicolumn{3}{c}{\hdots} & 0
\end{pmatrix}
.\]

Since elements of $W$ are represented (for this choice of realization of $G$, and for this choice of maximal torus $T$) by monomial matrices corresponding to signed permutations changing an even number of signs, the map on $W$ induced by $\theta$ is visibly
\[ w \mapsto \nu w \nu, \]
where $\nu=(1,-1)$ is the signed permutation which interchanges $1$ and $-1$.  The set $\mathcal{I}$ of twisted involutions, then, is no longer the set of honest involutions, but rather is the set of elements of $w$ sent to their inverses by this map.  Note, though, that elements $w \in W$ having the property that 
\[ \nu w \nu = w^{-1} \]
correspond 1-to-1 with signed permutations which are (honest) involutions and which change an \textit{odd} number of signs.  Indeed, given such a signed permutation $\sigma$, let $w = \nu \sigma$.  Then $w$ is a signed permutation which changes an even number of signs, and 
\[ (\nu w \nu) w= (\nu w)(\nu w) = \sigma^2 = 1, \]
so $w$ is a twisted involution.  On the other hand, given a twisted involution $w \in W$, the permutation $\sigma = w \nu$ changes an odd number of signs, and is an involution, since 
\[ \sigma^2 = (w \nu)(w \nu) = w(\nu w \nu) = 1. \]

Now, let us note that each symmetric $(2p+1,2q-1)$-clan corresponds in a natural way to a unique signed involution $\sigma$ changing an odd number of signs.  Indeed, the correspondence here is the same as it has been all along:  The positions of matching natural numbers in such a clan determine the transpositions of such a permutation, while the signs determine the fixed points.  The only observation that needs to be made is that the involution corresponding to a symmetric $(2p+1,2q-1)$-clan in this way necessarily changes an \textit{odd} number of signs.  The number of sign changes of the signed involution corresponding to a symmetric $(2p+1,2q-1)$-clan $\gamma$ is clearly the number of natural numbers occurring in positions $1,\hdots,n$ of $\gamma$ whose mate occurs at or beyond position $n+1$.  The claim, then, is that this number must be odd.

Suppose first that $n$ is even.  Then $p-q$ is even as well.  Suppose by contradiction that there are an even number of natural numbers occurring in positions $1,\hdots,n$ whose mates occur at or beyond position $n+1$.  This means that among the first $n$ symbols of the clan, the combined number of $+$ and $-$ signs must be even.  Thus the total number of $+$ and $-$ signs comprising the entire clan $\gamma$ must divisible by $4$.  Let $a$ be the number of $+$ signs, and let $b$ be the number of $-$ signs.  Then we have that 
\[ a + b \equiv 0 \pmod{4}, \]
while
\[ a - b = 2p+1-(2q-1) = 2(p-q) + 2 \equiv 2 \pmod{4}. \]
But this implies that $a$ is odd.  However, $a$ cannot be odd if $\gamma$ is symmetric, and so we have a contradiction.

If $n$ is odd, a similar contradiction can be reached.  Here, $p-q$ is odd, and using similar reasoning we obtain the congruences
\[ a + b \equiv 2 \pmod{4} \]
and
\[ a - b \equiv 0 \pmod{4}, \]
again a contradiction.

The upshot is that here the cardinality of the fiber $\mathcal{X_{\tau}}$ ($\tau$ a twisted involution) can be compared to the number of symmetric $(2p+1,2q-1)$-clans corresponding to the signed involution $\tau \nu$.

So let $\tau \in W$ be any twisted involution.  Recall from the description of Subsection \ref{ssec:clans} that in the definitions of $T_{\tau}$ and $T^{-\tau}$, the action of $\tau$ on $T$ is twisted by the map $\theta$, so that
\[ \tau(t) = \tau.\theta(t),\]
where $\tau.t$ indicates the usual action of $\tau$ on $T$ (by permutation of the diagonal entries).  Given a diagonal element $t = \text{diag}(a_1,\hdots,a_n,a_n^{-1},\hdots,a_1^{-1})$, one checks easily that
\[ \theta(t) = \text{diag}(a_1^{-1},a_2,\hdots,a_n,a_n^{-1},\hdots,a_2^{-1},a_1). \]
This amounts to applying the signed permutation $\nu$ to the coordinates of $t$.  In light of this, it is clear that $\tau(t)$ amounts to acting on $t$ by the signed involution $\tau \nu$ corresponding to $\tau$.

Next, let us note that any signed involution $\sigma$ which changes an odd number of signs must send some $j$ to its negative.  Indeed, suppose that $j > 0$, and that $\sigma(j) < 0$ but that $\sigma(j) \neq -j$.  Then $\sigma(j) = -i$ for some $i > 0$, and so $\sigma(i) = \sigma(-(-i)) = -\sigma(-i) = -j$.  This says that $\sigma$ can have only an even number of sign changes of the form $\sigma(j) \neq -j$, and so if $\sigma$ has an odd number of sign changes, at least one (and indeed, an odd number) must be of the form $\sigma(j) = -j$.

The point of this observation is that when $t = \text{diag}(a_1,\hdots,a_n,a_n^{-1},\hdots,a_1^{-1}) \in T$, and when $\tau$ is a twisted involution, the action of $\tau$ on $t$ must interchange at least one $a_j$ with $a_j^{-1}$.  That is, there must be at least one $j$ between $1$ and $n$ such that the $j$th and $(2n+1-j)$th entries of $\tau(t)$ are $a_j^{-1}$ and $a_j$, respectively.  This being the case, we must have that $t \tau(t)$ has $1$'s in these positions.  The upshot is that while $Z(G) = \pm 1$, it can never be the case that $t \tau(t) = -1$.  Thus
\[ T_{\tau} = \{t \in T \ \vert \ t \tau(t) \in Z(G) \} = \{t \in T \ \vert \ t \tau(t) = 1 \} = T^{-\tau}. \]
So in computing the cardinality of the fiber $\mathcal{X}_{\tau}$ over $\tau$, we simply have to count the components of $T_{\tau}$.

So consider $t' = t \tau(t)$.  For each fixed point $j$ of the signed involution $\sigma = \tau \nu$, $t'$ has $a_j^2$ in the $j$th position (and $a_j^{-2}$ in the $(2n+1-j)$th).  For each $j$ such that $\sigma(j) = -j$, $t'$ has a $1$ in the $j$th and $(2n+1-j)$th positions.  And for any other $j$, $t'$ simply has $a_i a_l^{\pm 1}$ in the $i$th position for some $l$ (and $a_i^{-1} a_l^{\mp 1}$ in the $(2n+1-i)$th position).  Based on this, if we let $k$ be the number of fixed points of $\sigma$, it is clear that
\[ T_{\tau} \cong (\Z/2\Z)^k \times (\C^*)^a \]
for some $a$.  (It is easy to check that $a = \frac{1}{2}(n+d - k)$, where $d$ is the number of $j$ such that $\sigma(j) = -j$.)  Thus $|\mathcal{X}_{\tau}| = 2^k$.  On the other hand, the number of symmetric $(2p+1,2q-1)$-clans associated to the involution $\sigma$ is also $2^k$, since we have free choice of assigning $\pm$ signs to the fixed points of $\sigma$ between $1$ and $n$, while the signs in positions $n+1$ through $2n$ are then determined by symmetry.  This completes the argument.
\chapter{Figures and Tables}
\begin{figure}[h!]
	\caption{$(GL(4,\C),GL(2,\C) \times GL(2,\C))$}\label{fig:type-a-2-2}
	\centering
	\includegraphics[scale=0.5]{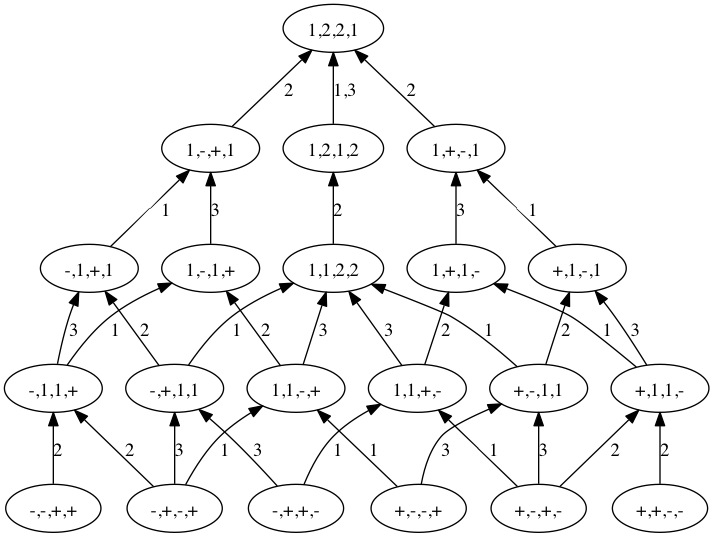}
\end{figure}

\begin{figure}[h!]
	\caption{$(SL(3,\C),SO(3,\C))$}\label{fig:type-a-orthogonal-1}
	\centering
	\includegraphics[scale=0.5]{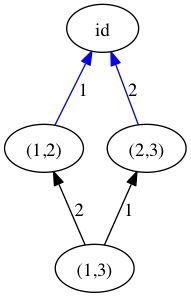}
\end{figure}

\begin{figure}[h!]
	\caption{$(SL(5,\C),SO(5,\C))$}\label{fig:type-a-orthogonal-2}
	\centering
	\includegraphics[scale=0.6]{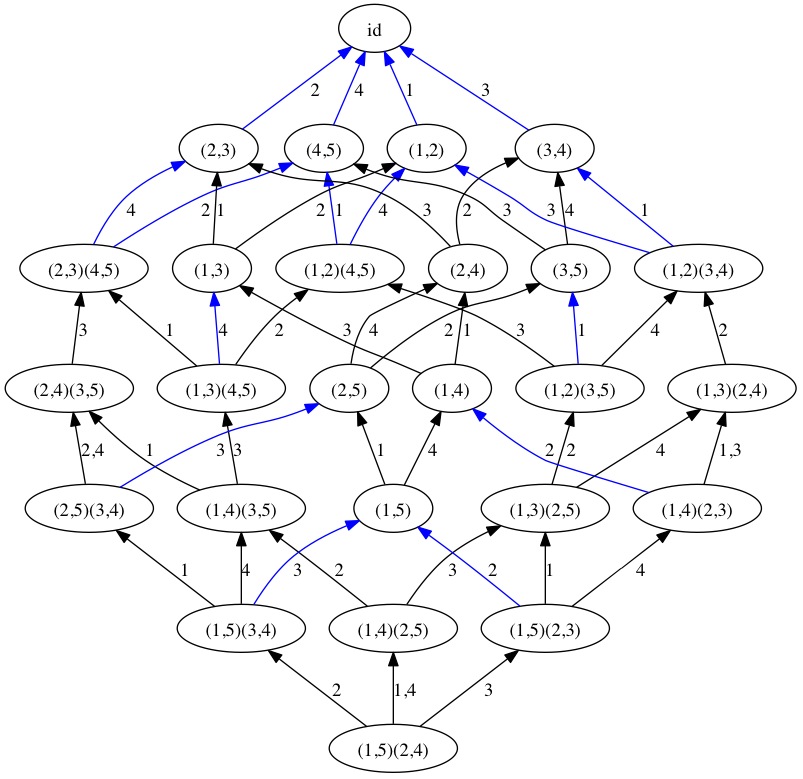}
\end{figure}

\begin{figure}[h!]
	\caption{$(GL(4,\C),O(4,\C))$}\label{fig:type-a-orthogonal-3}
	\centering
	\includegraphics[scale=0.6]{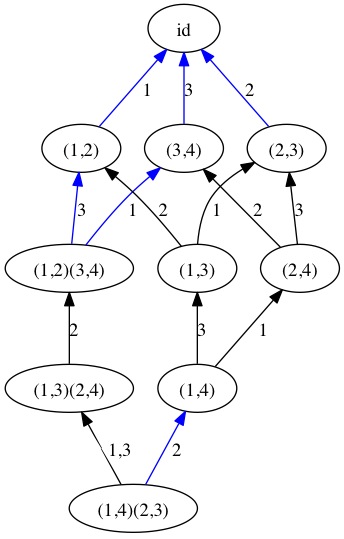}
\end{figure}

\begin{figure}[h!]
	\caption{$(SL(4,\C),SO(4,\C))$}\label{fig:type-a-orthogonal-4}
	\centering
	\includegraphics[scale=0.6]{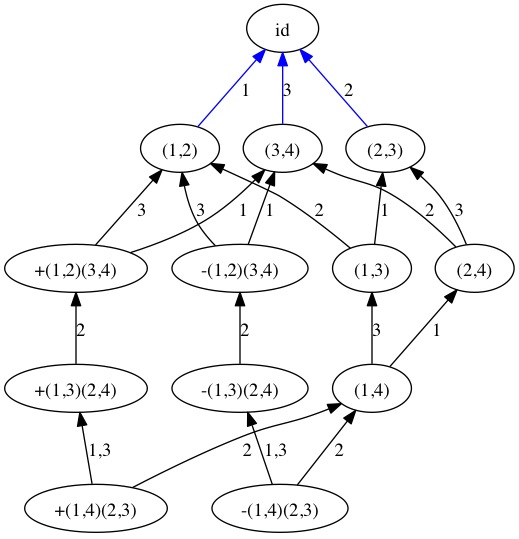}
\end{figure}

\begin{figure}[h!]
	\caption{$(SL(4,\C),Sp(4,\C))$}\label{fig:type-a-symplectic-1}
	\centering
	\includegraphics[scale=0.6]{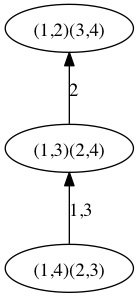}
\end{figure}

\begin{figure}[h!]
	\caption{$(SL(6,\C),Sp(6,\C))$}\label{fig:type-a-symplectic-2}
	\centering
	\includegraphics[scale=0.5]{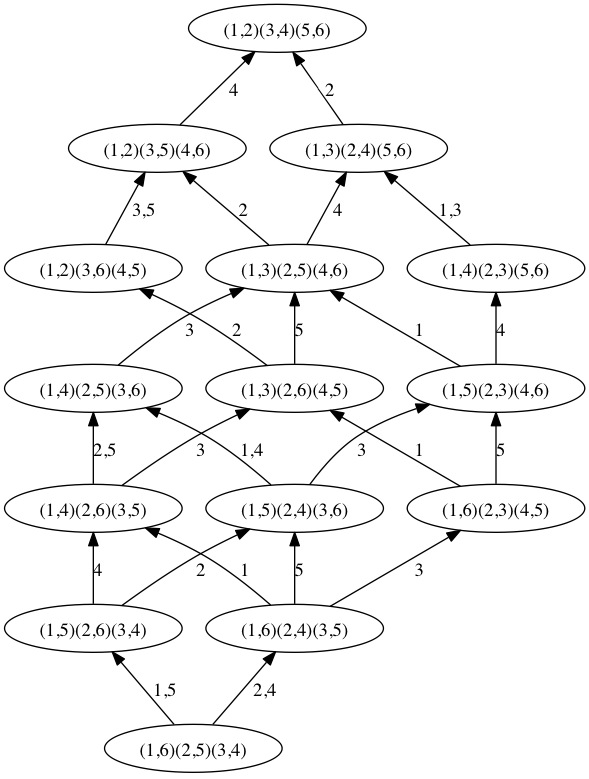}
\end{figure}

\begin{figure}[h!]
	\caption{$(SO(7,\C),S(O(4,\C) \times O(3,\C)))$}\label{fig:type-b-graph}
	\centering
	\includegraphics[scale=0.5]{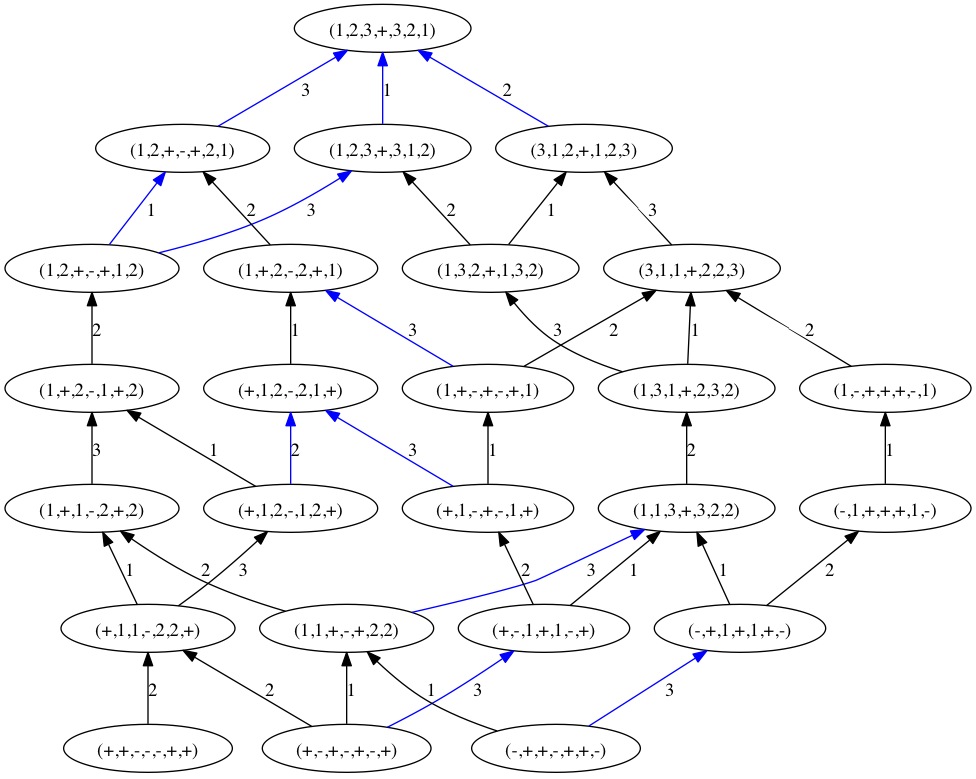}
\end{figure}

\begin{figure}[h!]
	\caption{$(Sp(6,\C),Sp(4,\C) \times Sp(2,\C))$}\label{fig:type-c-graph-1}
	\centering
	\includegraphics[scale=0.5]{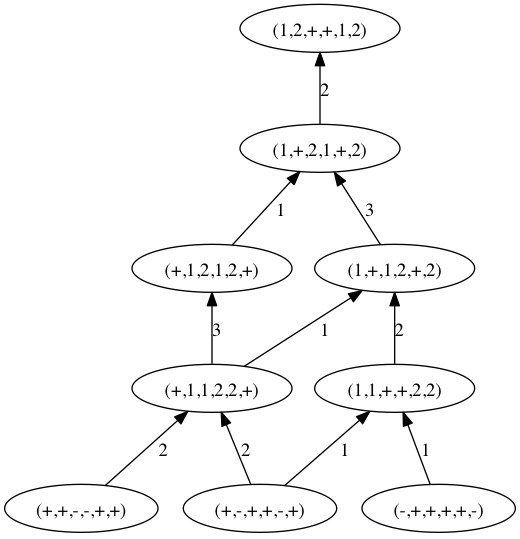}
\end{figure}

\begin{figure}[h!]
	\caption{$(Sp(4,\C),GL(2,\C))$}\label{fig:type-c-graph-2}
	\centering
	\includegraphics[scale=0.5]{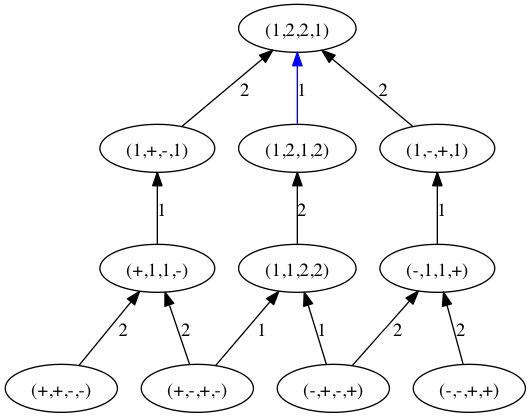}
\end{figure}

\begin{figure}[h!]
	\caption{$(SO(6,\C),S(O(4,\C) \times O(2,\C)))$}\label{fig:type-d-graph-1}
	\centering
	\includegraphics[scale=0.5]{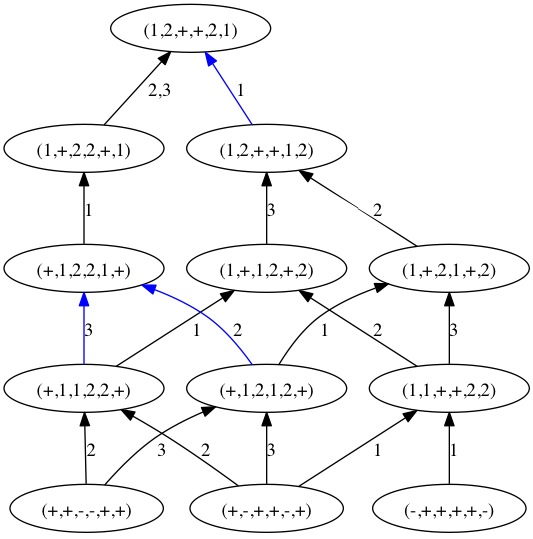}
\end{figure}

\begin{figure}[h!]
	\caption{$(SO(6,\C),GL(3,\C))$}\label{fig:type-d-graph-2}
	\centering
	\includegraphics[scale=0.5]{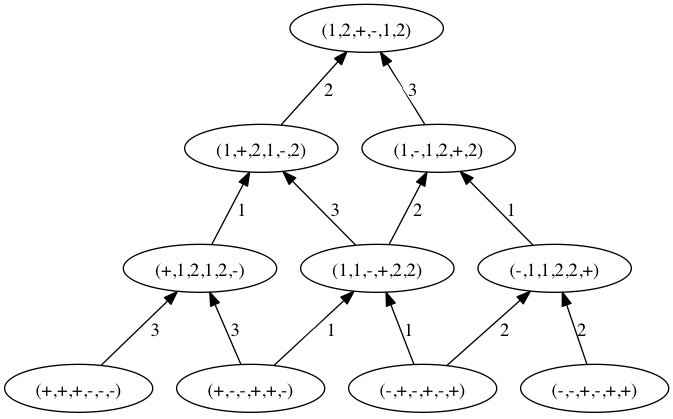}
\end{figure}

\begin{figure}[h!]
	\caption{$(SO(6,\C),S(O(3,\C) \times O(3,\C)))$}\label{fig:type-d-graph-3}
	\centering
	\includegraphics[scale=0.5]{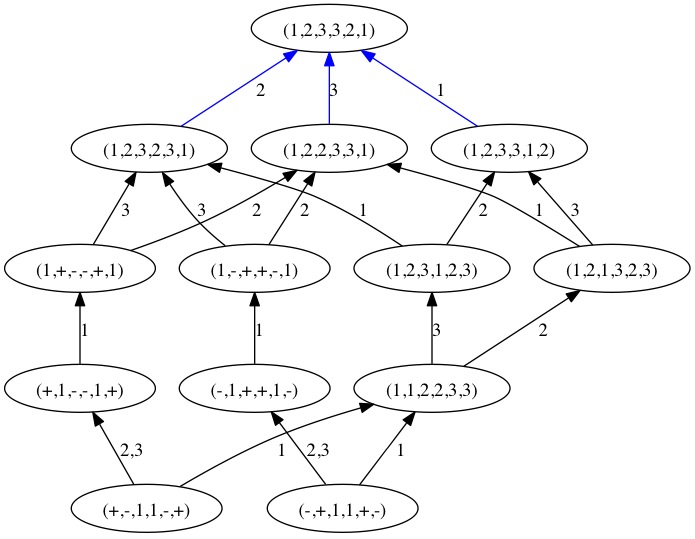}
\end{figure}

\begin{table}[h]
	\caption{Formulas for $(GL(4,\C),GL(2,\C) \times GL(2,\C))$}\label{tab:type-a-2-2}
	\resizebox{16cm}{!}{	
		\begin{tabular}{|l|l|}
			\hline
			$(2,2)$-clan $\gamma$ & Formula for $[Y_{\gamma}]$ \\ \hline
			$(+,+,-,-)$ & $(x_1-y_3)(x_1-y_4)(x_2-y_3)(x_2-y_4)$ \\ \hline
			$(+,-,+,-)$ & $-(x_1-y_2)(x_1-y_4)(x_2-y_2)(x_2-y_4)$ \\ \hline
			$(+,-,-,+)$ & $(x_1-y_2)(x_1-y_3)(x_2-y_2)(x_2-y_3)$ \\ \hline
			$(-,+,+,-)$ & $(x_1-y_1)(x_1-y_4)(x_2-y_1)(x_2-y_4)$ \\ \hline
			$(-,+,-,+)$ & $-(x_1-y_1)(x_1-y_3)(x_2-y_1)(x_2-y_3)$ \\ \hline
			$(-,-,+,+)$ & $(x_1-y_1)(x_1-y_2)(x_2-y_1)(x_2-y_2)$ \\ \hline
			$(+,1,1,-)$ & $(x_1-y_4)(x_2-y_4)(x_1+x_2-y_2-y_3)$ \\ \hline
			$(1,1,+,-)$ & $-(x_1-y_4)(x_2-y_4)(x_1+x_2-y_1-y_2)$ \\ \hline
			$(+,-,1,1)$ & $-(x_1-y_2)(x_2-y_2)(x_1+x_2-y_3-y_4)$ \\ \hline
			$(1,1,-,+)$ & $(x_1-y_3)(x_2-y_3)(x_1+x_2-y_1-y_2)$ \\ \hline
			$(-,+,1,1)$ & $(x_1-y_1)(x_2-y_1)(x_1+x_2-y_3-y_4)$ \\ \hline
			$(-,1,1,+)$ & $-(x_1-y_1)(x_2-y_1)(x_1+x_2-y_2-y_3)$ \\ \hline
			$(1,+,1,-)$ & $(x_1-y_4)(x_2-y_4)$ \\ \hline
			$(+,1,-,1)$ & $x_1^2+x_1x_2-x_1y_2-x_1y_3-x_1y_4+x_2^2-x_2y_2-x_2y_3-x_2y_4+y_2y_3+y_2y_4+y_3y_4$ \\ \hline
			$(1,1,2,2)$ & $-(x_1+x_2-y_3-y_4)(x_1+x_2-y_1-y_2)$ \\ \hline
			$(1,-,1,+)$ & $x_1^2+x_1x_2-x_1y_1-x_1y_2-x_1y_3+x_2^2-x_2y_1-x_2y_2-x_2y_3+y_1y_2+y_1y_3+y_2y_3$ \\ \hline
			$(-,1,+,1)$ & $(x_1-y_1)(x_2-y_1)$ \\ \hline
			$(1,+,-,1)$ & $x_1+x_2-y_3-y_4$ \\ \hline
			$(1,2,1,2)$ & $y_1-y_4$ \\ \hline
			$(1,-,+,1)$ & $-(x_1+x_2-y_1-y_2)$ \\ \hline
			$(1,2,2,1)$ & $1$ \\
			\hline
		\end{tabular}
	}
\end{table}

\begin{table}[h]
	\caption{Formulas for $(SL(3,\C),SO(3,\C))$}\label{tab:type-a-so3}
	\begin{tabular}{|l|l|}
		\hline
		Involution $\pi$ & Formula for $[Y_{\pi}]$ \\ \hline
		$(1,3)$ & $-2(y_1+y_2)(y_2+y_3)$ \\ \hline
		$(1,2)$ &  $-2(y_2+y_3)$ \\ \hline
		$(2,3)$ & $2(y_1+y_2)$ \\ \hline
		id & $1$ \\ 
		\hline
	\end{tabular}
\end{table}

\begin{table}[h]
	\caption{Formulas for $(SL(5,\C),SO(5,\C))$}\label{tab:type-a-so5}
	\resizebox{18cm}{7cm}{
		\begin{tabular}{|l|l|}
			\hline
			Involution $\pi$ & Formula for $[Y_{\pi}]$ \\ \hline
			$(1,5)(2,4)$ & $4(y_1+y_3)(y_3+y_5)(y_2+y_3)(y_3+y_4)(y_1+y_2)(y_1+y_4)$ \\ \hline
			$(1,5)(3,4)$ & $-4(y_1+y_2)(y_1+y_3)(y_1+y_4)(y_2+y_3)(y_2+y_3+y_4+y_5)$ \\ \hline
			$(1,4)(2,5)$ & $4(y_1+y_2)(y_1+y_3)(y_2+y_3)(y_3+y_4)(y_3+y_5)$ \\ \hline
			$(1,5)(2,3)$ & $4(y_1+y_2)(y_1+y_3)(y_1+y_4)(y_3+y_4)(y_2+y_3+y_4+y_5)$ \\ \hline
			$(2,5)(3,4)$ & $-4(y_1+y_2)(y_1+y_3)(y_2+y_3)(y_3+y_5)$ \\ \hline
			$(1,4)(3,5)$ & $-4(y_1+y_2)(y_1+y_3)(y_2+y_3)(y_2+y_3+y_4+y_5)$ \\ \hline
			$(1,5)$ & $-2(y_1+y_2)(y_1+y_3)(y_1+y_4)(y_2+y_3+y_4+y_5)$ \\ \hline
			$(1,3)(2,5)$ & $4(y_1+y_2)(y_3+y_4)(y_3^2+y_4^2 + \displaystyle\sum_{1 \leq i < j \leq 5} y_iy_j)$ \\ \hline
			$(1,4)(2,3)$ & $4(y_1+y_2)(y_1+y_3)(y_1+y_3+y_4+y_5)(y_2+y_3+y_4+y_5)$ \\ \hline
			$(2,4)(3,5)$ & $4(y_1+y_2)(y_1+y_3)(y_2+y_3)$ \\ \hline
			$(1,3)(4,5)$ & $-4(y_1+y_2)(y_1+y_2+y_3+y_4)(y_2+y_3+y_4+y_5)$ \\ \hline
			$(2,5)$ & $-2(y_1+y_2)(y_1y_2+y_1y_3+y_1y_4+y_1y_5+y_2y_3+y_2y_4+y_2y_5+y_3^2+y_3y_4+y_3y_5+y_4^2+y_4y_5)$ \\ \hline
			$(1,4)$ & $-2(y_1+y_2)(y_1+y_3)(y_2+y_3+y_4+y_5)$ \\ \hline
			$(1,2)(3,5)$ & $-4(y_2^2y_3+y_2y_3^2-y_2y_4^2-y_2y_4y_5-y_3y_4^2-y_3y_4y_5-y_4^3-y_4^2y_5 + (y_1^2+y_1y_2) \displaystyle\sum_{i=2}^5 y_i + y_1y_3 \displaystyle\sum_{i=3}^5 y_i)$ \\ \hline
			$(1,3)(2,4)$ & $4(y_1+y_2)(y_1+y_3+y_4+y_5)(y_2+y_3+y_4+y_5)$ \\ \hline
			$(2,3)(4,5)$ & $4(y_1+y_2)(y_1+y_2+y_3+y_4)$ \\ \hline
			$(1,3)$ & $-2(y_1+y_2)(y_2+y_3+y_4+y_5)$ 	\\ \hline
			$(1,2)(4,5)$ & $-4(y_1+y_2+y_3+y_4)(y_2+y_3+y_4+y_5)$ \\ \hline
			$(2,4)$ & $-2(y_1+y_2)(y_4+y_5)$ \\ \hline
			$(3,5)$ & $-2(y_4+y_5)(y_1+y_2+y_3+y_4)$ \\ \hline
			$(1,2)(3,4)$ & $4(y_4+y_5)(y_2+y_3+y_4+y_5)$ \\ \hline
			$(2,3)$ & $2(y_1+y_2)$ \\ \hline
			$(4,5)$ & $2(y_1+y_2+y_3+y_4)$ \\ \hline
			$(1,2)$ & $-2(y_2+y_3+y_4+y_5)$ \\ \hline
			$(3,4)$ & $-2(y_4+y_5)$ \\ \hline
			id & $1$ \\
			\hline
		\end{tabular}
	}
\end{table}

\begin{table}[h]
	\caption{Formulas for $(GL(4,\C),O(4,\C))$}\label{tab:type-a-o4}
	\begin{tabular}{|l|l|}
		\hline
		Involution $\pi$ & Formula for $[Y_{\pi}]$ \\ \hline
		$(1,4)(2,3)$ & $4y_1y_2(y_1+y_2)(y_1+y_3)$ \\ \hline
		$(1,3)(2,4)$ & $4y_1y_2(y_1+y_2)$ \\ \hline
		$(1,4)$ & $2y_1(y_1+y_2)(y_1+y_3)$ \\ \hline
		$(1,2)(3,4)$ & $4y_1(y_1+y_2+y_3)$ \\ \hline
		$(1,3)$ & $2y_1(y_1+y_2)$ \\ \hline
		$(2,4)$ & $2(y_1+y_2)(y_1+y_2+y_3)$ \\ \hline
		$(1,2)$ & $2y_1$ \\ \hline
		$(3,4)$ & $2(y_1+y_2+y_3)$ \\ \hline
		$(2,3)$ & $2(y_1+y_2)$ \\ \hline
		id & $1$ \\ 
		\hline
	\end{tabular}
\end{table}

\begin{table}[h]
	\caption{Formulas for $(SL(4,\C),SO(4,\C))$}\label{tab:type-a-so4}
	\begin{tabular}{|l|l|l|}
		\hline
		Parameter for $Q$ & Representative for $Q$ & Formula for $[Y]$ \\ \hline
		$+(1,4)(2,3)$ & $\left\langle e_1,e_2,e_3,e_4 \right\rangle$ & $2(x_1x_2+y_1y_2)(y_1+y_2)(y_1+y_3)$ \\ \hline
		$-(1,4)(2,3)$ &  $\left\langle e_1,e_3,e_2,e_4 \right\rangle$ & $-2(x_1x_2-y_1y_2)(y_1+y_2)(y_1+y_3)$ \\ \hline
		$+(1,3)(2,4)$ & $\left\langle e_1,e_2,e_4,e_3 \right\rangle$ & $2(x_1x_2+y_1y_2)(y_1+y_2)$ \\ \hline
		$-(1,3)(2,4)$ & $\left\langle e_1,e_3,e_4,e_2 \right\rangle$ & $-2(x_1x_2-y_1y_2)(y_1+y_2)$ \\ \hline
		$(1,4)$ & $\left\langle e_1,e_2+e_3,e_2-e_3,e_4 \right\rangle$ & $2y_1(y_1+y_2)(y_1+y_3)$ \\ \hline
		$+(1,2)(3,4)$ & $\left\langle e_1,e_4,e_2,e_3 \right\rangle$ & $2(x_1x_2+y_1^2+y_1y_2+y_1y_3)$ \\ \hline
		$-(1,2)(3,4)$ & $\left\langle e_1,e_4,e_3,e_2 \right\rangle$ & $-2(x_1x_2-y_1^2-y_1y_2-y_1y_3)$ \\ \hline
		$(1,3)$ & $\left\langle e_1,e_2+e_3,e_4,e_2-e_3 \right\rangle$ & $2y_1(y_1+y_2)$ \\ \hline
		$(2,4)$ & $\left\langle e_2+e_3,e_1,e_2-e_3,e_4 \right\rangle$ & $2(y_1+y_2)(y_1+y_2+y_3)$ \\ \hline
		$(1,2)$ & $\left\langle e_1,e_4,e_2+e_3,e_2-e_3 \right\rangle$ & $2y_1$ \\ \hline
		$(3,4)$ & $\left\langle e_2+e_3,e_2-e_3,e_1,e_4 \right\rangle$ & $2(y_1+y_2+y_3)$ \\ \hline
		$(2,3)$ & $\left\langle e_2+e_3,e_1,e_4,e_2-e_3 \right\rangle$ & $2(y_1+y_2)$ \\ \hline
		id & $\left\langle e_1+e_4,e_1-e_4,e_2+e_3,e_2-e_3 \right\rangle$ & $1$ \\
		\hline
	\end{tabular}
\end{table}

\begin{table}[h]
	\caption{Formulas for $(SL(4,\C),Sp(4,\C))$}\label{tab:type-a-sp4}
	\begin{tabular}{|c|l|}
		\hline
		Involution $\pi$ & Formula for $[Y_{\pi}]$ \\ \hline
		$(1,4)(2,3)$ & $(y_1+y_2)(y_1+y_3)$ \\ \hline
		$(1,3)(2,4)$ & $y_1+y_2$ \\ \hline
		$(1,2)(3,4)$ & $1$ \\
		\hline
	\end{tabular}
\end{table}

\begin{table}[h]
	\caption{Formulas for $(SL(6,\C),Sp(6,\C))$}\label{tab:type-a-sp6}
	\begin{tabular}{|c|l|}
		\hline
		Involution $\pi$ & Formula for $[Y_{\pi}]$ \\ \hline
		$(1,6)(2,4)(3,5)$ & $(y_1+y_2)(y_1+y_5)(y_1+y_3)(y_1+y_4)(y_2+y_3)(y_2+y_4)$ \\ \hline
		$(1,5)(2,6)(3,4)$ & $(y_1+y_2)(y_1+y_3)(y_1+y_4)(y_2+y_3)(y_2+y_4)$ \\ \hline
		$(1,6)(2,4)(3,5)$ & $(y_1+y_2)(y_1+y_5)(y_1+y_3)(y_1+y_4)(y_2+y_3)$ \\ \hline
		$(1,4)(2,6)(3,5)$ & $(y_1+y_2)(y_1+y_3)(y_2+y_3)(y_1+y_2+y_4+y_5)$ \\ \hline		
		$(1,5)(2,4)(3,6)$ & $(y_1+y_2)(y_1+y_3)(y_1+y_4)(y_2+y_3)$ \\ \hline
		$(1,6)(2,3)(4,5)$ & $(y_1+y_2)(y_1+y_5)(y_1+y_3)(y_1+y_4)$ \\ \hline
		$(1,4)(2,5)(3,6)$ & $(y_1+y_2)(y_1+y_3)(y_2+y_3)$ \\ \hline
		$(1,3)(2,6)(4,5)$ & $(y_1+y_2)(y_1^2+y_2^2+\displaystyle\sum_{1 \leq i < j \leq 5} y_iy_j)$ \\ \hline
		$(1,5)(2,3)(4,6)$ & $(y_1+y_2)(y_1+y_3)(y_1+y_4)$ \\ \hline		
		$(1,2)(3,6)(4,5)$ & $(y_1+y_2+y_3+y_4)(y_1+y_2+y_3+y_5)$ \\ \hline
		$(1,3)(2,5)(4,6)$ & $(y_1+y_2)(y_1+y_2+y_3+y_4)$ \\ \hline
		$(1,4)(2,3)(5,6)$ & $(y_1+y_2)(y_1+y_3)$ \\ \hline
		$(1,2)(3,5)(4,6)$ & $y_1+y_2+y_3+y_4$ \\ \hline
		$(1,3)(2,4)(5,6)$ & $y_1+y_2$ \\ \hline
		$(1,2)(3,4)(5,6)$ & $1$ \\
		\hline
	\end{tabular}
\end{table}

\begin{table}[h]
	\caption{Formulas for $(SO(7,\C),S(O(4,\C) \times O(3,\C)))$}\label{tab:type-b-table}
	\begin{tabular}{|c|l|}
		\hline
		Symmetric $(4,3)$-clan $\gamma$ & Formula for $[Y_{\gamma}]$ \\ \hline
		$(+,+,-,-,-,+,+)$ & $y_1y_2(x_1-y_3)(x_1+y_3)(x_2-y_3)(x_2+y_3)$ \\ \hline
		$(+,-,+,-,+,-,+)$ & $-y_1y_3(x_1-y_2)(x_1+y_2)(x_2-y_2)(x_2+y_2)$ \\ \hline
		$(-,+,+,-,+,+,-)$ & $y_2y_3(x_1-y_1)(x_1+y_1)(x_2-y_1)(x_2+y_1)$ \\ \hline
		$(+,1,1,-,2,2,+)$ & $y_1(x_1^2 x_2^2 + y_2y_3(x_1^2+x_2^2-y_2^2-y_2y_3-y_3^2))$  \\ \hline		
		$(1,1,+,-,+,2,2)$ & $-y_3(x_1^2 x_2^2 + y_1y_2(x_1^2+x_2^2-y_1^2-y_1y_2-y_2^2))$ \\ \hline
		$(+,-,1,+,1,-,+)$ & $-y_1(x_1+y_2)(x_1-y_2)(x_2+y_2)(x_2-y_2)$ \\ \hline
		$(-,+,1,+,1,+,-)$ & $y_2(x_1+y_1)(x_1-y_1)(x_2+y_1)(x_2-y_1)$ \\ \hline
		$(1,+,1,-,2,+,2)$ & $x_1^2x_2^2+y_1y_2(y_1y_3+y_2y_3+y_3^2)$ \\ \hline
		$(+,1,2,-,1,2,+)$ & $2y_1y_2(x_1^2+x_2^2-y_2^2-y_3^2)$ \\ \hline		
		$(+,1,-,+,-,1,+)$ & $y_1(y_2+y_3)(x_1^2+x_2^2-y_2^2-y_3^2)$ \\ \hline
		$(1,1,3,+,3,2,2)$ & $-(x_1^2x_2^2 + y_1y_2(x_1^2+x_2^2-y_1^2-y_1y_2-y_2^2))$ \\ \hline
		$(-,1,+,+,+,1,-)$ & $(x_1+y_1)(x_1-y_1)(x_2+y_1)(x_2-y_1)$ \\ \hline
		$(1,+,2,-,1,+,2)$ & $2y_1y_2(y_1+y_2)$ \\ \hline
		$(+,1,2,-,2,1,+)$ & $y_1(x_1^2+x_2^2-y_2^2-y_3^2)$ \\ \hline
		$(1,+,-,+,-,+,1)$ & $x_1^2y_3+x_2^2y_3+y_1^2y_2+y_1y_2^2+y_1y_2y_3-y_3^3$ \\ \hline
		$(1,3,1,+,2,3,2)$ & $-y_1(x_1^2+x_2^2-y_1^2-y_1y_2-y_1y_3-y_2^2-y_2y_3-y_3^2)$ \\ \hline
		$(1,-,+,+,+,-,1)$ & $-(y_1+y_2)(x_1^2+x_2^2-y_1^2-y_2^2)$ \\ \hline
		$(1,2,+,-,+,1,2)$ & $2y_1(y_1+y_2+y_3)$ \\ \hline
		$(1,+,2,-,2,+,1)$ & $x_1^2+x_2^2+y_1y_2-y_3^2$ \\ \hline
		$(1,3,2,+,1,3,2)$ & $2y_1(y_1+y_2)$ \\ \hline
		$(3,1,1,+,2,2,3)$ & $-(x_1^2+x_2^2-y_1^2-y_1y_2-y_1y_3-y_2^2-y_2y_3-y_3^2)$ \\ \hline
		$(1,2,+,-,+,2,1)$ & $y_1+y_2+y_3$ \\ \hline
		$(1,2,3,+,3,1,2)$ & $2y_1$ \\ \hline
		$(3,1,2,+,1,2,3)$ & $2(y_1+y_2)$ \\ \hline
		$(1,2,3,+,3,2,1)$ & $1$ \\
		\hline
	\end{tabular}
\end{table}

\begin{table}[h]
	\caption{Formulas for $(Sp(6,\C),Sp(4,\C) \times Sp(2,\C))$}\label{tab:type-c-table-1}
	\begin{tabular}{|l|l|}
		\hline
		Symmetric $(4,2)$-clan $\gamma$ & Formula for $[Y_{\gamma}]$ \\ \hline
		$(+,+,-,-,+,+)$ & $(x_1+y_3)(x_1-y_3)(x_2+y_3)(x_2-y_3)$ \\ \hline
		$(+,-,+,+,-,+)$ & $-(x_1+y_2)(x_1-y_2)(x_2+y_2)(x_2-y_2)$ \\ \hline
		$(-,+,+,+,+,-)$ & $(x_1+y_1)(x_1-y_1)(x_2+y_1)(x_2-y_1)$ \\ \hline
		$(+,1,1,2,2,+)$ & $(y_2+y_3)(x_1^2+x_2^2-y_2^2-y_3^2)$ \\ \hline
		$(1,1,+,+,2,2)$ & $-(y_1+y_2)(x_1^2+x_2^2-y_1^2-y_2^2)$ \\ \hline
		$(1,+,1,2,+,2)$ & $-(x_1^2+x_2^2-y_1^2-y_1y_2-y_1y_3-y_2^2-y_2y_3-y_3^2)$ \\ \hline
		$(+,1,2,1,2,+)$ & $x_1^2+x_2^2-y_2^2-y_3^2$ \\ \hline
		$(1,+,2,1,+,2)$ & $y_1+y_2$ \\ \hline
		$(1,2,+,+,1,2)$ & $1$ \\
		\hline
	\end{tabular}
\end{table}

\begin{table}[h]
	\caption{Formulas for $(Sp(4,\C),GL(2,\C))$}\label{tab:type-c-table-2}
	\begin{tabular}{|l|l|}
		\hline
		Skew-symmetric $(2,2)$-clan $\gamma$ & Formula for $[Y_{\gamma}]$ \\ \hline
		$(+,+,-,-)$ & $(x_1+x_2+y_1+y_2)(x_1x_2+y_1y_2)$ \\ \hline
		$(+,-,+,-)$ & $-(x_1+x_2+y_1-y_2)(x_1x_2-y_1y_2)$ \\ \hline
		$(-,+,-,+)$ & $(x_1+x_2-y_1+y_2)(x_1x_2-y_1y_2)$ \\ \hline
		$(-,-,+,+)$ & $-(x_1+x_2-y_1-y_2)(x_1x_2+y_1y_2))$ \\ \hline
		$(+,1,1,-)$ & $(x_1+y_1)(x_2+y_1)$ \\ \hline
		$(1,1,2,2)$ & $-2(x_1x_2-y_1y_2)$ \\ \hline
		$(-,1,1,+)$ & $(x_1-y_1)(x_2-y_1)$ \\ \hline
		$(1,+,-,1)$ & $x_1+x_2+y_1+y_2$ \\ \hline
		$(1,2,1,2)$ & $2y_1$ \\ \hline
		$(1,-,+,1)$ & $-(x_1+x_2-y_1-y_2)$ \\ \hline
		$(1,2,2,1)$ & $1$ \\
		\hline
	\end{tabular}
\end{table}

\begin{table}[h]
	\caption{Formulas for $(SO(6,\C),S(O(4,\C) \times O(2,\C)))$}\label{tab:type-d-table-1}
	\begin{tabular}{|l|l|}
		\hline
		Symmetric $(4,2)$-clan $\gamma$ & Formula for $[Y_{\gamma}]$ \\ \hline
		$(+,+,-,-,+,+)$ & $(x_1-y_3)(x_1+y_3)(x_2-y_3)(x_2+y_3)$ \\ \hline
		$(+,-,+,+,-,+)$ & $-(x_1-y_2)(x_1+y_2)(x_2-y_2)(x_2+y_2)$ \\ \hline
		$(-,+,+,+,+,-)$ & $(x_1-y_1)(x_1+y_1)(x_2-y_1)(x_2+y_1)$ \\ \hline
		$(+,1,1,2,2,+)$ & $(y_2+y_3)(x_1^2+x_2^2-y_2^2-y_3^2)$ \\ \hline
		$(+,1,2,1,2,+)$ & $(y_2-y_3)(x_1^2+x_2^2-y_2^2-y_3^2)$ \\ \hline
		$(1,1,+,+,2,2)$ & $-(y_1+y_2)(x_1^2+x_2^2-y_1^2-y_2^2)$ \\ \hline
		$(1,+,1,2,+,2)$ & $-(x_1^2+x_2^2-y_1^2-y_1y_2-y_1y_3-y_2^2-y_2y_3-y_3^2)$ \\ \hline
		$(+,1,2,2,1,+)$ & $x_1^2+x_2^2-y_2^2-y_3^2$ \\ \hline
		$(1,+,2,1,+,2)$ & $-(x_1^2+x_2^2-y_1^2-y_1y_2+y_1y_3-y_2^2+y_2y_3-y_3^2)$ \\ \hline
		$(1,2,+,+,1,2)$ & $2y_1$ \\ \hline
		$(1,+,2,2,+,1)$ & $y_1+y_2$ \\ \hline
		$(1,2,+,+,2,1)$ & $1$ \\ 
		\hline
	\end{tabular}
\end{table}

\begin{table}[h]
	\caption{Formulas for $(SO(6,\C),GL(3,\C))$}\label{tab:type-d-table-2}
	\resizebox{18cm}{3cm}{
		\begin{tabular}{|l|l|}
			\hline
			Skew-symmetric $(3,3)$-clan $\gamma$ & Formula for $[Y_{\gamma}]$ \\ \hline
			$(+,+,+,-,-,-)$ & $\Delta_2(x,y,id)$ \\ \hline
			$(-,-,+,-,+,+)$ & $-\Delta_2(x,y,(\overline{1}\ \overline{2}\ 3))$ \\ \hline
			$(-,+,-,+,-,+)$ & $\Delta_2(x,y,(\overline{1}\ 2\ \overline{3}))$ \\ \hline
			$(+,-,-,+,+,-)$ & $-\Delta_2(x,y,(1\ \overline{2}\ \overline{3}))$ \\ \hline
			$(+,1,2,1,2,-)$ & $\frac{1}{2}(x_1x_2+x_1x_3+x_1y_1+x_2x_3+x_2y_1+x_3y_1+y_1^2+y_2y_3)$ \\ \hline
			$(-,1,1,2,2,+)$ & $\frac{1}{2}(x_1x_2+x_1x_3-x_1y_1+x_2x_3-x_2y_1-x_3y_1+y_1^2-y_2y_3)$ \\ \hline
			$(1,1,-,+,2,2)$ & $-\frac{1}{2}(x_1x_2+x_1x_3-x_1y_3+x_2x_3-x_2y_3-x_3y_3-y_1y_2+y_3^2)$ \\ \hline
			$(1,+,2,1,-,2)$ & $\frac{1}{2}(x_1+x_2+x_3+y_1+y_2-y_3)$ \\ \hline
			$(1,-,1,2,+,2)$ & $-\frac{1}{2}(x_1+x_2+x_3-y_1-y_2-y_3)$ \\ \hline
			$(1,2,+,-,1,2)$ & $1$ \\
			\hline
		\end{tabular}
	}
\end{table}

\begin{table}[h]
	\caption{Formulas for $(SO(6,\C),S(O(3,\C) \times O(3,\C)))$}\label{tab:type-d-table-3}
	\begin{tabular}{|l|l|}
		\hline
		Symmetric $(3,3)$-clan $\gamma$ & Formula for $[Y_{\gamma}]$ \\ \hline
		$(+,-,1,1,-,+)$ &  $y_1y_2(x_1+y_2)(x_1-y_2)$ \\ \hline
		$(-,+,1,1,+,-)$ & $-y_1y_2(x_1+y_1)(x_1-y_1)$ \\ \hline
		$(1,1,2,2,3,3)$ & $y_1y_2(y_1+y_2)$ \\ \hline
		$(+,1,-,-,1,+)$ & $y_1(x_1^2-y_2^2-y_2y_3-y_3^2)$ \\ \hline
		$(-,1,+,+,1,-)$ & $-y_1(x_1-y_1)(x_1+y_1)$ \\ \hline
		$(1,2,1,3,2,3)$ & $y_1(y_1+y_2+y_3)$ \\ \hline
		$(1,2,3,1,2,3)$ & $y_1(y_1+y_2-y_3)$ \\ \hline
		$(1,+,-,-,+,1)$ & $x_1^2+y_1y_2-y_3^2$ \\ \hline
		$(1,-,+,+,-,1)$ & $-x_1^2+y_1^2+y_1y_2+y_2^2$ \\ \hline
		$(1,2,2,3,3,1)$ & $y_1+y_2+y_3$ \\ \hline
		$(1,2,3,3,1,2)$ & $2y_1$ \\ \hline
		$(1,2,3,2,3,1)$ & $y_1+y_2-y_3$ \\ \hline
		$(1,2,3,3,2,1)$ & $1$ \\
		\hline
	\end{tabular}
\end{table}

\bibliographystyle{alpha}
\bibliography{sourceDatabase}

\end{document}